\documentclass[11pt]{amsart}
\usepackage{amsmath,amsfonts,amsthm,mathrsfs}
\usepackage{amssymb}

\usepackage[unicode,bookmarks]{hyperref}
\usepackage[dvipsnames]{xcolor}
\hypersetup{colorlinks=true,citecolor=NavyBlue,linkcolor=Brown,urlcolor=Orange}

\usepackage{tikz-cd,adjustbox}
\usepackage[arrow,curve,matrix]{xy}

\usepackage[margin=1in]{geometry}

\newif\ifdraft
\draftfalse

\counterwithin{equation}{subsection}
\counterwithout{subsection}{section}
\counterwithin{figure}{subsection}

\newtheorem{theorem}[equation]{Theorem}
\newtheorem*{theorem*}{Theorem}
\newtheorem{lemma}[equation]{Lemma}
\newtheorem*{lemma*}{Lemma}
\newtheorem{corollary}[equation]{Corollary}
\newtheorem{proposition}[equation]{Proposition}
\newtheorem*{proposition*}{Proposition}
\newtheorem{conjecture}[equation]{Conjecture}

\theoremstyle{definition}
\newtheorem{definition}[equation]{Definition}
\newtheorem*{definition*}{Definition}
\newtheorem{remark}[equation]{Remark}
\newtheorem*{remark*}{Remark}

\newtheorem{example}[equation]{Example}
\newtheorem*{example*}{Example}
\newtheorem*{problem*}{Problem}

\theoremstyle{plain}

\newcounter{intro}

\newtheorem{intro-conjecture}[intro]{Conjecture}
\newtheorem{intro-corollary}[intro]{Corollary}
\newtheorem{intro-theorem}[intro]{Theorem}
\newtheorem{intro-proposition}[intro]{Proposition}

\newcommand{\theoremref}[1]{\hyperref[#1]{Theorem~\ref*{#1}}}
\newcommand{\lemmaref}[1]{\hyperref[#1]{Lemma~\ref*{#1}}}
\newcommand{\definitionref}[1]{\hyperref[#1]{Definition~\ref*{#1}}}
\newcommand{\propositionref}[1]{\hyperref[#1]{Proposition~\ref*{#1}}}
\newcommand{\conjectureref}[1]{\hyperref[#1]{Conjecture~\ref*{#1}}}
\newcommand{\corollaryref}[1]{\hyperref[#1]{Corollary~\ref*{#1}}}
\newcommand{\exampleref}[1]{\hyperref[#1]{Example~\ref*{#1}}}

\newcommand{\A}{\mathbb{A}}
\newcommand{\ZZ}{\mathbb{Z}}
\newcommand{\Q}{\mathbb{Q}}
\newcommand{\C}{\mathbb{C}}
\renewcommand{\O}{\mathcal{O}}
\renewcommand{\H}{\mathcal{H}}
\DeclareMathOperator{\Pic}{Pic}

\DeclareMathOperator{\coker}{coker}
\DeclareMathOperator{\im}{im}
\DeclareMathOperator{\gr}{gr}

\DeclareMathOperator{\id}{id}

\DeclareMathOperator{\sing}{sing}

\DeclareMathOperator{\mot}{mot}

\DeclareMathOperator{\CH}{CH}
\DeclareMathOperator{\Cl}{Cl}
\DeclareMathOperator{\cdh}{cdh}

\DeclareMathOperator{\DB}{\underline{\Omega}}

\DeclareMathOperator{\HH}{\mathbb{H}}

\DeclareMathOperator{\cl}{cl}
\DeclareMathOperator{\ord}{ord}
\DeclareMathOperator{\divisor}{div}
\DeclareMathOperator{\Alb}{Alb}

\makeatletter
\let\old@caption\caption
\renewcommand*{\caption}[1]{%
	\setcounter{figure}{\value{equation}}%
	\stepcounter{equation}%
	\old@caption{#1}\relax%
}
\makeatother

\begin{document}

\title{Negative $K$-Theory and Hodge Theory}

\author{Andrew Burke}
\address{Department of Mathematics, Harvard University, 1 Oxford Street, Cambridge, MA 02138, USA}
\email{aburke@math.harvard.edu}

\author{Mihnea Popa}
\address{Department of Mathematics, Harvard University, 1 Oxford Street, Cambridge, MA 02138, USA}
\email{mpopa@math.harvard.edu}

\author{Wanchun Shen}
\address{Department of Mathematics, Harvard University, 1 Oxford Street, Cambridge, MA 02138, USA}
\email{wshen@math.harvard.edu}

\thanks{The authors were partially supported by the NSF grant DMS-2401498.}

\subjclass[2010]{19E20, 19E15, 14C30, 14J17}

\date{\today}

\begin{abstract}
We study the negative $K$-groups of complex varieties from a mixed Hodge-theoretic perspective, making use of the theory of higher singularities, Chow groups, and the Minimal Model Program.
\end{abstract}

\maketitle

\makeatletter
\newcommand\@dotsep{4.5}
\def\@tocline#1#2#3#4#5#6#7{\relax
  \ifnum #1>\c@tocdepth 
  \else
    \par \addpenalty\@secpenalty\addvspace{#2}%
    \begingroup \hyphenpenalty\@M
    \@ifempty{#4}{%
      \@tempdima\csname r@tocindent\number#1\endcsname\relax
    }{%
      \@tempdima#4\relax
    }%
    \parindent\z@ \leftskip#3\relax
    \advance\leftskip\@tempdima\relax
    \rightskip\@pnumwidth plus1em \parfillskip-\@pnumwidth
    #5\leavevmode\hskip-\@tempdima #6\relax
    \leaders\hbox{$\m@th
      \mkern \@dotsep mu\hbox{.}\mkern \@dotsep mu$}\hfill
    \hbox to\@pnumwidth{\@tocpagenum{#7}}\par
    \nobreak
    \endgroup
  \fi}
\def\l@section{\@tocline{1}{0pt}{1pc}{}{\bfseries}}
\def\l@subsection{\@tocline{2}{0pt}{25pt}{5pc}{}}
\makeatother

\tableofcontents

\section{Introduction}
An algebraic variety $X$ is endowed with a series of abelian groups $K_j (X)$, for $j \in \ZZ$, called the $K$-groups of $X$. This theory was initially developed in various stages, but now exists in a unified form for all $j$; for a comprehensive introduction, see the book \cite{k-book}. In this paper we are concerned with the case $j < 0$ (and at times $j =0$), where $K$-groups also have a more elementary definition due to Bass \cite{Bass}, obstructing a lifting problem for classes in certain $K_0$-groups of vector bundles, and recalled in \S\ref{scn:K-intro}. A homotopy invariance result attributed to Grothendieck, see \cite[Ch.XII, \S3]{Bass} or \cite[Theorem II.7.8]{k-book}, implies that if $X$ is smooth, then $K_j (X) = 0$ for all $j < 0$. Therefore negative $K$-theory is a, rather mysterious, invariant of singularities.
The door towards both a better general understanding and the proof of various predictions, such as Weibel's $K$-dimension conjecture, was opened by the important works \cite{k weibel} and \cite{bass nk} (and earlier work of the authors of these papers referenced therein), where a strong connection with cdh-theoretic methods is established.

The purpose of our paper is to formulate, and in some cases prove, conjectures that provide systematic Hodge-theoretic or singularity-theoretic reasons for the vanishing (or torsion) behavior of negative $K$-theory in the complex setting. More generally, for certain $K$-groups we propose concrete formulas that go beyond vanishing. The hypotheses are either in terms of the mixed Hodge structure on singular cohomology, or in terms of the recently developed theory of higher rational singularities, both of which seem to fit the problem ideally.

In what follows $X$  will always be a complex variety of dimension $n$. Recall that Weibel's conjecture, proved in \cite{k weibel} and 
\cite{descent} under more general hypotheses, says that $K_j (X) = 0$ for $j < - n$, while $K_{-n}$ is homotopy invariant and isomorphic to the cdh cohomology $H^n_{\rm cdh} (X, \ZZ)$.
Our results or predictions give similar statements under appropriate conditions, as we go up the tower of $K$-groups up to $K_{-1}$.

Another point of view is to fully understand the negative $K$-groups of varieties of small dimension, as done by Weibel for curves and normal surfaces in the foundational \cite{weibel surfaces}. We start the discussion of our results from this perspective, by providing a comprehensive description of the $K$-groups of projective threefolds with rational singularities, as an illustration of applications of the main results of the paper.

\begin{intro-theorem}\label{thm:threefolds}
Let $X$ be a complex projective threefold. Then

\noindent 
(i)  If $X$ is of klt type, then $K_{-3}(X) = 0$ and 
$$K_{-2} (X) \simeq \coker \big( \bigoplus_j \Pic S_j \rightarrow \bigoplus_k \Pic C_k \big),$$
where $S_j$ and $C_k$ are the surface and curve strata of the exceptional divisor of a strong resolution of singularities $f \colon Y \to X$. 
Moreover, if $X$ has isolated singularities,
$$K_{-1}(X)_\mathbb{Q} \simeq  \gr_2^W H^3(X, \mathbb{Q}) \simeq  \frac{\ker \big( \bigoplus_j \Pic S_j \rightarrow \bigoplus_k \Pic C_k \big)}{\im \big( \Pic Y \rightarrow \bigoplus_j \Pic S_j \big)} \otimes \Q $$
and  there is an exact sequence 
$$0 \to \Pic(X)_{\Q} \to \Cl(X)_{\Q} \to \bigoplus_{x\in X_{\sing}} \Cl(\widehat{\mathscr{O}}_{X,x})_{\Q} \to  K_{-1} (X)_{\Q} \to 0.$$
We have $K_{-1}(X)_\mathbb{Q} = 0 \iff$ all the Hodge structures on the cohomology groups of $X$ are pure $\iff$ the global $\Q$-factoriality defect of $X$ is equal to the sum of its local analytic $\Q$-factoriality defects.

The description and exact sequence for  $K_{-1} (X)$ already hold over $\ZZ$ if $X$ is in addition a local complete intersection.

\noindent
(ii) If $X$ has rational singularities, then
$$K_{-3}(X)_\Q = K_{-2} (X)_\Q = 0,$$
and, if in addition $X$ has isolated singularities, the description of $K_{-1}(X)$ in part (i) holds if we assume Bloch's conjecture on $0$-cycles on smooth projective surfaces.
\end{intro-theorem}

Recall that a variety is of klt type when it supports an effective $\Q$-divisor such that the pair $(X, \Delta)$ is klt. In this case $X$ has rational singularities by a well-known result of Elkik, and moreover the two notions are equivalent when $X$ is Gorenstein. For a more detailed discussion of the result above, as well as similar results for fourfolds (and curves and surfaces), please see \S\ref{scn:small-dim}. We note that the exact sequence for $K_{-1} (X)$ was already obtained in the case of $cA_n$ singularities in \cite{pavic}, and coincides with the description of $K_{-1}$ for normal surfaces given in \cite{weibel surfaces}.

We now proceed with discussing the general conjectures and results of the paper. We will always assume $n \ge 2$, as $K_{-1}$ of a curve is well understood (see \S\ref{scn:small-dim}).

\noindent
{\bf Overview.}
Since the technical statements may obscure the overall picture, we start by loosely reviewing the main ideas of our approach. Details will come next.

\noindent
(1) We relate the vanishing of negative rational homotopy $K$-theory $KH_j (X)_\Q$, and more generally of cdh-motivic cohomology $H^k_{\rm cdh}(X, \Q(\ell))$, to bounds on the weights of the mixed Hodge structure on the singular cohomology of $X$.

\noindent
(2) The weight bounds needed in (1) are implied, due to the main result of \cite{PP1}, by a duality condition $D_m$ in Du Bois theory, analogous to the perfect pairing between forms of degree $k \le m$ and forms of degree $n-k$ on a complex manifold.

\noindent
(3) The $K$-theory groups $K_j (X)$ coincide with the homotopy $K$-theory groups $KH_j (X)$ in a range depending on the $K$-regularity level of $X$. This is in turn determined by the $m$-Du Bois property of the singularities of $X$, thanks to \cite{rosie}.

\noindent
(4) The theory of higher singularities also studies $m$-rational singularities, for $m\ge 0$, refining the classical notion of rational singularities. A key result is that $m$-rational is equivalent to $m$-Du Bois plus condition $D_m$. Thus $m$-rationality is perfectly suited for predicting the vanishing (and other) properties of negative $K$-groups.
In summary:

\begin{itemize}
\item Bounds on the weights on the mixed Hodge structures on singular cohomology groups, or the $D_m$ property, lead to the vanishing of rational $KH$-groups.
\item The $m$-Du Bois property leads to isomorphisms between $K$ and $KH$-groups.
\item The $m$-rationality property leads to the vanishing of rational $K$-groups.
\end{itemize}

Most of the unconditional results we show are in the setting of rational or klt type singularities, and sometimes (pre-)$1$-rational. Besides hyperresolution (or cdh) and mixed Hodge theory techniques, in the most difficult situations we need to study natural restriction maps on Chow groups of codimension $2$ cycles on log resolutions of singularities. This may be of independent interest; our approach is based on techniques from 
the minimal model program (MMP) and on Bloch's conjecture on $0$-cycles.

\smallskip

\noindent
{\bf $KH$-theory.}
One of the main difficulties in the study of $K$-theory is the fact that it is usually not $\A^r$-invariant (or ``homotopy invariant"); see Definition \ref{def:K-regularity}. To rectify this, Weibel \cite{weibel kh} has introduced the homotopy $K$-theory groups $KH_j (X)$, which do have this property, and admit natural comparison maps $K_j (X) \to KH_j (X)$.
The main step of our program is to try to systematically understand the groups $KH_j (X)$ for $j < 0$, and in fact the finer cdh-motivic cohomology, in terms of Hodge theory and Chow groups, to which they are more directly related.

We make the following conjecture, in terms of the  weight filtration on the mixed Hodge structure on the cohomology of $X$. A stronger and more comprehensive conjecture regarding cdh-motivic cohomology appears below as Conjecture \ref{main-cdh-conjecture}.

\begin{intro-conjecture}\label{main-KH-conjecture}
Fix an integer $i > 0$. If  $W_{j-i} H^j(X, \mathbb{Q}) = 0$ for all $j \le 2n - i$, then 
$$KH_{-i}(X)_{\Q}= 0,$$ 
or in other words $KH_{-i}(X)$ is torsion. 
\end{intro-conjecture}

We note that if we considered $i \le 0$ as well in the conjecture, neither the hypotheses nor the conclusion would hold.

\begin{remark*}[\emph{Projective vs. non-projective}]\label{rmk:KH-nonproj}
The conjecture does not require $X$ to be projective. The only solid evidence we have for the non-projective case is the fact that it is implied by the deep conjectural picture of mixed motives for arbitrary varieties. In the isolated singularities case, it can easily be reduced to the projective case via compactification that does not introduce new singularities. The same applies to the other conjectures we state in what follows, but besides isolated singularities, all the results we can currently show are in the projective setting.
\end{remark*}

For example, Conjecture \ref{main-KH-conjecture} predicts that if all the singular cohomology groups of a projective $X$ have pure rational Hodge structure, then \emph{all} negative $KH$-theory is torsion. The most obvious case when this hypothesis holds is that of rational homology manifolds, for instance varieties with quotient singularities, but it can also happen when Poincar\'e duality fails. For example, by \cite[Theorem A]{PP1} and \cite[Theorem E]{PP2} a threefold with isolated rational singularities satisfies this property if and only if the global $\Q$-factoriality defect is equal to the sum of the local analytic $\Q$-factoriality defects. When all Hodge structures are pure, we extend our study to the groups $KH_j (X)$ for $j \ge 0$, by formulating  Conjecture \ref{conj nonnegative KH} in the Appendix, and proving it for $KH_0$ of a surface. 

Note that Conjecture \ref{main-KH-conjecture} fails integrally, already in the case of surfaces with rational singularities, as many examples in \S\ref{scn:small-dim} and \S\ref{scn:misc} show.

Thanks to results in \cite{PP1}, from the point of view of singularity theory, the hypothesis of Conjecture \ref{main-KH-conjecture} holds for varieties with normal pre-$m$-rational singularities, or more generally satisfying condition $D_m$ of \emph{loc. cit.}, with $m =\left[ \frac{n-1-i}{2}\right]$ ($m = -1$ means no assumptions). See \S\ref{scn:higher-sings}, especially Corollary \ref{cor weight restrictions} (and Remark \ref{rmk:Dm-nonproj}). Overall, when replacing the weight assumptions with stronger singularity assumptions, we make the following more complete proposal.

\begin{intro-conjecture}\label{sings-KH-conjecture}
If $X$ is a normal variety with pre-$m$-rational singularities (or more generally satisfying  condition $D_m$) for some $0 \le m \le \frac{(n-2)}{2}$, then 
$$KH_{-i}(X)_{\Q}= 0 \,\,\,\,\,\,{\rm for~all}\,\,\,\,i \ge n - 2m -1$$
and if $m < \frac{(n-2)}{2}$, then in addition
$$KH_{-n + 2m + 2}(X)_{\Q} \simeq \gr^W_{2m +2} H^n (X, \Q).$$ 
\end{intro-conjecture}

We now list the results on $KH$-groups that we can show either unconditionally, or in one case assuming Bloch's conjecture on the Chow groups of $0$-cycles on smooth projective surfaces.

\begin{intro-theorem}\label{thm:main-KH}
Let $X$ be a complex projective variety of dimension $n\ge 2$.

\noindent
(i) Conjecture \ref{main-KH-conjecture} holds for $i = n, n - 1$. In particular, it holds for surfaces.

\noindent
(ii) The first part of Conjecture \ref{sings-KH-conjecture} holds for $m = 0$. That is, if $X$ has rational singularities, then 
$$KH_{-n}(X)_{\Q} = KH_{-n + 1}(X)_{\Q} = 0.$$
If $X$ has isolated klt type singularities, the second part also holds, i.e. if $n \ge 3$, then
$$KH_{-n + 2}(X)_{\Q} \simeq \gr^W_{2} H^n (X, \Q).$$ 
In particular, Conjecture \ref{sings-KH-conjecture} holds for surfaces with rational singularities, and for threefolds with isolated klt type singularities. It holds for all threefolds with isolated rational singularities if we assume Bloch's conjecture on $0$-cycles on  smooth surfaces.

Under the same assumptions, Conjecture \ref{sings-KH-conjecture} holds for $m = 1$ and $i \ge n-2$;  that is, if $n \ge 4$ and $X$ has isolated singularities of klt type and satisfies $D_1$, then we also have $KH_{-n +2}(X)_{\Q} = 0$.
\end{intro-theorem}

All of these results are consequences of more general facts that essentially provide descriptions of $KH_{-n}$, $KH_{-n+1}$ and $KH_{-n +2}$, sometimes even integrally, for varieties with appropriate singularities. See Ch.\ref{scn:results-K-groups} for details. Even more generally, the computations are in fact for the appropriate cdh cohomology groups, solving the corresponding cases of Conjecture \ref{main-cdh-conjecture} below.

\smallskip

\noindent
{\bf $K$-theory.}
Our main interest is in understanding the standard $K$-theory groups in the negative range. 
We would like to make sense of what they measure, or obstruct.

We make predictions analogous to those for $KH$-groups. This time however, they are necessarily based on singularity assumptions as in Conjecture \ref{sings-KH-conjecture}. The weight assumptions of Conjecture \ref{main-KH-conjecture} do not suffice, since in addition one needs to deal with $K$-regularity issues.

\begin{intro-conjecture}\label{main-K-conjecture}
If $X$ has normal pre-$m$-rational singularities, for some $0 \le m \leq (n-2)/2$, and $s = \dim X_{\rm sing}$, then:

\smallskip

\noindent 
(i) We have
$$K_{-i}(X)_{\Q}= 0 \,\,\,\,\,\,{\rm for~all}\,\,\,\,i \ge {\rm max}~\{s, n - 2m -1\}.$$
In particular, if $X$ has $m$-rational singularities,\footnote{We will not discuss this strong higher rationality notion here, but besides implying normal pre-$m$-rational singularities, a result in \cite{MP-lci} in the lci case, and its very definition in \cite{SVV} in general, imply that $s \le n - 2m -2$. This is another remarkable agreement with the picture of $K$-groups. 
Note that this also says that (ii) should hold when $X$ has $m$-rational singularities.}
 then 
$$K_{-i}(X)_{\Q}= 0 \,\,\,\,\,\,{\rm for~all}\,\,\,\,i \ge  n - 2m -1.$$



\smallskip

\noindent
(ii)  If $s \le n - 2m -2$ and $m < \frac{(n-2)}{2}$, then
$$K_{-n + 2m + 2}(X)_{\Q} \simeq \gr^W_{2m +2} H^n (X, \Q).$$ 
\end{intro-conjecture}


Note that $m = 0$ means precisely rational singularities, and in this case the conjecture predicts that $K_{-n}(X)$ and $K_{-n +1}(X)$ are torsion, plus a precise description of $K_{-n+2}(X)_\Q$ when $n \ge 3$. At the other end, rational homology manifolds (for example varieties with quotient singularities), are pre-$m$-rational for all $m$, hence at least for isolated singularities all of their negative $K$-groups should be torsion.

Using the results on $K$-regularity from \cite{rosie} (and upcoming improvements in \cite{BPS}), which provide criteria for when the natural maps $K_j (X) \to KH_j (X)$ are isomorphisms in terms of higher Du Bois singularities (see \S\ref{scn:K-intro}), Conjecture \ref{main-K-conjecture} is in fact a consequence of Conjecture \ref{sings-KH-conjecture}. Along the same lines, using Theorem \ref{thm:main-KH}, we deduce the following cases of Conjecture \ref{main-K-conjecture}.

\begin{intro-theorem}\label{thm:main-K}
Let $X$ be a projective complex variety of dimension $n\ge 2$.

\noindent 
(i) Conjecture \ref{main-K-conjecture}(i) holds for $m = 0$. In other words, if $X$ has rational singularities, then
$$K_{-n}(X)_{\Q} =  K_{-n +1}(X)_{\Q} = 0.$$

\noindent 
(ii) Conjecture \ref{main-K-conjecture}(ii) holds for $m =0$ if $X$ has isolated klt type singularities. In other words, if $n \ge 3$ and $X$ has isolated klt type singularities, then
$$K_{-n +2} (X)_{\Q} \simeq {\rm gr}_2^W H^n (X, \Q).$$ 
Moreover, if $X$ is a threefold, it holds in general if we assume Bloch's conjecture on $0$-cycles on smooth surfaces.

In particular $K_{-n +2} (X)_{\Q} = 0$ when the singularities are in addition pre-$1$-rational (i.e. part of Conjecture \ref{main-K-conjecture}(i) for $m=1$).
\end{intro-theorem}

As a special case, note that Conjecture \ref{main-K-conjecture} always holds for surfaces (only (i) is relevant). This can already be deduced from \cite{weibel surfaces}.

\smallskip

\noindent
{\bf Other known cases.}
Our conjectures on $KH$-groups and $K$-groups hold in full generality for special classes of varieties. These include varieties with quotient singularities, cones over smooth varieties, toric varieties, or certain varieties with short hyperresolution, like secant varieties under sufficiently positive embeddings. See \S\ref{scn:graded}, \S\ref{scn:cones}, \S\ref{scn:toric} and \S\ref{scn:short-hyp}. Much of this follows from various existing results. The case of quotient singularities seems new in full generality however, although partial results can be found in the literature; it is a consequence of a general result we prove about the behavior of cdh cohomology under finite surjective maps, Proposition 
\ref{prop finite map}.

\smallskip

\noindent
{\bf Cdh-motivic cohomology.}
Conjecture \ref{main-KH-conjecture} follows from a more precise prediction regarding the vanishing of \emph{cdh cohomology groups}. These can be defined and studied in a rather general context, see for instance the comprehensive \cite{BEM}. In the case of complex varieties, they coincide
with the motivic cohomology groups of Friedlander and Voevodsky \cite{FV}, which have been studied quite extensively. See \S\ref{scn:cdh} for more details.\footnote{The reason for using the terminology cdh-motivic cohomology, or simply \emph{cdh cohomology}, is that in the singular case a different motivic cohomology theory has been constructed in the recent \cite{motivic cohomology}, which has a stronger connection with $K$-theory, analogous to that of cdh cohomology to $KH$-theory.}

\begin{intro-conjecture}\label{main-cdh-conjecture}
If $X$ is projective and $j\ge 0$ and $k$ are integers, then:

\noindent 
(i)  We have $H^k_{\rm cdh}  (X, \Q(j)) = 0$ if 
$$W_{b-k + 2j} H^b(X, \mathbb{Q}) = 0 \,\,\,\,\,\,{\rm for ~all } \,\,\,\,  b \leq k.$$
More precisely, it should suffice to assume 
\begin{equation}\label{eqn:sharp-cdh}
H^{r,s} \gr_{b-c}^W H^b(X, \mathbb{Q}) = 0 \,\,\,\,\,\,{\rm for ~all } \,\,\,\,  b \leq k, c \geq k-2j, ~{\rm and}~r \leq b+ j - k.
\end{equation}

\noindent 
(ii) If (\ref{eqn:sharp-cdh}) holds except when $b= k$, $c = k-2j$ and $r = j$, then 
$$H^k_{\rm cdh} (X, \Q(j) ) \simeq \gr^W_{2j} H^k(X, \Q).$$

\noindent
(iii) If $X$ has normal pre-$m$-rational singularities (or more generally satisfies $D_m$), then $H^k_{\rm cdh}  (X, \Q(j)) = 0$ whenever we have $k > 2j$, as well as $m \ge j$ or $m \ge n - k + j$.
\end{intro-conjecture}

Part (iii) is a consequence of part (i) and Corollary \ref{cor weight restrictions}. Part (ii) is the reason for the formulas that go beyond vanishing, predicted in Conjectures \ref{sings-KH-conjecture} and \ref{main-K-conjecture}.

This is the fundamental technical conjecture of the paper, which essentially implies everything else. Note that it predicts more vanishing than is needed in Conjecture \ref{main-KH-conjecture}. Indeed, it includes a conjecture of Beilinson and Soul\'e, stating that $H^k_{\rm cdh} (X, \Q(j)) = 0$ for $k < 0$; see Remark \ref{rmk:BS-conj}.
It also progressively improves the so-called motivic Soul\'e-Weibel vanishing under better and better singularity hypotheses, as in:

\begin{remark*}[\emph{Rational singularities (and beyond)}]
When $X$ has rational singularities, the conjecture predicts that $H^k_{\rm cdh}  (X, \Q (0)) = 0$ for $k > 0$, which we show in \S\ref{scn:weight0}, but also that 
$$H^{j + n}_{\rm cdh}  (X, \Q (j)) = 0 \,\,\,\,\,\, {\rm for} \,\,\,\, j < n.$$ 
This is new ``borderline" vanishing, as for varieties with no conditions on the singularities it is known that $H^i_{\rm cdh}  (X, \Q (j)) = 0$
for $i > j +n$; see Lemma \ref{lem motivic soule-weibel}. We will show this for $j =1$, and also for $j=2$ in the case of isolated klt singularities. When $X$ has pre-$1$-rational singularities, it predicts in addition the vanishing of groups of the type 
$H^{j + n -1}_{\rm cdh}  (X, \Q (j))$, and so on.
\end{remark*}

A natural reindexing that places the relevant groups in the fourth quadrant (when $k \ge 0$) makes the intricate statement of Conjecture \ref{main-cdh-conjecture}  quite easy to visualize. Here is a picture for part (iii), in dimension six:

\begin{figure}[h]
\includegraphics[width=5.8in]{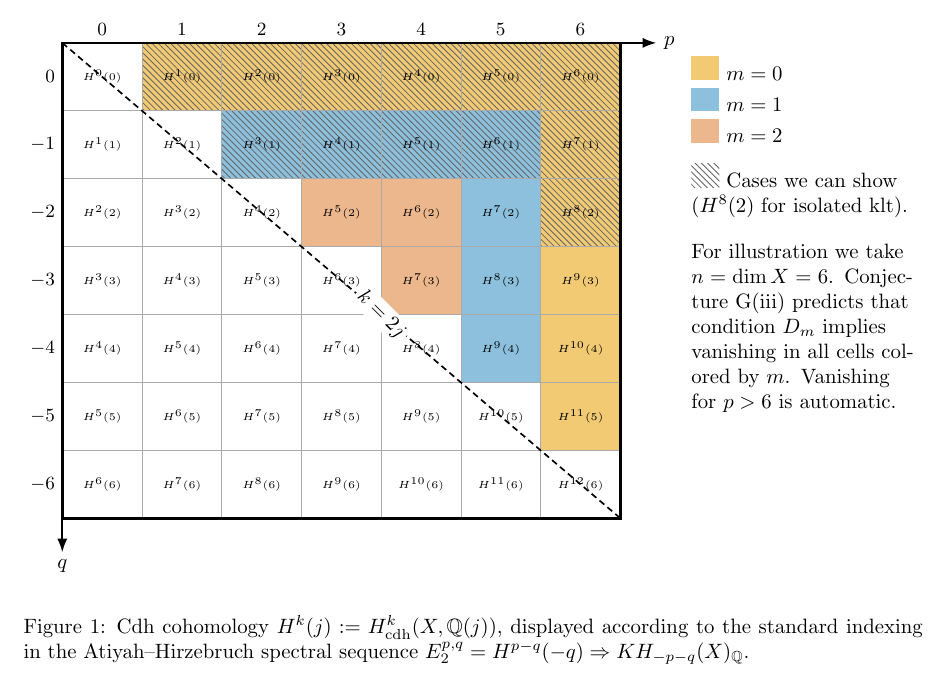}
\end{figure}

The fact that Conjecture \ref{main-cdh-conjecture} implies Conjecture \ref{main-KH-conjecture} is a consequence of the rational $E_2$-degeneration of an Atiyah-Hirzebruch type spectral sequence, which filters $KH_{-i} (X)_{\Q}$ by the cdh-cohomology groups 
$$H^{i + 2j}_{\rm cdh} (X, \Q(j)) \,\,\,\,\,\,{\rm with} \,\,\,\, 0 \le j \le n -i.$$
For complex varieties, the existence and degeneration of this spectral sequence is a consequence of results in \cite{descent properties} and \cite{KP}, respectively. See \S\ref{scn:cdh} for more details.

\begin{remark*}[\emph{Bloch-Beilinson filtration}]
We will see in the Appendix that Conjecture \ref{main-cdh-conjecture} (hence consequently all the conjectures in this paper) is implied by an enhanced form of the conjectural Bloch-Beilinson filtration on (higher) Chow groups, which is in turn predicted by the standard conjectures in the theory of mixed motives. These are of course completely open in general. The refinement deals with the exactness of filtered complexes of higher Chow groups naturally obtained from semisimplicial varieties, especially cubical hyperresolutions; see \S\ref{scn:BB-conj}.
\end{remark*}

As for unconditional results, we have a quite complete picture for weight $0$ and weight $1$ cdh-cohomology groups. We state the case of $\Q$-coefficients for simplicity, but stronger results for $\ZZ$-coefficients appear in the body of the paper.

\begin{intro-theorem}\label{thm:cdh-low-weight}
Let $X$ be a normal complex projective variety of dimension $n$.

\noindent 
(i) For all integers $i$, we have 
$$H^i_{\rm cdh}(X, \Q (0)) \simeq W_0 H^i (X, \Q).$$
In particular, if $X$ has rational singularities, then $H^i_{\rm cdh}(X, \Q (0)) = 0$ for all $i \ge 1$.

\noindent 
(ii)  For all integers $i$, the groups $H^i_{\rm cdh}(X, \Q (1))$ sit in exact sequences whose terms are concretely determined by the mixed Hodge structure of $X$; see Corollary \ref{cor weight one rational}. In particular, if $X$ has rational singularities and $i \ge 3$, then 
$$H_{\cdh}^{i}(X, \mathbb{Q}(1)) \simeq W_2 H^{i}(X, \mathbb{Q}) =  \gr_2^W H^{i}(X, \mathbb{Q}) = H^{1,1} \gr_2^W H^{i}(X, \mathbb{Q}).$$
If moreover $X$ has pre-$1$-rational singularities, then $H^i_{\rm cdh}(X, \Q (1)) = 0$ for all $i \ge 3$.
\end{intro-theorem}


In weight $2$ and higher, Chow groups of cycles of codimension at least two appear in the calculations, and the situation becomes more delicate. 
The connection with Hodge theory is realized via Bloch's conjecture on $0$-cycles, and its generalizations, for various strata of the exceptional locus of a resolution of singularities; for example, thanks to Shokurov's rational connectedness conjecture, established in \cite{hm}, this is known for the exceptional surface components of any log resolution of a klt type singularity. Together with other arguments relying on the Minimal Model Program, this allows us to obtain the following statement about $H^{n+2}_{\rm cdh}(X, \Q (2))$. This is by some margin the most difficult result of the paper, and the main step towards the results on $KH_{-n+2}(X)$ and  $K_{-n+2}(X)$ stated earlier.

\begin{intro-theorem}\label{thm:weight2}
(i) If $X$ is a projective threefold with isolated klt type singularities, then 
$$H_{\rm cdh}^5 (X, \Q (2)) = 0.$$
If in addition $X$ has local complete intersection singularities, then 
$$H^5_{\rm cdh} (X, \ZZ (2)) = 0.$$
The same result holds if we replace klt type by rational singularities, if we assume Bloch's conjecture on $0$-cycles on smooth surfaces.

\noindent
(ii) If $X$ is projective of dimension $n\ge 4$, with isolated klt type singularities, then
$$H_{\rm cdh}^{n+2} (X, \Q (2)) = 0.$$
\end{intro-theorem}

Other weight $2$ cdh-cohomology groups are still mysterious. To treat those, and higher weight groups, we are forced to enter the unknown territory part of the Bloch-Beilinson conjectures.

\smallskip

\noindent
{\bf Quasi-projective varieties with isolated singularities.}
Based on local-to-global principles relying on localization sequences and the Gersten resolution, as described for instance in \cite{weibel isolated}, 
the theorems about projective varieties stated in this Introduction can be extended without much effort to the setting of quasi-projective varieties with isolated singularities, or to the case of local rings of germs of such singularities. We explain this in \S\ref{scn:quasiproj-isolated}.

\smallskip

To conclude, we mention that in the forthcoming \cite{BPS} we will address similar questions for the motivic cohomology groups of complex varieties, as defined in \cite{motivic cohomology}. Besides the connection with mixed Hodge theory and higher singularities, we will show that results in \cite{weibel surfaces}, \cite{pavic} or this paper, relating $K$-theory to factoriality in small dimension, are examples of general behavior for  special motivic cohomology groups.

It is also natural to ask whether some of the conjectures and results in this paper have natural versions in positive characteristic, in view of the fact that a compelling analogue of the theory of higher singularities is currently taking shape \cite{KW1}, \cite{KW2}.

\noindent
{\bf Acknowledgements.}
We would like to thank Jungkai Chen, Brad Dirks, Elden Elmanto, Christopher Hacon, Christian Haesemeyer, Mircea Musta\c t\u a, Scott Nollet, Sung Gi Park, Stefan Schreieder, Vasudevan Srinivas, Anh Duc Vo, Claire Voisin, Charles Weibel, and Kai Xu for various comments and suggestions. 
Fanjun Meng pointed out an inaccuracy related to the proof of Theorem \ref{thm mmp}, and helped fix it.
The second named author is grateful to DPMMS at the University of Cambridge, especially Holly Krieger,  for hospitality and excellent working conditions during part of the preparation of this paper, and Evgeny Shinder for a conversation in which he asked about a possible connection between the results of \cite{PP1}, \cite{PP2} and $K$-theory.

\section{Preliminaries}

\subsection{Mixed Hodge theory and higher singularities.}\label{scn:higher-sings}
For a detailed version of the material in this section, and further references, the reader may consult \cite[Ch. B]{PP1}.

For a complex $n$-dimensional variety $X$, and for each $0\le p \le n$, the \emph{$p$-th Du Bois complex} of $X$ is 
\[\DB^p_X : = \gr^p_F\, \DB^\bullet_X[p],\]
where $(\DB_X^\bullet, F)$ is the \emph{filtered de Rham complex} of $X$, an object in the bounded derived category of filtered differential complexes on $X$ which plays a role analogous to the standard de Rham complex in the smooth case, and which can be computed in terms of the de Rham complexes on a hyperresolution of $X$.
Each $\DB^p_X$ is an object in ${\bf D}^b_{\rm coh}(X)$, the bounded derived category of coherent sheaves.

Note that there exists a Hodge-to-de Rham spectral sequence 
	$$E^{p,q}_1 = \HH^q (X, \DB_X^p) \implies H^{p + q} (X, \C),$$
	which degenerates at $E_1$ if $X$ is projective; see e.g. \cite[Proposition 7.24]{peters steenbrink}. The filtration induced by this spectral sequence coincides with the Hodge filtration $F_{\bullet} H^{p+q} (X, \C)$ associated to Deligne's mixed Hodge structure on cohomology. As for smooth varieties, in the projective case we define 
$$
\underline h^{p,q}(X):=\dim_\C \gr_F^{p}H^{p+q}(X,\C),
$$
and the degeneration mentioned above shows that they are also computed by the hypercohomologies of the Du Bois complexes $\DB_X^p$, in the sense that 
$$\gr_F^{p}H^{p+q}(X,\C) \simeq \HH^q(X,\DB_X^p).$$
We call $\underline h^{p,q}(X)$ the \emph{Hodge-Du Bois numbers} of $X$, in order to emphasize the difference between them and the \emph{Hodge-Deligne numbers} of the pure Hodge structures on the associated graded pieces with respect to the weight filtration
$$h_i^{p,r} (X) : = \dim_{\C} \gr_F^p \gr_{p+r}^W H^i(X, \C).$$
By definition and the strictness of the Hodge filtration, we have 
\begin{align*}
    \underline{h}^{p,i-p} = \sum_{r=0}^{i-p} h_i^{p,r}.
\end{align*}

The following technical result is essentially (a part of) \cite[Lemma $3.23$]{FL}, with a small addition. We include the proof for completeness.

\begin{lemma}\label{lem hodge du bois numbers}
Let $X$ be a projective variety of dimension $n$. For integers $m\ge -1$ and $i\ge 0$, we have
$$\underline{h}^{p,i-p}(X) = \underline{h}^{i-p,p}(X) \,\,\,\,{\rm for~ all} \,\,\,\, p \leq m \iff 
\gr_F^p W_{i-1} H^i(X, \mathbb{Q}) = 0  \,\,\,\,{\rm for~ all} \,\,\,\, p \leq m.$$
Moreover, if this holds and $i \ge 2m +2$, then $H^i(X, \mathbb{Q})$ has weights $ \geq 2m+2$.
\end{lemma}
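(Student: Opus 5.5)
The plan is to compare both sides through the Hodge--Deligne numbers $h_i^{p,r}$, and to use only two standard facts. The first is that $\gr_F^p$ is exact on mixed Hodge structures, by strictness. This gives
$$\dim \gr_F^p W_{i-1}H^i(X,\Q) = \sum_{r< i-p} h_i^{p,r},$$
using that $H^i(X,\Q)$ has weights in $[0,i]$ because $X$ is projective. The second is Hodge symmetry on each pure graded piece $\gr^W_w H^i$, which gives $h_i^{p,r} = h_i^{r,p}$.

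\textbf{Rewriting the difference.} Expand, using the displayed formula before the lemma,
$$\underline h^{p,i-p} = \sum_{r=0}^{i-p} h_i^{p,r}, \qquad \underline h^{i-p,p} = \sum_{r=0}^{p} h_i^{i-p,r} = \sum_{r=0}^{p} h_i^{r,i-p}.$$
The top-weight terms, $r=i-p$ in the first sum and $r=p$ in the second, are $h_i^{p,i-p}$ and $h_i^{i-p,p}$. These are equal by symmetry, so they cancel. Hence
$$\underline h^{p,i-p} - \underline h^{i-p,p} = \dim \gr_F^p W_{i-1}H^i \;-\; \sum_{r<p} h_i^{r,i-p}.$$
Each term $h_i^{r,i-p}$ with $r<p$ is the dimension of a $\gr_F^r$-piece of weight $r+i-p\le i-1$. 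It is therefore bounded by $\dim \gr_F^r W_{i-1}H^i$.

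\textbf{The equivalence.} For $\Leftarrow$: if $\gr_F^pW_{i-1}H^i=0$ for all $p\le m$, then for each $p\le m$ both terms of the difference vanish. The second term vanishes because it involves only $\gr_F^r$ with $r<p\le m$. So the Hodge--Du Bois numbers are symmetric.

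For $\Rightarrow$: induct on $p$ from $0$ up to $m$. Suppose $\gr_F^rW_{i-1}H^i=0$ for all $r<p$. Then the correction sum vanishes, and the assumed equality $\underline h^{p,i-p}=\underline h^{i-p,p}$ forces $\gr_F^pW_{i-1}H^i=0$. The base case $p=0$ has an empty correction sum. Values $p>i$ are trivial, and the case $m=-1$ is vacuous on both sides.

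\textbf{The weight statement.} Assume $i\ge 2m+2$, and suppose $\gr^W_wH^i\neq 0$ for some $w\le 2m+1$. Some Hodge type $(a,w-a)$ then occurs in it. By symmetry we may take $a\le w/2$, so $a\le m$. Since $w\le 2m+1\le i-1$, this gives $\gr_F^aW_{i-1}H^i\neq 0$, contradicting the equivalence just proved.

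The only delicate point is the bookkeeping in the cancellation: matching the top-weight terms via symmetry, and making sure the correction terms only involve $F$-indices $r<p$, which is what lets the induction run. I do not expect a real obstacle beyond that.
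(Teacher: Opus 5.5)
Your proof is correct and is essentially the paper's argument: both expand the Hodge--Du Bois numbers into Hodge--Deligne numbers, cancel the weight-$i$ terms using $h_i^{j,k}=h_i^{k,j}$, kill the correction terms $h_i^{r,i-p}$ with $r<p$ inductively, identify what remains with $\dim \gr_F^p W_{i-1}H^i(X,\Q)$, and then deduce the weight bound by Hodge symmetry. The only difference is cosmetic: you induct on $p$ with $m$ fixed rather than on $m$, which lets you treat all $i$ uniformly instead of separating off the case $i\le 2m$ as the paper does.
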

\begin{proof}
The idea is that there are inequalities $\underline{h}^{p,i-p} \geq \underline{h}^{i-p,p}$ for each $p$ as above, and the differences measure lower weight pieces of $H^i(X, \mathbb{C})$. 
Recall that $H^i(X, \mathbb{Q})$ has weights only between $0$ and $i$.

When $i \le 2m$, both sides of the equivalence are easily seen to mean that $H^i (X, \Q)$ is a pure Hodge structure.

When $i \ge 2m +1$, we induct on $m$. The base case $m = -1$ is vacuous (the last statement holds for arbitrary varieties). By induction, we may assume $h_i^{i-m,r} = 0$ whenever $r < m$. Since there is a duality $h_i^{j,k}= h_i^{k,j}$, we have 
\begin{align*}
    \underline{h}^{m,i-m} - \underline{h}^{i-m,m} =& \sum_{r=0}^{i-m} h_i^{m,r} - \sum_{r=0}^m h_i^{i-m,r} \\
    =&  \sum_{r=0}^{i-m-1} h_i^{m,r}, 
\end{align*}
This sum is the dimension of $\gr_F^m W_{i-1} H^i(X, \mathbb{Q})$, which shows the equivalence.

To deduce the final statement, note that it follows from the strictness of the Hodge filtration that $\gr_F^p W_{2m+1} H^i(X, \mathbb{Q}) = 0$ for all $p \leq m$. By duality, we conclude that $W_{2m+1} H^i(X, \mathbb{Q}) = 0$.
\end{proof}

We are particularly interested here in the vanishing of certain weight spaces. In order to apply Lemma \ref{lem hodge du bois numbers}, we need the symmetry of various 
Hodge-Du Bois numbers. This is studied in \cite{PP1}, in terms of higher singularities, which we now introduce.

Following \cite{MOPW,JKSY,FL, MP-LC}, if $X$ is a local complete intersection (LCI) subvariety of a smooth variety $Y$, we say that it has \textit{$m$-Du Bois singularities} if the canonical morphisms 
$$\Omega^p_X \to \DB^p_X$$ 
are isomorphisms for all $0\le p\le m$, and 
 \textit{$m$-rational singularities} if the canonical compositions 
 $$\Omega^p_X \to \DB_X^p \to \mathbf{D}_X(\DB^{n-p}_X)$$  
 are isomorphisms for all $0\le p\le m$, where $\mathbf{D}_X (\cdot) : = R\H om (\, \cdot \, , \omega_X)$.
 For $m =0$, the LCI condition is not needed, and these definitions coincide with the classical notions of Du Bois and rational singularities, respectively. These singularity notions have found many uses recently, due to the fact that there are numerical criteria to detect them. We only
 recall this in the case of hypersurfaces (which can be assumed to be in $\C^n$ without loss of generality), but there exist similar criteria for LCI varieties of higher codimension \cite{MP-LC}, \cite{CDM}.
 
 \begin{theorem}
If $X = (f = 0)$ is a singular hypersurface in $\C^n$, and $m \ge 0$, then:
 
 \noindent
 (i) (\cite{MOPW}, \cite{JKSY})~~$X$ has $m$-Du Bois singularities $\iff \widetilde{\alpha} (f) \ge m+1$.
 
 \noindent
 (ii) (\cite{saito-appendix}, \cite{MP-lci})~~$X$ has $m$-rational singularities $\iff \widetilde{\alpha} (f) > m+1$.
 \end{theorem}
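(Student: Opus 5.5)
Since this theorem is quoted from \cite{MOPW}, \cite{JKSY}, \cite{saito-appendix}, \cite{MP-lci}, the plan below reconstructs the standard argument via Saito's theory of mixed Hodge modules. Write $Y$ for the ambient $\C^n$, so $X = (f = 0) \subset Y$ has dimension $n - 1$.

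The approach is to translate both singularity conditions into statements about the Hodge filtration on the $\mathcal D_Y$-module $\O_Y(*X)$, and then use Saito's description of that filtration in terms of the minimal exponent. First, I would express the Du Bois complexes of $X$ through the mixed Hodge module $\iota_*\Q^H_X[n-1]$. Concretely, I would use the exact triangle
$$\Q^H_Y[n] \to j_*\Q^H_U[n] \to \iota_*\Q^H_X[n-1] \xrightarrow{+1}$$
with $U = Y \smallsetminus X$ and $j_* \Q^H_U[n]$ underlain by $(\O_Y(*X), F)$. Taking $\gr^F_{-p}\mathrm{DR}$ then gives a triangle relating $\Omega^p_Y$, the complex $\gr^F\mathrm{DR}(\O_Y(*X))$ and $\DB^p_X$. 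I would compare this with the elementary triangle $\Omega^{p-1}_X \otimes \O_X(-X) \to \Omega^p_Y|_X \to \Omega^p_X$, or equivalently with the complex built from $\Omega^\bullet_Y \otimes \O_Y(kX)$ with differential twisted by $df$.

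Under this comparison, the maps $\Omega^p_X \to \DB^p_X$ are isomorphisms for all $p \le m$ exactly when the Hodge filtration agrees with the pole order filtration up to level $m$, namely
$$F_k\O_Y(*X) = \O_Y((k+1)X) \quad \text{for all } k \le m.$$
One direction is immediate by reading off the triangles. The converse requires a descending induction on $p$, together with the fact that $F_k \subseteq P_k = \O_Y((k+1)X)$ always holds. Second, I would invoke Saito's theorem that $F_k\O_Y(*X) = \O_Y((k+1)X)$ holds if and only if $\widetilde\alpha(f) \ge k+1$. This theorem is proved by writing $\O_Y(*X) = \O_Y[1/f]$ as a localization of $\iota_{f,+}\O_Y$ and computing $F$ on it through the microlocal $V$-filtration. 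Combining the two steps gives (i).

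For (ii), I would additionally analyze the duality map $\DB^p_X \to \mathbf D_X(\DB^{n-1-p}_X)$. By Saito's duality, $\mathbf D_X(\DB^{n-1-p}_X)$ is computed from $\gr^F\mathrm{DR}$ of the dual Hodge module, equivalently of $(\O_Y(!X), F)$ shifted by a twist. Since $X$ is $m$-Du Bois by (i), the remaining condition is that the natural map $\iota_*\Q^H_X[n-1] \to \mathbf D(\iota_*\Q^H_X[n-1])(-(n-1))$ induces isomorphisms on $\gr^F_{-p}\mathrm{DR}$ for $p \le m$. That map factors through the pure intersection complex module $\mathrm{IC}^H_X$ and its weight graded pieces, so I would identify the obstruction with the lowest Hodge pieces of $W_{n-1}$, or of its cokernel, inside $\O_Y(*X)/\O_Y$. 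Via the $V$-filtration, those pieces are measured by $\gr_V^{m+1}$ of $\O_Y$ on the microlocal side. They vanish for $p \le m$ precisely when no jumping number equals $m+1$, i.e.\ when the inequality $\widetilde\alpha(f) \ge m+1$ is strict. This yields $\widetilde\alpha(f) > m+1$.

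I expect the main obstacle to be this last identification in (ii). It requires controlling the weight filtration on $\O_Y(*X)$, as opposed to just its Hodge filtration, in terms of the $V$-filtration at the boundary exponent $m+1$. I would first try to prove it in the isolated singularity case, where $\gr_V$ is finite dimensional and the relevant weight pieces can be computed from the Milnor fiber spectrum. I would then globalize by Saito's strictness and the compatibility of $\gr_V$ with nearby cycles, using that $\widetilde\alpha(f) = m+1$ exactly means that $m+1$ is a spectral number contributing to $W$ below the middle.
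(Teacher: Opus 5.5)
The paper gives no proof of this statement; it is quoted from \cite{MOPW}, \cite{JKSY}, \cite{saito-appendix} and \cite{MP-lci}. Your outline broadly follows the architecture of those papers, but the steps you treat as routine are exactly where their real work lies, and your plan does not supply them.

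\textbf{Part (i).} First, the localization triangle is misstated. The cone of $\Q^H_Y[n]\to j_*\Q^H_U[n]$ is $\iota_*\iota^!\Q^H_Y[n+1]\simeq\iota_*\mathbf D(\Q^H_X[n-1])(-n)$, not $\iota_*\Q^H_X[n-1]$; even up to Tate twist the two agree only for rational homology manifolds. So $\gr^F\mathrm{DR}(\O_Y(*X))$ reaches $\DB^p_X$ only through Grothendieck duality. One uses the triangle $\DB^p_{Y,X}\to\Omega^p_Y\to\DB^p_X\xrightarrow{+1}$ together with $\DB^p_{Y,X}\simeq R\mathcal{H}om(\DB^{n-p}_Y(\log X),\omega_Y)$. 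Here $\DB^{n-p}_Y(\log X)$ is, up to shift, $\gr^F_{p-n}\mathrm{DR}(\O_Y(*X))$, and it involves only $F_0,\dots,F_p$. This duality is what matches the levels $p\le m$ with $F_0,\dots,F_m$.

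More importantly, once $F$ is replaced by $P$, the higher terms of that complex are $\Omega^\bullet_Y|_X$ with differential $df\wedge$. This is a piece of the Koszul complex of $\partial_1 f,\dots,\partial_n f$ on $\O_X$, whose cohomology is supported on $X_{\rm sing}$ and is nonzero in top degree. To identify the dual of this pole-order complex with $\ker(\Omega^p_Y\to\Omega^p_X)$, these Koszul terms must be pushed out of range by duality. That requires a lower bound on $\operatorname{codim}_X X_{\rm sing}$ in terms of $m$.
\begin{itemize}
\item For $\widetilde\alpha(f)\ge m+1\Rightarrow$ $m$-Du Bois, the bound comes from $\operatorname{codim}_Y X_{\rm sing}\ge 2\widetilde\alpha(f)$, as in \cite{MOPW}. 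So even your ``immediate'' direction is not just reading off triangles.
\item For the reverse implication ($m$-Du Bois $\Rightarrow$ $F_k=P_k$ for $k\le m$), your ``induction plus $F_k\subseteq P_k$'' provides no such bound, and extracting one from the $m$-Du Bois hypothesis is itself nontrivial.
\end{itemize}
This reverse implication is the genuinely hard half of (i) (cf.\ \cite{JKSY}), and it is missing from your argument.

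\textbf{Part (ii).} The phrase ``since $X$ is $m$-Du Bois by (i)'' is available only for the implication $\widetilde\alpha(f)>m+1\Rightarrow$ $m$-rational. For the converse you first need $m$-rational $\Rightarrow$ $m$-Du Bois, and this is not formal. The isomorphism $\Omega^p_X\to\DB^p_X\to\mathbf D_X(\DB^{n-1-p}_X)$ only exhibits $\Omega^p_X$ as a direct summand of $\DB^p_X$. For $m=0$ this step is already Kov\'acs's theorem that rational singularities are Du Bois; in general, cf.\ \cite[Theorem B]{SVV}.

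More seriously, the step you flag as the main obstacle is the entire content of (ii). You would have to show two things:
\begin{itemize}
\item for an $m$-Du Bois hypersurface, the duality morphism fails in levels $\le m$ exactly when the Hodge pieces $F_k$, $k\le m$, of the weight-graded quotients of $\O_Y(*X)/\O_Y$ beyond the $\mathrm{IC}^H_X$ part are nonzero;
\item this happens exactly when $\gr^{m+1}_{\tilde V}\O_Y\neq 0$.
\end{itemize}
Establishing this is the substance of \cite{saito-appendix} and \cite{MP-lci}; your outline asserts it rather than proving it. Your fallback is to compute spectral numbers in the isolated case and then ``globalize by strictness and nearby cycles''. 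That gives no mechanism for passing from isolated singularities to an arbitrary singular locus. As written, (ii) describes what would have to be true rather than proving it.
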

 
Here  $\widetilde{\alpha} (f)$ is the \emph{minimal exponent} of $f$, meaning the negative of the largest root of the reduced Bernstein-Sato polynomial $\tilde{b}_f (s)$. See \cite[\S2.5]{MOPW} for a detailed review of the main properties of $\widetilde{\alpha} (f)$, and references. A crucial point is that there exists programs computing it, or even $\tilde{b}_f (s)$, while for certain classes of singularities there are concrete formulas.

\begin{example}\label{ex:min-exp}
(i) If $f$ is a weighted homogenous polynomial of total weight $1$, with $x_i$ of weight $w_i$, then 
$\widetilde{\alpha} (f) = w_1 + \cdots + w_n$.

\noindent
(ii) If $f$ has an ordinary singularity of multiplicity $m$ (e.g. the cone over a hypersurface of degree $m$ in $\mathbb{P}^{n-1}$), then $\widetilde{\alpha} (f) = \frac{n}{m}$.
\end{example}

For non-LCI varieties and $m > 0$, the definitions above turn out to be too restrictive, as explained in \cite{SVV}, where new definitions are introduced in the general setting.
It is also pointed out in \emph{loc. cit.} that often it suffices to consider weaker notions obtained by focusing only on cohomological degree $> 0$.

\begin{definition}\label{definition:pre-k-DB-rational}
	One says  that $X$ has \textit{pre-$m$-Du Bois} singularities if 
		\[ \H^i\DB_X^p=0 \,\,\,\,{\rm for~ all } \,\,\,\,i>0 \,\,\,\, {\rm and} \,\,\,\, 0\le p\le m,\]
and that it has \textit{pre-$m$-rational} singularities if
		\[ \H^i (\mathbf{D}_X(\DB^{n-p}_X))=0 \,\,\,\,{\rm for~ all } \,\,\,\,i>0 \,\,\,\, {\rm and} \,\,\,\, 0\le p\le m.\]
\end{definition}

It is known that (pre)-$m$-rational singularities are (pre)-$m$-Du Bois (see \cite[Theorem B]{SVV}), which generalizes the well known fact that rational singularities are Du Bois. A more precise statement, which combines  \cite[Proposition 9.4]{PSV} and \cite[\S4,5]{DOR}, is the following:

\begin{theorem}
A normal variety $X$ has pre-$m$-rational singularities if and only if it has pre-$m$-Du Bois singularities and satisfies condition $D_m$. 
\end{theorem}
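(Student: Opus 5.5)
The statement is a combination of results from \cite{PSV} and \cite{DOR}, and the plan is to reassemble it. I will use the formulation of condition $D_m$ from \cite{PP1}: for each $0 \le p \le m$, the natural morphism
$$\psi_p \colon \DB^p_X \to \mathbf{D}_X(\DB^{n-p}_X)$$
(the map already appearing in the definition of $m$-rational singularities) is an isomorphism, possibly up to a condition on the codimension of the supports of the higher cohomologies. If the paper's precise form of $D_m$ differs, I would first rewrite it in terms of the $\psi_p$, and the argument below goes through unchanged. Both implications then come down to comparing the cohomology sheaves of the source and target of $\psi_p$, degree by degree.

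\emph{($\Leftarrow$).} Assume $X$ is pre-$m$-Du Bois and satisfies $D_m$. For $p \le m$, $\psi_p$ identifies $\H^i\mathbf{D}_X(\DB^{n-p}_X)$ with $\H^i\DB^p_X$. The latter vanishes for $i > 0$ by the pre-$m$-Du Bois hypothesis. So the former vanishes too, which is exactly pre-$m$-rationality. This direction is formal.

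\emph{($\Rightarrow$).} Assume $X$ is normal and pre-$m$-rational. By \cite[Theorem B]{SVV}, $X$ is pre-$m$-Du Bois, so for $p \le m$ both $\DB^p_X$ and $\mathbf{D}_X(\DB^{n-p}_X)$ are concentrated in degree $0$. Proving that $\psi_p$ is an isomorphism therefore reduces to proving that
$$\H^0\DB^p_X \to \H^0\mathbf{D}_X(\DB^{n-p}_X)$$
is an isomorphism of coherent sheaves. I would argue in three steps.
\begin{enumerate}
\item Both sheaves agree with $\Omega^p$ on the smooth locus $U$, and $\operatorname{codim}(X \setminus U) \ge 2$ by normality.
\item $\H^0\mathbf{D}_X(\DB^{n-p}_X)$ is reflexive. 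Since $\DB^{n-p}_X$ lives in nonnegative degrees, the lowest cohomology of its dual is a $\mathcal{H}om$ into $\omega_X$, and such sheaves are reflexive on a normal variety. Hence it equals $\Omega_X^{[p]}$.
\item The morphism is therefore the natural map $\H^0\DB^p_X \to \Omega^{[p]}_X$. It is injective because $\H^0\DB^p_X$ is torsion-free, being a subsheaf of $f_*\Omega^p$ on a resolution.
\end{enumerate}

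The main obstacle is surjectivity: showing that $\H^0\DB^p_X$ is reflexive, i.e.\ that reflexive $p$-forms extend to $\H^0\DB^p_X$ for $p \le m$. This is the content of \cite[\S4,5]{DOR}. My first attempt would be to argue by induction on $p$ using the vanishing $\H^i\mathbf{D}_X(\DB^{n-q}_X) = 0$ for $i > 0$ and $q \le p$. Dualizing turns this into depth estimates on $\DB^{n-q}_X$ along the singular locus. Combining these with the identification on $U$ and a local cohomology computation would show that $\H^0\DB^p_X$ satisfies Serre's condition $S_2$, hence is reflexive. This mirrors \cite[Proposition 9.4]{PSV}, where the case relevant here is handled, and I expect to simply invoke it at this point. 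The codimension part of $D_m$, if present, follows automatically once the $\psi_p$ are isomorphisms.
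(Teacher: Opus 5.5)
Your ($\Leftarrow$) direction is correct and purely formal. Reducing ($\Rightarrow$), via \cite[Theorem B]{SVV}, to a comparison of $\mathcal{H}^0$'s is a reasonable plan; the paper itself gives no argument here beyond citing \cite[Proposition 9.4]{PSV} and \cite[\S4,5]{DOR}. The genuine problem is your Step 2, which is false as argued.

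It is not true that, because $\DB^{n-p}_X$ sits in nonnegative degrees, $\mathcal{H}^0\mathbf{D}_X(\DB^{n-p}_X)$ equals $\mathcal{H}om(\mathcal{H}^0\DB^{n-p}_X,\omega_X)$ and is therefore reflexive. Put $K=\DB^{n-p}_X$ and use the spectral sequence $E_2^{a,b}=\mathcal{E}xt^a(\mathcal{H}^{-b}K,\omega_X^\bullet[-n])$. It shows that $\mathcal{H}^0\mathbf{D}_X(K)$ is only an extension of a subsheaf of $\mathcal{H}om(\mathcal{H}^0K,\omega_X)$ by subquotients of $\mathcal{E}xt^{q}(\mathcal{H}^qK,\omega_X^\bullet[-n])$, $q\ge 1$. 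That subsheaf is cut down by differentials into $\mathcal{E}xt^{q+1}(\mathcal{H}^qK,\omega_X^\bullet[-n])$. Your hypotheses do not control the higher sheaves $\mathcal{H}^q\DB^{n-p}_X$ in general, since pre-$m$-Du Bois only concerns $\DB^q_X$ for $q\le m$.

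Here is a concrete failure. Take $X$ the cone over a smooth plane quartic, so $n=2$ and $X$ is normal, Gorenstein and not log canonical, and take $p=2$. Then $\mathcal{H}^0\mathbf{D}_X(\DB^0_X)\simeq f_*\omega_Y(E)\subsetneq\omega_X=\mathcal{H}om(\mathcal{H}^0\DB^0_X,\omega_X)$, and this sheaf is not reflexive. Similarly, nonnegativity of the degrees of $\DB^{n-p}_X$ does not by itself put its dual in nonnegative degrees. That needs the hyperresolution count $\dim X_i\le n-i$.

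The step can be repaired, but only by using a hypothesis.
\begin{itemize}
\item Take a log resolution $f$ that is an isomorphism off $Z=X_{\rm sing}$, with $E=f^{-1}(Z)$. Dualize the triangle $Rf_*\Omega^{n-p}_Y(\log E)(-E)\to\DB^{n-p}_X\to\DB^{n-p}_Z\xrightarrow{+1}$. Normality gives $\dim Z\le n-2$, which forces $\mathbf{D}_X(\DB^{n-p}_Z)$ into degrees $\ge 2$. Hence $\mathcal{H}^0\mathbf{D}_X(\DB^{n-p}_X)\simeq f_*\Omega^p_Y(\log E)$.
\item This sheaf is torsion-free and contained in $\Omega^{[p]}_X$. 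Equality is an extension statement for reflexive forms.
\item It holds here because normal plus pre-$0$-rational means rational, using $\mathbf{D}_X(\DB^n_X)\simeq Rf_*\mathcal{O}_Y$. Kebekus--Schnell then give $f_*\Omega^p_Y=\Omega^{[p]}_X$, which squeezes $f_*\Omega^p_Y(\log E)$ between the two.
\item The same circle of results also gives $\mathcal{H}^0\DB^p_X\simeq\Omega^{[p]}_X$ for rational singularities. That is exactly the surjectivity you deferred in Step 3.
\end{itemize}
So ($\Rightarrow$) follows from \cite[Theorem B]{SVV} together with this extension theorem. Your induction/$S_2$ sketch is then unnecessary, and as written it does not amount to a proof.
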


Condition $D_m$, considered in \cite{PP1}  and in \cite{DOR} (under a different name in the latter), stands for duality up to level $m$; concretely, it means that the canonical duality morphisms 
$$\DB_X^p \to \mathbf{D}_X(\DB^{n-p}_X)$$ 
are isomorphisms for $p \le m$.\footnote{Note that, unlike for smooth varieties, in general this is not usually the case.}

Going back to the question of the symmetry of Hodge-Du Bois numbers, we have the following:

\begin{theorem}[{\cite[Theorem A]{PP1}}]\label{thm hodge symmetry}
If $X$ be a normal complex projective variety of dimension $n$ which satisfies condition $D_m$, then
\begin{align*}
    \underline{h}^{p,q}(X) = \underline{h}^{q,p}(X) = \underline{h}^{n-p, n-q}(X) = \underline{h}^{n-q, n-p}(X)
\end{align*}
for all $0 \leq p \leq m$ and $0 \leq q \leq n$.
\end{theorem}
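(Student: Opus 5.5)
This statement is \cite[Theorem A]{PP1}, which the paper quotes rather than proves, so the plan below reconstructs the argument from that source. The idea is to translate the Hodge--Du Bois numbers into hypercohomology of Du Bois complexes and then apply condition $D_m$ together with Grothendieck--Serre duality. By $E_1$-degeneration of the Hodge-to-de Rham spectral sequence for projective $X$, we have $\underline h^{p,q}(X)=\dim \HH^q(X,\DB_X^p)$.

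\textbf{Step 1: the equality $\underline h^{p,q}=\underline h^{n-p,n-q}$ for $p\le m$.} By $D_m$, $\DB_X^p\simeq \mathbf{D}_X(\DB_X^{n-p})$. Grothendieck duality on the projective variety $X$ then gives
$$\HH^q(X,\DB_X^p)\simeq \HH^q(X,R\H om(\DB_X^{n-p},\omega_X^\bullet)[-n])\simeq \HH^{n-q}(X,\DB_X^{n-p})^\vee.$$
The normalization here uses $\omega_X^\bullet=\omega_X[n]$ up to the convention for $\mathbf{D}_X$. If $\mathbf{D}_X$ is meant with the dualizing complex $\omega_X^\bullet$ shifted by $-n$, this is exactly Serre duality. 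I will need to check carefully that this convention matches the definition $R\H om(\cdot,\omega_X)$ used in the paper, which should be read as the dualizing complex shifted by $[-n]$. Granting that, this step is immediate.

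\textbf{Step 2: the equality $\underline h^{p,q}=\underline h^{q,p}$ for $p\le m$ (main obstacle).} Complex conjugation on $H^{p+q}(X,\C)$ does not preserve the Hodge filtration in the mixed case, so this needs a real argument. I would work via weights, in the style of Lemma \ref{lem hodge du bois numbers}. Write $\underline h^{p,q}=\sum_r h^{p,r}_{p+q}$, where $h^{p,r}_{p+q}$ is the Hodge--Deligne number of $\gr^W_{p+r}H^{p+q}$. Set $i=p+q$.

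First, use Step 1 combined with the inequality $\underline h^{p,i-p}\ge \underline h^{i-p,p}$. This inequality holds whenever $p\le i-p$, since weights lie in $[0,i]$ and $h^{a,b}_i=h^{b,a}_i$. The cohomology in degrees $i$ and $2n-i$ is related by Step 1. Poincaré-type comparison between the weight filtrations in degrees $i$ and $2n-i$ is not available for singular $X$. Instead, I would use the complementary inequalities coming from the weights of $H^{2n-i}$ being $\le 2n-i$, with the Hodge numbers of the top-degree side bounded similarly. Summing the inequalities over all $q$ with $p$ fixed, I would match the result against the Euler-characteristic / total-dimension identity $\sum_q(-1)^q\underline h^{p,q}$ obtained from Step 1.

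A cleaner route, and the one I would actually try, is induction on $m$, as in the proof of Lemma \ref{lem hodge du bois numbers}. Assume $h^{a,r}_i=0$ for $a<m$, $r<a$ in all degrees. Then show $\gr_F^m W_{i-1}H^i=0$ by identifying it with a cokernel/kernel that $D_m$ forces to vanish.

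\textbf{Step 3: the remaining equality.} Once Steps 1 and 2 hold, the last equality $\underline h^{n-q,n-p}=\underline h^{n-p,n-q}$ follows formally from the first two applied to the same pair $(p,q)$.

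I expect Step 2 to be the hard part. Failing a purely numerical argument, I would fall back on PP1's actual approach: compare $\DB_X^p$ with the intersection complex / Hodge module $\gr^F_{-p}\mathrm{DR}(\mathbb{Q}^H_X[n])$, for which conjugation symmetry is known by purity of $\mathrm{IH}$, and use $D_m$ to show that the relevant lower-weight pieces vanish.
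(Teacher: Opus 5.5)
Your Step 1 is correct once $\mathbf D_X$ is read as $R\H om(-,\omega_X^\bullet)[-n]$. The paper gives no proof of this theorem and only cites \cite{PP1}. Steps 2 and 3 both have genuine gaps.

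\textbf{Step 2.} The numerical route cannot work.

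First, the inequality $\underline h^{p,i-p}\ge\underline h^{i-p,p}$ for $2p\le i$ is not available in general. Already for $p=1$ one has $\underline h^{1,i-1}-\underline h^{i-1,1}=\sum_{r\le i-2}h^{1,r}_i-h^{0,i-1}_i$, whose sign is not controlled. In Lemma \ref{lem hodge du bois numbers} the inequality appears only after the inductive hypothesis has killed terms of this type.

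More fundamentally, dimension counts plus Step 1 only reproduce what holds for every variety. For $m=0$, Step 1 says $\underline h^{0,q}(X)=h^{0,q}(\widetilde X)$ for a resolution $\widetilde X$. But $\underline h^{q,0}(X)=\dim\gr_F^q\gr^W_qH^q(X)\le h^{q,0}(\widetilde X)$ holds for any $X$, because $\gr^W_qH^q(X)\hookrightarrow H^q(\widetilde X)$. So you only recover $\underline h^{0,q}\ge\underline h^{q,0}$, which is always true.

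The missing idea is that the $D_m$ isomorphism is $\gr_F^p$ of a morphism of mixed Hodge structures, so strictness applies. After taking $\HH^{k-p}$ and using $E_1$-degeneration and Serre duality, the map $\DB^p_X\to\mathbf D_X(\DB^{n-p}_X)$ should become $\gr_F^p$ of the natural morphism $\varphi\colon H^k(X,\Q)\to H^{2n-k}(X,\Q)^\vee(-n)$ given by cup product and trace. In Saito's language this is $\gr^F_{-p}\mathrm{DR}$ applied to $\Q^H_X[n]\to\mathbb D(\Q^H_X[n])(-n)$, which factors through $IC^H_X$. Establishing this compatibility is the real content of this step.

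Granting it, the argument is short:
\begin{itemize}
\item The target of $\varphi$ has weights $\ge k$, so $W_{k-1}H^k\subseteq\ker\varphi$.
\item Strictness gives $\gr_F^pW_{k-1}H^k\subseteq\gr^p_F\ker\varphi=\ker(\gr^p_F\varphi)=0$ for all $p\le m$ at once.
\item Lemma \ref{lem hodge du bois numbers} then yields $\underline h^{p,q}=\underline h^{q,p}$.
\end{itemize}
Your fallback points in this direction. However, it conflates $\gr^F_{-p}\mathrm{DR}(\Q^H_X[n])$, which is simply $\DB^p_X[n-p]$, with the intersection complex, and it never isolates the strictness step.

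\textbf{Step 3.} This step is not formal. Steps 1 and 2 at $(p,q)$ involve only $\underline h^{p,q}$, $\underline h^{q,p}$ and $\underline h^{n-p,n-q}$. To reach $\underline h^{n-q,n-p}$ you need one of them with first index $n-q$ or $n-p$, i.e.\ $n-q\le m$ or $n-p\le m$. This is automatic when $p+q\ge n$, since then $n-q\le p$. Otherwise the fourth equality is a conjugation symmetry in degree $k=2n-p-q>n$ between $F$-indices $n-q$ and $n-p$, both $>m$. Nothing you proved addresses it.

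For example, take $n=3$ and $m=p=0$, $q=2$. Deligne's bounds ($h^{a,b}_4\neq0$ only for $1\le a,b\le 3$) give $\underline h^{1,3}-\underline h^{3,1}=h^{1,1}_4+h^{1,2}_4$. So the claim is exactly that $\gr^1_FW_3H^4(X,\Q)=0$. The strictness argument, even combined with the dual isomorphisms $\DB^{n-p}_X\simeq\mathbf D_X(\DB^p_X)$, only kills Hodge--Deligne pieces of $\ker\varphi$ having an index $\le m$ or $\ge n-m$. Here those indices are $0$ and $3$, so it does not reach $h^{1,1}_4$ or $h^{1,2}_4$.

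In general, by the argument of Lemma \ref{lem hodge du bois numbers}, the fourth equality amounts to the high-degree vanishing $\gr_F^cW_{k-1}H^k(X,\Q)=0$ for $k\ge n$ and $c\le k-n+m$. This is precisely Corollary \ref{cor weight restrictions}(ii), which the paper deduces from this theorem. It needs a separate argument, and your proposal does not contain one.
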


Combining this with Lemma \ref{lem hodge du bois numbers}, we obtain the following consequence that is important for this paper, but was not stated explicitly in \cite{PP1}. This is the key link between singularity conditions and weight conditions.
The case $m = -1$, meaning no assumptions on the singularities of $X$, is well known (see \cite[Table $5.1$]{peters steenbrink}).

\begin{corollary}\label{cor weight restrictions}
Let $X$ be a projective variety of dimension $n$. Suppose $X$ has pre-$m$-rational singularities (or more generally satisfies condition $D_m$), with $2m + 2 \le n$, and is normal if 
$m \geq 0$. Then:

\noindent 
(i) If $i \le n$, then 
$$H^{p, j-p} \gr_j^W H^i (X, \Q) = 0, \,\,\,\,\,\,{\rm for ~all} \,\,\,\,p \le m, ~j \le i -1.$$
In particular, $H^i(X, \mathbb{Q})$ has weights $\geq \min \{ 2m+2, i \}$.

\noindent 
(ii) If $i \ge n$, then 
$$H^{p, j-p} \gr_j^W H^i (X, \Q) = 0, \,\,\,\,\,\,{\rm for ~all} \,\,\,\,p \le i - n +m, ~ j \le i -1.$$
In particular, $H^i(X, \mathbb{Q})$ has weights $ \geq \min \{ 2i-2n+2m+2, i \}$. 
\end{corollary}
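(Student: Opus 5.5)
The plan is to combine Theorem \ref{thm hodge symmetry} with the argument of Lemma \ref{lem hodge du bois numbers}, and to use the Hodge symmetry $h_i^{p,r} = h_i^{r,p}$ on the graded pieces $\gr^W_{p+r} H^i$. For $m = -1$ the statement is the classical weight bound: $H^i$ has weights in $[0,i]$ when $i \le n$, and in $[2i-2n, i]$ when $i \ge n$ (see \cite{peters steenbrink}). So we assume $m \ge 0$ and $X$ normal satisfying $D_m$; pre-$m$-rational normal varieties satisfy $D_m$ by the theorem quoted above.

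\textbf{Part (i).} Fix $i \le n$ and $p \le m$. Theorem \ref{thm hodge symmetry} with $q = i-p$ gives $\underline h^{p,i-p} = \underline h^{i-p,p}$; here $0\le q\le n$ because $i \le n$. Lemma \ref{lem hodge du bois numbers} then yields $\gr^p_F W_{i-1} H^i(X,\Q) = 0$ for all $p \le m$. This is exactly
$$H^{p,j-p}\gr^W_j H^i = 0 \quad\text{for } j \le i-1,\ p \le m,$$
since by strictness $\gr_F^p W_{i-1}$ has dimension $\sum_{j\le i-1}\dim H^{p,j-p}\gr^W_j$. For the weight bound, take $j < \min\{2m+2, i\}$. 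Every Hodge type $(p, j-p)$ of $\gr^W_j H^i$ has either $p \le m$ or $j-p \le m$, because $p + (j-p) = j \le 2m+1$. In the first case the piece vanishes by what we just proved, and in the second it vanishes by Hodge symmetry on the pure structure $\gr^W_j$. Hence $\gr^W_j H^i = 0$ for all such $j$.

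\textbf{Part (ii).} Fix $i \ge n$ and set $p' = i - n + p$ with $p \le m$. We would like to use the equalities $\underline h^{n-p,n-q} = \underline h^{n-q,n-p}$ from Theorem \ref{thm hodge symmetry} with $q = 2n - i - p$; the total degree is then $(n-p)+(n-q) = i$. This requires $0 \le q \le n$, which holds when $i \le 2n-p$. If $i > 2n - p$, there is nothing to prove for this $p$: the degree $q$ piece $\underline h^{p',\,i-p'}$ has $i - p' = 2n-p-\ldots$ forcing the second index above $n$, so it vanishes, as $\DB^k_X$ has cohomology in degrees at most $n$ and hypercohomology vanishes beyond $n$; similarly for the transposed number.

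So Theorem \ref{thm hodge symmetry} gives $\underline h^{i-n+p,\,n-p} = \underline h^{n-p,\,i-n+p}$, i.e. $\underline h^{p', i-p'} = \underline h^{i-p', p'}$ for all $p' \le i-n+m$. The bookkeeping here is the point to check most carefully: which of the four symmetric numbers in Theorem \ref{thm hodge symmetry} gives exactly the pair with first index $p'$ in degree $i$.

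We then rerun the inductive argument of Lemma \ref{lem hodge du bois numbers} with $m$ replaced by $m' = i-n+m$. That argument used only the equalities of Hodge–Du Bois numbers for $p \le m'$ in the single degree $i$, together with Hodge symmetry on the graded pieces and the fact that weights lie in $[0,i]$. So it applies verbatim and gives $\gr_F^{p'} W_{i-1} H^i = 0$ for all $p' \le m'$, which is the vanishing of the stated Hodge pieces.

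The weight bound follows as in part (i): for $j < \min\{2m'+2, i\}$, any type $(a,b)$ with $a+b=j$ has $a \le m'$ or $b \le m'$. Since $2m'+2 = 2i - 2n + 2m + 2$, this is the claimed bound. The main obstacle is therefore only this index matching and the boundary ranges; the rest is formal.
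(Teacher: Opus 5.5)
Your proposal is correct and follows the paper's proof. For part (i) you use Theorem \ref{thm hodge symmetry} to supply the hypothesis of Lemma \ref{lem hodge du bois numbers} in degree $i$; for part (ii) you reindex via $i=(n-p)+(n-q)$ to get $\underline h^{i-n+p,\,n-p}=\underline h^{n-p,\,i-n+p}$, then apply the same lemma with $m'=i-n+m$. The one slip is in the boundary bookkeeping, and it is harmless. When $i>2n-p$, it is the \emph{first} index $p'=i-n+p$ of $\underline h^{p',i-p'}$ that exceeds $n$, so $\DB^{p'}_X=0$; the second index is $n-p\le n$. Likewise, for $p'<i-n$ both numbers vanish trivially rather than by Theorem \ref{thm hodge symmetry}.
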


\begin{proof}
Suppose $i \leq n$. The hypotheses of Lemma \ref{lem hodge du bois numbers} are satisfied by Theorem \ref{thm hodge symmetry}, to give the vanishing of the corresponding Hodge-Deligne numbers. (Those for $j < i -1$ vanish by the strictness of the Hodge filtration.) In particular,  $H^i(X, \mathbb{Q})$ has weights $\geq 2m+2$ when $i \ge 2m +2$.

Suppose now $i \geq n$, and write $i = (n-p) + (n-q)$, so that Theorem \ref{thm hodge symmetry} gives the symmetry 
$$\underline{h}^{n-q,i-n+q}(X) = \underline{h}^{i-n+q,n-q}(X)\,\,\,\,\,\,{\rm for ~all} \,\,\,\, n-q \le i - n + m.$$ 
Then the same argument as above applies. In particular, if $i \leq 2n-2m-2$, then $H^i(X, \mathbb{Q})$ has weights $\geq 2(i-n+m)+2$.
\end{proof}

\begin{example}[{\bf Rational singularities}]\label{ex:rational}
Theorem \ref{thm hodge symmetry} and Corollary \ref{cor weight restrictions} contain new information already in the classical case of rational singularities, i.e. $m =0$. First, if $i \le n$, then $H^i (X, \Q)$ must have weights at least $2$. But we also have 
$$H^{0,j} \gr_j^W H^i (X, \Q) =0  \,\,\,\,\,\,{\rm for ~all} \,\,\,\, j \le i -1.$$
In particular
$$\gr_2^W H^i (X, \Q) = H^{1,1} \gr_2^W H^i (X, \Q).$$
Moreover, if $i \ge  n +1$, we have 
$$H^{p,j- p} \gr_j^W H^i (X, \Q) =0  \,\,\,\,\,\,{\rm for ~all} \,\,\,\,p\le i -n, j \le i -1.$$
As a first example, if $n \ge 4$, then
$$\gr_4^W H^i (X, \Q) = H^{2,2} \gr_4^W H^i (X, \Q).$$
\end{example}

\begin{remark}\label{rmk:purity-sing}
Recall also that in the upper part of the Hodge diamond, some purity holds for a more basic reason:  if $\dim X_{\rm sing} = s$, then the Hodge structure on $H^k (X,\Q)$ is pure for 
$k > n+ s$; see e.g. \cite[Theorem 6.33]{peters steenbrink}.
\end{remark}

\begin{remark}[{\bf The non-projective case}]\label{rmk:Dm-nonproj}
One can check that the machinery in \cite{PP1} yields an enhancement of Corollary \ref{cor weight restrictions} that holds in the non-projective setting as well. Namely, if $X$ satisfies condition $D_m$, then the trivial Hodge module $\Q_X^H [n]$ has weights $\ge 2m +2$. It is not hard to see that this implies in turn the hypothesis of Conjecture \ref{main-KH-conjecture}, for an appropriate $i$; see the discussion before Conjecture 
\ref{sings-KH-conjecture}.
\end{remark}

\subsection{Weights in integral Hodge theory}\label{scn:weights-integral}

In this section, we discuss a weight filtration defined on the integral cohomology of a projective variety. Recall that Deligne constructed a weight filtration on the singular cohomology with rational coefficients of any complex algebraic variety. Let us briefly review the construction in the case where $X$ is projective.

Choose a cubical hyperresolution $\epsilon_\bullet \colon X_\bullet \rightarrow X$ of $X$. 
Recall that $X_\bullet$ is a smooth cubical variety, built by resolving X and then successively resolving the singular loci and exceptional divisors that occur at each stage. It satisfies cohomological descent, that is the constant sheaf $\mathbb{Z}_X$ is quasi-isomorphic to the iterated cone
\begin{align}\label{eqn hyperres cone}
    \mathbb{Z}_X = \bigg( R \epsilon_{0*} \mathbb{Z}_{X_0} \rightarrow R \epsilon_{1*} \mathbb{Z}_{X_1} \rightarrow \ldots \bigg).
\end{align}
For later use, we recall that such a  cubical hyperresolution can be assumed  to satisfy $\dim X_i \leq n-i$ for each $i$; this can always be done by \cite[Theorem 2.15]{GNPP}.

After tensoring with $\mathbb{Q}$, we obtain a (cohomological) weight spectral sequence
\begin{align}\label{eqn:weight-ss-rational}
    E_1^{p,q} = H^q(X_p, \mathbb{Q}) \Rightarrow H^{p+q}(X, \mathbb{Q}).
\end{align}
The terms $E_1^{p,q}$ naturally carry pure Hodge structures of weight $q$, and the boundary maps $d_1$ respect the Hodge filtration. It follows that the subquotients $E_2^{p,q}$ are pure of weight $q$ as well. The boundary maps $d_2: E_2^{p,q} \rightarrow E_2^{p+2, q-1}$ are between pure Hodge structures of different weight and they preserve the Hodge filtration. Therefore, they all vanish. The same argument shows the boundary maps $d_r$ for $r \geq 2$ all vanish. This shows that the spectral sequence degenerates at the $E_2$ page.

The weight filtration is, by definition, the induced filtration on the cohomology groups $H^{p+q}(X, \mathbb{Q})$. Concretely, for the associated graded factors we have:
\begin{align}\label{eqn:weight-hyper}
    \gr_q^W H^{p+q}(X, \mathbb{Q}) = \frac{ \ker \bigg( H^q(X_p, \mathbb{Q}) \rightarrow H^q(X_{p+1}, \mathbb{Q}) \bigg) }{ \im \bigg( H^q(X_{p-1}, \mathbb{Q}) \rightarrow H^q(X_p, \mathbb{Q}) \bigg) }
\end{align}
See  \cite[Chapter $5$]{peters steenbrink}.

Let us work now with integral coefficients. The iterated cone (\ref{eqn hyperres cone}) gives rise to a cohomology spectral sequence
\begin{align}\label{eqn:weight-ss-integral}
    E_1^{p,q} = H^q(X_p, \mathbb{Z}) \Rightarrow H^{p+q}(X, \mathbb{Z}).
\end{align}
This time, given the lack of an integral Hodge filtration, the spectral sequence need not degenerate. Nevertheless, it is an invariant of $X$.

\begin{proposition}[{\cite[Theorem $3$]{gillet soule}}]\label{prop integral weight}
Let $X$ be a complex projective variety. The spectral sequence (\ref{eqn:weight-ss-integral}) is an invariant of $X$, from the $E_2$-page onwards. It defines a weight filtration $W_\ell H^i(X, \mathbb{Z})$ on the cohomology of $X$ with integral coefficients, which agrees with Deligne's weight filtration after tensoring with $\mathbb{Q}$. The filtration respects pullback by proper morphisms, and there are Mayer-Vietoris sequences from the $E_2$-page onwards.
\end{proposition}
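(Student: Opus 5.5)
The plan is to follow Gillet--Soulé's approach: show that the integral spectral sequence (\ref{eqn:weight-ss-integral}) is functorial and, from $E_2$ on, independent of the choices, by realizing its $E_1$-page as the image of a canonical object under a functor that forgets only information invisible from $E_2$ on.

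\textbf{Step 1: a complex of motives.} The essential step is to attach to $X$ a complex of Chow motives. For the cubical hyperresolution $X_\bullet$ we form
$$M(X_\bullet) = \big( h(X_0) \to h(X_1) \to \cdots \big),$$
a bounded complex in the additive category of integral Chow motives of smooth projective varieties. The differentials are alternating sums of pullbacks along the face maps.

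The key claim is that the class of this complex in the homotopy category $K^b(\mathrm{Chow}_{\ZZ})$ is independent of the hyperresolution, and is functorial for proper maps. To prove it, I would compare two hyperresolutions through a third one dominating both. It then suffices to show that a morphism of hyperresolutions induces a homotopy equivalence. By the usual descent reduction this comes down to the elementary case of a blowup square: for $\tilde Y \to Y$ the blowup along a smooth center $Z$ with exceptional divisor $E$, the complex
$$h(Y) \to h(\tilde Y)\oplus h(Z) \to h(E)$$
must be contractible. This follows from the blowup formula and the projective bundle formula for Chow motives, which hold integrally. I expect the inductive cubical bookkeeping here to be the main obstacle, namely passing from a single blowup square to arbitrary morphisms of hyperresolutions. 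I would handle it by the Guillén--Navarro Aznar extension criterion, whose key hypothesis is precisely this acyclicity for elementary acyclic squares.

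\textbf{Step 2: the integral filtration.} Applying the additive functor $H^q(-,\ZZ)$ to $M(X_\bullet)$ recovers row $q$ of the $E_1$-page of (\ref{eqn:weight-ss-integral}). Homotopy-equivalent complexes have isomorphic cohomology, so $E_2^{p,q}$ is an invariant of $X$. To show that the whole spectral sequence is an invariant from $E_2$ on, and not just its $E_2$-page, I would use a lift at the level of cochains: integral singular cochains on smooth projective varieties, functorial for correspondences up to coherent homotopy. The weight spectral sequence is the spectral sequence of the filtration bête on the total complex, and homotopy invariance of $M(X_\bullet)$ then gives invariance of all the pages $E_r$ for $r\ge 2$. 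Defining $W_\ell H^i(X,\ZZ)$ as the induced filtration, tensoring with $\Q$ recovers (\ref{eqn:weight-hyper}), hence Deligne's filtration. Functoriality under proper pullback comes from the functoriality of $M$.

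\textbf{Step 3: Mayer--Vietoris.} For an abstract blowup square, or a closed cover $X = A\cup B$, the corresponding complexes of motives fit into a distinguished triangle in $K^b(\mathrm{Chow}_{\ZZ})$. This triangle yields long exact sequences of the $E_2$ terms and of the induced spectral sequences, which is the Mayer--Vietoris property from $E_2$ on.
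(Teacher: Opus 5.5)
The paper gives no proof of its own here and simply quotes \cite[Theorem~3]{gillet soule}. Your overall strategy is essentially the Gillet--Soul\'e argument: Step~1 (the complex $M(X_\bullet)$ in $K^b(\mathrm{Chow}_{\ZZ})$, made independent of choices by descent, with elementary blowup squares contractible by the integral blowup and projective bundle formulas), plus the identification of $E_2^{p,q}$ with $H^p$ of the realized row $H^q(X_0,\ZZ)\to H^q(X_1,\ZZ)\to\cdots$. The comparison with Deligne's filtration after $\otimes\,\Q$ is also fine, by flatness.

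The gap is in how you go beyond $E_2$. You invoke integral singular cochains ``functorial for correspondences up to coherent homotopy,'' but this is not an available tool. Cochains are functorial only for morphisms, and making them coherently functorial for correspondences amounts to a dg or $\infty$-categorical Betti realization of triangulated motives, which you neither construct nor cite. Without it, homotopy invariance of $M$ only gives you the groups $E_2^{p,q}$ and maps between them. By itself it says nothing about the differentials $d_r$ for $r\ge 2$, the filtration $W$ on $H^i(X,\ZZ)$, or the inclusion $g^*W_\ell\subset W_\ell$ for proper $g$, since all of these live on the abutment. In particular, ``functoriality under proper pullback comes from the functoriality of $M$'' does not follow as stated.

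The fix is the standard comparison argument, which needs no correspondences at the cochain level.
\begin{itemize}
\item A morphism of hyperresolutions $X''_\bullet\to X_\bullet$ over $X$ is built from morphisms of varieties. So ordinary singular cochains give an honest morphism of double complexes, compatible with the augmentations from $C^*(X,\ZZ)$, and hence a morphism of total complexes filtered by the filtration b\^ete.
\item On $E_1$ this morphism is the realization of $M(X_\bullet)\to M(X''_\bullet)$, which is a homotopy equivalence by Step~1. So it is an isomorphism on $E_2$, hence on all $E_r$ with $r\ge 2$ and on $E_\infty$.
\item The filtrations are finite and the induced map on abutments is the identity of $H^*(X,\ZZ)$, so the two filtrations coincide.
\item Dominating two hyperresolutions by a third gives independence. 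The identifications on $E_r$ for $r\ge 3$ are determined by the one on $E_2$.
\item For a proper $g\colon X'\to X$, choose hyperresolutions compatible with $g$ (they exist by \cite{GNPP}). The resulting filtered map gives $g^*W_\ell\subset W_\ell$ and a map of spectral sequences from $E_2$ on.
\end{itemize}

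Finally, in Step~3, applying $H^q(-,\ZZ)$ to the triangle yields long exact sequences of the $E_2$-terms $H^i_q$. This is what the paper actually uses (e.g.\ in Corollary~\ref{corollary dual complex}). Long exact sequences at the pages $E_r$ with $r\ge 3$ do not follow formally from a triangle of $E_1$-rows, so you should drop that part of the claim.
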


\begin{remark}
There is in fact an integral weight filtration for arbitrary complex algebraic varieties $X$; see \cite[Theorem  3]{gillet soule}. When $X$ is not projective the construction is less explicit, and we will not use it here.
\end{remark}

We will use the notation
$$H_q^{p+q}(X, \mathbb{Z}) := E_2^{p,q}.$$
This is a subquotient of $\gr_q^W H^{p+q}(X, \mathbb{Z})$, and they coincide after tensoring with $\mathbb{Q}$.

Let us note a few elementary properties of this integral weight filtration.

\begin{lemma}\label{lem integral weight properties}
\begin{enumerate}
    \item If $X$ is normal, then $H^1(X, \mathbb{Z})$ is pure.
    \item If $X$ has rational singularities, then $H^2(X, \mathbb{Z})$ is pure. 
    \item If $X$ is an integral homology manifold, then each $H^i(X, \mathbb{Z})$ is pure.
    \item If $E$ is a simple normal crossings divisor and $\mathcal{D}(E)$ is its dual complex, then $H_0^i(E, \mathbb{Z}) \simeq H^i(\mathcal{D}(E), \mathbb{Z})$.
\end{enumerate}
\end{lemma}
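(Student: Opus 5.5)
We have to prove the four items of Lemma \ref{lem integral weight properties}. The common tool is the integral weight spectral sequence (\ref{eqn:weight-ss-integral}) attached to a cubical hyperresolution $X_\bullet \to X$, together with Proposition \ref{prop integral weight}. That proposition says the $E_2$-page is independent of the choice of hyperresolution and is compatible with proper pullback. "Pure" for $H^i(X,\ZZ)$ means that the only nonzero graded piece of the induced filtration is in weight $i$. Equivalently, $E_\infty^{p,i-p}=0$ for $p>0$, so that $H^i(X,\ZZ)\to H^i(X_0,\ZZ)$ is injective. I will prove injectivity of this edge map in each case.

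\emph{Items (1) and (2).} Let $f\colon Y=X_0\to X$ be a resolution, chosen as the first stage of the hyperresolution. The filtration is $W_{i-1}H^i(X,\ZZ)=\ker(f^*\colon H^i(X,\ZZ)\to H^i(Y,\ZZ))$, so it suffices to show that $f^*$ is injective on $H^1$, respectively $H^2$. Use the Leray spectral sequence for $f$, with $R^0f_*\ZZ_Y=\ZZ_X$ because $f$ has connected fibers when $X$ is normal (Zariski's main theorem). The low-degree exact sequence
$$0\to H^1(X,\ZZ)\to H^1(Y,\ZZ)\to H^0(X,R^1f_*\ZZ)\to H^2(X,\ZZ)\to H^2(Y,\ZZ)$$
gives injectivity on $H^1$ immediately, which proves (1). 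For (2), it remains to show $R^1f_*\ZZ_Y=0$ when $X$ has rational singularities. Since $R^1f_*\O_Y=0$, the exponential sequence on $Y$ and $f_*\O_Y^*=\O_X^*$ give an injection $R^1f_*\ZZ\hookrightarrow R^1f_*\O_Y=0$. Here the analytic exponential sequence is used, with the relevant direct images being compatible with the analytic topology. Then $R^1f_*\ZZ=0$, so $H^2(X,\ZZ)\hookrightarrow H^2(Y,\ZZ)$ and $H^2$ is pure.

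\emph{Item (3).} Here I would argue via duality. Since $X$ is an integral homology manifold and projective, Poincaré duality gives $H^i(X,\ZZ)\cong H_{2n-i}(X,\ZZ)$, compatibly with $f_*$ on homology, and $f_*f^*=\mathrm{id}$ since $f$ has degree one. The relation $f_*f^*=\mathrm{id}$ comes from the projection formula and $f_*[Y]=[X]$, where the fundamental class exists because $X$ is a $\ZZ$-homology manifold. Hence $f^*$ is split injective in every degree, and this forces every $H^i(X,\ZZ)$ to be pure.

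\emph{Item (4).} Take the hyperresolution of $E=\bigcup E_j$ given by the disjoint unions of the strata, $X_p=\bigsqcup_{|J|=p+1}E_J$, with the differentials being alternating sums of restriction maps. In row $q=0$, the $E_1$ page is $H^0(X_p,\ZZ)$, which is exactly the simplicial cochain complex of the dual complex $\mathcal D(E)$, assuming the intersections $E_J$ are connected (otherwise one uses one vertex per component). Therefore $E_2^{p,0}=H^p(\mathcal D(E),\ZZ)$, which by our notation is $H_0^p(E,\ZZ)$. By Proposition \ref{prop integral weight} this identification does not depend on the chosen hyperresolution.

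The main obstacle I expect is item (3). There one must make the duality argument rigorous, specifically that $f^*$ is injective integrally, and check that injectivity of $f^*$ really is equivalent to purity in the integral sense. Integrally, the spectral sequence need not degenerate, so this equivalence needs care with the $E_\infty$ terms.
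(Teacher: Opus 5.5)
Your proposal is correct. Items (2)--(4) follow the paper's argument: injectivity of $f^*$ on $H^2$ from the Leray spectral sequence once $R^1f_*\ZZ_Y=0$ (the paper simply cites a lemma doing exactly this), $f_*\circ f^*=\id$ from Poincar\'e duality on a $\ZZ$-homology manifold, and the strata hyperresolution whose weight-zero row is the simplicial cochain complex of $\mathcal{D}(E)$.

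The one genuine difference is item (1). The paper quotes the known rational vanishing $W_0H^1(X,\Q)=0$ for normal $X$. It then uses that $H^1(X,\ZZ)=\mathrm{Hom}(H_1(X,\ZZ),\ZZ)$ is torsion-free, so the torsion group $W_0H^1(X,\ZZ)$ must vanish. You instead obtain integral injectivity of $H^1(X,\ZZ)\to H^1(Y,\ZZ)$ directly from $f_*\ZZ_Y=\ZZ_X$ (connected fibers). Your route is self-contained and gives slightly more; the paper's is shorter but rests on a citation.

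A few small corrections:
\begin{itemize}
\item The degree-zero term of a cubical hyperresolution is $X_0=Y\sqcup X_0'$, with $X_0'$ lying over the discriminant, not $Y$ alone. So what holds is $W_{i-1}H^i(X,\ZZ)=\ker(H^i(X,\ZZ)\to H^i(X_0,\ZZ))\subseteq\ker f^*$ rather than equality. This inclusion is exactly the direction you use, so nothing breaks.
\item The worry in your last paragraph is unfounded. The column $p=0$ receives no differentials, so $E_\infty^{0,i}\subseteq E_1^{0,i}$. Hence the kernel of the edge map is $F^1H^i=W_{i-1}H^i$, with no degeneration needed.
\item In (4), a disconnected $E_J$ contributes one simplex per connected component, not one vertex.
\end{itemize}
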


\begin{proof}
$(1)$ It is well known that $W_0 H^1(X, \mathbb{Q}) = 0$; see e.g. \cite[Example 8.11]{PP1} and the references therein. Since $H^1(X, \mathbb{Z})$ is torsion-free by the universal coefficient theorem, $W_0 H^1(X, \mathbb{Z})$ must vanish as well.

$(2)$ If $Y \rightarrow X$ is a resolution, then for example \cite[Lemma $2.1$]{bl} shows the pullback $H^2(X, \mathbb{Z}) \hookrightarrow H^2(Y, \mathbb{Z})$ is injective by analyzing the Leray spectral sequence.

$(3)$  We are assuming that $\mathbb{Z}_X \rightarrow \mathrm{IC}_{X, \mathbb{Z}}$ is an isomorphism. For a resolution $f\colon Y \rightarrow X$, Poincaré duality shows $f_* \circ f^* = \id$. Hence, each $f^*\colon H^i(X, \mathbb{Z}) \rightarrow H^i(Y, \mathbb{Z})$ is injective.

$(4)$ We obtain a cubical hyperresolution by $E$ by taking $E_0$ to be the disjoint union of the irreducible components of $E$, $E_1$ to be the disjoint union of the pairwise intersections of components of $E$, and so on. Then $H^i_0(E, \mathbb{Z})$ is the $i$th cohomology group of the complex
\begin{align}\label{eqn snc wt zero}
   0\to H^0(E_0, \mathbb{Z}) \rightarrow H^0(E_1, \mathbb{Z}) \rightarrow \cdots.
\end{align}
Recall that the dual complex $\mathcal{D}(E)$ is a CW complex whose vertices correspond with the irreducible components of $E$, whose edges correspond to the pairwise intersection components of the components of $E$, and so on. We observe that the complex is precisely the simplicial cochain complex of $\mathcal{D}(E)$. Hence, $H^i_0(E, \mathbb{Z}) \simeq H^i(\mathcal{D}(E), \mathbb{Z})$.
\end{proof}

The weight filtration on integral cohomology carries nontrivial information about the singularities of $X$, even on torsion subgroups, as the following example illustrates.

\begin{example}[{\cite[$3.1.3$]{gillet soule}}]\label{example kummer surface}
Let $S$ be a Kummer surface, namely the quotient of an abelian surface $A$ by the involution $\iota \colon x \mapsto -x$. The surface $S$ has $16$ isolated singularities, each of type $A_1$. The cohomology group $H^3(S, \mathbb{Z}) = (\mathbb{Z}/2 \mathbb{Z})^5$ is pure of weight two.
We extend this to Kummer varieties of arbitrary dimension in Example \ref{ex:Kummer}.
\end{example}

We have seen in Corollary \ref{cor weight restrictions} that quite a bit is known about the rational weight filtration, considered as an invariant of singularities.  The preceeding example shows that this does not extend immediately to the integral setting. Forthcoming work of the first author \cite{burke dual complex} clarifies this picture to some extent, using singularities of the minimal model program.

\begin{theorem}[{\cite{burke dual complex}}]\label{thm wt zero klt}
Suppose $X$ is projective and of klt type.
Then $H_0^i(X, \mathbb{Z}) = H_1^i(X, \mathbb{Z}) = 0$ for $i > 0$. In particular, 
\begin{align*}
    H^i(X, \mathbb{Z}) {\rm~ has ~weights} \geq \min \{ i, 2\}.
\end{align*}
\end{theorem}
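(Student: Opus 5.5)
Since $H_q^{i}(X,\ZZ) = E_2^{i-q,q}$ is the $E_2$-term of the integral weight spectral sequence (\ref{eqn:weight-ss-integral}), and $W_0 H^i(X,\ZZ)$, $\gr_1^W H^i(X,\ZZ)$ are built from subquotients of $E_\infty^{i,0}, E_\infty^{i-1,1}$, which are in turn subquotients of $E_2^{i,0}=H^i_0$ and $E_2^{i-1,1}=H^i_1$, the final claim follows from the two vanishings. So the plan is to prove $H_0^i(X,\ZZ)=0$ and $H_1^i(X,\ZZ)=0$ for $i>0$, working on a convenient hyperresolution. This is legitimate because Proposition \ref{prop integral weight} makes the $E_2$-page independent of the chosen hyperresolution.

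Take a log resolution $f\colon Y\to X$ which is an isomorphism over the smooth locus, with exceptional divisor $E$ (together with $f^{-1}(X_{\rm sing})$) an snc divisor. The discriminant square $E\to Y$, $Z=X_{\rm sing}\to X$ gives, from the $E_2$-page on, a Mayer--Vietoris type long exact sequence
$$\cdots\to H^{i}_q(X)\to H^i_q(Y)\oplus H^i_q(Z)\to H^i_q(E)\to H^{i+1}_q(X)\to\cdots.$$
Since $Y$ is smooth, $H^i_q(Y)=0$ unless $i=q$. We need to control the weight $0$ and weight $1$ parts of $Z$ and $E$. A cleaner route is to pass to a dlt modification $(X',\Delta')\to X$ of a klt pair $(X,\Delta)$, or to work directly with the fibers of $f$.

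For weight $0$, Lemma \ref{lem integral weight properties}(4) identifies $H^i_0(E)$ with the cohomology of the dual complex $\mathcal{D}(E)$. The key input is that dual complexes of log resolutions of klt singularities are contractible, or at least $\ZZ$-acyclic, by de Fernex--Koll\'ar--Xu and Koll\'ar--Xu. More precisely, for each point $x\in X$ the fiber $\mathcal{D}(f^{-1}(x))$ is contractible. Globally, I would argue by induction on $\dim X$ via the stratification by singular strata, which are themselves handled by the exact sequence, or by comparing the weight-$0$ complex of $X$ with the cohomology of a "dual complex of $X$". The latter is obtained by gluing the contractible fiber complexes, and a Leray/Vietoris--Begle argument shows that its reduced cohomology vanishes. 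Hence $H^i_0(X,\ZZ)=0$ for $i>0$.

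For weight $1$, $E_2^{p,1}$ is computed by the complex $H^1(X_0)\to H^1(X_1)\to\cdots$. Integral $H^1$ of smooth projective varieties is controlled by the Albanese/Picard varieties, so this complex is a complex of lattices of abelian varieties $\mathrm{Pic}^0(X_p)$. I would show that it is exact in positive degrees using rational chain connectedness of the fibers of $f$ (Hacon--McKernan): each exceptional stratum is rationally chain connected over its image. Therefore $H^1$ of a stratum comes from its image in $X$ or its smaller strata, which lets us run an induction over the cubical levels.

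I expect the \textbf{main obstacle} to be the weight-$1$ part integrally. Rational chain connectedness gives isomorphisms on $H^1$ up to torsion, whereas integral vanishing requires control of torsion in $H^1$ and $H^2$ of the strata. I would first try to use the fact that klt-type singularities have simply connected local links in the relevant sense (the local fundamental groups of fibers are trivial, by Takayama). This gives $\pi_1$-triviality of fibers and hence integral isomorphisms $H^1(\text{stratum})\simeq H^1(\text{image})$.
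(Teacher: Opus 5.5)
The reduction of the ``in particular'' to the two vanishings is fine, and so is the $E_2$-level Mayer--Vietoris sequence. One caveat: $H^1_1(X,\ZZ)=E_2^{0,1}\simeq H^1(X,\ZZ)$ is nonzero for, e.g., an abelian variety, so the weight-one vanishing can only hold for $i\ge 2$; your ``exact in positive degrees'' is the right target. (The paper gives no proof of this statement; it quotes it from \cite{burke dual complex}, so I am judging your argument on its own terms.)

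\textbf{The weight-one step has a genuine gap.} Hacon--McKernan and Takayama are statements about the fibers $f^{-1}(x)$, not about individual strata. (Takayama's result concerns $f^{-1}$ of a small neighborhood. Links of klt singularities are not simply connected in general: the link of $\C^n/G$ has $\pi_1=G$.) Neither result implies that each stratum is rationally chain connected over its image, nor that $H^1(\text{stratum},\ZZ)\simeq H^1(\text{image},\ZZ)$. Here is a counterexample. Take $X=\mathbb{P}(1,1,1,2)$, whose only singularity is of type $\frac12(1,1,1)$. Blow up the vertex, then blow up a smooth plane cubic $C$ inside the exceptional $\mathbb{P}^2$. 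The exceptional divisor is $E_1'\cup E_2$, with $E_1'\simeq\mathbb{P}^2$, $E_2\to C$ a $\mathbb{P}^1$-bundle, and $E_1'\cap E_2=C$ a section. So the strata $E_2$ and $C$ lie over a point but have $H^1\simeq\ZZ^2$. The weight-one row
\[
0=H^1(Y,\ZZ)\oplus H^1(\{x\},\ZZ)\longrightarrow H^1(E_1',\ZZ)\oplus H^1(E_2,\ZZ)\longrightarrow H^1(C,\ZZ)\longrightarrow 0
\]
is exact only because $C$ is a section of the ruling of $E_2$. A stratum is cancelled by a \emph{larger} stratum fibering over it, which no argument of the form ``$H^1$ of a stratum comes from its image or from smaller strata'' can detect.

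What the fiberwise results do give is $H^1(f^{-1}(x),\ZZ)=0$. In the isolated case this, together with $H^2(\mathcal D(E),\ZZ)=0$, yields $H^2_1(X,\ZZ)=0$. For $i\ge 3$, however, you need $H^{i-1}_1(E,\ZZ)=0$, i.e.\ integral exactness of $H^1(E^{[1]},\ZZ)\to H^1(E^{[2]},\ZZ)\to\cdots$ in all higher spots. Neither simple connectedness nor rational chain connectedness of fibers controls this, let alone its torsion.

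The missing ingredient is a global matching of strata. Each stratum whose $H^1$ is not pulled back should be paired with a larger stratum that fibers rationally over it, with rationally connected general fibers, and contains it as a section. The pairs must be ordered so that the differentials become triangular with isomorphisms on the diagonal. These are integral isomorphisms, since $H^1(-,\ZZ)$ of smooth projective varieties is torsion-free and a birational invariant. This is the kind of structure the paper extracts from an MMP over a plt blow-up in Theorem \ref{thm mmp}, the technique it attributes to \cite{burke dual complex}.

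\textbf{The weight-zero step is only justified for isolated singularities.} That case is already covered by \cite{dFKX} together with Lemma \ref{lem integral weight properties}(4), as in the remark following the theorem. In general, induction through $X_{\rm sing}$ is unavailable, because $X_{\rm sing}$ and its strata are not of klt type. There is also no Vietoris--Begle theorem to invoke. Over a positive-dimensional singular stratum, $f^{-1}(z)$ is not a divisor. Moreover, the combinatorial map from $\mathcal D(E)$ to a weight-zero complex of $f(E)$ does not have the fiberwise dual complexes as its point-fibers: components of $E$ have monodromy over $f(E)$, and the fibers jump over smaller strata. So fiberwise contractibility does not formally give global acyclicity. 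One again has to follow the complexes through the steps of an MMP, as \cite{dFKX} do for dual complexes, rather than glue fiberwise information.
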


\begin{remark}[{\bf Klt type}]\label{rmk:klt}
We recall that a variety \emph{of klt type} is a normal variety $X$ such that there exists an effective $\Q$-divisor $\Delta$ on $X$ with the property that the 
pair $(X, \Delta)$ is klt; see \S\ref{scn:MMP} for more on this notion. By a well-known result of Elkik, if $X$ is of klt type, then it has rational singularities. If $X$ is in addition Gorenstein, then the two notions are equivalent, since in this case rational and canonical singularities are the same. Using klt rather than rational singularities will be crucial at times in this paper, but this is therefore relevant only for non-Gorenstein varieties.
\end{remark}

\begin{remark}
If $X$ has isolated klt type singularities, then \cite{dFKX} prove the dual complex $\mathcal{D}(E)$ is contractible. Using \ref{lem integral weight properties}(4), we deduce $H_0^i(X, \mathbb{Z}) = 0$ for $i > 0$. The above theorem is an extension of this result. Note also that varieties of klt type have rational singularities, and the result holds over $\Q$ for all rational singularities by Corollary \ref{cor weight restrictions}.
\end{remark}

\subsection{$K$-theory and $KH$-theory}\label{scn:K-intro}
In this section, $X$ will always be a complex quasi-projective variety of dimension $n$. Essentially all of the content recalled here can be found in Weibel's $K$-book \cite{k-book}.

\smallskip

\noindent
{\bf Basics of (negative) $K$-Theory.}
As is standard, $K_0(X)$ denotes the group completion of the additive monoid of vector bundles on $X$. The $K$-groups $K_i(X)$ can be defined for all 
$i \in \mathbb{Z}$, for instance by constructing a (non-connective) spectrum $\mathcal{K}(X)$ called the $K$-theory spectrum of $X$, and setting $K_i(X) = \pi_i \mathcal{K}(X)$. 
We have the following ``Fundamental Theorem of $K$-theory" of Bass and Quillen, which says in particular that higher $K$-groups determine the lower ones in a suitable sense.

\begin{theorem}[{\cite[Theorem V.8.3]{k-book}}]\label{thm fund}
There exists a four-term exact sequence
\begin{align*}
    0 \rightarrow K_i(X) \xrightarrow{\Delta} K_i(X[t]) \oplus K_i(X[t^{-1}]) \xrightarrow{\pm} K_i(X[t, t^{-1}]) \rightarrow K_{i-1}(X) \to 0.
\end{align*}
\end{theorem}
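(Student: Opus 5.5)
This is the Fundamental Theorem of Bass and Quillen, which the paper cites from Weibel's $K$-book. The plan is to derive it from two ingredients: the projective line bundle formula, and Thomason–Trobaugh localization for non-connective $K$-theory spectra.

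\textbf{Step 1 (projective bundle formula).} Take $i \ge 0$ and work with the connective Quillen $K$-theory of $X$. Quillen's computation for projective bundles gives
$$K_i(\mathbb{P}^1_X) \simeq K_i(X)\oplus K_i(X),$$
generated by $\O$ and $\O(-1)$. This is a formal consequence of Beilinson's resolution of the diagonal on $\mathbb{P}^1$. It also holds at the level of non-connective Thomason–Trobaugh spectra.

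\textbf{Step 2 (localization and Mayer–Vietoris).} Cover $\mathbb{P}^1_X$ by the two affine charts $X[t]$ and $X[t^{-1}]$, which intersect in $X[t,t^{-1}]$. Thomason–Trobaugh Zariski descent (equivalently, localization with supports) yields a homotopy cartesian square of $K$-theory spectra. Its long exact Mayer–Vietoris sequence is
$$\cdots\to K_i(\mathbb{P}^1_X)\to K_i(X[t])\oplus K_i(X[t^{-1}])\to K_i(X[t,t^{-1}])\xrightarrow{\partial} K_{i-1}(\mathbb{P}^1_X)\to\cdots$$

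\textbf{Step 3 (splitting into the four-term sequence).} Substitute Step 1 into this sequence. The summand generated by $\O$ maps isomorphically, via restriction, onto the diagonal copy of $K_i(X)$ inside both polynomial terms. The summand generated by $\O(-1)$ restricts to a class that becomes trivial after trivializing $\O(-1)$ on each chart. The connecting map $\partial$ lands exactly in the $\O(-1)$-summand $K_{i-1}(X)$, and it has a section given by multiplication by the unit $t\in K_1(\ZZ[t,t^{-1}])$. From these facts one reads off two things: injectivity of $\Delta$, and surjectivity of $K_i(X[t,t^{-1}])\to K_{i-1}(X)$. Together these cut the long exact sequence into the stated four-term exact sequences.

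\textbf{Step 4 (negative degrees).} For $i\le 0$, either prove the same four-term sequence with Bass's definition, namely
$$K_{i-1}(X)=\coker\bigl(K_i(X[t])\oplus K_i(X[t^{-1}])\to K_i(X[t,t^{-1}])\bigr),$$
or note that the non-connective spectrum is built precisely so that this sequence holds in all degrees. The remaining task is to check that the two definitions agree, which Thomason–Trobaugh establish.

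The main obstacle is Step 3. There one must identify the boundary map $\partial$ with the $\O(-1)$-component, and verify that cup product with $t$ splits it. This sign and compatibility bookkeeping is exactly where Bass's contracted-functor formalism enters.
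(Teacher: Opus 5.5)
The paper gives no proof of this statement; it is quoted from Weibel's $K$-book. Your route, the projective line formula plus Mayer--Vietoris for $\mathbb{P}^1_X=X[t]\cup X[t^{-1}]$, is the standard Quillen--Grayson/Thomason--Trobaugh argument, so the strategy is right.

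The identification in Step 3, the step you single out, needs correcting. Let $p\colon\mathbb{P}^1_X\to X$ be the projection. Since $\O(-1)$ is trivial on both charts, restriction sends $p^*a+[\O(-1)]\cdot p^*b$ to $\Delta(a+b)$. So the $\O(-1)$-summand maps onto the same diagonal as the $\O$-summand; it does not restrict to zero. Hence the kernel of restriction, i.e.\ $\im\partial$, is the antidiagonal $\bigl([\O]-[\O(-1)]\bigr)\cdot p^*K_{i-1}(X)=[\O_{X\times\{0\}}]\cdot p^*K_{i-1}(X)$, not the $\O(-1)$-summand. Correspondingly, $\partial([t]\cdot a)=\pm[\O_{X\times\{0\}}]\cdot p^*a$. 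Your conclusion still holds: the image of restriction is $\Delta K_i(X)$, and the kernel is again a copy of $K_{i-1}(X)$.

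With this fixed, exactness of Mayer--Vietoris alone gives the four-term sequence for $i\ge 1$. For connective $K$ the square is not homotopy cartesian in general. However, the sequence is exact up to $K_0(X[t])\oplus K_0(X[t^{-1}])\to K_0(X[t,t^{-1}])$, and that is all you use. Appealing to non-connective descent in Step 2 instead would be circular in Thomason--Trobaugh's development, since there $K^B$ is constructed from the connective case of this very theorem.

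So the splitting by $[t]$ is not what gives surjectivity onto $K_{i-1}(X)$. Its real job is in Step 4. For the sequences whose middle terms are $K_{-1},K_{-2},\dots$, the recursive definition gives only exactness on the right; injectivity of $\Delta$ is automatic from the zero section. Bass's argument then gets middle exactness for $K_{i-1}=LK_i$ ($i\le 0$) as follows. Using the natural splitting, inherited inductively from multiplication by $[t]$ on $K_1$, it exhibits that sequence as a retract of the four-term sequence for $K_i$ on $X[s,s^{-1}]$.
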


\smallskip

The map $\pm$ is the difference of the maps induced by the open inclusions
\begin{align*}
    X[t,t^{-1}] := X \times \mathop{\mathrm{Spec}}\mathbb{C}[t, t^{{-1}}] \rightarrow X[t]  = X \times \mathop{\mathrm{Spec}}\mathbb{C}[t]
\end{align*}
and
\begin{align*}
    X[t,t^{-1}] = X \times \mathop{\mathrm{Spec}}\mathbb{C}[t, t^{{-1}}] \rightarrow X[t^{-1}]  = X \times \mathop{\mathrm{Spec}}\mathbb{C}[t^{-1}],
\end{align*}
respectively, while $\Delta$ is the diagonal map. We use the above notation to emphasize that, although $X[t]$ and $X[t^{-1}]$ are isomorphic schemes, they are viewed as distict partial compactifications of $X[t,t^{-1}]$. The geometric picture to keep in mind is that these are the two standard copies of $X \times \A^1$ covering $X \times \mathbb{P}^1$, whose intersection is  $X \times \mathbb{G}_m$.

In this paper we are almost exclusively interested in non-positive $K$-groups. In this case the picture can be seen in a more elementary way, in the sense that  
we can view Theorem \ref{thm fund} as a definition, and negative $K$-groups can be built only out of the $K_0$-groups on various spaces.\footnote{One of course needs
homotopy-theoretic methods in order to prove many of the fundamental facts we will use.}

\begin{definition}
We define the negative $K$-groups $K_{-i}(X)$ for $i > 0$ inductively. Assuming $K_{-i + 1}$ has already been defined, we set
\begin{align}\label{eqn K-theory recursive}
    K_{-i}(X) = \coker \bigg( K_{-i+1}(X[t]) \oplus K_{-i+1}(X[t^{-1}]) \xrightarrow{\pm} K_{-i+1}(X[t,t^{-1}] )\bigg).
\end{align}
\end{definition}

Note that iterating \ref{eqn K-theory recursive} leads to the promised description only in terms of $K_0$-groups.
\begin{align}\label{eqn K-i iterative}
    K_{-i}(X) = \coker \bigg( \bigoplus_{j=1}^i  \big( K_0(X[t_1^{\pm 1}, \ldots t_j, \ldots , t_i^{\pm 1}])& \oplus K_0(X[t_1^{\pm 1}, \ldots t_j^{-1}, \ldots , t_i^{\pm 1}]) \big) \\ & \xrightarrow[]{\oplus (\pm 1)}  K_0(X[t_1^{\pm 1}, \ldots , t_i^{\pm 1}] \bigg).
\end{align}
In words, $K_{-i}(X)$ is the quotient of $K_0(X \times \mathbb{G}_m^i)$, modulo the subgroup generated by virtual vector bundles which arise as differences of classes on one of the $2i$ standard partial compactifications $\mathbb{A}^1 \times \mathbb{G}_m^{i-1}$ of $\mathbb{G}_m^i$.

If $X$ is smooth, then it has no negative $K$-groups by another well-known result going back to Grothendieck.

\begin{theorem}[{\cite[Theorem II.$7.8$]{k-book}}]\label{thm:reg-smooth}
If $X$ is smooth, then 
$$K_0(X) \simeq K_0(X[t]) \simeq K_0(X[t^{\pm 1}]).$$
In particular, $K_{-i} (X) = 0$ for $i > 0$.
\end{theorem}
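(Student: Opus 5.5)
The plan is to prove the two isomorphisms $K_0(X)\simeq K_0(X[t])$ and $K_0(X[t])\simeq K_0(X[t^{\pm1}])$, the latter in the sense that $K_0(X[t])\to K_0(X[t^{\pm 1}])$ is surjective. The vanishing of negative $K$-groups then follows formally from the recursive definition (\ref{eqn K-theory recursive}).

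\textbf{Step 1: $K$ versus $G$.} Since $X$ is smooth and quasi-projective, every coherent sheaf has a finite resolution by vector bundles. This resolution theorem identifies $K_0(X)$ with $G_0(X)$, the Grothendieck group of coherent sheaves, and does the same for $X[t]$ and $X[t^{\pm1}]$, which are again smooth. So it suffices to prove the statements for $G_0$.

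\textbf{Step 2: homotopy invariance.} For $G$-theory I would invoke Quillen's homotopy invariance: flat pullback $p^*\colon G_0(X)\to G_0(X\times\A^1)$ is an isomorphism, and indeed $G_i(X)\simeq G_i(X[t])$ for all $i$. To prove it at the level needed, I would filter $X$ by closed subsets and use the localization sequence for $G$-theory, reducing to the case of a field. There the claim is the classical fact that $G_0(k[t])=\ZZ$, since $k[t]$ is a PID.

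This step is the main obstacle. Localization at the level of $G_0$ alone is only right exact, so an honest proof needs the long exact sequence involving $G_1$. I would simply cite Quillen's theorem, which is [Theorem II.7.8] in the reference being quoted, rather than reprove it.

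\textbf{Step 3: the Laurent case.} Localization for the closed immersion $X\times\{0\}\hookrightarrow X[t]$ with open complement $X[t^{\pm1}]$ gives an exact sequence
\[G_0(X)\to G_0(X[t])\to G_0(X[t^{\pm1}])\to 0.\]
The first map is pushforward along the zero section. Its composition with the isomorphism $p^*$ is zero, because it sends $[\mathcal F]$ to $[\mathcal F\boxtimes \O_0]$, and $[\mathcal F\boxtimes\O_0]=[p^*\mathcal F]-[p^*\mathcal F]=0$ by the exact sequence $0\to\O\xrightarrow{t}\O\to\O_0\to0$. Hence $G_0(X)\simeq G_0(X[t])\simeq G_0(X[t^{\pm1}])$.

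\textbf{Step 4: vanishing.} By Step 3, the map $K_0(X[t])\to K_0(X[t^{\pm1}])$ is surjective, so $K_{-1}(X)=0$ by (\ref{eqn K-theory recursive}). Since $X\times\mathbb{G}_m^{j}$ is again smooth and quasi-projective, the same argument applied to it and induction on $i$ give $K_{-i}(X)=0$ for all $i>0$. Equivalently, one can apply the iterated description (\ref{eqn K-i iterative}) directly, since a single partial compactification already surjects onto $K_0(X\times\mathbb{G}_m^i)$.
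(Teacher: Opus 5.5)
Your proposal is correct. The paper gives no proof of its own and only cites Weibel's $K$-book; your route matches the standard argument behind that reference: identify $K_0$ with $G_0$ by the resolution theorem, use homotopy invariance of $G_0$ and the localization sequence for $X\times\{0\}\subset X[t]$ (where $i_*=0$ because $0\to p^*\mathcal F\xrightarrow{t}p^*\mathcal F\to i_*\mathcal F\to 0$ is exact), then induct through the recursive definition of $K_{-i}$ on the smooth varieties $X\times\mathbb{G}_m^{j}$.
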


\smallskip

\noindent
{\bf $K$-regularity.}
Theorem \ref{thm:reg-smooth} implies that negative $K$-theory may appear only in the singular setting. At the same time, it suggests a coarser numerical invariant that has been studied in the literature.

\begin{definition}\label{def:K-regularity}
We say that $X$ is $K_i$-regular if the natural maps
\begin{align*}
    K_i (X) \rightarrow K_i (X \times \mathbb{A}^r)
\end{align*}
are isomorphisms for all $r \geq 1$.\footnote{It is known that requiring isomorphisms for all $r \geq 1$ rather than just $r=1$  is necessary for a good definition.}
\end{definition}

In this language, Theorem \ref{thm:reg-smooth} is part of a more general statement saying that a smooth variety is $K_i$-regular for all $i$. For singular varieties, it is important to give bounds on the largest index $i$ for which this property holds, as regularity makes calculations more tractable. Concretely,  it will allow us to take advantage of a homotopy-invariant version of $K$-theory, recalled in the next subsection.

If $X$ is $K_m$-regular, then it is also $K_{m-1}$-regular by \cite{vorst} and \cite{dayton weibel}. In the case of affine varieties Vorst's conjecture predicts, conversely, that $K_{n+1}$-regularity implies that $X$ is smooth. This is verified in \cite[Theorem 0.1(c)]{vorst conj char 0} over a characteristic zero field, and more generally for perfect fields in \cite[Corollary 4.7]{vorst conj char p}.

The following fundamental result was conjectured by Weibel \cite[Question $2.9$]{weibel conj}, and proved in \cite[Theorem 6.2]{k weibel} and \cite[Theorem B]{descent}. (Both the conjecture and the theorem hold in a more general setting than that of complex varieties.) The first statement is in fact a consequence of the second, thanks to Corollary \ref{weibel conj homotopy} below and the surrounding discussion.

\begin{theorem}\label{cor:weibel-conj}
If $X$ is a variety of dimension $n$, then $K_{-i}(X) = 0$ for $i > n$. Moreover, $X$ is $K_{-n}$-regular.
\end{theorem}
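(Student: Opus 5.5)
The plan is to derive both statements from the analogous facts for homotopy $K$-theory, and then compare $K$ with $KH$ using the $K$-regularity machinery. The second statement implies the first, so we prove that one first.

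\emph{Step 1: vanishing of $KH$ below $-n$.} We use cdh descent for $KH$ (Haesemeyer, Cortiñas--Haesemeyer--Schlichting--Weibel). This gives a descent spectral sequence
$$E_2^{p,q} = H^p_{\rm cdh}(X, \mathcal{K}_{q}) \Rightarrow KH_{q-p}(X),$$
where $\mathcal{K}_q$ is the cdh sheafification of the $K$-theory presheaf. Two inputs control it. First, the cdh cohomological dimension of $X$ is $n$. Second, points in the cdh topology are henselian valuation rings, for which $K_q = 0$ when $q<0$, so $\mathcal{K}_q=0$ for $q<0$. Hence $E_2^{p,q}$ vanishes unless $0\le p\le n$ and $q\ge 0$. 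This gives $KH_{-i}(X)=0$ for $i>n$ and $KH_{-n}(X)\simeq H^n_{\rm cdh}(X,\mathcal{K}_0)=H^n_{\rm cdh}(X,\ZZ)$. Applying the same reasoning to $X\times\A^r$, which has cdh cohomology homotopy invariant, shows that $KH_{-n}$ is $\A^r$-invariant. This is automatic in any case, since $KH$ is homotopy invariant by construction.

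\emph{Step 2: comparison of $K$ and $KH$.} We use the fibre sequence $\mathcal{K}^{\rm inf}\to \mathcal{K}\to \mathcal{KH}$ together with the Goodwillie--Cortiñas--Haesemeyer--Schlichting--Weibel description of the fibre through cdh descent for infinitesimal (negative) cyclic homology. Concretely, the fibre of $K\to KH$ agrees, after a shift, with the fibre of $HC(X/\Q)\to \mathbb{H}_{\rm cdh}(X,HC)$. Cyclic homology of $X$ and of its cdh sheafification are built from the (derived) Hodge-filtered de Rham complexes. The cdh sheafification of $\Omega^p$ has cdh cohomology vanishing above degree $n-p$, and the same holds for Zariski cohomology. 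A degree count then shows that the comparison map $HC_{m}(X)\to \mathbb{H}_{\rm cdh}^{-m}(X,HC)$ is an isomorphism for $m\le -n$ and surjective for $m=-n+1$. Substituting into the long exact sequence yields $K_{-i}(X)\simeq KH_{-i}(X)$ for $i\ge n$, and, combined with Step 1, $K_{-i}(X)=0$ for $i>n$.

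\emph{Step 3: $K_{-n}$-regularity.} We apply Step 2 to $X\times\A^r$, which has dimension $n+r$. The degree bound in the comparison is controlled by the number of de Rham degrees, so we have to track the extra $\A^r$ directions. For this we use the fundamental theorem (Theorem~\ref{thm fund}) inductively, or equivalently Vorst/Dayton--Weibel, and show that the fibre term in degree $-n$ stays invariant under $\times\A^r$. The fibre of $K\to KH$ in degrees $\le -n$ is controlled by $H^{n}$ of the complex obtained from the Zariski-to-cdh comparison of $\mathcal{O}$. Adjoining polynomial variables tensors this complex with $\C[t_1,\dots,t_r]$, and the top-degree cokernel vanishes for dimension reasons, namely because $H^{n}_{\rm zar}$ and $H^n_{\rm cdh}$ of $\mathcal{O}\otimes\C[t]$ both vanish beyond the relevant range. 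Hence $K_{-n}(X\times\A^r)=KH_{-n}(X\times\A^r)=KH_{-n}(X)=K_{-n}(X)$.

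The main obstacle is Step 2/3: making the degree bookkeeping for the cyclic-homology fibre precise enough that the comparison is an isomorphism exactly at $-n$, uniformly in $r$. This is where the bound on the cdh cohomological dimension of the sheaves $\Omega^p_{\rm cdh}$ enters essentially.
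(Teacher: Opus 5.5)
Your skeleton is the approach of \cite{k weibel}. The paper does not prove this statement: it cites \cite{k weibel} and \cite{descent}, and observes that the first statement follows from the second via Corollary~\ref{weibel conj homotopy}. Your Step~1 is fine. The first gap is the degree count in Step~2, which is wrong as stated.

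\begin{itemize}
\item The bound ``cohomology of $\Omega^p$ vanishes above degree $n-p$'' is false in both topologies. For example, $H^n(X,\Omega^n_X)\neq 0$ for $X$ smooth projective.
\item The ranges you derive are each off by one. At $m=-n$ the comparison map is $H^n_{\rm zar}(X,\O_X)\to H^n_{\rm cdh}(X,\O)$, and this need not be injective. For the projective cone over a smooth plane quartic $Z$ it is $\C\to\gr^0_F H^2(X,\C)=0$.
\item At $m=-n+1$ surjectivity fails for the affine cone $X$ over $Z$. Here $HC_{-1}(X)=0$ since $X$ is affine, while $\mathbb{H}^{1}_{\rm cdh}(X,HC)$ contains $H^1_{\rm cdh}(X,\O)\simeq\bigoplus_{k\ge 1}H^1(Z,\O_Z(k))\simeq\C$.
\end{itemize}

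The correct identification is $\mathcal{F}_K\simeq\Omega^{-1}\mathcal{F}_{HC}$, i.e.\ $\pi_m\mathcal{F}_K\simeq\pi_{m-1}\mathcal{F}_{HC}$. With it, your ranges would give $K_{-n+1}(X)\simeq KH_{-n+1}(X)$ for every $X$. The affine cone refutes this: Theorem~\ref{cone K} gives $K_{-1}(X)\simeq\C$, whereas $KH_{-1}(X)=0$ by \S\ref{scn:graded}. You reach the right conclusion only because your unspecified shift is also off by one.

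The correct count uses the Hodge decomposition $HC^{(p)}_m=\mathbb{H}^{2p-m}(\Omega^{\le p})$, with Hodge-truncated derived de Rham on the Zariski side. The only bound needed is cohomological dimension $\le n$ in both topologies.
\begin{itemize}
\item The shift by $2p$ makes both sides vanish for $m\le -n-1$. This already gives $K_m(X)\simeq KH_m(X)=0$ for $m\le -n-1$, so the first statement is a pure degree count.
\item At $m=-n$ only $p=0$ survives. Then $K_{-n}\simeq KH_{-n}$ requires surjectivity of $H^n_{\rm zar}(X,\O_X)\to H^n_{\rm cdh}(X,\O)$. This is a genuine extra input, not a degree count.
\end{itemize}

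Step~3 is where $K_{-n}$-regularity actually lives, and its key ingredient is only asserted. Since $\dim(X\times\A^r)=n+r$, Step~2 applied to $X\times\A^r$ only controls degrees $\le -n-r$. What you need is a K\"unneth formula on the cdh side,
$H^*_{\rm cdh}(X\times\A^r,\Omega^j)\simeq\bigoplus_{a+b=j}H^*_{\rm cdh}(X,\Omega^a)\otimes_\Q\Omega^b_{\Q[t_1,\dots,t_r]}$;
the Zariski analogue is formal. With it, every Hodge piece on both sides of the comparison in degrees $m\le -n-1$ involves cohomology of $X$ in degree $>n$, hence vanishes. This includes the pieces with $p\ge 1$ built from the $dt_i$, which your sketch ignores. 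At $m=-n$ the map becomes $(H^n_{\rm zar}(X,\O_X)\to H^n_{\rm cdh}(X,\O))\otimes_\Q\Q[t_1,\dots,t_r]$. Its cokernel vanishes because of the surjectivity for $X$ itself, not ``for dimension reasons'' on $X\times\A^r$.

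Two further points:
\begin{itemize}
\item The appeal to Vorst and Dayton--Weibel is beside the point. Those results propagate regularity downward; they do not establish it.
\item $K^{\rm inf}$ is the fibre of the Chern character $K\to HN$, not of $K\to KH$. The identification of the fibre of $K\to KH$ with $\Omega^{-1}\mathcal{F}_{HC}$ comes from cdh descent for $K^{\rm inf}$ and for $HP$.
\end{itemize}
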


A recent discovery, relying on results from \cite{k weibel,bass nk, haesemeyer weibel}, is that regularity for other $K$-groups goes hand in hand with the higher singularities notions introduced in \S\ref{scn:higher-sings}.

\begin{theorem}[{\cite[Theorem C]{rosie}}]\label{thm rosie}
Let $X$ be a complex variety of dimension $n$ with $\dim X_{\sing}=s$.

\noindent
(i) If $X$ has Du Bois singularities, then it is $K_{-n+1}$-regular.

\noindent
(ii) If $X$ has pre-$m$-Du Bois singularities, then it is $K_{-t}$-regular for $t\ge \max\{1+s, n-2m-2+s\}$. If $X$ is in addition seminormal, then the conclusion extends to $t=s$.

\noindent 
(iii) If $X$ is a affine local complete intersection and $X$ is $K_{- n + 2m +1 + s}$-regular, then it is pre-$m$-Du Bois.

If $X$ is a local complete intersection with isolated singularities, then 
\begin{align*}
\text{$X$ is $m$-Du Bois} &\iff \text{$X$ is $K_{-n+2m+2}$-regular} \\
    &\iff \text{$X$ is $K_{-n+2m+1}$-regular}
\end{align*}
\end{theorem}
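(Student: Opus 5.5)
Since this is \cite[Theorem C]{rosie}, the plan is to recall how the argument there goes, via the cdh machinery of \cite{k weibel, bass nk, haesemeyer weibel}.

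\emph{Step 1: reduce regularity to the groups $NK$.} By definition, $X$ is $K_{-t}$-regular if and only if $N^rK_{j}(X)=0$ for all $j\le -t$ and all $r\ge 1$. The basic input is the theorem of Cortiñas--Haesemeyer--Schlichting--Weibel. In characteristic zero, the homotopy fiber $\mathcal F_K$ of $\mathcal K\to \mathcal{KH}$ agrees, up to a shift by one, with the fiber $\mathcal F_{HC}$ of $HC(-/\Q)\to \mathbb H_{\cdh}(-,HC)$. Since $KH$ is $\A^1$-invariant, we get $NK_j(X)\simeq N\pi_{j-1}\mathcal F_{HC}(X)$.

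\emph{Step 2: compute $\mathcal F_{HC}$ Hodge-theoretically.} Cyclic homology has a Hodge decomposition into pieces built from $R\Gamma(X,\Omega^{\le p}_X)$. The cdh-sheafification of $\Omega^p$ is computed by the Du Bois complex, $\mathbb H_{\cdh}(X,\Omega^p)\simeq R\Gamma(X,\DB^p_X)$. Hence $\mathcal F_{HC}$ is assembled from the cones
$$C^p_X:=\mathrm{cone}\big(\Omega^p_X\to \DB^p_X\big).$$
For the $N$-construction, one uses that $\Omega^p_{X[t]}=\Omega^p_X\otimes\C[t]\oplus \Omega^{p-1}_X\otimes \C[t]\,dt$, and that $\DB^\bullet$ satisfies the analogous Künneth formula. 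The truncations disappear after applying $N$, giving
$$NK_{j}(X)\;\simeq\;\bigoplus_{p\ge 0}\HH^{p-j}\big(X, C^p_X\big)\otimes t\C[t].$$
The exact indexing has to be fixed here. The upshot is that $K_{-t}$-regularity is equivalent to the vanishing of $\HH^{q}(X,C^p_X)$ for all $p$ and all $q\ge p+t$, up to a shift by one.

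\emph{Step 3: bound the degrees of the cones.} Three facts are used.
\begin{itemize}
\item $C^p_X$ is supported on $X_{\sing}$, since $\Omega^p\to\DB^p$ is an isomorphism on the smooth locus. Via the hypercohomology spectral sequence, $\HH^q(X,\mathcal H^jC^p_X)$ can then be nonzero only for $q-j\le s$.
\item The general bound $\mathcal H^j\DB^p_X=0$ for $j>n-p$ holds.
\item For $p\le m$, the pre-$m$-Du Bois hypothesis kills $\mathcal H^{j}\DB^p$ for $j>0$, so $C^p_X$ is concentrated in degrees $0$ and $1$.
\end{itemize}
Feeding these into Step 2 for $p\le m$ and for $p>m$ separately gives vanishing for $t\ge \max\{1+s,\,n-2m-2+s\}$; the worst case is $p=m+1$. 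Seminormality means $\O_X\to\DB^0_X$ is an isomorphism on $\mathcal H^0$, and this gains one more index, giving $t=s$. Part (i) is the case $m=0$, with the sharper bound obtained by using $s\le n-1$ and handling $p=0$ directly.

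\emph{Step 4: the converse (iii) and the isolated LCI equivalences.} For affine $X$, $\HH^q(X,-)$ is just global sections of cohomology sheaves. For $X$ LCI, $\Omega^p_X$ has controlled depth for small $p$, and this lets one isolate $\mathcal H^j\DB^p_X$ from the vanishing of $NK$ in the stated range. The main obstacle is this step: passing from the vanishing of the cone hypercohomology back to the vanishing of $\mathcal H^j\DB^p$ and $\mathcal H^j C^p$. This needs the depth estimates on $\Omega^p$ for LCI varieties from \cite{MP-LC}, together with a careful identification of which $\mathcal H^j$ contributes in the top degree. For isolated singularities one has $s=0$, and combining (ii) and (iii) gives the chain of equivalences. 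Here pre-$m$-Du Bois upgrades to $m$-Du Bois because, in the LCI case with isolated singularities, $\Omega^p_X\to\mathcal H^0\DB^p_X$ is an isomorphism in the relevant range.
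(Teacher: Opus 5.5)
\textbf{The gap is in Step~3: your three facts do not reach the stated bounds.} The paper gives no proof of this statement; it quotes \cite[Theorem C]{rosie}. Your framework is the natural one and matches the inputs \cite{k weibel,bass nk,haesemeyer weibel} the paper names: the CHSW fiber theorem, passage to cdh-fibrant Hochschild homology, $\mathbb H_{\cdh}(X,\Omega^p)\simeq R\Gamma(X,\DB^p_X)$, and support of $C^p_X$ in $X_{\sing}$.

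\emph{Normalization.} Your displayed normalization $NK_{-t}\leftrightarrow \HH^{p+t}(X,C^p_X)$ is the right one. The opposite shift would make every affine variety with isolated singularities $K_{1-n}$-regular, contradicting (iii). So ``up to a shift by one'' cannot be used to adjust the bound.

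\emph{The off-by-one.} For $p\ge m+1$, your facts kill the contributions $H^a(X,\mathcal H^k\DB^p_X)$ with $a+k=p+t$, $a\le s$, $k\le n-p$ only when $p+t>s+n-p$. At $p=m+1$ this gives $t\ge n-2m-1+s$, one worse than claimed. At $t=n-2m-2+s$ the term $H^s(X,\mathcal H^{n-m-1}\DB^{m+1}_X)$ is not excluded.

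\emph{The missing lemma.} Pre-$m$-Du Bois forces $\mathcal H^{n-m-1}\DB^{m+1}_X=0$ for $m\le n-2$. One way to see it:
\begin{itemize}
\item Use $\DB^\bullet_X\simeq\C_X$. For $q\le m$, the graded pieces $\DB^q_X[-q]$ of $\DB^\bullet_X/F^{m+1}$ have cohomology only in degree $q$. Hence $\mathcal H^i(F^{m+1}\DB^\bullet_X)\simeq\mathcal H^{i-1}(\DB^\bullet_X/F^{m+1})=0$ for $i\ge m+2$.
\item The triangle $F^{m+2}\to F^{m+1}\to\DB^{m+1}_X[-m-1]$ then gives $\mathcal H^k\DB^{m+1}_X\simeq\mathcal H^{k+m+2}(F^{m+2}\DB^\bullet_X)$ for all $k\ge1$.
\item For $k=n-m-1$ the right side vanishes, since no $\DB^q_X[-q]$ has cohomology above degree $n$.
\end{itemize}
This lemma is what produces $n-2m-2+s$. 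In particular it is what gives the index $-n+2m+2$ in the isolated lci equivalence.

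\emph{Part (i).} Part (i) is not ``the case $m=0$ plus $s\le n-1$.'' Take $n=4$, $p=1$, $t=3$, $s=3$. Even with the lemma, $H^2(X,\mathcal H^2\DB^1_X)$ and $H^3(X,\mathcal H^1\DB^1_X)$ pass all your tests. You need a support estimate that improves with $k$, not just containment in $X_{\sing}$. For example, $\dim\operatorname{Supp}\mathcal H^k\DB^p_X\le n-k-1$ for $k\ge1$, which one gets by cutting with general hyperplanes.

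\textbf{Two further points.}

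\emph{Coefficients.} The CHSW/CHWW comparison is for $HC(-/\Q)$. Step~2 therefore involves $\wedge^pL_{X/\Q}$ and $\DB^p_{X/\Q}$ with coefficients $t\Q[t]$, not the $\C$-linear objects. This is repairable:
\begin{itemize}
\item These objects carry finite filtrations with graded pieces $\DB^{p-i}_{X/\C}\otimes_\C\Omega^i_{\C/\Q}$ (resp.\ $\wedge^{p-i}L_{X/\C}\otimes\Omega^i_{\C/\Q}$).
\item The $i$-th piece contributes to $\HH^{p+t}$ exactly as $C^{p-i}_X$ contributes to $NK_{-(t+i)}$. So vanishing ranges transfer for (i) and (ii).
\item For (iii) you must instead extract the $i=0$ quotient, by induction on $p$.
\item The cotangent-complex terms sit in cone degrees $\le -2$ and are irrelevant once $t\ge s$.
\end{itemize}

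\emph{Step~4.} Step~4 is not yet an argument, and it misplaces the difficulty. For affine $X$ one has $\HH^q(X,C^p_X)=H^0(X,\mathcal H^qC^p_X)$. So regularity yields $\mathcal H^k\DB^p_X=0$ immediately, but only for $k\ge p+n-2m-1-s$. The real content of (iii) is that for lci $X$ the sheaves $\mathcal H^k\DB^p_X$ with $p\le m$ and $1\le k<p+n-2m-1-s$ vanish automatically. A pointer to depth estimates on $\Omega^p_X$ does not establish this. Your remark that $\Omega^p_X\to\mathcal H^0\DB^p_X$ is an isomorphism in the relevant range for isolated lci singularities is fine.
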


For example, while regularity always holds for $K_{-n}$, the next simplest fact covered by Theorem \ref{thm rosie} is that the standard notion of Du Bois singularities implies $K_{-n +1}$-regularity, and even $K_{-n +2}$-regularity if the singularities are isolated.

\begin{remark}
In the upcoming \cite{BPS} we will show that under the seminormal and pre-$m$-Du Bois hypothesis, the bound in (ii) can be 
improved to $t\ge \max\{s, n-2m-2\}$. This will be a consequence of finer comparison statements between motivic and cdh cohomology.
\end{remark}

\smallskip

\noindent
{\bf Homotopy $K$-theory.}
In \cite{weibel kh}, Weibel defined the homotopy $K$-theory $KH_i (X)$ as the homotopy groups $\pi_i \mathcal{KH} (X)$ of a homotopy $K$-theory spectrum. These satisfy formal properties analogous to those of the usual $K$-theory groups, but by contrast, and by their very definition, they are $\A^1$-invariant.

The non-positive $KH$-groups again admit a more elementary description; see  \cite[Section IV.$12$]{k-book}.
Let $KH_0(X)$ be the quotient of $K_0(X)$ by the subgroup generated by $i_0^*(\alpha) - i_1^*(\alpha)$, where $i_0$ (resp. $i_1$) are the inclusions of $X$ into $X \times \mathbb{A}^1$ as $X \times \{0\}$ (resp. $X \times \{1\}$) and $\alpha \in K_0(X \times \mathbb{A}^1)$. 
Then, as before:

\begin{definition}
We define the negative $K$-groups $KH_{-i}(X)$ for $i > 0$ inductively. Assuming $KH_{-i + 1}$ has already been defined, we set
\begin{align}\label{eqn KH-theory recursive}
    KH_{-i}(X) = \coker \bigg( KH_{-i+1}(X[t]) \oplus KH_{-i+1}(X[t^{-1}]) \xrightarrow{\pm} KH_{-i+1}(X[t,t^{-1}] )\bigg).
\end{align}
\end{definition}

Note that by construction, for each $i$ we have a natural morphism 
$$\varphi_i \colon K_i (X) \to KH_i (X).$$
It is known that $K_m$-regularity implies $\varphi_i$ is an isomorphism for all  $i \leq m$; see \cite[Corollary IV.$12.3.2$]{k-book}.
Thus for smooth varieties we have $K_i (X) = KH_i (X)$ for all $i$, as we have seen in Theorem \ref{thm:reg-smooth} that in this case $K$-theory is $\A^1$-invariant. In the singular case, once $K_i$-regularity is established for some index $i$, it suffices to focus on $KH_i (X)$.

An important fact, proved by Haesemeyer \cite{descent properties}, is that $\mathcal{KH}(\cdot)$ is a sheaf in the cdh topology.
As a consequence, we are free to use Mayer-Vietoris to compute $KH_i (X)$. 
Recall that we can choose a  cubical hyperresolution $\epsilon_\bullet \colon X_\bullet \rightarrow X$ such that $\dim X_i \leq n-i$ for each $i$. There is a spectral sequence
\begin{align}\label{spectral sequence hyperres}
    E_1^{p,q} = K_{q}(X_p) = KH_q(X_p) \Rightarrow KH_{q-p}(X).
\end{align}

Note that for complex varieties the homotopy-invariant version of the vanishing in Theorem \ref{cor:weibel-conj}, first noted in \cite[Theorem 7.1]{descent properties}, is an immediate consequence. Indeed, in the set-up above, we have $X_i = \emptyset$ for $i>n$. Thus $E_1^{p,q} = 0$ for $p > n$ and for $q < 0$, and therefore:

\begin{corollary}\label{weibel conj homotopy}
$KH_i(X) = 0$ for $i < -n$.
\end{corollary}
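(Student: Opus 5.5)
The plan is to read the vanishing off the descent spectral sequence (\ref{spectral sequence hyperres}) by pure bookkeeping of degrees; no Hodge theory or singularity input is needed. First, fix a cubical hyperresolution $\epsilon_\bullet \colon X_\bullet \to X$ with $\dim X_p \le n - p$ for every $p$; this exists by \cite[Theorem 2.15]{GNPP}, as recalled in \S\ref{scn:weights-integral}. In particular $X_p = \emptyset$ for $p > n$. Each $X_p$ is smooth, so Theorem \ref{thm:reg-smooth} gives $K_q(X_p) = KH_q(X_p)$, and these groups vanish for $q < 0$.

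Second, justify that the spectral sequence
$$E_1^{p,q} = KH_q(X_p) \Rightarrow KH_{q-p}(X)$$
exists and converges. This rests on Haesemeyer's theorem \cite{descent properties} that $\mathcal{KH}$ satisfies cdh descent. Cubical hyperresolutions are built from abstract blow-up squares, so $\mathcal{KH}(X)$ is the homotopy limit of the cubical diagram $p \mapsto \mathcal{KH}(X_p)$. This cubical diagram is finite: it is indexed by $p \le n$, because $X_p = \emptyset$ for $p > n$. The spectral sequence of a finite homotopy limit is therefore a bounded first-quadrant-type spectral sequence, and it converges strongly with no $\lim^1$ issues. I expect this step to be the main point needing care, namely matching the cubical indexing with the cdh descent statement. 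In the paper it is granted as (\ref{spectral sequence hyperres}), so I would simply cite it.

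Third, count degrees. The nonzero terms $E_1^{p,q}$ require two conditions:
\begin{itemize}
\item $0 \le p \le n$, since $X_p$ is empty otherwise;
\item $q \ge 0$, since smooth varieties have no negative $K$-theory.
\end{itemize}
A term contributing to $KH_i(X)$ has $i = q - p$, and these conditions force $q - p \ge -n$. Hence for $i < -n$ every $E_1^{p,q}$ with $q - p = i$ vanishes. The same holds for all later pages. By strong convergence, the finite filtration on $KH_i(X)$ has zero graded pieces, so $KH_i(X) = 0$ for $i < -n$.

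As a remark, the same count shows that only $E_1^{n,0} = K_0(X_n)$ contributes in total degree $-n$. So $KH_{-n}(X)$ is a quotient of $K_0(X_n)$, but the statement does not require this.
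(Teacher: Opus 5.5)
Your proposal is correct and follows essentially the same route as the paper. The paper reads the vanishing off the spectral sequence (\ref{spectral sequence hyperres}) for a cubical hyperresolution with $\dim X_p \le n-p$, using exactly your two conditions: $E_1^{p,q}=0$ for $p>n$ because $X_p=\emptyset$, and for $q<0$ because the $X_p$ are smooth.
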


We will approach the computation of $KH$-groups by means of cdh-motivic cohomology and the analogue of the Atiyah-Hirzebruch spectral sequence, as described 
in the next section.

\subsection{Cdh-motivic cohomology}\label{scn:cdh}
In this section we review some basics on the motivic cohomology of arbitrary complex varieties, which refines the study of $K$-groups.

\smallskip

\noindent
{\bf Higher Chow groups and motivic cohomology on smooth varieties.}
We first recall some important notions that are  building blocks in the $K$-theory of smooth varieties.

Let $X$ be a smooth variety of dimension $n$. For $k \geq 0$, denote
\begin{align*}
    \Delta^k = \mathop{\mathrm{Spec}} \bigg( \mathbb{C}[t_0, \ldots , t_k] / \big( \sum t_i - 1 \big) \bigg),
\end{align*}
the algebraic $k$-simplex. Abstractly, $\Delta^n$ is isomorphic to affine space $\mathbb{A}^n$. However, it also carries natural faces and degeneracies. Given an increasing map $\rho: \{0, \ldots , m\} \rightarrow \{0, \ldots , k\}$, define $\Tilde{\rho}: \Delta^m \rightarrow \Delta^k$ by $\Tilde{\rho}^*(t_i) = \sum_{\rho(j)=i} t_j$ and $\Tilde{\rho}^*(t_i) = 0$ if $\phi^{-1}(i) = \emptyset$. If $\rho$ is injective, we say $\Tilde{\rho}(\Delta^m) \subset \Delta^k$ is a face. If $\rho$ is surjective, $\Tilde{\rho}$ is a degeneracy.

For $0 \le k \le n$, define $z^\bullet (X,k)$ to be the free abelian group (graded by codimension) which is generated by the irreducible subvarieties of $X \times \Delta^k$ meeting each face $X \times \Delta^m \subset X \times \Delta^k$ properly. It is a standard check that $z^\bullet (X, \cdot)$ is preserved under pullback by face and degeneracy maps. As we vary $k$, it assembles into a simplicial complex of graded abelian groups $z^\bullet (X, \cdot)$.

The differential $\partial_i$ is pullback along the standard face map
\begin{align*}
    (t_0, \ldots , t_{k-1}) \mapsto (t_0, \ldots , t_{i-1}, 0 , t_i, \ldots t_{k-1})
\end{align*}
for $0 \leq i \leq k$, and the degeneracy $s_i$ is pullback along the standard degeneracy
\begin{align*}
    (t_0, \ldots , t_k) \mapsto (t_0, \ldots , t_{i-1}, t_i+t_{i+1} , t_{i+2}, \ldots t_k)
\end{align*}
for $0 \leq i \leq k-1$. The higher Chow groups $\CH^\bullet (X,k)$ are defined to be the homology of the associated chain complex. Concretely,
\begin{align*}
    \CH^\bullet (X,k) = \frac{\ker \bigg( \sum (-1)^i \partial_i : z^\bullet (X,k) \rightarrow z^\bullet (X,k-1) \bigg)}{\im \bigg( \sum (-1)^i \partial_i : z^\bullet (X,k+1) \rightarrow z^\bullet (X,k) \bigg)}
\end{align*}
For example, $\CH^\bullet (X,0)$ is the the quotient of $z^\bullet (X)$ by the subgroup generated by all cycles $Z(0)-Z(1)$, where $Z$ is a cycle on $X \times \mathbb{A}^1$, meeting the fibers over $0$ and over $1$ properly, i.e. the usual Chow group $\CH^\bullet (X)$.

There is an Atiyah-Hirzebruch spectral sequence
\begin{align*}
    E_2^{p,q} = \CH^{-q}(X, -p-q) \Rightarrow K_{-p-q}(X)
\end{align*}
relating higher Chow groups with $K$-theory \cite{Levine}, \cite{FS}. After tensoring with $\mathbb{Q}$, it degenerates at the $E_2$-page, thus providing a decomposition
\begin{equation}\label{eqn:K-decomposition}
    K_i(X)_\mathbb{Q} \simeq \bigoplus_p \CH^p(X, i)_\mathbb{Q},
\end{equation}
extending the well-known decomposition of $K_0(X)_\mathbb{Q}$ in terms of Chow groups, induced by the topological filtration (see \cite[15.2.16]{fulton}).

In the smooth case, motivic cohomology can be seen as a repackaging of higher Chow groups. One can define the weight $j$ motivic cohomology complex of  $X$ to be the chain complex of abelian groups 
$$\mathbb{Z}(j)(X) := z^j(X, \cdot )[-2j],$$ 
and then motivic cohomology is defined as the cohomology of this complex, i.e.
$$H^i(X, \mathbb{Z}(j)) := H^i(\mathbb{Z}(j)(X)).$$ 
Therefore we have $H^i(X, \mathbb{Z}(j)) \simeq \CH^j(X, 2j-i)$. It is clear that we have 
\begin{equation}\label{eqn:van-smooth-1}
H^i(X, \mathbb{Z}(j)) = 0 \,\,\,\,\,\,{\rm for} \,\,\,\, i > 2j.
\end{equation} 
We also have 
\begin{equation}\label{eqn:van-smooth-2}
H^i(X, \mathbb{Z}(j)) = 0 \,\,\,\,\,\,{\rm for} \,\,\,\, i > j + n.
\end{equation}
Indeed, $CH^j(X, 2j - i)$ is a subquotient of the group of codimension $j$ cycles on $X \times \mathbb{A}^{2j - i}$. 
However, $j$ is greater than the dimension $n + (2j - i)$ of the ambient space. See Lemma \ref{lem motivic soule-weibel} for a more general statement.

\smallskip

\noindent
{\bf Cdh-motivic cohomology on singular varieties.}
Let $\mathbb{Z}(j)^{\cdh}(\cdot)$ be the functor on arbitrary complex varieties defined as the cdh-sheafification of the functor $\mathbb{Z}(j)(\cdot)$ defined as above on smooth varieties. For an arbitrary complex variety $X$, following \cite{BEM} we denote 
$$H^i_{\cdh}(X, \mathbb{Z}(j)) : = H^i\big(\mathbb{Z}(j)^{\cdh}(X)\big).$$
This is called the \emph{cdh-motivic cohomology} of $X$, or we will simply say the \emph{cdh cohomology} of $X$. Over the complex numbers, it coincides with the motivic cohomology introduced by Friedlander and Voevodsky \cite{FV}.  By definition, it can be computed in terms of the motivic cohomology of the smooth constituents of a cubical hyperresolution of $X$. Concretely, if $\epsilon_\bullet \colon  X_\bullet \to X$ is any such hyperresolution, then  for each $j$ there is a \emph{descent spectral sequence}
\begin{equation}\label{eq:ss-cdh}
E_1^{p,q} = H^q(X_p, \mathbb{Z}(j)) \Rightarrow H^{p+q}_{\cdh}(X, \mathbb{Z}(j)),
\end{equation}
where the terms on the left denote the motivic cohomology defined in the previous subsection. Thus in the smooth case we have 
$$H^i_{\cdh}(X, \mathbb{Z}(j)) = H^i (X, \mathbb{Z}(j)),$$
i.e. cdh cohomology coincides with motivic cohomology.

The crucial point is that this is an $\A^1$-invariant cohomology theory, that plays the same role for $KH$-theory that motivic cohomology plays for the $K$-theory of smooth varieties. We recall some important results in this direction, originally due to a number of authors at a level of generality that includes complex varieties as needed here. (Recently these results have been extended to arbitrary qcqs schemes in \cite[Theorem~1.1]{BEM}.)

\begin{theorem}\label{thm motivic cohom}
Let $X$ be a complex algebraic variety.
\begin{enumerate}
\item There exists an Atiyah-Hirzebruch type spectral sequence \cite[Theorem 7.13]{descent properties}
\begin{align*}
    E_2^{p,q} = H_{\cdh}^{p-q}(X, \mathbb{Z}(-q)) \Rightarrow KH_{-p-q}(X), 
\end{align*}
which degenerates rationally at the $E_2$ page by \cite[Theorem 4.27]{KP}.

\smallskip

\item The map $\mathbb{Z}(j)^{\cdh}(X) \rightarrow \mathbb{Z}(j)^{\cdh}(X \times \mathbb{A}^1)$ is an equivalence \cite[\S9]{FV}.

\smallskip

\item There are equivalences $\mathbb{Z}(0) = R\Gamma_{\cdh}(X, \mathbb{Z})$ and $\mathbb{Z}(1) = R\Gamma_{\cdh}(X, \mathbb{G}_m)[-1]$; see \cite[Theorem 4.1]{MVW}.
Therefore, if $X$ is smooth and connected, $\mathbb{Z}(0) = \mathbb{Z}$, considered as an abelian group concentrated in degree zero, and
\[
H^i_{\mathrm{cdh}}(X,\mathbb{Z}(1)) \cong 
\begin{cases}
H^0\bigl(X,\mathscr{O}_X^*\bigr) & \text{if } i = 1,\\[4pt]
\operatorname{Pic}(X) & \text{if } i = 2,\\[4pt]
0 & \text{otherwise.}
\end{cases}
\]
\end{enumerate}
\end{theorem}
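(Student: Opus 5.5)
The theorem collects results from the literature, so the plan is to reduce each item to the cited sources. The only genuine verifications are the explicit computations in (3) for smooth connected $X$, and checking that the indexing conventions of the sources match ours.

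For (1), I would take Haesemeyer's cdh-descent for $\mathcal{KH}$ (\cite{descent properties}) together with the slice/motivic filtration on $\mathcal{KH}$. On smooth varieties this is the Friedlander--Suslin/Levine filtration, whose graded pieces are $\mathbb{Z}(j)[2j]$. Cdh-sheafifying the filtration gives a filtered spectrum whose graded pieces compute $H^{*}_{\cdh}(X,\mathbb{Z}(j))$. This yields the stated spectral sequence after reindexing $E_2^{p,q}=H^{p-q}_{\cdh}(X,\mathbb{Z}(-q))$, which I will check against the smooth case $E_2^{p,q}=\CH^{-q}(X,-p-q)$ using $H^i(X,\mathbb{Z}(j))=\CH^j(X,2j-i)$.

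Convergence needs a finiteness input. By \eqref{spectral sequence hyperres} and the descent spectral sequence \eqref{eq:ss-cdh} for a cubical hyperresolution with $\dim X_i\le n-i$, only finitely many weights $j$ contribute to each $KH_{m}$. This uses $H^i(X_p,\mathbb{Z}(j))=0$ for $i>j+\dim X_p$, as in \eqref{eqn:van-smooth-2}. For rational degeneration, Adams operations act on the filtered object, and $\psi^k$ acts on the weight $j$ graded piece by $k^j$. The differentials $d_r$ for $r\ge2$ change the weight, so they commute with operators acting by different eigenvalues and must vanish after $\otimes\Q$. This is the argument of \cite[Theorem 4.27]{KP}, which I will cite rather than reprove.

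For (2), $\A^1$-invariance, I would use that $\mathbb{Z}(j)$ on smooth schemes is $\A^1$-invariant by homotopy invariance of higher Chow groups. Cdh-sheafification preserves this: for a hyperresolution $X_\bullet$ of $X$, the variety $X_\bullet\times\A^1$ is a hyperresolution of $X\times\A^1$, so the two descent spectral sequences \eqref{eq:ss-cdh} are compared termwise by isomorphisms. This is \cite[\S9]{FV}.

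For (3), the identification $\mathbb{Z}(0)\simeq\mathbb{Z}$ holds on smooth schemes, since $z^0(X,\cdot)$ is the constant simplicial group $\mathbb{Z}$ for connected $X$. The identification $\mathbb{Z}(1)\simeq\mathcal{O}^*[-1]$ on smooth schemes is \cite[Theorem 4.1]{MVW}, and sheafifying both identifications gives the cdh statements. For smooth connected $X$ one can then compute directly from $H^i(X,\mathbb{Z}(1))=\CH^1(X,2-i)$:
\begin{itemize}
\item $\CH^1(X,0)=\Pic X$;
\item $\CH^1(X,1)=\mathcal{O}^*(X)$, by Bloch's computation via divisors on $X\times\Delta^1$;
\item $\CH^1(X,k)=0$ for $k\ge2$, also by Bloch;
\item $i>2$ gives zero by \eqref{eqn:van-smooth-1}.
\end{itemize}

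I expect the main obstacle to be bookkeeping rather than mathematics. The real points are matching the grading conventions of the Atiyah--Hirzebruch spectral sequence across the sources, and checking that the cdh and Zariski cohomology of $\mathbb{G}_m$ agree on smooth $X$, so that $H^1_{\cdh}(X,\mathbb{G}_m)=\Pic X$ and the higher groups vanish. For the latter I would first try the Gersten-type flasque resolution $0\to\mathcal{O}^*\to K^*\to\bigoplus_{x\in X^{(1)}}\mathbb{Z}\to0$, which gives the vanishing of $H^i_{\rm Zar}(X,\mathcal{O}^*)$ for $i\ge2$. I would then pass from Zariski to cdh using \eqref{eq:ss-cdh} and the fact that the sheafification leaves smooth values unchanged.
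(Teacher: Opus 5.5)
Your proposal is correct and matches the paper, which gives no argument for this theorem beyond the same citations (Haesemeyer and \cite[Theorem 4.27]{KP} for (1), Friedlander--Voevodsky for (2), Mazza--Voevodsky--Weibel for (3)). Your supplementary sketches are sound: the convergence bookkeeping, the Adams-operation argument for rational degeneration, the $X_\bullet\times\A^1$ hyperresolution argument for $\A^1$-invariance, and Bloch's computation of $\CH^1(X,k)$.

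One small simplification is available. The paper already uses $H^i_{\cdh}(X,\mathbb{Z}(j))\simeq H^i(X,\mathbb{Z}(j))$ for smooth $X$, so Bloch's computation gives the display in (3) directly. The separate cdh-versus-Zariski comparison for $\mathbb{G}_m$, which you flag as the main obstacle, is therefore not needed.
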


\begin{remark}\label{rmk:Z-copy}
We note a useful fact about the spectral sequence in the theorem, namely that
$$E_2^{0,0} \simeq E_\infty^{0,0}.$$
It is straightforward to reduce this to the case when $X$ is connected, when $E_2^{0,0} \simeq H_{\rm cdh}^0 (X, \ZZ (0))\simeq \ZZ$.
We use the fact that the spectral sequence is ``multiplicative", meaning there are products 
$$E_r^{p,q} \otimes E_r^{a,b} \to E_r^{p+a, q+b}$$ 
which satisfy a Leibniz rule with respect to the differential; see \cite[Theorem 7.12(1)]{BEM}.
In particular, taking the element $1 \in E_r^{0,0}$, we have $d_r(1) = d_r( 1 \otimes 1) = d_r(1)\otimes1 + 1\otimes d_r(1)$, so $d_r(1) = 0$. 
Intuitively, this copy of $\ZZ$ survives at the $E_\infty$ stage because it appears in the surjective rank homomorphism ${\rm rk}: K_0(X) \to \ZZ$.
\end{remark}

The cdh cohomology groups $H^i_{\cdh}(X, \mathbb{Z}(j))$ do not necessarily vanish anymore when $i > 2j$, and their study in this range is essentially the main point in what follows. On the other hand, using the spectral sequence (\ref{eq:ss-cdh}) and the vanishing properties of the motivic cohomology of smooth varieties, we have the following immediate consequence:

\begin{lemma}\label{lem motivic soule-weibel}
We always have 
$$H^i_{\cdh}(X, \mathbb{Z}(j)) = 0 \,\,\,\,\,\, {\rm  for}\,\,\,\, i > j + n.$$
\end{lemma}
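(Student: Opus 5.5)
We will deduce the vanishing from the descent spectral sequence (\ref{eq:ss-cdh}) attached to a cubical hyperresolution $\epsilon_\bullet \colon X_\bullet \to X$, chosen by \cite[Theorem 2.15]{GNPP} so that $\dim X_p \le n - p$ for every $p$ (and $X_p = \emptyset$ for $p > n$). The abutment $H^{i}_{\cdh}(X, \ZZ(j))$ has a finite filtration whose graded pieces are subquotients of the terms $E_1^{p,q} = H^q(X_p, \ZZ(j))$ with $p + q = i$. It therefore suffices to show that all these $E_1$-terms vanish whenever $i > j + n$.

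The key input is the vanishing (\ref{eqn:van-smooth-2}) for smooth varieties, applied to each smooth (possibly disconnected) constituent $X_p$. Its components have dimension at most $n - p$, so $H^q(X_p, \ZZ(j)) = 0$ for $q > j + (n - p)$. Now fix $p, q$ with $p + q = i > j + n$. Then $q = i - p > j + n - p \ge j + \dim X_p$, hence $E_1^{p,q} = 0$. For $p > n$ the term vanishes trivially because $X_p = \emptyset$, and for $p < 0$ there are no terms at all. Since every term on the line $p + q = i$ vanishes, so does the abutment.

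The only point requiring a remark is the justification of (\ref{eqn:van-smooth-2}) in the form used, including the case where $X_p$ has components of different dimensions. Motivic cohomology is additive over disjoint unions, so we may treat each component separately. For a smooth variety $Z$ of dimension $d$ we have $H^q(Z, \ZZ(j)) = \CH^j(Z, 2j - q)$, and this is a subquotient of the group of codimension $j$ cycles on $Z \times \Delta^{2j-q}$, which has dimension $d + 2j - q$. This group vanishes once $j > d + 2j - q$, i.e. once $q > j + d$. (If $2j - q < 0$ the group is zero anyway.)

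There is no genuine obstacle here. The argument is a bookkeeping exercise combining the dimension bound on the hyperresolution with the smooth vanishing, and the convergence of the finite (first-quadrant, bounded) descent spectral sequence is what the paper already takes as given in (\ref{eq:ss-cdh}).
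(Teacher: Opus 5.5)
Your proof is correct and is essentially the paper's argument. Both take a hyperresolution from \cite[Theorem 2.15]{GNPP} with $\dim X_p \le n-p$ and apply the smooth vanishing (\ref{eqn:van-smooth-2}) to each term $E_1^{p,q}$ with $p+q>j+n$.

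One small correction: the descent spectral sequence is not known to be first-quadrant, since negative-degree motivic cohomology of smooth varieties in weight $j\ge 2$ is the open Beilinson--Soul\'e vanishing. Convergence still holds, because only the finitely many columns $0\le p\le n$ are nonzero.
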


Indeed, if we choose a cubical hyperresolution with the property that $\dim X_p \le n -p$ for all $p$, and take $p + q > j + n$, then the motivic cohomology $H^q (X_p, \ZZ (j))$ always vanishes, because $q > j + n-p\ge j + \dim X_p$, and we apply (\ref{eqn:van-smooth-2}).

To avoid any confusion, we conclude by noting that in the singular setting we will use the terminology \emph{motivic cohomology} for a different cohomology theory $H^i_{\mot}(X, \mathbb{Z}(j))$, introduced in \cite{motivic cohomology}, that typically differs from cdh-motivic cohomology, and plays a similar role with respect to the standard $K$-theory. For this cohomology theory, the vanishing in Lemma \ref{lem motivic soule-weibel} still holds with a more involved argument, and is called ``motivic Soul\'e-Weibel vanishing"; see \cite[Theorem~1.14]{motivic cohomology}. Results on motivic cohomology parallel to those in this paper will appear in \cite{BPS}.

\subsection{Singularities of pairs and MMP}\label{scn:MMP}
Finally, in preparation for \S\ref{scn:weight2} and \S\ref{scn:chow} especially, we briefly review singularities of pairs and some results from the MMP. For a detailed introduction, please see \cite{kollar mori} or \cite{mmp}.

A \emph{pair} $(X, \Delta)$ consists of a normal variety $X$, together with a $\mathbb{Q}$-divisor $\Delta$ on $X$ such that $K_X + \Delta$ is $\mathbb{Q}$-Cartier.
If $f\colon Y \rightarrow X$ is a proper birational morphism, there is a unique $\mathbb{Q}$-divisor $\Delta_Y$ on $Y$, called the \emph{log pullback} of $\Delta$, defined by the formulas
\begin{align}\label{eqn delta_Y}
    K_Y + \Delta_Y \sim_\mathbb{Q} f^*(K_X + \Delta) \hspace{.2cm} \text{ and } \hspace{.2cm} f_* \Delta_Y = \Delta. 
\end{align}
Given a divisor $E \subset Y$, its \emph{discrepancy}, denoted $a(E, X, \Delta)$, is the negative of the coefficient of $E$ in $\Delta_Y$.

A pair $(X, \Delta)$ is called \emph{lc} (resp. \emph{klt}) if $a(E, X, \Delta) \geq -1$ (resp. $a(E, X, \Delta) > -1)$ for every birational morphism 
$f\colon Y \rightarrow X$ and every divisor $E$ on $Y$. Moreover, it is called \emph{plt} if it is lc and $a(E, X, \Delta) > -1$ for every exceptional divisor $E$ over X. An irreducible subvariety $Z \subset X$ is an \emph{lc center} if there exists a birational morphism $f\colon Y \rightarrow X$ and a divisor $E \subset Y$ such that $a(E, X, \Delta)= -1$ and $f(E) = Z$.

We say that the pair $(X, \Delta)$ is a \emph{simple normal crossing pair} if $X$ is smooth and $\Delta$ is a reduced simple normal crossing divisor.  
Given any pair $(X, \Delta)$, where $\Delta$ has coefficients between $0$ and $1$, there exists a largest open set $X^{\rm snc} \subset X$, called the \emph{snc locus} such that $(X^{\rm snc}, \Delta|_{X^{\mathrm{snc}}})$ is a simple normal crossing pair. An lc pair $(X, \Delta)$ is called  \emph{dlt} if none of the lc centers of $(X, \Delta)$ are contained in $X \setminus X^{\rm snc}$. In this case, the lc centers of $(X, \Delta)$ are in bijective correspondence with the strata of the snc divisor $\Delta|_{X^{\mathrm{snc}}}$.

A \emph{log resolution} of $(X, \Delta)$ is a resolution $f\colon Y \rightarrow X$ of $X$ such that the pair $(Y, {\rm Supp}(\mathrm{Exc}(f) + \widetilde{\Delta}))$ is a  simple normal crossing  pair, where $\widetilde{\Delta}$ is the proper transform of $\Delta$. We say that $f$ is a \emph{strong log resolution} if in addition it is an isomorphism over the locus where $(X, {\rm Supp} (\Delta))$ is an snc pair.

Suppose $(Y, \Gamma)$ is a dlt pair and $E = \Gamma^{=1}$, the sum of the components with coefficient $1$. Every stratum (or, equivalently, lc center) $S$ inherits a natural $\mathbb{Q}$-divisor $\Gamma_S = \mathrm{Diff}_S^* \Gamma$ such that $(S, \Gamma_S)$ is dlt and $K_S + \Gamma_S \sim_\mathbb{Q} \left(K_Y + \Gamma)\right|_S$.  See \cite[\S4.1]{mmp} for details. In general, $\Gamma_S$ can be complicated. However, if all of the divisors in $\Gamma$ have coefficient $1$ and if $K_Y + \Gamma$ is Cartier, then the same holds for $\Gamma_S$, and $\Gamma_S$ consists of the intersections of $S$ with those irreducible components of $\Gamma$ that do not contain $S$. This construction will be useful for transferring information about $Y$ to the strata $S$.

\smallskip

We have the well-known existence result needed in order to run the minimal model program.

\begin{theorem}\cite[Theorem $1.2$]{bchm}\label{thm bchm}
Let $(Y, \Gamma)$ be a klt pair and let $g\colon Y \rightarrow Z$ be a projective morphism of quasi-projective varieties. If $K_Y + \Gamma$ is $g$-big (for example, if $g$ is birational), we may run a $(K_Y + \Gamma)$-MMP over $Z$ ending with a log terminal model over $Z$.
\end{theorem}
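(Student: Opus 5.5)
This statement is imported from \cite{bchm}, and in the paper we would simply cite it. If one wanted to reconstruct the argument, the plan would follow the inductive scheme of Birkar--Cascini--Hacon--M\textsuperscript{c}Kernan, working with relative versions over $Z$ throughout.

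\emph{Reduction to the general type case.} First reduce to the case where $\Gamma$ itself is $g$-big. Since $K_Y+\Gamma$ is $g$-big, one can write $K_Y+\Gamma \sim_{\Q, Z} A + B$ with $A$ $g$-ample and $B \ge 0$. Perturbing, the pair $(Y, \Gamma + \epsilon B)$ is still klt and has a big boundary. When $g$ is birational every divisor is $g$-big, which covers the parenthetical case in the statement.

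\emph{Simultaneous induction.} Next, prove by simultaneous induction on $\dim Y$ four statements:
\begin{itemize}
\item existence of pl-flips;
\item existence of flips for klt pairs;
\item existence of log terminal models for klt pairs with big boundary;
\item non-vanishing, together with finiteness of models as the boundary varies in a rational polytope of big klt boundaries.
\end{itemize}
Pl-flips come from Shokurov's reduction to finite generation of a restricted algebra on a plt divisor $S$. That finite generation is proved via extension theorems (Hacon--M\textsuperscript{c}Kernan, using multiplier ideals and Kawamata--Viehweg vanishing), which reduce it to finite generation on $S$, where the lower-dimensional log terminal models are available. Flips for klt pairs are then reduced to pl-flips by the standard dlt blow-up argument.

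\emph{Termination.} Run the MMP \emph{with scaling} of an ample divisor $H$. Each step exists by the cone and contraction theorems plus the existence of flips. Termination would follow from finiteness of models: every step is a log terminal model for some boundary $\Gamma + tH$ with $t$ in a compact interval, and only finitely many such models occur. The finiteness is itself proved using non-vanishing and the lower-dimensional statements, through special termination near the reduced boundary.

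\emph{Main obstacle.} The main obstacle is the interlocking induction: finite generation of the restricted algebra and non-vanishing (via Diophantine approximation and Nadel vanishing) in dimension $n$ require the full package in dimension $n-1$. Organising the induction so that each step uses only what is already available is the heart of \cite{bchm}. For the purposes of this paper I would not reprove any of this and would invoke the theorem directly.
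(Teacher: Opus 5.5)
Your proposal agrees with the paper, which gives no proof of its own and simply invokes \cite[Theorem 1.2]{bchm}; your outline of the BCHM induction is reasonable orientation but is not needed. There is one slip in that optional sketch. The divisor $\Gamma+\epsilon B$ need not be $g$-big (e.g.\ $Z$ a point, $\Gamma=B=0$, $K_Y$ ample), and $K_Y+\Gamma+\epsilon B$ is not proportional to $K_Y+\Gamma$. You should instead perturb to $\Gamma+\epsilon(A'+B)$, with $A'\ge 0$ a general member of the $\Q$-linear class of $A$ over $Z$. For small $\epsilon$ this keeps the pair klt, makes the boundary big, and gives $K_Y+\Gamma+\epsilon(A'+B)\sim_{\Q,Z}(1+\epsilon)(K_Y+\Gamma)$, so the two MMPs coincide.
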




Recall from Remark \ref{rmk:klt} that a variety $X$ is called \emph{of klt type} if there exists an effective $\Q$-divisor $\Delta$ on $X$ such that the pair $(X, \Delta)$ is klt. We will require the existence of a nice boundary for such a variety. The proof is inspired by \cite[Theorem $5.4$]{dFH} and discussions with C. Hacon.

\begin{lemma}\label{lem new boundary}
Suppose $X$ is a quasi-projective variety of klt type. Then there exists a $\mathbb{Q}$-divisor $\Delta$ on $X$ with smooth support away from the singular locus such that $(X, \Delta)$ is a klt pair.
\end{lemma}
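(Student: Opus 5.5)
We start from an arbitrary klt boundary and perturb it so that, away from $X_{\sing}$, its support becomes smooth. Fix an effective $\Q$-divisor $\Delta_0$ with $(X,\Delta_0)$ klt. Choose $r>0$ such that $r(K_X+\Delta_0)$ is Cartier, and fix a very ample line bundle $H$ on $X$ (here $X$ is quasi-projective). The idea is to replace $\Delta_0$ by a $\Q$-linearly equivalent boundary $\Delta = \frac{1}{N}G$, where $G$ is a general member of a suitable linear system. The klt property is then preserved by the standard fact that adding a general member of a free linear system with small coefficient, or replacing a boundary by a general member of a basepoint-free system containing it, keeps a pair klt. Bertini will then give smoothness of $G$ away from the base locus and from $X_{\sing}$.

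First, write $\Delta_0 \sim_\Q D$ and consider the $\Q$-Cartier issue. Since $X$ is smooth on $U = X\setminus X_{\sing}$, $\Delta_0|_U$ is $\Q$-Cartier, but globally only $K_X+\Delta_0$ is. The key step is therefore to find $N$ and a linear system $|L|$ with $L \sim N\Delta_0 + (\text{something supported over } X_{\sing})$ that is basepoint free on $U$.

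The concrete construction I would try is the following. Take $m\gg0$ and $N$ divisible so that $N\Delta_0$ is an integral Weil divisor. Consider the reflexive sheaf $\O_X(N\Delta_0 + mNH)$, which is globally generated on $U$ for $m\gg0$ by Serre vanishing and generation on quasi-projective varieties, since the reflexive sheaf is coherent. Pick a general $G \in |N\Delta_0+mNH|$ and a general $H' \in |mNH|$ (a sum of general hyperplanes). Set
\[
\Delta := (1-\epsilon)\Delta_0 + \tfrac{\epsilon}{N}\,G - \tfrac{\epsilon}{N}H' ,
\]
and then adjust. This is not effective, so instead I would use $\Delta := \tfrac{1}{N}G - \tfrac{1}{N}H'$ as a $\Q$-linear equivalence and rebalance: $K_X+\tfrac1N G \sim_\Q K_X+\Delta_0 + mH$. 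The cleaner route is to write $\Delta_0 \sim_\Q \tfrac1N G - mH$, choose $A\in|mNH|$ general with $\tfrac1N A$ adding $mH$, and note that this does not directly cancel.

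A better plan, following \cite[Theorem 5.4]{dFH}, is to pass to a log resolution $f\colon Y\to X$ of $(X,\Delta_0)$ that is an isomorphism over the snc locus, which contains $U$ minus $\mathrm{Supp}\,\Delta_0$. On $Y$, write $K_Y+\Delta_Y = f^*(K_X+\Delta_0)$. Then $f^*$ of a general member of $|N(K_X+\Delta_0)+ kNH|$ is basepoint free, with Cartier divisor on $X$. Take $\Delta := \tfrac1N B - K_X$ style expressions, i.e. $\Delta \sim_\Q \Delta_0$ realized as $\tfrac1N(\text{general member of } |{-NK_X} + \ldots|)$. Concretely, the reflexive sheaf $\O_X(-NK_X + N(K_X+\Delta_0) + kNH)$ is generated on $U$ for $k\gg0$. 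A general member $G$ is then smooth on $U$ by Bertini, since on $U$ this is an honest base-point-free system, and we set $\Delta = \tfrac{1}{N}G$, so that $K_X+\Delta \sim_\Q K_X+\Delta_0+kH$ is $\Q$-Cartier.

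For klt, the fixed part of the system lies over $X_{\sing}$ and is contained in the fixed part dominated by $N\Delta_0$ plus a general element. Discrepancies over $X_{\sing}$ are then controlled by convexity: write $\Delta$ as a limit combination $(1-t)\Delta_0 + t\Delta'$ and use that general members of the mobile part add no new non-klt centers. The main obstacle is exactly this control of discrepancies at the base locus over $X_{\sing}$. There I would pass to $Y$, use that $f^*(K_X+\Delta_0)+kf^*H$ minus fixed part is free, and apply generic smoothness on $Y$, with the fixed divisor having coefficients dominated by those of $\Delta_Y$ after scaling by $1/N$, to conclude that all discrepancies remain $>-1$.
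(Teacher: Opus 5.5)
Your final construction is correct and is in substance the paper's. With $r\mid N$ and $N\ge 2$, the system $|N\Delta_0+kNH|$ equals $|{-NK_X}+C'|$ with $C'=N(K_X+\Delta_0)+kNH$ Cartier. This is the same kind of system $|\mathscr{L}-mD|$ (with $K_X-D$ Cartier) from which the paper takes $M$. The $\Q$-Cartierness of $K_X+\frac1N G$ and smoothness on $U$ via Bertini are fine.

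The gap is the klt verification, which is the real content of the lemma and which you only sketch. You take $f$ to be just a log resolution of $(X,\Delta_0)$ and assert that the pulled-back system minus its fixed part is free; this is not automatic. You must choose $Y$ so that it also principalizes the base ideal of the system. For example, require $\mathscr{O}_X(N\Delta_0)\cdot\mathscr{O}_Y$ to be invertible, and take $k$ so large that $\mathscr{O}_X(N\Delta_0+kNH)$ is globally generated on all of $X$, not only on $U$. This is why the paper demands that $\mathscr{O}_X(-mD)\cdot\mathscr{O}_Y$ and $\mathscr{O}_X(B)\cdot\mathscr{O}_Y$ be invertible. Without this, $\widetilde G$ may be singular or non-transverse to $\operatorname{Exc}(f)$ over $X_{\sing}$. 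Then $f$ is not a log resolution of $(X,\frac1N G)$, and discrepancies computed on $Y$ do not decide klt. Nor can you choose $G$ general separately for each of the (uncountably many) divisors over $X$.

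The inequality ``fixed part $\le N\Delta_Y$'' also needs a reason, and the convexity/limit-combination remark is not one. The reason is that the system contains the special member $W=N\Delta_0+A$ with $A\in|kNH|$ general. Write $G=W+\operatorname{div}(s)$ with $s\in H^0(X,\mathscr{O}_X(W))$. Then $f^*(K_X+\frac1N G)=f^*(K_X+\Delta_0)+\frac1N\big(f^*A+\operatorname{div}_Y(s)\big)$, hence for every exceptional $E\subset Y$
\[ a(E,X,\tfrac1N G)=a(E,X,\Delta_0)-\tfrac1N\big(\operatorname{ord}_E f^*A+\operatorname{ord}_E s\big). \]
Since $W\ge 0$, the constant $1$ lies in $H^0(X,\mathscr{O}_X(W))$, so a general $s$ satisfies $\operatorname{ord}_E s\le\operatorname{ord}_E 1=0$. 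Also $\operatorname{ord}_E f^*A=0$ for general $A$. Thus $a(E,X,\frac1N G)\ge a(E,X,\Delta_0)>-1$.

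Combined with the principalization step ($\frac1N\widetilde G$ smooth, with coefficient $\frac1N<1$, and snc with $\operatorname{Exc}(f)$), this gives klt. It is the same comparison as the paper's identity (\ref{eqn klt}), whose second summand is the effective correction term.
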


\begin{proof}
Choose a $\mathbb{Q}$-divisor $\Delta_1 \geq 0$ such that $(X, \Delta_1)$ is klt, and let $m \geq 2$ be such that $B := m \Delta_1$ is integral and $m(K_X + \Delta_1)$ is Cartier. 

Let $D$ be an effective Weil divisor for which $K_X - D$ is Cartier. Take $f\colon Y \rightarrow X$ to be a log resolution of $(X, \Delta_1)$; in addition, we require that the sheaves $\mathscr{O}_X(-mD) \cdot \mathscr{O}_Y$ and $\mathscr{O}_X(B) \cdot \mathscr{O}_Y = \mathscr{O}_Y(\widetilde{B}+E)$ be invertible, where $\widetilde{B}$ is the proper transform of $B$ and $E \geq 0$ is exceptional, and that all such divisors have simple normal crossings.

Let $\mathscr{L}$ be an invertible sheaf such that $\mathscr{L} \otimes \mathscr{O}_X(-mD)$ is globally generated, where  $\mathscr{O}_X(-mD)$ is the reflexive rank one sheaf associated to the integral divisor $mD$, and let $G$ be a general element in the linear system $\{ L \in | \mathscr{L}| : L - mD \geq 0 \}$. We write $G = M + mD$, where $M$ is an effective divisor. Finally, set
\begin{align*}
    \Delta := \frac{1}{m} M.
\end{align*}
By construction, $K_X + \Delta$ is $\mathbb{Q}$-Cartier. Moreover, away from the singular locus of $X$, the sheaf 
$\mathscr{L} \otimes \mathscr{O}_X(-mD)$ is invertible and globally generated, so the restriction of $M$ is smooth by Bertini's theorem.

It remains to show that $(X, \Delta)$ is klt. By the generality of $M$, $f$ is a log resolution of $(X, \Delta)$. Since $\Delta = \frac{1}{m} M$ and $\Delta_1 = \frac{1}{m} B$, we compute
\begin{align}\label{eqn klt}
    K_Y + \widetilde{\Delta} - f^*(K_X + \Delta) 
    = \bigg( K_Y + \widetilde{\Delta}_1 - f^*(K_X + \Delta_1) \bigg) + 
    \frac{1}{m} \bigg( f^*(B-M) -  \widetilde{B} + \widetilde{M} \bigg).
\end{align}
Since $B - M = m(K_X + \Delta_1) - m(K_X + \Delta)$ is Cartier, we have $f^*(B-M) = \widetilde{B} + E - \widetilde{M}$. Thus, the second term on the right of (\ref{eqn klt}) is effective. Since $(X, \Delta_1)$ is klt, the first term on the right of (\ref{eqn klt}) has coefficients $> -1$. We conclude that the left side of (\ref{eqn klt}) has coefficients $> - 1$. Therefore, $(X, \Delta)$ is klt. 
\end{proof}

Next we recall a special type of morphism, useful in what follows (see e.g. \cite[Definition $2.5$]{moraga plt}).

\begin{definition}\label{defn plt blow-up}
A \emph{plt blow-up} of a klt pair $(X, \Delta)$ at a point $x \in X$ is a projective birational morphism $h\colon Z \rightarrow X$, such that:
\begin{enumerate}
    \item $Z$ is a quasi-projective normal variety,
    \item the exceptional locus of $h$ is an irreducible divisor $D$ whose image on $X$ is $x$,
    \item the pair $(Z, \widetilde{\Delta}+D)$ is plt, and
    \item $-D$ is a $\Q$-Cartier divisor which is ample over $X$.
\end{enumerate}
Plt blow-ups always exist by \cite[Lemma 1]{Xu}; cf. also\cite[Proposition $2.7$]{moraga plt}.

We will say a log resolution $f\colon Y \rightarrow X$ factors through a plt blow-up if locally around each singular point $x \in X$, there exists a plt blow-up $h\colon Z \rightarrow X$ of $x$ such that  $f = h \circ g$, where $g\colon Y \rightarrow Z$ is a morphism.
\end{definition}

We will at times assume that $f$ factors through a plt blow-up to ensure that any preimage of the $\mathbb{Q}$-Cartier divisor $D = h^{-1}(x)$ is divisorial. Without this assumption, some variety over $X$ arising in the MMP may have exceptional components of codimension $\geq 2$, and these are difficult to control.

We also require the following version of the connectedness theorem.

\begin{proposition}\label{prop connectness}\cite[Proposition $25$]{dFKX}
Let $(Y, \Gamma)$ be a dlt pair and let $g\colon Y \rightarrow Z$ be a projective morphism such that $g_* \mathcal{O}_Y \simeq \mathcal{O}_Z$ and $-(K_Y+\Gamma)$ is $g$-ample. Then for any (possibly non-closed) point $z \in Z$, the set of all lc centers of $(Y, \Gamma)$ which intersect $g^{-1}(z)$ is either empty or contains a unique minimal element with respect to inclusion.
\end{proposition}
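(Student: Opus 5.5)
The plan is to follow the Kollár--Shokurov connectedness argument, combined with adjunction to lc centers and induction on $\dim Y$.

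\textbf{Reduction.} First I localize: replace $Z$ by an affine neighbourhood of (the closure of) $z$, and after base change to ${\rm Spec}\,\mathscr{O}_{Z,z}$ assume $z$ is the closed point. Set $E = \lfloor \Gamma \rfloor = \Gamma^{=1}$. The lc centers of the dlt pair $(Y,\Gamma)$ are $Y$ itself and the strata of $E$. Intersections of strata are again unions of strata. Hence it suffices to show the following claim.
\begin{itemize}
\item[] \emph{If $W_1, W_2$ are lc centers meeting $g^{-1}(z)$, then $W_1 \cap W_2 \cap g^{-1}(z) \neq \emptyset$.}
\end{itemize}
Granting the claim, a component of $W_1 \cap W_2$ through a point of the fiber is an lc center contained in both and meeting $g^{-1}(z)$. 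If $W_1, W_2$ are both minimal, this forces $W_1 = W_2$. If no proper lc center meets the fiber, then $Y$ is the unique minimal element and there is nothing to prove.

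\textbf{Step 1: connectedness of $E$ over $z$.} I perturb $\Gamma$ in the standard way for dlt pairs so that the fractional part becomes klt, while keeping $-(K_Y+\Gamma)$ $g$-ample. Then
$$-E - (K_Y + \{\Gamma\}) = -(K_Y + \Gamma)$$
is $g$-ample, so Kawamata--Viehweg vanishing gives $R^1 g_* \mathscr{O}_Y(-E) = 0$. Therefore
$$\mathscr{O}_Z = g_*\mathscr{O}_Y \to g_*\mathscr{O}_E$$
is surjective. It follows that $E \cap g^{-1}(z)$ is connected or empty.

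\textbf{Step 2: induction via adjunction.} For a component $S$ of $E$, the pair $(S, \Gamma_S)$ is dlt. Its lc centers are exactly the lc centers of $Y$ contained in $S$, and $-(K_S+\Gamma_S) = -(K_Y+\Gamma)|_S$ is ample over $Z$. Let $S \to Z_S \to Z$ be the Stein factorization. Applying the induction hypothesis over each of the finitely many points of $Z_S$ above $z$ gives a unique minimal lc center inside $S$ over each such point. The same vanishing argument applied on $S$, with $E_S = \lfloor \Gamma_S \rfloor$, gives connectedness of $E_S$ over $z$.

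\textbf{Step 3: chaining, the main obstacle.} Take components $S_1 \supseteq W_1$ and $S_2 \supseteq W_2$. By Step 1 there is a chain of components of $E$ joining $S_1$ to $S_2$, with consecutive members meeting over $z$. Along the chain I would propagate ``minimal centers'' from one component to the next through the lc centers $S_i \cap S_{i+1}$. The difficulty is twofold. First, $S \to Z$ need not have connected fibers, so a component can contain several minimal centers over different points of $Z_S$. Second, one must show these are compatible across adjacent components.

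What I would try first is a second induction on the number of components meeting the fiber. Using the surjectivity $g_*\mathscr{O}_Y \to g_*\mathscr{O}_E$ together with the Mayer--Vietoris sequence for $E = S \cup E'$, where $E'$ is the union of the remaining components, I would show that $S \cap E'$ is connected over $z$. This restores connectedness at each step of the chain and lets the unique minimal centers agree along it. I expect this bookkeeping, rather than the vanishing theorem, to be the delicate part of the proof.
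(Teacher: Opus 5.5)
Your reduction to the claim $W_1\cap W_2\cap g^{-1}(z)\neq\emptyset$ and your Step 1 are fine. However, the argument stops exactly where the real work is: Step 3 is only a plan, and the plan you propose cannot work. Write $V_z=V\cap g^{-1}(z)$. Suppose $S_z$ were disconnected, say $S_z=A\sqcup B$ with $A,B$ nonempty and closed. Then connectedness of $E_z=S_z\cup E'_z$ forces $E'_z$ to meet both $A$ and $B$, since otherwise $A$ would be open and closed in $E_z$. Hence $(S\cap E')_z=S_z\cap E'_z$ meets both $A$ and $B$, and is disconnected. So connectedness of $S\cap E'$ over $z$ fails precisely in the case you are worried about. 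No Mayer--Vietoris argument can produce it unless you already know that $S$ and $E'$ are connected over $z$, and that is exactly what is missing.

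The missing observation is that your first difficulty never occurs. Run Step 1 with $E$ replaced by a single component $S$. The pair $(Y,\Gamma-S)$ is still dlt, and $-S-(K_Y+\Gamma-S)=-(K_Y+\Gamma)$ is $g$-ample. The same perturbation then gives $R^1g_*\mathcal{O}_Y(-S)=0$: for $Y$ $\mathbb{Q}$-factorial, apply Kawamata--Viehweg to the klt pair $(Y,\Gamma-S-\epsilon(E-S))$, for which $-S-(K_Y+\Gamma-S-\epsilon(E-S))=-(K_Y+\Gamma)+\epsilon(E-S)$ is still $g$-ample for $0<\epsilon\ll 1$. Consequently $\mathcal{O}_Z\to g_*\mathcal{O}_S$ is surjective, so $g_*\mathcal{O}_S=\mathcal{O}_{g(S)}$. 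Thus $S\to g(S)$ is itself a contraction, and $-(K_S+\Gamma_S)$ is ample over $g(S)$. No Stein factorization is needed, and the induction hypothesis applies over the single point $z$.

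The compatibility problem then disappears by minimality. Take $W_1,W_2$ minimal and your chain $S_1=T_0,\dots,T_k=S_2$. Suppose $W_1\subseteq T_i$. Then $W_1$ is minimal among the lc centers of $(T_i,\Gamma_{T_i})$ meeting $g^{-1}(z)$. By induction it is the unique such minimal center, so it lies in every lc center of $T_i$ meeting $g^{-1}(z)$. A component of $T_i\cap T_{i+1}$ meeting $g^{-1}(z)$ is one of these centers, so $W_1\subseteq T_{i+1}$. Hence $W_1\subseteq S_2\supseteq W_2$, and uniqueness on $S_2$ gives $W_1=W_2$.
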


\section{General results on cdh cohomology}

\subsection{Weight zero cdh cohomology}\label{scn:weight0}

In this subsection, we compute weight zero cdh cohomology over the integers. 
 As an application, in the case of rational coefficients we obtain a proof of Theorem \ref{thm:cdh-low-weight}(i).

In what follows we use the notions around the weight filtration on integral cohomology introduced in \S\ref{scn:weights-integral}.

\begin{proposition}\label{prop weight zero}
If $X$ is a projective variety, then
\begin{align*}
    H^i_{\cdh} \big( X, \mathbb{Z}(0) \big) \simeq H^i_0(X, \mathbb{Z}).
\end{align*}
If $X$ is of klt type, this vanishes for all $i > 0$.
\end{proposition}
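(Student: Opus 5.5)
The plan is to compute both sides with the same cubical hyperresolution and compare the spectral sequences. Choose $\epsilon_\bullet\colon X_\bullet \to X$ with each $X_p$ smooth projective and $\dim X_p \le n-p$. By Theorem \ref{thm motivic cohom}(3), for smooth $X_p$ the weight zero motivic complex is $\mathbb{Z}(0) = \mathbb{Z}$ in degree zero on each connected component. Hence $H^q(X_p,\mathbb{Z}(0)) = H^0(X_p,\mathbb{Z})$ for $q=0$ and vanishes for $q\neq 0$.

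Plug this into the descent spectral sequence (\ref{eq:ss-cdh}) for $j=0$. Its $E_1$-page is concentrated in the single row $q=0$, namely the complex
$$0 \to H^0(X_0,\mathbb{Z}) \to H^0(X_1,\mathbb{Z}) \to H^0(X_2,\mathbb{Z}) \to \cdots.$$
A one-row spectral sequence degenerates at $E_2$ with no extension problems, so
$$H^i_{\cdh}(X,\mathbb{Z}(0)) \simeq H^i\big(H^0(X_\bullet,\mathbb{Z})\big).$$

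On the other side, the integral weight spectral sequence (\ref{eqn:weight-ss-integral}) has $E_1^{p,q} = H^q(X_p,\mathbb{Z})$. Its $d_1$ differentials are the alternating sums of restriction maps along the cubical structure, which are the same maps as in the complex above. So the $q=0$ row of its $E_2$-page is the cohomology of the same complex, and by definition $H^i_0(X,\mathbb{Z}) = E_2^{i,0}$ is exactly $H^i(H^0(X_\bullet,\mathbb{Z}))$.

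The one point needing care is that these two complexes genuinely coincide, meaning the differentials on $H^0$ agree. This holds because both spectral sequences come from the same cubical descent cone (\ref{eqn hyperres cone}), applied respectively to $\mathbb{Z}$ and to the cdh sheaf $\mathbb{Z}(0)=R\Gamma_{\cdh}(-,\mathbb{Z})$, which agree on smooth varieties.

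Independence of the choice of hyperresolution is automatic on both sides: cdh cohomology is intrinsic, and $E_2$ of the weight spectral sequence is an invariant by Proposition \ref{prop integral weight}. This gives the isomorphism $H^i_{\cdh}(X,\mathbb{Z}(0)) \simeq H^i_0(X,\mathbb{Z})$.

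For the klt type statement, apply Theorem \ref{thm wt zero klt}, which gives $H^i_0(X,\mathbb{Z}) = 0$ for $i>0$ when $X$ is projective of klt type. Via the isomorphism just established, this yields $H^i_{\cdh}(X,\mathbb{Z}(0)) = 0$ for $i>0$.

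Beyond the bookkeeping of the differentials, I expect no real obstacle, since the substantive input is the cited result of \cite{burke dual complex}. As a sanity check in the snc case, Lemma \ref{lem integral weight properties}(4) recovers the cohomology of the dual complex.
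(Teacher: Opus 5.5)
Your proposal is correct and follows essentially the paper's proof. Both write $\mathbb{Z}(0)^{\cdh}(X)$ via a cubical hyperresolution, use Theorem \ref{thm motivic cohom}(3) to reduce it to the complex $H^0(X_\bullet,\mathbb{Z})$, identify its cohomology with the bottom row $H^i_0(X,\mathbb{Z})$ of the integral weight spectral sequence, and invoke Theorem \ref{thm wt zero klt} for the klt type vanishing. Phrasing this through the one-row descent spectral sequence rather than the iterated cone is only a cosmetic difference.
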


\begin{proof}
Choose a cubical hyperresolution $\epsilon_\bullet \colon X_\bullet \rightarrow X$ of $X$. Then
\begin{align*}
   \mathbb{Z}_{\cdh}(0)(X) =& \bigg( \mathbb{Z}_{\cdh}(0)(X_0) \rightarrow \mathbb{Z}_{\cdh}(0)(X_1) \rightarrow \cdots \rightarrow \mathbb{Z}_{\cdh}(0)(X_n) \bigg).
\end{align*}
Applying Theorem \ref{thm motivic cohom}(3) to each term separately, we observe that this complex is of the form
\begin{align*}
    \bigg( H^0(X_0, \mathbb{Z}) \rightarrow H^0(X_1,\mathbb{Z}) \rightarrow \cdots \rightarrow H^0(X_n, \mathbb{Z}) \bigg).
\end{align*}
By construction, its cohomology groups $H^i_{\cdh}(X, \mathbb{Z})$ are isomorphic to $H_0^i(X, \mathbb{Z}(0))$ (see Proposition \ref{prop integral weight}). The last statement is a consequence of Theorem \ref{thm wt zero klt}.
\end{proof}

\begin{remark}
Recall that Gorenstein rational singularities are canonical, hence klt, so the result above applies. For non-Gorenstein rational singularities, see Corollary \ref{cor:cdh-weight-zero} below.
\end{remark}

The following corollary gives a more concrete interpretation for cohomology in sufficiently large degree.

\begin{corollary}\label{corollary dual complex}
Let $X$ be a normal projective variety. Let $f \colon Y \rightarrow X$ be a log resolution of $X$, which is an isomorphism away from the singular locus $X_{\sing}$ and has snc exceptional divisor $E$. If $i \geq \dim X_{\sing}+2$, then 
$$H^i_{\cdh}(X, \mathbb{Z}(0)) \simeq H^{i-1}(\mathcal{D}(E), \mathbb{Z}),$$ 
where $\mathcal{D}(E)$ is the dual complex of $E$.
\end{corollary}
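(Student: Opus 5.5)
By Proposition \ref{prop weight zero}, $H^i_{\cdh}(X,\ZZ(0)) \simeq H^i_0(X,\ZZ)$. The task is therefore to compute the weight zero part $H^i_0(X,\ZZ)$ of the integral weight spectral sequence. I would do this by cdh descent, i.e. Mayer--Vietoris, for the abstract blow-up square
$$E \hookrightarrow Y,\qquad X_{\sing}\hookrightarrow X,\qquad f\colon Y\to X.$$
The functor $\ZZ(0)^{\cdh}(\cdot) = R\Gamma_{\cdh}(\cdot,\ZZ)$ is a cdh sheaf, and $f$ is an isomorphism over $X\smallsetminus X_{\sing}$. Hence there is a long exact sequence
$$\cdots\to H^{i-1}_{\cdh}(Y,\ZZ(0))\oplus H^{i-1}_{\cdh}(X_{\sing},\ZZ(0))\to H^{i-1}_{\cdh}(E,\ZZ(0))\to H^i_{\cdh}(X,\ZZ(0))\to H^i_{\cdh}(Y,\ZZ(0))\oplus H^i_{\cdh}(X_{\sing},\ZZ(0))\to\cdots$$
Equivalently, one can build a cubical hyperresolution of $X$ from $Y$, $E$ and a hyperresolution of $X_{\sing}$, and read off the weight zero rows. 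This is the same Mayer--Vietoris property granted in Proposition \ref{prop integral weight}.

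Next I evaluate the terms. Since $Y$ is smooth, Theorem \ref{thm motivic cohom}(3) gives $H^i_{\cdh}(Y,\ZZ(0)) = H^i(Y,\ZZ(0))$, which is $\ZZ^{\pi_0(Y)}$ in degree $0$ and vanishes otherwise. For $X_{\sing}$, of dimension $s$, Lemma \ref{lem motivic soule-weibel} with $j=0$ gives $H^i_{\cdh}(X_{\sing},\ZZ(0))=0$ for $i>s$. The same vanishing also follows from a hyperresolution of length at most $s$.

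For $E$, I use the snc structure. Its hyperresolution consists of the disjoint unions of strata, so $H^{i}_{\cdh}(E,\ZZ(0)) = H^i_0(E,\ZZ)$, which is $H^{i}(\mathcal{D}(E),\ZZ)$ by Lemma \ref{lem integral weight properties}(4). Equivalently, Proposition \ref{prop weight zero} applies to $E$ viewed through its snc hyperresolution.

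Now take $i\ge s+2$. Then $i-1\ge s+1>s$ and $i\ge 1$, so the $X_{\sing}$ terms vanish in degrees $i-1$ and $i$, and the $Y$ terms vanish in degree $i$. The $Y$ term in degree $i-1$ vanishes provided $i-1\ge 1$, which holds since $i\ge s+2\ge 2$. The long exact sequence then collapses to $H^{i}_{\cdh}(X,\ZZ(0))\simeq H^{i-1}(\mathcal{D}(E),\ZZ)$.

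The main point needing care is justifying the blow-up square long exact sequence with the correct index shift. One also needs $H^{i-1}_{\cdh}(E,\ZZ(0))\simeq H^{i-1}(\mathcal{D}(E),\ZZ)$ for a possibly disconnected snc $E$ whose strata may be disconnected. This is handled by Lemma \ref{lem integral weight properties}(4), after noting that each stratum's $H^0$ records its connected components, matching the cells of the dual complex. The normality of $X$ is not essential to the computation.
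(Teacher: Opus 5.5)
Your proof is correct and is essentially the paper's own: the paper takes the weight-zero part of the Gillet--Soul\'e Mayer--Vietoris sequence (Proposition~\ref{prop integral weight}) for the same square $E\subset Y$, $X_{\sing}\subset X$, kills the $Y$ and $X_{\sing}$ terms when $i\ge \dim X_{\sing}+2$, and concludes with Lemma~\ref{lem integral weight properties}(4) and Proposition~\ref{prop weight zero}, whereas you run the same sequence directly in cdh cohomology, which is equivalent. The one point left implicit in both versions is that the square is an abstract blow-up square, i.e.\ $E=f^{-1}(X_{\sing})_{\rm red}$; for normal $X$ this follows from Zariski's Main Theorem, so your remark that normality is inessential holds only if $E$ is read as the full preimage of $X_{\sing}$.
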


\begin{proof}
Consider the weight zero part of the Mayer-Vietoris long exact sequence
\begin{align*}
    \cdots \rightarrow H^{i-1}_0&(Y, \mathbb{Z}) \oplus H^{i-1}_0(X_{\sing}, \mathbb{Z}) \rightarrow H^{i-1}_0(E, \mathbb{Z}) \\& \rightarrow H^i_0(X, \mathbb{Z}) \rightarrow H^{i}_0(Y, \mathbb{Z}) \oplus H^{i}_0(X_{\sing}, \mathbb{Z}) \rightarrow \cdots
\end{align*}
The end terms vanish whenever $i \geq \dim X_{\sing}+2$, so $H^i_0(X, \mathbb{Z}) \simeq H^{i-1}_0(E, \mathbb{Z})$. On the other hand, we saw that 
$H^{i-1}_0(E, \mathbb{Z}) \simeq H^{i-1}(\mathcal{D}(E), \mathbb{Z})$ in Lemma \ref{lem integral weight properties}(4), while $H^i_{\cdh}(X, \mathbb{Z}(0)) \simeq 
H^i_0(X, \mathbb{Z})$ by Proposition \ref{prop weight zero}.
\end{proof}

Working with rational coefficients, the situation is simpler. This concludes the proof of Theorem \ref{thm:cdh-low-weight}(i).

\begin{corollary}\label{cor:cdh-weight-zero}
If $X$ is a projective variety, then 
$$H^i_{\cdh}(X, \mathbb{Q} (0)) \simeq W_0 H^i(X, \mathbb{Q}).$$ 
These spaces vanish for all $i > 0$ if $X$ has rational singularities, and for $i = 1$ if $X$ is normal.
\end{corollary}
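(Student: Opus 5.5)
The plan is to deduce this from Proposition \ref{prop weight zero} by tensoring with $\Q$, and then to read off the vanishing from the weight restrictions of \S\ref{scn:higher-sings}.

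First, tensor the descent complex from the proof of Proposition \ref{prop weight zero} with $\Q$. Since $\Q$ is flat over $\ZZ$, we get
$$H^i_{\cdh}(X,\Q(0)) \simeq H^i_{\cdh}(X,\ZZ(0))\otimes\Q \simeq H^i_0(X,\ZZ)\otimes \Q.$$
By Proposition \ref{prop integral weight}, the groups $H^i_0(X,\ZZ)=E_2^{i,0}$ of the integral weight spectral sequence (\ref{eqn:weight-ss-integral}) become, after tensoring with $\Q$, the $E_2^{i,0}$ terms of the rational weight spectral sequence (\ref{eqn:weight-ss-rational}). That sequence degenerates at $E_2$, so (\ref{eqn:weight-hyper}) with $q=0$ identifies these terms with $\gr_0^W H^i(X,\Q)$. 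Weights on $H^i(X,\Q)$ are nonnegative, so $\gr^W_0 = W_0$, which gives the isomorphism with $W_0H^i(X,\Q)$. One point needs care: the complex $H^0(X_0,\Q)\to H^0(X_1,\Q)\to\cdots$ computing $H^i_{\cdh}(X,\Q(0))$ must be exactly the $q=0$ row of the $E_1$ page. This holds because both complexes are built from the same cubical hyperresolution with the same restriction maps.

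For the vanishing statements, first let $X$ have rational singularities, i.e. $m=0$ with $X$ normal. If $i\le n$, Corollary \ref{cor weight restrictions}(i) (see also Example \ref{ex:rational}) says $H^i(X,\Q)$ has weights $\ge\min\{2,i\}$, so $W_0=0$ for $i\ge1$. If $i\ge n$, Corollary \ref{cor weight restrictions}(ii) gives weights $\ge \min\{2i-2n+2,i\}\ge 1$ for $i\ge 1$, so again $W_0H^i=0$. The corollary is stated under $2m+2\le n$, i.e. $n\ge 2$. If $n\le 1$, rational singularities means $X$ is smooth, and then $H^i$ is pure of weight $i$, so $W_0H^i=0$ for $i>0$ directly.

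For $i=1$ with $X$ only normal, we use Lemma \ref{lem integral weight properties}(1), which says $W_0H^1(X,\ZZ)=0$. Rationally, this is the well-known vanishing $W_0H^1(X,\Q)=0$ recalled in its proof.

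The only real obstacle is the compatibility in the first step, between the cdh complex and the weight-zero row of the weight spectral sequence. It is formal from the constructions, but it has to be stated explicitly.
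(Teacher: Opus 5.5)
Your proposal is correct and is essentially the paper's proof: tensor Proposition~\ref{prop weight zero} with $\Q$, identify $H^i_0(X,\ZZ)\otimes\Q$ with $\gr^W_0 H^i(X,\Q)=W_0H^i(X,\Q)$ via the $E_2$-degenerate rational weight spectral sequence, and get the vanishing from Corollary~\ref{cor weight restrictions} and from the purity of $H^1$ for a normal projective variety. The compatibility you flag as the main obstacle is already established in the proof of Proposition~\ref{prop weight zero}, and your separate treatment of $n\le 1$ is a harmless detail that the paper leaves implicit.
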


\begin{proof}
For the first statement, recall that $H^i_0(X, \mathbb{Z}) \otimes \Q \simeq W_0 H^i(X, \mathbb{Q})$, and use Proposition \ref{prop weight zero}.
The second follows from Corollary \ref{cor weight restrictions}, as well as the well-known fact (already used in Lemma \ref{lem integral weight properties}) that normal projective varieties have pure $H^1 (X, \Q)$.
\end{proof}

\subsection{Weight one cdh cohomology}\label{scn:weight1}
In this subsection, we compute weight one cdh-motivic cohomology over the integers. As an application, in the case of rational coefficients we obtain a proof of Theorem \ref{thm:cdh-low-weight}(ii).

Choose a cubical hyperresolution $\epsilon_\bullet \colon X_\bullet \rightarrow X$ of $X$. We obtain an expression for $\mathbb{Z}(1)^{\cdh}(X)$ as an iterated cone
\begin{align}\label{eqn wt one iterated cone}
    \mathbb{Z}(1)^{\cdh}(X) = \bigg( \mathbb{Z}(1)^{\cdh}(X_0) \rightarrow \mathbb{Z}(1)^{\cdh}(X_1) \rightarrow \cdots \rightarrow \mathbb{Z}(1)^{\cdh}(X_n) \bigg).
\end{align}

Each smooth piece $X_i$ of the hyperresolution is smooth and projective, so Theorem \ref{thm motivic cohom}(3) computes the cohomology of the complex $\mathbb{Z}(1)^{\cdh}(X_i)$. In particular, there exist distinguished triangles
\begin{align}\label{eqn weight one smooth}
    H^0(X_i, \mathscr{O}_{X_i}^*)[-1] \rightarrow \mathbb{Z}(1)^{\cdh}(X_i) \rightarrow \Pic(X_i)[-2] \xrightarrow[]{+1}
\end{align}

To consolidate this information, let us form the iterated cones
\begin{align*}
    H^0(X_\bullet, \mathscr{O}_{X_\bullet}^*) : = \bigg(H^0(X_0, \mathscr{O}_{X_0}^*) \rightarrow H^0(X_1, \mathscr{O}_{X_1}^*) \rightarrow \cdots \rightarrow H^0(X_n, \mathscr{O}_{X_n}^*) \bigg).
\end{align*}
and
\begin{align*}
    \Pic(X_\bullet) : = \bigg( \Pic(X_0) \rightarrow \Pic(X_1) \rightarrow \cdots \rightarrow \Pic(X_n) \bigg).
\end{align*}
Analogously, define $\Pic^0(X_\bullet)$ and ${\rm NS} (X_\bullet)$.

\begin{proposition}\label{prop weight one}
If $X$ is projective, there exist distinguished triangles of complexes
\begin{align*}
     H^0(X_\bullet, \mathscr{O}_{X_\bullet}^*)  [-1] \rightarrow \mathbb{Z}(1)^{\cdh}(X) \rightarrow  \Pic(X_\bullet)  [-2] \xrightarrow[]{+1}
\end{align*}
and 
\begin{align*}
    \Pic^0(X_\bullet) \rightarrow \Pic(X_\bullet) \rightarrow {\rm NS}(X_\bullet) \xrightarrow[]{+1}
\end{align*}
Moreover, there exist short exact sequences
\begin{align*}
    0 \rightarrow H^i_0(X, \mathbb{Z}) \otimes_\mathbb{Z} \mathbb{C}^* \rightarrow H^i \big( H^0(X_\bullet, \mathscr{O}_{X_\bullet}^*) \big) \rightarrow \mathrm{Tor}_1^\mathbb{Z} \big( H^{i+1}_0(X, \mathbb{Z}), \mathbb{C}^* \big) \rightarrow 0
\end{align*}
and
\begin{align*}
    0 \rightarrow H^i_1(X, \mathbb{Z}) \otimes_\mathbb{Z} \mathbb{R}/\mathbb{Z} \rightarrow H^i \big( \Pic^0(X_\bullet) \big) \rightarrow \mathrm{Tor}_1^\mathbb{Z} \big( H^{i+1}_1(X, \mathbb{Z}), \mathbb{R} / \mathbb{Z} \big) \rightarrow 0
\end{align*}
Finally, if $X$ is of klt type, then 
$$H^i_{\cdh}(X, \mathbb{Z}(1)) \simeq H^{i-2}( \Pic X_\bullet) \simeq H^{i-2}( {\rm NS}(X_\bullet) ) \,\,\,\,\,\,{\rm for ~all} \,\,\,\, i \geq 2.$$
\end{proposition}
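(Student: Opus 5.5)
We first use the iterated cone (\ref{eqn wt one iterated cone}). For each smooth projective $X_i$, Theorem \ref{thm motivic cohom}(3) gives the triangle (\ref{eqn weight one smooth}). We want these triangles to assemble, functorially in the cubical structure, into a triangle of iterated cones. The cleanest way is a truncation argument. Since $\mathbb{Z}(1)^{\cdh}(X_i)$ has cohomology only in degrees $1$ and $2$, the truncations $\tau_{\le 1}$ and $\tau_{\ge 2}$ are functorial. They give a short exact sequence of cubical diagrams of complexes. Totalizing then yields the first triangle, with $H^0(X_\bullet,\mathscr{O}^*_{X_\bullet})[-1]$ and $\Pic(X_\bullet)[-2]$ as the outer terms. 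The second triangle comes the same way: on each smooth projective $X_i$ we have the functorial short exact sequence $0\to \Pic^0(X_i)\to \Pic(X_i)\to {\rm NS}(X_i)\to 0$, and the maps in the cubical diagram are pullbacks, so they respect it.

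For the two short exact sequences we use universal coefficient arguments. On a smooth projective variety, $H^0(X_i,\mathscr{O}^*)=H^0(X_i,\mathbb{Z})\otimes \mathbb{C}^*$ functorially. Hence $H^0(X_\bullet,\mathscr{O}^*_{X_\bullet})$ is the complex $C_0\otimes\mathbb{C}^*$, where $C_0=(H^0(X_0,\mathbb{Z})\to H^0(X_1,\mathbb{Z})\to\cdots)$ is a complex of free abelian groups. By Proposition \ref{prop integral weight} its cohomology is $H^i_0(X,\mathbb{Z})$. The universal coefficient theorem for a complex of free groups then gives the first sequence, with the $\mathrm{Tor}$ term coming from $H^{i+1}$.

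For the second sequence, write $\Pic^0(X_i)=H^1(X_i,\mathscr{O})/H^1(X_i,\mathbb{Z})$. Using Hodge theory, $H^1(X_i,\mathbb{Z})\otimes\mathbb{R}\cong H^1(X_i,\mathscr{O})$ as real vector spaces, compatibly with pullbacks, so $\Pic^0(X_i)\cong H^1(X_i,\mathbb{Z})_{\rm free}\otimes \mathbb{R}/\mathbb{Z}$. Torsion in $H^1$ of a smooth variety vanishes, so this equals $H^1(X_i,\mathbb{Z})\otimes\mathbb{R}/\mathbb{Z}$, functorially. Thus $\Pic^0(X_\bullet)=C_1\otimes\mathbb{R}/\mathbb{Z}$, where $C_1$ is the row $q=1$ of the $E_1$ page of (\ref{eqn:weight-ss-integral}). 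This is a complex of free groups with cohomology $H^i_1(X,\mathbb{Z})$, and universal coefficients give the sequence. The point needing care is that the real isomorphism is canonical, through the Hodge decomposition, and hence commutes with pullback.

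For the klt type statement, Theorem \ref{thm wt zero klt} gives $H^i_0=H^i_1=0$ for $i>0$, and $H^0_1=0$ trivially. The short exact sequences then show that $H^i(H^0(X_\bullet,\mathscr{O}^*))=0$ for $i\ge 1$ and $H^i(\Pic^0(X_\bullet))=0$ for all $i\ge 0$. The second triangle then gives $H^{i-2}(\Pic X_\bullet)\cong H^{i-2}({\rm NS}(X_\bullet))$. In the long exact sequence of the first triangle, the $\mathscr{O}^*$ term contributes $H^{i-1}(H^0(X_\bullet,\mathscr{O}^*))$ and $H^{i}(H^0(X_\bullet,\mathscr{O}^*))$, which vanish for $i\ge 2$. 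This gives the isomorphism with $H^i_{\cdh}(X,\mathbb{Z}(1))$.

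The main obstacle is the first step: making the truncation argument rigorous at the level of cubical diagrams and checking that totalization commutes with it. This is a strictification issue for the complexes $\mathbb{Z}(1)^{\cdh}(X_i)$. I would handle it by using the model $R\Gamma(X_i,\mathbb{G}_m)[-1]$, computed for example via Godement resolutions, which is strictly functorial.
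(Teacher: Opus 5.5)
Your proposal is correct and follows the paper's proof. Like the paper, you assemble the two triangles from the iterated cone and the pointwise sequences. You then identify $H^0(X_\bullet,\mathscr{O}^*_{X_\bullet})$ and $\Pic^0(X_\bullet)$ with the weight-zero and weight-one rows of the integral weight spectral sequence, tensored with $\mathbb{C}^*$ and $\mathbb{R}/\mathbb{Z}$ respectively. The universal coefficient theorem and Theorem \ref{thm wt zero klt} finish the argument. Your version adds the functoriality checks and correctly uses $\Pic^0(X_\bullet)$ where the paper's text writes $\Pic(X_\bullet)$.

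One small indexing point, inherited from the statement itself: with the convention $H^{p+q}_q(X,\mathbb{Z}) = E_2^{p,q}$, the degree-$i$ cohomology of the weight-one row is $H^{i+1}_1(X,\mathbb{Z})$, not $H^i_1(X,\mathbb{Z})$. Compare Corollary \ref{cor weight one rational}, where degree $i-2$ of $\Pic^0(X_\bullet)$ corresponds to $\gr_1^W H^{i-1}(X,\mathbb{Q})$. This is harmless for the klt conclusion, since all $H^j_1(X,\mathbb{Z})$ with $j>0$ vanish.
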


Note that there does not seem to be a direct Hodge-theoretic interpretation of the complex ${\rm NS}(X_\bullet)$, because strictness can fail integrally.

\begin{proof}
The first distinguished triangle is a consequence of (\ref{eqn wt one iterated cone}) and (\ref{eqn weight one smooth}). The second follows by gluing together each of the short exact sequences
\begin{align*}
    0 \rightarrow \Pic^0(X_i) \rightarrow \Pic(X_i) \rightarrow {\rm NS}(X_i) \rightarrow 0.
\end{align*}
Note that the complex $H^0(X_\bullet, \mathscr{O}_{X_\bullet}^*)$ is precisely the tensor product with $\mathbb{C}^*$ of the bottom row of the $E_2$-page weight spectral 
sequence (\ref{eqn:weight-ss-integral}). The first short exact sequence follows from the universal coefficient theorem.

For the second short exact sequence, note the complex $\Pic(X_\bullet)$ is, as a complex of abelian groups, the tensor product with $\mathbb{R}/\mathbb{Z}$ of the weight one row of the $E_2$-page weight spectral sequence. We again apply the universal coefficient theorem.

Based on the above, the final claim is a consequence of Theorem \ref{thm wt zero klt}.
\end{proof}

Over the rational numbers, the situation is simpler.

\begin{corollary}\label{cor weight one rational}
If $X$ is projective, there exist long exact sequences of the form
\begin{align*}
    \cdots \rightarrow W_0 H^{i-1}(X, \mathbb{Q}) \otimes_\mathbb{Z} \mathbb{C}^* \rightarrow H^i_{\cdh}(X, \mathbb{Q}(1)) \rightarrow H^{i-2} ( \Pic(X_\bullet)_\mathbb{Q} ) \rightarrow \cdots  
\end{align*}
and
\begin{align*}
    \cdots \rightarrow \gr_1^W H^{i-1}(X, \mathbb{Q}) \otimes_\mathbb{Z} \mathbb{R}/\mathbb{Z} \rightarrow H^{i-2}( \Pic(X_\bullet)_\mathbb{Q}) \rightarrow H^{1,1} \gr_2^W H^i(X, \mathbb{Q}) \rightarrow \cdots
\end{align*}
\end{corollary}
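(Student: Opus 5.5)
The plan is to tensor the two distinguished triangles of Proposition \ref{prop weight one} with $\mathbb{Q}$, take the associated long exact sequences in cohomology, and identify the outer terms with pieces of the rational weight filtration. Since $\mathbb{Q}$ is flat over $\mathbb{Z}$, the Tor terms in the two short exact sequences of that proposition vanish after tensoring. Here we read ``$\otimes \mathbb{Q}$'' on the complex $H^0(X_\bullet,\mathscr{O}^*_{X_\bullet})$ as the complex $H^0(X_\bullet,\mathbb{Q})\otimes_{\mathbb{Z}}\mathbb{C}^*$, i.e.\ the weight-zero row of the rational weight spectral sequence (\ref{eqn:weight-ss-rational}) tensored with $\mathbb{C}^*$. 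For this row, $H^i_0(X,\mathbb{Z})\otimes\mathbb{Q} \simeq W_0H^i(X,\mathbb{Q})$ by (\ref{eqn:weight-hyper}), since $W_0=\gr^W_0$.

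\textbf{First sequence.} Shift the first triangle $H^0(X_\bullet,\mathscr{O}^*_{X_\bullet})[-1] \to \mathbb{Z}(1)^{\cdh}(X) \to \Pic(X_\bullet)[-2]$ and take cohomology. The $i$-th cohomology of the left term is $H^{i-1}(H^0(X_\bullet,\mathscr{O}^*))$. Rationally this is $W_0H^{i-1}(X,\mathbb{Q})\otimes\mathbb{C}^*$, using the first short exact sequence of Proposition \ref{prop weight one} with the Tor term killed. The right term contributes $H^{i-2}(\Pic(X_\bullet)_\mathbb{Q})$, which gives the first long exact sequence.

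\textbf{Second sequence.} Use the triangle $\Pic^0(X_\bullet)\to\Pic(X_\bullet)\to{\rm NS}(X_\bullet)$.

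\begin{itemize}
\item For the $\Pic^0$ term, the second short exact sequence of Proposition \ref{prop weight one} gives $H^{i-2}(\Pic^0(X_\bullet))_\mathbb{Q} \simeq H^{i-2}_1(X,\mathbb{Z})\otimes \mathbb{R}/\mathbb{Z}\otimes\mathbb{Q}$. The indexing convention $H^{p+q}_q = E_2^{p,q}$ with $q=1$ then makes this the weight-one graded piece $\gr_1^W H^{i-1}(X,\mathbb{Q})\otimes\mathbb{R}/\mathbb{Z}$. This is the one place requiring care with degree shifts: the row $q=1$ sits in total degree $p+1$, so $H^{p}(\Pic^0(X_\bullet))$ corresponds to $\gr_1^W H^{p+1}$.
\item For the ${\rm NS}$ term, ${\rm NS}(X_p)_\mathbb{Q} = H^{1,1}\cap H^2(X_p,\mathbb{Q})$ by Lefschetz $(1,1)$. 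The $d_1$ maps are morphisms of Hodge structures, and taking the $(1,1)$-part of weight-$2$ rational Hodge structures is exact on the semisimple category of polarizable pure Hodge structures. Hence the cohomology of ${\rm NS}(X_\bullet)_\mathbb{Q}$ in degree $p$ is $H^{1,1}$ of the $E_2^{p,2}$ term, namely $H^{1,1}\gr_2^W H^{p+2}(X,\mathbb{Q})$ by (\ref{eqn:weight-hyper}) and $E_2$-degeneration.
\end{itemize}

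With $p=i-2$, the long exact sequence of this triangle is the second sequence claimed.

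\textbf{Main obstacle.} The identification of the ${\rm NS}$ term is the only nontrivial step. It relies on exactness of the functor ``rational $(1,1)$-classes'' on polarizable pure Hodge structures, i.e.\ semisimplicity, so that it commutes with taking the cohomology of the row complex. Everything else is bookkeeping of shifts.
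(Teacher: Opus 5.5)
Your proposal is correct and is essentially the paper's proof. Both tensor the two triangles of Proposition~\ref{prop weight one} with $\mathbb{Q}$, identify the $\mathscr{O}^*$- and $\Pic^0$-complexes rationally with the weight-zero and weight-one rows of the weight spectral sequence (tensored with $\mathbb{C}^*$ and $\mathbb{R}/\mathbb{Z}$), and identify ${\rm NS}(X_\bullet)_\mathbb{Q}$ with the rational $(1,1)$-classes of the weight-two row.

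In the last step you use semisimplicity of polarizable pure Hodge structures, which is a slightly more precise justification than the paper's appeal to strictness. Strictness only controls the complex $(1,1)$-parts; passing to rational classes uses polarizability. One small correction: the intermediate term for $H^{i-2}(\Pic^0(X_\bullet))_\mathbb{Q}$ should be $H^{i-1}_1(X,\mathbb{Z})\otimes\mathbb{R}/\mathbb{Z}\otimes\mathbb{Q}$, not $H^{i-2}_1$. This is consistent with the degree shift you describe immediately afterwards.
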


\begin{proof}
We take tensor product of the distinguished triangles of Proposition \ref{prop weight one} with the rational numbers $\mathbb{Q}$.

The complex $H^0(X_\bullet, \mathscr{O}_{X_\bullet}^*) \otimes_\mathbb{Z} \mathbb{Q}$ is equal to $H^0(X_\bullet, \mathbb{Q}) \otimes_\mathbb{Z} \mathbb{C}^*$, the tensor product with a complex of $\mathbb{Q}$-vector spaces. Since $\mathbb{Q}$-vector spaces are flat over $\mathbb{Z}$, the cohomologies of this complex are precisely 
$$H^i(H^0(X_\bullet, \mathbb{Q})) \otimes_\mathbb{Z} \mathbb{C}^* = W_0 H^i(X, \mathbb{Q}) \otimes_\mathbb{Z} \mathbb{C}^*.$$

Similarly, we have an isomorphism of complexes 
$$\Pic^0(X_\bullet) \otimes_\mathbb{Z} \mathbb{Q}\simeq  H^1(X_\bullet, \mathbb{Q}) \otimes_\mathbb{Z} \mathbb{R}/\mathbb{Z}.$$ 
It follows that 
\begin{align*}
    H^{i-2}(\Pic^0(X_\bullet) \otimes_\mathbb{Z} \mathbb{Q}) \simeq H^{i-2}(H^1(X_\bullet, \mathbb{Q})) \otimes_\mathbb{Z} \mathbb{R}/\mathbb{Z} \simeq \gr_1^W H^{i-1}(X, \mathbb{Q}) \otimes_\mathbb{Z} \mathbb{R}/\mathbb{Z}.
\end{align*}

Finally, we note that ${\rm NS} (X_\bullet)_\mathbb{Q} = H^{1,1}(X_\bullet, \mathbb{Q})$. Using the strictness of the Hodge filtration, the cohomologies of this complex are precisely
\begin{align*}
    H^{i-2} ( {\rm NS} (X_\bullet)_\mathbb{Q} ) \simeq H^{1,1} \gr_2^W H^i(X, \mathbb{Q}).
\end{align*}
\end{proof}

Let us list a few immediate consequences.

\begin{corollary}\label{cor cdh 1 vanishing}
If $W_1 H^{i-1}(X, \mathbb{Q}) = \gr_2^W H^i(X, \mathbb{Q}) = 0$, then $H^i_{\cdh}(X, \mathbb{Q}(1)) = 0$.
\end{corollary}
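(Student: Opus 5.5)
This follows by chasing the two long exact sequences of Corollary \ref{cor weight one rational}. The plan is to show that the hypotheses kill both outer terms surrounding $H^i_{\cdh}(X, \mathbb{Q}(1))$ in the first sequence. That sequence, coming from the first distinguished triangle of Proposition \ref{prop weight one} tensored with $\mathbb{Q}$, contains the segment
$$W_0 H^{i-1}(X, \mathbb{Q}) \otimes_\mathbb{Z} \mathbb{C}^* \rightarrow H^i_{\cdh}(X, \mathbb{Q}(1)) \rightarrow H^{i-2}(\Pic(X_\bullet)_\mathbb{Q}).$$
By exactness, it suffices to prove that both $W_0 H^{i-1}(X, \mathbb{Q})$ and $H^{i-2}(\Pic(X_\bullet)_\mathbb{Q})$ vanish.

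The first term is immediate: $W_0 H^{i-1}(X,\Q) \subseteq W_1 H^{i-1}(X,\Q) = 0$ by hypothesis.

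For the second term, I use the second long exact sequence, which comes from the triangle $\Pic^0(X_\bullet) \to \Pic(X_\bullet) \to {\rm NS}(X_\bullet)$ after tensoring with $\Q$. It contains the segment
$$\gr_1^W H^{i-1}(X, \mathbb{Q}) \otimes_\mathbb{Z} \mathbb{R}/\mathbb{Z} \rightarrow H^{i-2}(\Pic(X_\bullet)_\mathbb{Q}) \rightarrow H^{1,1} \gr_2^W H^i(X, \mathbb{Q}).$$
The left term vanishes because $\gr_1^W H^{i-1}(X,\Q)$ is a quotient of $W_1 H^{i-1}(X,\Q) = 0$. The right term vanishes because it is a Hodge piece of $\gr_2^W H^i(X,\Q) = 0$. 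Hence $H^{i-2}(\Pic(X_\bullet)_\mathbb{Q}) = 0$, and therefore $H^i_{\cdh}(X,\Q(1)) = 0$.

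The only point needing care is the bookkeeping of indices. One must check that the terms adjacent to $H^{i-2}(\Pic(X_\bullet)_\Q)$ in the second sequence are exactly the degree-$(i-2)$ cohomologies $H^{i-2}(\Pic^0(X_\bullet)_\Q)$ and $H^{i-2}({\rm NS}(X_\bullet)_\Q)$. The proof of Corollary \ref{cor weight one rational} identifies these with $\gr_1^W H^{i-1}(X,\Q)\otimes \mathbb{R}/\mathbb{Z}$ and $H^{1,1}\gr_2^W H^i(X,\Q)$, respectively. I do not expect any genuine obstacle beyond this.
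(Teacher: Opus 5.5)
Your proof is correct and is the argument the paper intends; the paper states this corollary without proof as an immediate consequence of the two long exact sequences of Corollary \ref{cor weight one rational} (with $X$ projective, as there). In that argument, $W_1 H^{i-1}(X,\Q)=0$ kills both $W_0 H^{i-1}(X,\Q)\otimes\C^*$ and $\gr_1^W H^{i-1}(X,\Q)\otimes\mathbb{R}/\mathbb{Z}$, while $\gr_2^W H^i(X,\Q)=0$ kills the $H^{1,1}$ term, so first $H^{i-2}(\Pic(X_\bullet)_\Q)=0$ and then $H^i_{\cdh}(X,\Q(1))=0$, exactly as you chased it.
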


\begin{corollary}\label{cor H^3(1)}
If $X$ has rational singularities and $i \geq 3$, then 
\begin{align*}
    H_{\cdh}^{i}(X, \mathbb{Q}(1)) \simeq H^{i-2}(\Pic X_\bullet)_\mathbb{Q} \simeq \gr_2^W H^{i}(X, \mathbb{Q}).
\end{align*}
If moreover $X$ has pre-$1$-rational singularities, then $H_{\cdh}^{i}(X, \mathbb{Q}(1)) = 0$.
\end{corollary}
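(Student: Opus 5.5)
The plan is to feed the weight restrictions for rational singularities into the two long exact sequences of Corollary \ref{cor weight one rational}. I take $X$ projective and normal, as in Theorem \ref{thm:cdh-low-weight}.

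\emph{First sequence.} It reads
\begin{align*}
W_0 H^{i-1}(X,\Q)\otimes\C^* \to H^i_{\cdh}(X,\Q(1)) \to H^{i-2}(\Pic(X_\bullet)_\Q) \to W_0 H^{i}(X,\Q)\otimes\C^*.
\end{align*}
When $X$ has rational singularities, $W_0 H^k(X,\Q)=0$ for all $k\ge 1$ by Corollary \ref{cor:cdh-weight-zero}. For $i\ge 3$ both outer terms have degree $\ge 2$, so they vanish, and we get the first isomorphism $H^i_{\cdh}(X,\Q(1))\simeq H^{i-2}(\Pic(X_\bullet)_\Q)$.

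\emph{Second sequence.} The relevant portion is
\begin{align*}
\gr_1^W H^{i-1}(X,\Q)\otimes\mathbb{R}/\ZZ \to H^{i-2}(\Pic(X_\bullet)_\Q)\to H^{1,1}\gr_2^W H^i(X,\Q)\to \gr_1^W H^{i}(X,\Q)\otimes\mathbb{R}/\ZZ.
\end{align*}
So it suffices to show $\gr_1^W H^k(X,\Q)=0$ for $k\ge 2$. The cases $k=i-1$ and $k=i$ with $i\ge 3$ both satisfy $k\ge 2$.
\begin{itemize}
\item If $k\le n$, Corollary \ref{cor weight restrictions}(i) with $m=0$ says $H^k$ has weights $\ge\min\{2,k\}=2$.
\item If $k\ge n$, part (ii) of that corollary with $m=0$ gives $H^{p,j-p}\gr^W_j H^k=0$ for $p\le k-n$ and $j\le k-1$. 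In particular $H^{0,1}\gr_1^W H^k=0$, and by Hodge symmetry $H^{1,0}\gr_1^W H^k=0$ as well, so $\gr_1^W H^k=0$.
\end{itemize}
(The corollary assumes $2m+2\le n$; this holds once $n\ge 2$, and curves can be handled directly.)

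This yields $H^{i-2}(\Pic(X_\bullet)_\Q)\simeq H^{1,1}\gr_2^W H^i(X,\Q)$. By Example \ref{ex:rational} together with the weight $\ge 2$ bound just used, we have $W_1H^i=0$ and $\gr_2^W H^i=H^{1,1}\gr_2^W H^i$, which gives the stated identification with $W_2H^i=\gr_2^W H^i$. The one step I expect to need care is the range $i>n$: there I must check that the vanishing of $H^{0,1}$ from Example \ref{ex:rational} gives $W_2=\gr_2$ and $\gr_2=H^{1,1}\gr_2$.

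\emph{Pre-$1$-rational case.} Now apply Corollary \ref{cor weight restrictions} with $m=1$. For $i\le n$ it gives weights $\ge\min\{4,i\}$, so $\gr_2^W H^i=0$ whenever $i\ge3$. For $i\ge n$ the range is $p\le i-n+1$, which includes $p=0,1$ for all $j\le i-1$, and this kills $\gr^W_2$. Hence $H^i_{\cdh}(X,\Q(1))=0$ by Corollary \ref{cor cdh 1 vanishing}. The hypothesis $2m+2=4\le n$ of the corollary may fail for small $n$. For $n\le 3$ I would instead appeal to Remark \ref{rmk:Dm-nonproj}, or note that Theorem \ref{thm hodge symmetry} still applies directly and feed it into Lemma \ref{lem hodge du bois numbers}.
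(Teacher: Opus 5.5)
Your proposal is correct and is essentially the paper's proof: you feed the weight bounds of Corollary \ref{cor weight restrictions} (weights $\ge \min\{k,2\}$, and no $(0,2)$-part in $\gr_2^W H^i$ for $i\ge 3$) into the two long exact sequences of Corollary \ref{cor weight one rational}, then take $m=1$ to kill $\gr_2^W H^i$. The step you flagged for $i>n$ follows at once from part (ii) with $p=0$. You also note that the $m=1$ case of that corollary needs $n\ge 4$, and you cover $n\le 3$ via Theorem \ref{thm hodge symmetry} combined with Lemma \ref{lem hodge du bois numbers}; the paper's one-line argument does not address this point explicitly.
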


\begin{proof}
We apply Corollary \ref{cor weight one rational}. It remains to observe that since $X$ has rational singularities, Corollary \ref{cor  weight restrictions} implies each $H^j(X, \mathbb{Q})$ has weights $\geq \min \{ j, 2 \}$. Moreover, $W_2 H^i(X, \mathbb{Q})$ is of Hodge-type $(1,1)$ for $i \ge 3$ by Lemma \ref{lem hodge du bois numbers}. Finally, if $X$ has pre-$1$-rational singularities, Corollary \ref{cor  weight restrictions} applies again, to say that $W_2 H^i(X, \mathbb{Q}) = 0$.
\end{proof}

This completes the proof of Theorem \ref{thm:cdh-low-weight}(ii).

\subsection{The weight two cdh cohomology $H^{n+2}_{\rm cdh} (X, \Q (2))$}\label{scn:weight2}
In weight two we will only address the ``borderline" cdh cohomology group $H^{n+2}_{\rm cdh} (X, \Q (2))$ (and its integral version), which is the key case for computing $KH_{-n+2} (X)$, and only for isolated singularities. This is significantly more difficult than the weight zero and one cases, as (higher) Chow groups of higher codimension cycles come into play. Beyond this case it seems necessary to appeal to deep conjectures on Chow groups.

Concretely, the goal of this section and the next is to prove the following Theorem \ref{thm:weight2} in the Introduction. This states that for  a projective variety $X$ of dimension $n \ge 3$ with isolated klt type singularities we have 
$$H^{n+2}_{\rm cdh} (X, \Q (2)) = 0,$$
with some improvements in the case of threefolds: the same result holds integrally when $X$ is a local complete intersection, and it holds 
more generally for rational singularities if one assumes Bloch's conjecture \ref{conj:Bloch} on $0$-cycles on smooth surfaces.

\begin{example}\label{example kummer threefold}
There exist klt threefolds with isolated (but not local complete intersection) singularities such that $H^5_{\cdh}(X,\mathbb{Z}(2)) \neq 0$.
We will in fact show in Example \ref{example kummer threefold details} that this is the case for the Kummer threefold $X = A/\iota$, where $A$ is an abelian threefold and $\iota: x \mapsto -x$ is the negation involution.
Thus we cannot hope to improve the statement of Theorem \ref{thm:weight2} to the integral setting without extra hypotheses. 
\end{example}

Let then $X$ be a complex projective variety of dimension $n\ge 3$, of klt type and with isolated singularities. For the notions that follow, please refer
to \S\ref{scn:MMP}. 
Choose a boundary $\Delta$ on $X$ such that $(X, \Delta)$ is klt and $\mathrm{Supp}(\Delta)$ is smooth away from the singular locus $Z_0$ of $X$; this exists by Lemma \ref{lem new boundary}. We can then say equivalently that $(X, \Delta)$ has \emph{isolated singularities}.
Take a strong log resolution $f\colon Y \rightarrow X$, i.e. such that $f$ is an isomorphism away from $Z_0$ and $E \cup \mathrm{Supp}(\widetilde{\Delta})$ has simple normal crossings, where $E = \cup E_a$ is the exceptional divisor and $\widetilde{\Delta}$ is the proper transform of $\Delta$.
 Later on, we will impose another technical condition, namely that $f$ factor through a plt blow-up (see Definition \ref{defn plt blow-up}). Since such a blow-up always exists, there will be no loss of generality in assuming it. This is in any case an issue only when $X$ is not $\Q$-factorial.

Denote $E^{[m]} = \amalg ( E_{a_1} \cap \cdots \cap E_{a_m})$ and let $Z_m \subset Z_0$ be the image of $E^{[m+1]}$. The standard algorithm in \cite{GNPP} provides a semi-simplicial hyperresolution of $X$ given by 
\begin{align*}
      \cdots E^{[2]} \amalg Z_2
    \mathrel{\substack{\rightarrow\\[-0.6ex]\rightarrow\\[-0.6ex]\rightarrow}}
    E^{[1]} \amalg Z_1 \rightrightarrows Y \amalg Z_0.
\end{align*}
See \cite[Example $2.4$]{abw} for details. Set $X_0 = Y \amalg S_0$ and $X_p = E^{[p]} \amalg Z_p$ for $p \geq 1$.

We analyze $H^{n+2}_{\cdh}(X, \mathbb{Z}(2))$ via the descent spectral sequence 
$$E_1^{p,q} = H^q(X_p, \mathbb{Z}(2)) \Rightarrow H_{\cdh}^{p+q}(X, \mathbb{Z}(2)),$$
associated to this hyperresolution.

Since the motivic cohomology $H^i (W, \ZZ(j))$ of a smooth variety $W$ vanishes for $i > 2j$ (see \S\ref{scn:cdh}), the only terms $E_1^{p,q}$ with $p+q= n+2$ which are possibly nonzero are those with $p = n-2$, $n-1$, $n$. The relevant differentials at the $E_1$ page are:

\[
\begin{tikzcd}
H^4(X_{n-3}, \ZZ(2)) \arrow[r, "\alpha"] & H^4(X_{n-2}, \ZZ (2)) \arrow[r] & 0 \\
H^3(X_{n-2}, \ZZ(2)) \arrow[r, "\beta"] & H^3(X_{n-1}, \ZZ(2)) \arrow[r] & 0 \\
H^2(X_{n-1}, \ZZ(2)) \arrow[r, "\gamma"] & H^2(X_n, \ZZ(2)) \arrow[r] & 0 \\
\end{tikzcd}
\]
Note that the zeros on the right are due to another standard vanishing for motivic cohomology mentioned in (\ref{eqn:van-smooth-2}), namely $H^i (W, \ZZ (j)) = 0$ for $i > j + \dim W$, given that $\dim X_i = n- i$.

The goal is to show that these three $E_1$-terms die at the $E_2$-page, under suitable assumptions. This is the content of the following Lemmas \ref{lem gamma}, \ref{lem beta}, and \ref{lem alpha}, addressing the maps $\gamma$, $\beta$ and $\alpha$ (this is the order of difficulty), after translating this in terms of higher Chow groups according to the identification
$$H^i(Y, \mathbb{Z}(j)) \simeq \CH^j(Y, 2j-i)$$ 
described in see \S\ref{scn:cdh}. 
For simplicity, we shall denote by $T_i$ the threefold strata of $E$, $S_j$ the surface strata, $C_k$ the curve strata, and $P_\ell$ the point strata. We do not index them according to which exceptional components they belong to, as this will not be relevant. Recall that we are assuming $n \ge 3$.

\begin{lemma}\label{lem gamma}
The map
$$\gamma \colon \bigoplus_k CH^2(C_k,2) \oplus CH^2(Z_{n-1}, 2)\rightarrow \bigoplus_\ell CH^2(P_\ell, 2)$$ 
is surjective if $W_0 H^n(X, \mathbb{Q}) = 0$, so in particular if $X$ has rational singularities.
\end{lemma}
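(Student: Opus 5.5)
The plan is to make the three groups explicit and reduce the statement to the vanishing of $W_0 H^n(X,\Q)$. For a point $P$, we have $\CH^2(P,2)\simeq H^2(P,\ZZ(2))\simeq K_2^M(\C)$. For a smooth curve $C$, $\CH^2(C,2)=H^2(C,\ZZ(2))$ is the degree $2$ motivic cohomology, i.e. $H^0$ of the Zariski sheaf $\mathcal K_2^M$ of Milnor $K$-theory (Gersten). The map $\gamma$ is the alternating sum of pullbacks along the face maps of the hyperresolution. On a component it is restriction from a curve stratum $C_k$, or from $Z_{n-1}$, to the points lying on it. Since $Z_{n-1}$ is a finite set of points (isolated singularities), its contribution is also a sum of copies of $K_2^M(\C)$.

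The key observation is that the composite $K_2^M(\C)\to H^2(C,\ZZ(2))\to K_2^M(\C)$, given by pullback to $C$ and then restriction to a point, is the identity, since $C\to \mathrm{Spec}\,\C$ has a section at every point. Consequently the image of $\gamma$ contains the image of the combinatorial map
\[
\bigoplus_k K_2^M(\C)\oplus \bigoplus_{z\in Z_{n-1}}K_2^M(\C)\longrightarrow \bigoplus_\ell K_2^M(\C),
\]
whose matrix is exactly the last differential of the weight zero complex $H^0(X_{n-1},\ZZ)\to H^0(X_n,\ZZ)$, tensored with $K_2^M(\C)$. This is the complex computing $H^\bullet_0(X,\ZZ)$ in the spectral sequence (\ref{eqn:weight-ss-integral}), since $X_p=\emptyset$ for $p>n$. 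Its cokernel is $H^n_0(X,\ZZ)$, so by right exactness of $\otimes$ the cokernel of the tensored map is $H^n_0(X,\ZZ)\otimes K_2^M(\C)$. This dominates $\coker\gamma$.

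It remains to show $H^n_0(X,\ZZ)\otimes K_2^M(\C)=0$. The hypothesis gives $H^n_0(X,\ZZ)\otimes\Q=W_0H^n(X,\Q)=0$ (Proposition \ref{prop integral weight}), so $H^n_0(X,\ZZ)$ is a finitely generated torsion group. The group $K_2^M(\C)$ is uniquely divisible (Suslin/Bass–Tate: it is divisible, and its torsion vanishes since $\mu_\infty\otimes\mu_\infty$ symbols die in $K_2$ of an algebraically closed field). Hence the tensor product of a finite group with it vanishes, and $\gamma$ is surjective. The rational singularities case follows from Corollary \ref{cor weight restrictions} with $m=0$, which gives that $H^n(X,\Q)$ has weights $\ge 2$ (using $n\ge 2$).

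The main point requiring care is checking that the signs and face maps of $\gamma$ on the constant-class summand literally match the $H^0$ weight-zero differential; this is the same bookkeeping as in Proposition \ref{prop weight zero}. The other delicate input is the unique divisibility of $K_2^M(\C)$, which I would cite rather than reprove.
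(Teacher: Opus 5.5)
Your argument is correct and is essentially the paper's proof. The paper also precomposes $\gamma$ with the pullbacks along the structure maps $C_k \to \operatorname{Spec}\C$, identifies the composite with the weight-zero differential $H^0(X_{n-1},\Q)\to H^0(X_n,\Q)$ tensored with $K_2(\C)$, and concludes via (\ref{eqn:weight-hyper}) and the unique divisibility of $K_2(\C)$. The only cosmetic difference is that you work integrally: you kill the finite group $H^n_0(X,\ZZ)\otimes K_2(\C)$ using divisibility of $K_2(\C)$ (divisibility alone suffices here), whereas the paper regards $K_2(\C)$ as a $\Q$-vector space and tensors over $\Q$.
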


\begin{proof}
The target is a direct sum of factors $CH^2(\operatorname{Spec}\mathbb{C}, 2)$, one for each point stratum $P_\ell$. The terms $CH^2(C_k, 2)$ in the domain can be more complicated, but the structure morphism $C_k \rightarrow \operatorname{Spec}\mathbb{C}$ induces a map
\begin{align*}
    CH^2(\operatorname{Spec}\mathbb{C}, 2) \rightarrow CH^2(C_k, 2)
\end{align*}

Consider the composition
\begin{align*}
    \bigoplus_{k} CH^2(\operatorname{Spec}\mathbb{C}, 2) \oplus CH^2(Z_{n-1}, 2) \rightarrow \bigoplus_{k} CH^2(C_{k},2) \oplus CH^2(Z_{n-1}, 2) \xrightarrow[]{\gamma} \bigoplus_{\ell} CH^2(P_{\ell}, 2)
\end{align*}
It can be identified with the tensor product  over $\Q$ of the map $\theta: H^0(X_{n-1}, \mathbb{Q}) \rightarrow H^0(X_n,\mathbb{Q})$ with $CH^2(\operatorname{Spec}\mathbb{C}, 2)$. To make sense of this tensor product, we are using the fact that $CH^2(\operatorname{Spec}\mathbb{C}, 2) = K_2(\mathbb{C})$ is uniquely-divisible by \cite[Theorems III$.6.1$, III.$6.4$]{k-book}, hence it naturally inherits the structure of a $\mathbb{Q}$-vector space. Since $W_0 H^n(X, \mathbb{Q}) = 0$, (\ref{eqn:weight-hyper}) shows $\theta$ is surjective. But then the composition is surjective, hence so is $\gamma$.
\end{proof}

For what follows, note that since $Z_p$ are unions of points, we have 
$$CH^2(Z_p, 1) = CH^2(Z_p, 0)  = 0,$$
hence they do not appear in the statements. The second vanishing is clear, and the first follows for instance from (\ref{eqn:van-smooth-2}).

\begin{lemma}\label{lem beta}
The map
$$\beta \colon \bigoplus_j CH^2(S_j, 1) \rightarrow \bigoplus_k CH^2(C_k, 1)$$ 
is surjective if ${\rm gr}_1^W H^{n} (X, \Q) = {\rm gr}_2^W H^{n+1} (X, \Q) = 0$, so in particular if $X$ has rational singularities.
\end{lemma}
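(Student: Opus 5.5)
Since $\dim X_{n-2}\le 2$ and $\dim X_{n-1}\le 1$, the relevant strata are the surfaces $S_j$ and the curves $C_k$, and the map $\beta$ is the alternating sum of restrictions. I would first recall that for a smooth projective variety $W$ one has $CH^2(W,1)=H^3(W,\ZZ(2))$, and that for a curve $C$ there is a concrete description
$$CH^2(C,1)\simeq H^1(C,\mathcal{K}_2),$$
given by classes $\sum (p_i, a_i)$ with $p_i\in C$ and $a_i\in\C^*$, modulo tame symbols of $K_2$ of the function field. In particular the norm map $CH^2(C,1)\to \C^*\otimes \Pic$-type data exhibits $CH^2(C,1)$ as an extension: there is a natural map $\Pic(C)\otimes \C^* \to CH^2(C,1)$ (product of a divisor class with a unit), which is surjective. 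The same product $\Pic(S)\otimes \C^*\to CH^2(S,1)$ exists on surfaces, and the decomposable part is compatible with restriction along $C_k\hookrightarrow S_j$. Here $\C^* = CH^1(\operatorname{Spec}\C,1)$.

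Following the pattern of Lemma \ref{lem gamma}, the plan is to factor through this decomposable part:
$$\bigoplus_j \Pic(S_j)\otimes\C^*\to \bigoplus_k \Pic(C_k)\otimes \C^*\twoheadrightarrow \bigoplus_k CH^2(C_k,1),$$
where the composite agrees with $\beta$ composed with $\bigoplus_j\Pic(S_j)\otimes\C^*\to\bigoplus_j CH^2(S_j,1)$. It then suffices to show that the first map, the restriction map on Picard groups of the $(n-2)$ and $(n-1)$ terms of the hyperresolution tensored with $\C^*$, is surjective. Since $\C^*$ is divisible, the torsion of the cokernel is killed after tensoring, so it is enough to work rationally. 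Split $\Pic=\Pic^0\oplus$ (up to finite index) ${\rm NS}$. For the ${\rm NS}$ part, by the weight spectral sequence (\ref{eqn:weight-hyper}), the cokernel of ${\rm NS}(X_{n-2})_\Q\to {\rm NS}(X_{n-1})_\Q$ injects into $H^{1,1}\gr_2^W H^{n+1}(X,\Q)$, as in Corollary \ref{cor weight one rational}; note that $X_n$ consists of points, whose $H^2$ vanishes, so the $E_2$ term is just this cokernel. For the $\Pic^0$ part, $\Pic^0(C_k)_\Q = H^1(C_k,\Q)\otimes\mathbb{R}/\mathbb{Z}$, and the cokernel is controlled by $\gr_1^W H^{n}(X,\Q)$, again because the next term of the hyperresolution consists of points with vanishing $H^1$. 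Both vanish by hypothesis, so the first map is surjective up to torsion, and $\otimes\C^*$ absorbs the torsion. For the last claim, rational singularities give these vanishings by Corollary \ref{cor weight restrictions} (and Example \ref{ex:rational}): for $n+1\ge n+1$ weights $<\min\{2(n+1)-2n+2,\ n+1\}$ vanish on the $(0,\cdot)$-part, and for $H^n$ weights $\ge 2$ hold.

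The main obstacle is the surjectivity of $\Pic(C)\otimes\C^*\to CH^2(C,1)$ and its compatibility with restriction. Surjectivity holds for curves because $CH^2(C,1)$ is generated by pairs $(p,a)$, with $p$ a closed point and $a$ a unit in $\kappa(p)=\C$. Compatibility follows from functoriality of the product $CH^1(-,0)\otimes CH^1(-,1)\to CH^2(-,1)$ under pullback. A subtler point is the $\Pic^0$ part: tensoring $\mathbb{R}/\mathbb{Z}$-type groups with $\C^*$ over $\ZZ$ is not directly Hodge-theoretic. Instead of $\Pic^0\otimes\C^*$, I would argue at the level of the complexes $\Pic(X_\bullet)$ and their cohomology $H^{n-1}(\Pic X_\bullet)$, showing it is torsion via the exact sequences of Corollary \ref{cor weight one rational}. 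Then its tensor product with the divisible group $\C^*$ vanishes by right exactness, which yields the surjectivity.
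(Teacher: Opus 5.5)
Your proposal is correct and is essentially the paper's proof. The paper likewise factors $\beta$ through $\bigoplus_j \Pic(S_j)\otimes\C^*\to\bigoplus_k \Pic(C_k)\otimes\C^*$ via the product with $CH^1(\operatorname{Spec}\C,1)=\C^*$, and it uses surjectivity of $\Pic(C)\otimes\C^*\to CH^2(C,1)$ for curves (cited from the literature rather than derived from the tame-symbol description as you do). It then kills the cokernel exactly as in your final paragraph: $\coker\varphi\otimes\Q\simeq H^{n-1}(\Pic(X_\bullet)_\Q)=0$ by Corollary~\ref{cor weight one rational}, so $\coker\varphi$ is torsion and vanishes after tensoring with the divisible group $\C^*$.
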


\begin{proof}
The product operation for higher Chow groups induces a commutative diagram
\[\begin{tikzcd}
    \bigoplus_j CH^1(S_j,0) \otimes CH^1(\operatorname{Spec}\mathbb{C}, 1) \ar[r, "\psi"] \ar[d] & \bigoplus_{k} CH^1(C_{k},0) \otimes CH^1(\operatorname{Spec}\mathbb{C}, 1) \ar[d] \\
        \bigoplus_j CH^2(S_j,1) \ar[r, "\beta"] & \bigoplus_{k} CH^2(C_{k},1)
    \end{tikzcd}\]
We claim the top arrow $\psi$ is surjective. Indeed, it can be identified with the tensor product of the natural map 
$$\varphi \colon \bigoplus_j \Pic(S_j) \rightarrow \bigoplus_{k} \Pic(C_{k})$$ 
with $CH^1(\operatorname{Spec}\mathbb{C}, 1) = K_1(\mathbb{C}) = \mathbb{C}^*$. Since the tensor product is right exact, we have $\coker \psi \simeq (\coker \varphi) \otimes_\mathbb{Z} \mathbb{C}^*$. But, under our assumptions, Corollary \ref{cor weight one rational} implies that 
$$\coker \varphi_\mathbb{Q} \simeq H^{n-1}( \Pic(X_\bullet)_\mathbb{Q}) = 0.$$ 
(Note that if $X$ has rational singularities, the vanishing assumptions are satisfied by Corollary \ref{cor weight restrictions}.) Hence we are assuming that $\coker \varphi$ is torsion. The tensor product of a torsion group with the divisible group $\mathbb{C}^*$ is zero, so the claim follows.

The right arrow is surjective as well by the following Lemma \ref{lemma CH^2(1)}. This implies that the composition is surjective, hence so is $\beta$.
\end{proof}

Our proof of Lemma \ref{lem beta} requires the following general fact about higher Chow groups on curves.

\begin{lemma}[{\cite[\S 3.5]{decomposable}}]\label{lemma CH^2(1)}
Let $C$ be a smooth complex projective curve. Then the product operation for higher Chow groups is a surjection
\begin{align*}
    m\colon \Pic(C) \otimes_{\mathbb{Z}} \mathbb{C}^* \rightarrow CH^2(C,1).
\end{align*}
\end{lemma}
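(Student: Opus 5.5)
The plan is to prove the surjectivity already at the level of cycles, by using the cubical model of Bloch's higher Chow groups. This model computes the same groups $CH^\bullet(\cdot,\cdot)$ as the simplicial one (Levine, Totaro), and its products are given by external product of cycles followed by pullback along the diagonal.

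\emph{Cycle-level description of $CH^2(C,1)$.} In the cubical model with $\square^1 = \mathbb{P}^1\smallsetminus\{1\}$ and faces $0,\infty$, the group $z^2(C,1)$ of cycles on $C\times\square^1$ consists of codimension $2$ cycles, i.e. $0$-cycles, meeting the faces properly. Meeting the faces properly means avoiding $C\times\{0,\infty\}$. Hence $z^2(C,1)$ is freely generated by the closed points $(x,a)$ with $x\in C$ and $a\in\mathbb{C}^*\smallsetminus\{1\}$; degenerate cycles are killed in the normalized complex. The boundary of such a point is its intersection with the faces, which is empty. So every element of $z^2(C,1)$ is a cycle, and $CH^2(C,1)$ is a quotient of the free abelian group on these points.

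\emph{The product on generators.} The class $[a]\in CH^1(\operatorname{Spec}\mathbb{C},1)=\mathbb{C}^*$ is represented by the point $a\in\square^1$. The divisor class $[x]\in\Pic(C)=CH^1(C,0)$ is represented by the point $x$. Their external product is the cycle $\{x\}\times\{a\}\subset C\times\square^1$, which meets all faces properly. Since the second factor is $\operatorname{Spec}\mathbb{C}$, the diagonal pullback is the identity. Therefore $m([x]\otimes a)$ is exactly the class of the generator $(x,a)$, and $m$ hits a set of generators of $CH^2(C,1)$, which gives surjectivity. (This is consistent with the map being well defined on $\Pic(C)\otimes\mathbb{C}^*$: linear equivalence on $C$ and multiplicativity in $a$ hold in $CH^2(C,1)$ by bilinearity of the product. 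We only need surjectivity, so this is not used.)

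\emph{Main obstacle.} The only delicate point is the identification of the product in the paper's simplicial model with the naive external product in the cubical model. I would handle this by invoking the standard comparison isomorphism between cubical and simplicial higher Chow groups, which is compatible with products. As a cross-check I would use the Gersten description $CH^2(C,1)\cong H^1(C,\mathcal{K}_2)=\coker\big(K_2(\mathbb{C}(C))\xrightarrow{\partial}\bigoplus_{x}\mathbb{C}^*\big)$. Under it the product $[x]\otimes a$ corresponds to $a$ placed in the summand at $x$, and these elements visibly generate the target.
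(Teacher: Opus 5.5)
Your argument is correct. The paper gives no proof of its own here; it simply quotes \cite[\S 3.5]{decomposable}. Your computation is therefore a self-contained substitute, and it shows the statement is essentially formal over $\mathbb{C}$.

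The key observations are the ones you make:
\begin{itemize}
\item $z^2(C,1)$ consists of $0$-cycles on the surface $C\times\square^1$. Such cycles must miss the codimension-one faces, so all of them have zero boundary.
\item Every closed point of $C$ has residue field $\mathbb{C}$. Hence each generator $(x,a)$ is literally the external product of $[x]\in\Pic(C)$ with $[a]\in CH^1(\operatorname{Spec}\mathbb{C},1)=\mathbb{C}^*$.
\end{itemize}
The second point is the only place algebraic closedness of the ground field is used. Over a general field $k$ the generators are $(x,a)$ with $a\in k(x)^*$, and these are not a priori products.

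The ``main obstacle'' you flag can be avoided altogether. The identical computation works in the simplicial model the paper uses, on $C\times\Delta^1$ with the two vertex fibres as faces. The external product $z^1(C,0)\otimes z^1(\operatorname{Spec}\mathbb{C},1)\to z^2(C,1)$ needs no triangulation, because $\Delta^0\times\Delta^1=\Delta^1$. No diagonal pullback is involved either, since the product lands directly in $CH^2(C\times\operatorname{Spec}\mathbb{C},1)=CH^2(C,1)$. So you need neither a cubical--simplicial comparison nor a moving lemma.
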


\smallskip

The final necessary result is the hardest; this is in fact the most involved part of the paper. We will separate this into the cases $n = 3$ (when the threefold stratum is the whole of $Y$), and $n \ge 4$. Since these are statements that may be of general interest, involving Chow groups, we will address them independently in the next section.

\begin{lemma}\label{lem alpha}
(i) (= Theorem \ref{thm:Chow-threefolds}) If $n = 3$, then 
$$\alpha_\Q: CH^2(Y)_\mathbb{Q} \rightarrow \bigoplus_j CH^2(S_j)_\mathbb{Q}$$
is surjective if $X$ has isolated klt type singularities. The same holds for isolated rational singularities, if we assume Bloch's conjecture on $0$-cycles on smooth projective surfaces. Moreover, the result holds integrally, i.e. $\alpha$ itself is surjective, if we assume in addition that $X$ is a local complete intersection.

\noindent
(ii) (= Theorem \ref{thm:Chow-higher}) Suppose $f$ factors through a plt blow-up (see Definition \ref{defn plt blow-up}). If $n \geq 4$, then 
$$\alpha_\Q: \bigoplus_i CH^2(T_i)_\mathbb{Q} \rightarrow \bigoplus_j CH^2(S_j)_\mathbb{Q}$$
 is surjective if $X$ has isolated klt singularities. 
\end{lemma}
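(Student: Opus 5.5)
The plan is to control the cokernel of $\alpha_\Q$ in two layers: a \emph{cohomological} layer, where cycle classes land in $H^4$, and a \emph{homologically trivial} layer, which is where Bloch's conjecture or its klt substitute enters. Throughout I use the hyperresolution $X_\bullet$ fixed above and the identification \eqref{eqn:weight-hyper}.

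\textbf{Case $n=3$, cohomological layer.} Here $X_0=Y\amalg Z_0$, $X_1=\amalg_j S_j\amalg Z_1$, and $X_2$ consists of the curves $C_k$ and points, which have no $H^4$. So \eqref{eqn:weight-hyper} identifies
$$\coker\big(H^4(Y,\Q)\to \textstyle\bigoplus_j H^4(S_j,\Q)\big)$$
with $\gr^W_4 H^5(X,\Q)$. By Remark~\ref{rmk:purity-sing}, $H^5(X,\Q)$ is pure of weight $5$, since $5>n+s=3$. Hence this cokernel vanishes. Since $H^4(S_j,\Q)=\Q$ is the degree, it follows that the degree vector of the image of $\alpha_\Q$ fills $\bigoplus_j\Q$. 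It therefore suffices to show that every degree-zero class $\bigoplus_j A_0(S_j)_\Q$ lies in the image.

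\textbf{Case $n=3$, homologically trivial layer.} Next I would show that $A_0(S_j)_\Q$ is generated by $0$-cycles supported on the double curves $C_k\subset S_j$.
\begin{itemize}
\item For klt type, I would use the rational chain connectedness of exceptional components modulo the boundary, due to Hacon--McKernan \cite{hm}. Moving an arbitrary point of $S_j$ along chains of rational curves to a point of $\bigcup_k C_k$ gives the claim directly.
\item For rational singularities, Corollary~\ref{cor weight restrictions} together with the weight computation above gives $H^{2,0}(S_j)=0$. Bloch's conjecture then says that the Albanese kernel of $S_j$ vanishes rationally. The Albanese part comes from $H^1$ of curves, and I would show it is hit from the $C_k$ using the purity of $H^4(X,\Q)$, so that $\gr^W_3H^4=0$.
\end{itemize}
Granting this, it remains to hit degree-zero cycles on the $C_k$. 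For a curve $D\subset S_j$, the class $[D]\in CH^2(Y)$ restricts to $D|_{S_{j'}}$ on the other components, which is supported on double curves, and to $(N_{S_j}Y)|_D$ on $S_j$ itself. I would combine these restrictions with the curve-level surjectivity already established in Lemma~\ref{lem beta}, which says $\coker(\bigoplus_j\Pic S_j\to\bigoplus_k \Pic C_k)$ is torsion. That should produce every class $(\text{point}-\text{point})$ on a single $C_k$, inserted into the two surfaces containing $C_k$. The remaining bookkeeping is a linear-algebra check on the intersection matrix of $E$, which is negative definite by the Hodge index theorem.

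\textbf{Integral lci statement.} Here I would rerun the same argument with the integral weight filtration. Theorem~\ref{thm wt zero klt} kills low weights. Roitman's theorem makes $A_0(S_j)$ torsion-free once it is rationally generated by curve classes. The lci hypothesis should make the relevant local cohomology torsion-free, which rules out examples like Example~\ref{example kummer threefold}.

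\textbf{Case $n\ge 4$.} The strata $T_i$ are threefolds and the $S_j$ are their pairwise intersections. I would first reduce to the plt blow-up $h\colon Z\to X$ with exceptional divisor $D$, where $-D$ is $h$-ample. I would then run a $(K_Y+\Gamma)$-MMP over $Z$ (Theorem~\ref{thm bchm}), with $\Gamma$ the reduced exceptional divisor plus $\widetilde\Delta$. This should control the dlt strata, and Proposition~\ref{prop connectness} should guarantee that the lc centers over each point form a connected, minimal configuration. On each stratum $(T_i,\Gamma_{T_i})$, which is klt up to boundary, I would apply the threefold argument from the previous steps to surject onto $CH^2$ of its boundary surfaces.

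\textbf{Main obstacle.} I expect the main difficulty to be this last transfer for $n\ge4$. Birational modifications in the MMP change the strata, and one must check that surjectivity onto $\bigoplus_j CH^2(S_j)_\Q$ is preserved under the flips and divisorial contractions. This is exactly why the factorization through a plt blow-up is imposed: it keeps the relevant preimages divisorial throughout the MMP.
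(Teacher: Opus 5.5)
\emph{Case $n=3$.} Your cohomological reduction is the paper's. You should add that lifting the needed Hodge classes of $H^4(Y,\Q)$ to cycles uses the Hodge conjecture for $1$-cycles on $Y$. The gap is in the homologically trivial part.

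Moving $0$-cycles of $S_j$ onto the double curves is fine for klt, via rational chain connectedness of $E$. But it does not reduce the problem to combinatorics. For $D\in\Pic^0(S_i)$, the $S_i$-component of $\alpha(a_{i*}D)$ is $D\cdot N_{S_i/Y}$. This cycle is not supported on the $C_k$, and its class in $A_0(S_i)$ can only be located through its Albanese image. Lemma~\ref{lem beta}, on the other hand, controls nothing but restrictions to the $C_k$. So it cannot isolate ``point minus point on a single $C_k$''.

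The non-degeneracy you actually need is not that of an intersection matrix of $E$. It is that of the pairing $(x,y)\mapsto\int_Y a_*x\cup a_*y$ on $\bigoplus_i H^1(S_i,\Q)$. Equivalently, $\bigoplus_i H^1(S_i,\Q)\xrightarrow{a_*}H^3(Y,\Q)\xrightarrow{a^*}\bigoplus_j H^3(S_j,\Q)$ must be bijective, which is exactly surjectivity of $\bigoplus_i\Pic^0(S_i)\to\bigoplus_j\Alb(S_j)$. The paper proves this with three inputs you do not supply:
\begin{itemize}
\item $a_*$ is injective because $\gr^W_3H^4(X,\Q)=0$. This is the actual role of that vanishing; it says nothing about Gysin maps from the $C_k$, which would need $\gr^W_1H^2(X,\Q)=0$.
\item $\im(a_*)$ is primitive because $H^1(Y,\Q)\to H^1(S_j,\Q)$ is zero for isolated rational singularities.
\item The intersection form polarizes $H^3(Y,\Q)_{\rm prim}$, hence is non-degenerate on the sub-Hodge structure $\im(a_*)$.
\end{itemize}
Your ``negative definite'' heuristic can be made rigorous only at this $H^1$ level. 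If $-F$ is relatively ample for some effective $F=\sum m_iS_i$, then after rescaling, $-\int_Y a_*x\cup a_*y$ is a sum of Hodge--Riemann forms on the $H^1(S_i)$ (for the ample classes $-F|_{S_i}$) and of pullbacks of Hodge--Riemann forms on the $H^1(C_k)$. That is a polarization argument, not bookkeeping with Lemma~\ref{lem beta}. Also, $h^{2,0}(S_j)=0$ comes from $R^2f_*\mathscr{O}_Y=0$; the weight restrictions only give surjectivity of $H^{2,0}(Y)\to\bigoplus_jH^{2,0}(S_j)$.

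\emph{Case $n\ge 4$.} The proposal contains no workable mechanism here. A threefold stratum $T_i$ with its boundary surfaces is not a resolution of an isolated rational threefold singularity, so none of the inputs above exist on it. Restriction from a single threefold to a boundary surface is typically far from surjective on $CH_0$ (think of $\mathbb{P}^3$ and a quartic K3 surface). Surjectivity comes only from the global configuration. The paper's route is:
\begin{itemize}
\item Use $\gr^W_4H^{n+2}(X,\Q)=\gr^W_3H^{n+1}(X,\Q)=0$ to get surjectivity on $H^4$ and on $J^3(T_i)\to J^3(S_j)$.
\item A diagram chase with the Abel--Jacobi maps of the fourfold, threefold and surface strata reduces everything to surjectivity of $\bigoplus_i F^2CH^2(T_i)\to\bigoplus_j F^2CH^2(S_j)$ and of $\bigoplus_m\coker(AJ_{F_m})\to\bigoplus_i\coker(AJ_{T_i})$.
\item These groups vanish on uniruled strata, by Bloch for uniruled surfaces and by Lemma~\ref{lemma coker AJ}.
\item The MMP over the plt blow-up is used only to prove Theorem~\ref{thm mmp}: each non-uniruled stratum is a rational section of a rational fibration from a stratum one dimension higher, with a triangular ordering. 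Birational invariance then makes the diagonal blocks split surjective.
\end{itemize}
Nothing is transported through flips or divisorial contractions, so the obstacle you single out is not the real one.

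\emph{Integral lci statement.} Roitman's theorem does not make $A_0(S_j)$ torsion-free; it is $\Alb(S_j)$. The missing input is integral surjectivity of $CH^2(Y)\to\bigoplus_jH^4(S_j,\ZZ)$. The paper derives this from Hamm's $1$-connectedness of the links, which gives $H^5(X,U;\ZZ)=0$. It then uses that $H^4(Y,U;\ZZ)\simeq H_2(E,\ZZ)$ is generated by curve classes on the birationally ruled $S_j$.
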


Putting together all of these lemmas gives the proof of Theorem \ref{thm:weight2}.

\subsection{Restriction maps on Chow groups of codimension two cycles}\label{scn:chow}
We start with the case of threefolds, which needs a separate treatment based on Bloch's conjecture and Hodge-theoretic methods.

\begin{theorem}\label{thm:Chow-threefolds}
Let $X$ be a projective threefold with isolated singularities of klt type, and let $f \colon Y \to X$ be a log resolution which is an isomorphism away from the singular locus of $X$. 
If $S_j$ are the surface components of the exceptional locus of $f$, then the restriction map
$$\alpha_\Q: CH^2(Y)_\mathbb{Q} \rightarrow \bigoplus_j CH^2(S_j)_\mathbb{Q}$$
is surjective.  The same holds for isolated rational singularities, if we assume Bloch's conjecture on $0$-cycles on smooth projective surfaces.\footnote{We recall that this is an issue only when $X$ is not Gorenstein, as in the Gorenstein case the notions of klt, canonical and rational singularities are all equivalent.} Moreover, the result holds integrally, i.e. $\alpha$ itself is surjective, if we assume in addition that $X$ is a local complete intersection.
\end{theorem}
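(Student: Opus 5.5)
The plan is to split $CH^2(S_j)$, the group of $0$-cycles on the smooth projective surface $S_j$, into its degree part and its kernel, and treat the two pieces separately. For a $0$-cycle on $S_j$ there is the degree map to $\ZZ$, the Albanese map to $\Alb(S_j)$, and the Albanese kernel $T(S_j)$. I will show three things. First, the Albanese kernel vanishes rationally, or under Bloch's conjecture. Second, the Albanese contribution vanishes because the relevant weight-one cohomology does. Third, the degrees can be hit by restricting suitable curve classes from $Y$.

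\emph{Step 1: $0$-cycles on the exceptional surfaces are governed by degrees.} Since $X$ has rational singularities, the exceptional components satisfy $H^i(S_j,\O_{S_j})=0$ for $i>0$, i.e.\ $p_g(S_j)=q(S_j)=0$. I would derive this from $R^if_*\O_Y=0$ and the vanishing of the lowest Hodge pieces of the $E_1$-terms of the weight spectral sequence at the relevant positions; the latter I would obtain from Corollary \ref{cor weight restrictions} applied to $H^4(X)$ and $H^3(X)$ near the isolated singular points. If this cannot be extracted directly, I would use the local Du Bois/rationality criterion $H^i(E,\O_E)=0$ for the exceptional divisor of a rational singularity.

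In the klt type case, Hacon–McKernan (Shokurov's conjecture, \cite{hm}) gives more: every exceptional component $S_j$ is rationally chain connected, hence rationally connected. Therefore $CH_0(S_j)_\Q\simeq\Q$ via the degree map, with no appeal to Bloch's conjecture. For merely rational singularities, $p_g=q=0$ together with Bloch's conjecture gives the same conclusion.

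\emph{Step 2: hitting the degrees.} It now suffices to show that the map $CH^2(Y)_\Q\to\bigoplus_j\Q$, sending a cycle to the tuple of degrees $(Z\cdot S_j)$, is surjective. Here $Z\cdot S_j$ is the degree of $Z|_{S_j}$, computed via $\deg(\O_Y(S_j)|_Z)$ for a curve class $Z$. Surjectivity is equivalent to the intersection pairing between curve classes on $Y$ and the divisors $S_j$ being nondegenerate on the span of the $S_j$.

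To see nondegeneracy, I would use curves contained in the exceptional locus and the negativity lemma. The intersection matrix of $f$-exceptional divisors against exceptional curves detects every nonzero combination $D=\sum a_jS_j$: by negativity, $-D$ cannot be $f$-nef unless it is effective, and similarly for $D$ itself, so some exceptional curve has $D\cdot C\neq 0$. Hence the $S_j$ are linearly independent functionals on $N_1(Y/X)_\Q$, and the map to $\Q^{\#j}$ is onto.

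\emph{Step 3: the integral statement for lci.} For lci $X$, rational singularities in dimension 3 are canonical Gorenstein, hence klt. Integrally I need two further facts. The first is $CH_0(S_j)\simeq\ZZ$. Rational connectedness gives that $CH_0$ is torsion-free of this form, by Roitman together with $\Alb(S_j)=0$ and vanishing of torsion in the Albanese kernel. The second is that the degree map $CH^2(Y)\to\ZZ^{\#j}$ is onto. For this I would exploit the lci hypothesis via purity of integral weights (Lemma \ref{lem integral weight properties}, Theorem \ref{thm wt zero klt}) to obtain unimodularity. The lci condition should force the relevant integral weight-two cohomology to have no torsion, as the Kummer example shows it can fail otherwise. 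Concretely, I expect to use that the cokernel of $\bigoplus_j\Pic S_j\to\bigoplus_k\Pic C_k$, or rather of $H_2$-classes of curves paired against the $S_j$, is $H^5$-type torsion that vanishes in the lci case.

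\emph{Main obstacle.} I expect Step 3 to be hardest, namely producing curve classes that hit each degree vector exactly over $\ZZ$. A secondary difficulty is Step 1 in the non-klt rational case, which requires carefully establishing $p_g(S_j)=q(S_j)=0$ for every component, not just for $E$ as a whole.
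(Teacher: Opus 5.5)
There is a genuine gap in Step 1, and it removes the main content of the theorem. It is false that every exceptional component has $q(S_j)=0$, and false that each is rationally connected in the klt case.

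\begin{itemize}
\item Hacon--McKernan gives rational chain connectedness of the fibre $E=f^{-1}(x)$. That only makes each component uniruled, i.e.\ birationally ruled, which is all the paper uses.
\item Rationality only forces the restriction $\bigoplus_j H^1(S_j,\Q)\to\bigoplus_k H^1(C_k,\Q)$ to be injective. It does not force $H^1(S_j,\Q)=0$.
\end{itemize}

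For a concrete example, blow up an ordinary double point, so $E\cong\mathbb{P}^1\times\mathbb{P}^1$. Then blow up a smooth curve $C\subset E$ of genus $g\ge 1$. This is still a log resolution that is an isomorphism off the singular point. The new component $F$ is a $\mathbb{P}^1$-bundle over $C$, so $CH^2(F)_{\hom}\otimes\Q\cong J(C)\otimes\Q\neq 0$.

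So in general the target of $\alpha_\Q$ contains $\bigoplus_j\Alb(S_j)\otimes\Q$. Bloch's conjecture, known for birationally ruled surfaces, \emph{identifies} $CH^2(S_j)_{\hom}$ with $\Alb(S_j)$; it does not kill it. Your argument only controls degrees, so it never shows this part lies in the image of $\alpha_\Q$.

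The missing idea is how to reach the Albanese part. The paper pushes forward $\Pic^0(S_i)=CH_1(S_i)_{\hom}$ into $CH^2(Y)_{\hom}$ and restricts back to the $S_j$. This lifts to the composite $\bigoplus_i H^1(S_i,\Q)\to H^3(Y,\Q)\to\bigoplus_j H^3(S_j,\Q)$ of Gysin maps followed by pullbacks. Its bijectivity is equivalent to nondegeneracy of the intersection form on $H^3(Y,\Q)$ restricted to the image of the Gysin map. The paper proves this in three steps:
\begin{itemize}
\item The Gysin map is injective because $\gr^W_3H^4(X,\Q)=0$, by purity for isolated singularities.
\item Its image is primitive because $H^1(Y,\Q)\to H^1(S_j,\Q)$ is zero: it factors through $H^1(X,\Q)\cong H^1(Y,\Q)$ and $H^1(X_{\rm sing},\Q)=0$.
\item Hodge--Riemann then gives nondegeneracy.
\end{itemize}

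Your Step 2 is correct. The negativity lemma is a more elementary substitute for the paper's treatment of the cycle-class part, which uses $\gr^W_4H^5(X,\Q)=0$ and the Hodge conjecture for threefolds. But it covers only $\bigoplus_jH^4(S_j,\Q)$.

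Step 3 inherits the same error, since $CH_0(S_j)\cong\ZZ$ fails. The integral surjectivity on degrees is also not proved. The relevant obstruction is not $\coker(\bigoplus_j\Pic S_j\to\bigoplus_k\Pic C_k)$; that group computes $K_{-2}(X)$. It is $\gr^W_4H^5(X,\ZZ)=\coker(H^4(Y,\ZZ)\to\bigoplus_jH^4(S_j,\ZZ))$. The paper kills it via Hamm's theorem that links of isolated lci threefold singularities are simply connected, so $H^4(L_x,\ZZ)=0$ and $H^5(X,U;\ZZ)=0$. Classes are then lifted through $H^4(Y,U;\ZZ)\cong H_2(E,\ZZ)$, a quotient of $\bigoplus_jH_2(S_j,\ZZ)$, which is generated by curve classes.
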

\begin{proof}
Note to begin with that the irreducible components $S_j$ of the exceptional divisor $E$ are smooth projective surfaces with $h^{2,0}(S_j) = 0$. Indeed, since $X$ has rational singularities, we have 
$$R^2 f_* \mathscr{O}_Y \simeq H^2(E, \mathscr{O}_E) \simeq \bigoplus_j H^2(S_j, \mathscr{O}_{S_j}),$$
the latter since $H^2$ of the structure sheaf of an intersection of $S_j$'s vanishes. For such surfaces, Bloch's conjecture (see Conjecture \ref{conj:Bloch} in the Appendix) asserts that the Abel-Jacobi map 
\begin{equation}\label{eqn:bloch}
CH_0(S_i)_{\hom} \to \Alb(S_i)
\end{equation}
is an isomorphism; this is known for all surfaces that are not of general type. If $X$ is of klt type, then $E$ is rationally chain connected by \cite{hm}, hence each $S_i$ is birationally ruled, so the conjecture holds (and is rather straightforward). If $X$ is only assumed to have rational singularities, some $S_j$ may be of general type, and in this case we will assume the conjecture. Either way, from now on we assume that (\ref{eqn:bloch}) is an isomorphism for all $j$.

Consider now the commutative diagram 
\begin{equation}\label{eqn gr3W-H4}
\begin{tikzcd}
  CH^2(Y)_\mathbb{Q} \ar[r, "\alpha_{\Q}"] \ar[d, "\cl"] &
  \bigoplus_{i} CH^2(S_i)_{\mathbb{Q}} \ar[d, "\cl"] \\
  H^{2,2}(Y)_\Q \ar[r] &
  \bigoplus_{i} H^{2,2}(S_i)_\Q
\end{tikzcd}
\end{equation}
where the vertical maps are cycle class maps; here $H^{2,2} (Y)_\Q$ is notation for $H^{2,2} (Y) \cap H^4 (Y, \Q)$, and same for the $S_j$.

Since $X$ has isolated singularities, $H^5(X, \mathbb{Q})$ is a pure Hodge structure (see Remark \ref{rmk:purity-sing}), hence in particular $\gr_4^W H^5(X, \mathbb{Q}) = 0$.\footnote{Since $X$ has rational singularities, Corollary \ref{cor weight restrictions} says that $\gr_4^W H^5(X, \mathbb{Q}) = 0$ even without the isolated singularities assumption, but this will be needed in another part of the argument.}
Computing this in terms of the standard hyperresolution for isolated singularities described in \S\ref{scn:weight2}, according to (\ref{eqn:weight-hyper}) we have that the pullback map 
$$H^4(Y, \mathbb{Q}) \rightarrow \bigoplus_j H^4(S_j, \mathbb{Q})$$ 
is surjective, and note that on the surfaces $S_j$ the $H^{2,2}$ space fills up the entire $H^4$. Since this is a map of Hodge structures, we conclude that the bottom arrow of (\ref{eqn gr3W-H4}) is surjective. Moreover, the Hodge conjecture, which is known for threefolds, implies the vertical left arrow in the diagram is surjective as well, and of course so is the right vertical one.

By the Snake Lemma, it suffices then to show the surjectivity of the restriction of $\alpha$ to the null-homologous cycles
\begin{align*}
    \alpha \colon CH^2(Y)_{\hom} \rightarrow \bigoplus_i CH^2(S_i)_{\hom}.
\end{align*}
The only null-homologous one-cycles we understand  on $Y$ are those supported on the exceptional divisor. Therefore we aim to show surjectivity of the composition
\begin{align*}
    M\colon \bigoplus_i CH_1(S_i)_{\hom} \overset{ (a_{i*})_i}{\longrightarrow} CH_1(Y)_{\hom} = CH^2(Y)_{\hom} \overset{\oplus a_j^*}{\longrightarrow}  \bigoplus_j CH^2(S_j)_{\hom}
\end{align*}
where $a_j\colon S_j \rightarrow Y$ denotes the inclusion. 
We will prove the surjectivity of $M$ by exploiting the Hodge theory of $Y$.

Let us make the identifications $CH_1(S_i)_{\hom} \simeq \Pic^0(S_i)$, viewed as a quotient of $H^1(S_i, \mathbb{Q})$, and $CH^2(S_j)_{\hom} \simeq \Alb(S_j)$ by Bloch's conjecture as described above, viewed as a quotient of $H^3(S_j, \mathbb{Q})$. The block-matrix $M$ lifts to a map of cohomology groups of the form
\begin{align*}
     \bigoplus_i H^1(S_i, \mathbb{Q}) \xrightarrow{(a_{i*})_i} H^3(Y, \mathbb{Q}) \xrightarrow{\oplus a_j^*} \bigoplus_j H^3(S_j, \mathbb{Q}), 
\end{align*}
where $a_{j*}\colon H^1(S_j, \mathbb{Q}) \rightarrow H^3(Y, \mathbb{Q})$ is the Gysin pushforward and $a_j^*\colon H^3(Y, \mathbb{Q}) \rightarrow H^3(S_j, \mathbb{Q})$ is the pullback. 
It suffices to show the surjectivity (equivalently, bijectivity) of this composition.

Since these two types of maps are Poincaré dual to each other, this is equivalent to the non-degeneracy of the Gysin pushforward map composed with the intersection 
product on $Y$:
\begin{align*}
    \bigoplus_j H^1(S_j, \mathbb{Q}) \otimes \bigoplus_i H^1(S_i, \mathbb{Q}) \xrightarrow{(a_{i*}) \oplus (a_{j*})} H^3(Y, \mathbb{Q}) \otimes H^3(Y, \mathbb{Q}) \rightarrow \mathbb{Q}.
\end{align*}
Similar to an earlier argument in the proof, since $X$ has isolated singularities, $H^4(X, \mathbb{Q})$ is a pure Hodge structure by Remark \ref{rmk:purity-sing}, hence in particular $\gr_3^W H^4(X, \mathbb{Q}) = 0$.\footnote{Again, this could also be deduced from Corollary \ref{cor weight restrictions} by the rational singularities assumption, even without assuming that the singularities are isolated.}
Writing this in terms of the standard hyperresolution, from (\ref{eqn:weight-hyper}) we deduce that the pullback map 
$$\oplus a_j^* \colon  H^3(Y, \mathbb{Q}) \rightarrow \bigoplus_i H^3(S_j, \mathbb{Q})$$ 
is surjective, hence its dual 
$$(a_{j*{}})_j: \bigoplus_j H^1(S_j, \mathbb{Q}) \rightarrow H^3(Y, \mathbb{Q})$$ 
is injective.  We next show that the image $\im ( (a_{j*})_j )$ of this last map lies in the primitive cohomology $H^3(Y, \mathbb{Q})^{\mathrm{prim}}$, taken with respect to an ample class $h \in H^2(Y, \mathbb{Q})$. Since the intersection product is a polarization on $H^3(Y, \mathbb{Q})^{\mathrm{prim}}$, it then follows that it restricts to a polarization on $\im ( (a_{j*})_j)$, hence it is non-degenerate as claimed.

To this end, fix an index $j$ and let $x \in H^1(S_j, \mathbb{Q})$. We prove that $a_{j*} x \cup h = 0$, hence $a_{j*} x \in H^3(Y, \mathbb{Q})^{\mathrm{prim}}$. For an arbitrary class $y \in H^1(Y, \mathbb{Q})$, the projection formula implies
\begin{align*}
    (a_{j*} x \cup h \cup y)_Y = (x \cup a_j^*h \cup a_j^*y)_{S_j}
\end{align*}
This is zero because, we claim, $a_j^*\colon H^1(Y, \mathbb{Q}) \rightarrow H^1(S_j, \mathbb{Q})$ is the zero map. To show this, consider the commutative diagram
\[
\begin{tikzcd}[row sep=3.2em, column sep=3.6em]   
H^1(X, \mathbb{Q}) \arrow[r] \arrow[d, "f^*"]  &  H^1(X_{\sing}, \mathbb{Q}) \arrow[d]  = 0 & \\
H^1(Y, \mathbb{Q}) \arrow[r]   &   H^1(E, \mathbb{Q}) \arrow[r]   & H^1(S_j, \mathbb{Q}) 
\end{tikzcd}
\]
where $a_j^*$ is the composition of the bottom horizontal maps. 
Note first that $H^1(X_{\sing}, \mathbb{Q}) = 0$ because $X$ has isolated singularities. On the other hand, the pullback $f^*\colon H^1(X, \mathbb{Q}) \rightarrow H^1(Y, \mathbb{Q})$ is an isomorphism, since $X$ has rational singularities. Indeed, in this case $H^1(X, \mathbb{Q})$ is pure of weight one, and there is an isomorphism 
$$H^{0,1}(X) \simeq H^1(X, \mathscr{O}_X) \simeq H^1(Y, \mathscr{O}_Y) \simeq H^{0,1}(Y),$$
Here a priori $H^{0, 1} (X)$ is the Hodge-Du Bois space $\HH^1 (X, \DB_X^0)$, but $\DB_X^0 \simeq  \mathscr{O}_X$ for rational (and more generally Du Bois) singularities, while the isomorphism in the middle also uses the rational singularities hypothesis.  The vanishing we want follows then from commutativity, and this concludes the proof in the general setting. We treat the last statement for local complete intersections in Proposition \ref{prop:threefold-lci} below.
\end{proof}

\begin{example}\label{example kummer threefold details}
We can now provide details regarding Example \ref{example kummer threefold}. The phenomenon we highlight is the only fact in the proof of the Theorem \ref{thm:weight2} that fails over $\ZZ$.

Let $X = A/\iota$, where $A$ is an abelian threefold and $\iota: x \mapsto -x$ is the negation involution. The cohomology group $H^5(X, \mathbb{Z}) \simeq (\ZZ/ 2 \ZZ)^7$ is pure of weight $4$ by Proposition \ref{proposition kummer example} in the Appendix. But this means
\begin{align*}
    0 \neq \gr_4^W H^5(X, \mathbb{Z}) \simeq \coker \big( H^4(Y, \mathbb{Z}) \rightarrow \oplus_j H^4(S_j, \mathbb{Z}) \big),
\end{align*}
so the composition in (\ref{eqn gr3W-H4}) is not surjective when we consider integral coefficients. It follows that the map
$$\alpha\colon CH^2(Y) \rightarrow \bigoplus_j CH^2(S_j)$$
 in the proof of Theorem \ref{thm:Chow-threefolds} is not surjective either. Tracing back through the proof of that theorem, we find $H^5_{\cdh}(X,\mathbb{Z}(2)) \simeq \coker \alpha \neq 0$.
\end{example}

\begin{proposition}\label{prop:threefold-lci}
Suppose $X$ is a projective threefold with isolated, rational,  local complete intersection singularities. Then $H^5_{\cdh}(X, \mathbb{Z}(2)) = 0$.
\end{proposition}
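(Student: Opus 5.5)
The plan is to rerun the proof of Theorem~\ref{thm:weight2}(i) with $\ZZ$-coefficients. That proof reduces the vanishing of $H^5_{\cdh}(X,\ZZ(2))$, via the descent spectral sequence for the standard hyperresolution $Y\amalg Z_0 \leftleftarrows E^{[1]}\amalg Z_1\cdots$, to the surjectivity of the three maps $\gamma,\beta,\alpha$. So it suffices to show each of them is surjective integrally.

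\emph{Step 1: reduction to klt type.} Isolated rational lci singularities are Gorenstein rational, hence canonical, hence of klt type (Remark~\ref{rmk:klt}). So Theorem~\ref{thm wt zero klt} gives $H^i_0(X,\ZZ)=H^i_1(X,\ZZ)=0$ for $i>0$. It also makes $E$ rationally chain connected by \cite{hm}, so every $S_j$ is birationally ruled.

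\emph{Step 2: the maps $\gamma$ and $\beta$.} For $\gamma$, the argument of Lemma~\ref{lem gamma} goes through integrally. The composite factors as $\theta\otimes K_2(\C)$ with $\theta\colon H^0(X_2,\ZZ)\to H^0(X_3,\ZZ)$, and $\coker\theta = H^3_0(X,\ZZ)=0$ by Step~1. For $\beta$, the proof of Lemma~\ref{lem beta} is already integral: it only needs $\coker\varphi$ to be torsion, since $\C^*$ is divisible.

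\emph{Step 3: the map $\alpha$, algebraic part.} I follow the diagram (\ref{eqn gr3W-H4}) integrally, with $CH^2(S_j)\to H^4(S_j,\ZZ)=\ZZ$ the degree map. To avoid the integral Hodge conjecture on $Y$, I would not lift through $H^4(Y,\ZZ)$. Instead I would use curves supported on $E$.

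First, topological surjectivity. Let $N$ be a regular neighbourhood of $E$ with boundary the link $L$, a real $5$-manifold. By Hamm's theorem the link of an isolated lci singularity of dimension $3$ is simply connected, so $H^4(L,\ZZ)\simeq H_1(L,\ZZ)=0$. The sequence of the pair $(N,L)$ then shows that $H^4(N,L;\ZZ)\simeq H_2(E,\ZZ)\to H^4(N,\ZZ)=\bigoplus_j H^4(S_j,\ZZ)$ is surjective.

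Next, algebraicity. Rationality gives $h^{2,0}(S_j)=0$, so by integral Lefschetz $(1,1)$ every class in $H_2(S_j,\ZZ)$ is algebraic. A Mayer--Vietoris argument over the strata of $E$ shows $H_2(E,\ZZ)$ is generated by the images of the $H_2(S_i,\ZZ)$. Hence the integral intersection map $\bigoplus_i CH_1(S_i)\to\bigoplus_j\ZZ$, pushing forward to $Y$ and restricting to the $S_j$, is surjective. This replaces the Hodge-conjecture step.

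\emph{Step 4: the map $\alpha$, homologically trivial part.} By the snake lemma it remains to show that $M\colon\bigoplus_i CH_1(S_i)_{\hom}\to\bigoplus_j CH^2(S_j)_{\hom}$ is surjective integrally.

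Bloch's conjecture holds for the $S_j$, since they are birationally ruled. The kernel of $CH_0(S_j)_{\hom}\to\Alb(S_j)$ is therefore torsion-free by Roitman and rationally zero, hence zero, so $CH^2(S_j)_{\hom}\simeq\Alb(S_j)$ integrally. Similarly $CH_1(S_i)_{\hom}=\Pic^0(S_i)$.

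So $M$ is a homomorphism of complex tori $\bigoplus\Pic^0(S_i)\to\bigoplus\Alb(S_j)$. The polarization argument in the proof of Theorem~\ref{thm:Chow-threefolds} shows it is rationally surjective, so its image is a closed connected subtorus of full dimension, and $M$ is surjective integrally.

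Together with Step~2 this gives integral surjectivity of $\alpha$, $\beta$ and $\gamma$, hence $H^5_{\cdh}(X,\ZZ(2))=0$.

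The main obstacle is the integral statement in Step~3. The Kummer threefold of Example~\ref{example kummer threefold details} shows this is exactly where the lci hypothesis must enter, through the vanishing of $H^4$ of the link. The points to check carefully are the identification $H^4(N,L)\simeq H_2(E)$ compatible with restriction to the $S_j$, and that $H_2(E,\ZZ)$ is generated by algebraic classes on the components.
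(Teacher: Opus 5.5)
Your proposal is correct and is essentially the paper's proof. The new integral input is Hamm's $1$-connectedness of the links, which gives $H^4(L,\ZZ)=0$ and hence surjectivity of $H_2(E,\ZZ)\simeq H^4(Y,U;\ZZ)\to\bigoplus_j H^4(S_j,\ZZ)$; integral Lefschetz $(1,1)$ on the birationally ruled $S_j$ then supplies curve classes supported on $E$. Your pair $(N,L)$ is a local shortcut for the paper's two diagram chases, and your Step~4 spells out the integrality of the homologically trivial part, which the paper takes for granted.

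The one step you should not present as a formal Mayer--Vietoris fact is that $H_2(E,\ZZ)$ is generated by the $H_2(S_j,\ZZ)$. This needs the maps $\bigoplus_\ell H_0(P_\ell,\ZZ)\to\bigoplus_k H_0(C_k,\ZZ)$ and $\bigoplus_k H_1(C_k,\ZZ)\to\bigoplus_j H_1(S_j,\ZZ)$ to be injective. The paper deduces this from $W_0H^3(X,\Q)=\gr_1^WH^3(X,\Q)=0$: the kernels are then torsion subgroups of torsion-free groups, hence zero. Your Step~1 also provides this input.
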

\begin{proof}
As noted above, the only extra fact that we need to show is the surjectivity of the composition of the cycle class map $\cl\colon CH^2(Y) \rightarrow H^4(Y, \mathbb{Z})$ with the pullback $H^4(Y, \mathbb{Z}) \rightarrow \oplus_j H^4(S_j, \mathbb{Z})$, in the notation of Theorem \ref{thm:Chow-threefolds}.

Since $X$ is LCI, its links $L_x$ at singular points are $1$-connected by \cite[Korollar $1.3$]{hamm}, so $H_1(L_x, \mathbb{Z}) = 0$. They are compact $5$-manifolds, so by Poincar\'e duality 
\begin{align*}
    0 = H_1(L_x, \mathbb{Z}) = H^4(L_x, \mathbb{Z}) 
\end{align*}
We denote by $U$ the regular locus of $X$. In view of the above, and excision, we deduce
$$ H^5(X, U; \mathbb{Z}) = 0,$$ 
and therefore the restriction map $H^5(X, \mathbb{Z}) \rightarrow H^5(U, \mathbb{Z})$ is injective.

Consider the Mayer-Vietoris long exact sequence
$$
\begin{tikzcd}
\cdots \arrow[r]
  &  H^4(Y, \mathbb{Z}) \arrow[r] & H^4(E, \mathbb{Z}) \arrow[r] 
  & H^5(X, \mathbb{Z}) \arrow[r] \arrow[d, hook]
  & H^5(Y, \mathbb{Z}) \arrow[r] \arrow[d]
  & \ldots \\
 {} & {} & {}
  & H^{5}(U;\mathbb Z) \arrow[r,equal]
  & H^{5}(U;\mathbb Z)
  & {}
\end{tikzcd}
$$
The bottom row views $U$ as an open subvariety of both $X$ and $Y$. Using the injectivity shown above, the commutative square implies that the pullback map $H^5(X, \mathbb{Z}) \rightarrow H^5(Y, \mathbb{Z})$ is injective, hence the restriction 
$$H^4(Y, \mathbb{Z}) \rightarrow H^4(E, \mathbb{Z}) \simeq \bigoplus_j H^4(S_j, \mathbb{Z})$$ 
is surjective. We need to upgrade this to surjectivity at the level of the Chow groups of $Y$.

For this, consider the diagram 
$$
\begin{tikzcd}
 & {} & H^{4}(Y,U;\mathbb Z) \arrow[d] &  {} & {} \\
\cdots \arrow[r]
  & H^{4}(X;\mathbb Z) \arrow[r] \arrow[d,two heads]
  & H^{4}(Y;\mathbb Z) \arrow[r] \arrow[d]
  & H^4(E, \mathbb{Z}) = \bigoplus_i H^{4}(S_i;\mathbb Z) \arrow[r]
  & 0 \\
 {} 
  & H^{4}(U;\mathbb Z) \arrow[r, equal]
  & H^{4}(U;\mathbb Z)
  & {} & {}
\end{tikzcd}
$$
The row comes from Mayer-Vietoris and the column is the long exact sequence of the pair $(Y, U)$. The map $H^4(X, \mathbb{Z}) \rightarrow H^4(U, \mathbb{Z})$ is surjective because $H^5(X, U; \mathbb{Z}) = 0$.

A diagram chase shows that any class in $\oplus_j H^4(S_j, \mathbb{Z})$ can be lifted to a class in $H^4(Y,U; \mathbb{Z})$. Let us analyze this last group. Poincaré-Lefschetz duality \cite[Theorem B.$28$]{peters steenbrink} implies $H^4(Y, U; \mathbb{Z}) \simeq H_2(E, \mathbb{Z})$. There is a homology spectral sequence 
$$E^1_{p,q} = H_q(E^{[p+1]}, \mathbb{Z}) \Rightarrow H_{p+q}(E, \mathbb{Z}),$$ 
whose $E_1$-page is pictured:  
$$
\begin{tikzcd}[row sep=large, column sep=large]
\displaystyle\bigoplus_j H_{2}(S_j,\mathbb Z) &
\displaystyle\bigoplus_k H_{2}(C_k,\mathbb Z) \arrow[l] &
0 \arrow[l] \\
\displaystyle\bigoplus_j H_{1}(S_j,\mathbb Z) &
\displaystyle\bigoplus_k H_{1}(C_k,\mathbb Z) \arrow[l,"a"] &
 0 \arrow[l] \\
\displaystyle\bigoplus_j H_{0}(S_j,\mathbb Z) &
\displaystyle\bigoplus_k H_{0}(C_k,\mathbb Z) \arrow[l] &
\displaystyle\bigoplus_{\ell} H_{0}(P_{\ell},\mathbb Z) \arrow[l,"b"]
\end{tikzcd}
$$
We claim $\ker b = \ker a = 0$. Indeed, since $W_0 H^3(X, \mathbb{Q}) = 0$, the map $\oplus_k H^0(C_k, \mathbb{Q}) \rightarrow \oplus_\ell H^0(P_\ell, \mathbb{Q})$ is surjective. Hence, its dual is injective. This says that $\ker b$ is torsion. But $\ker b$ is a subgroup of a torsion-free group, so it must vanish.

Similarly, since $\gr_1^W H^3(X, \mathbb{Q}) = 0$, the map $\oplus_j H^1(S_j, \mathbb{Q}) \rightarrow \oplus_k H^1(C_k, \mathbb{Q})$ is surjective. Hence, its dual is injective. This says that $\ker a$ is torsion. But $\ker a$ is a subgroup of a torsion-free group, so it must vanish.

Therefore, 
\begin{align*}
    H_2(E, \mathbb{Z}) = \coker \bigg(\bigoplus_k H_{2}(C_k,\mathbb Z) \rightarrow \bigoplus_j H_{2}(S_j,\mathbb Z) \bigg).
\end{align*}

In particular, using the compatibility between the Poincar\'e-type dualities we used, there is a natural surjection $\oplus_j H^2(S_j, \mathbb{Z}) \rightarrow H^4(Y,U; \mathbb{Z})$. From this we deduce the surjectivity the composition $\oplus_j H^2(S_j, \mathbb{Z}) \rightarrow \oplus_j H^4(S_j, \mathbb{Z})$. Note that, as in the proof of Theorem \ref{thm:Chow-threefolds},  the surfaces $S_j$ are birationally ruled since LCI and rational implies klt.
Therefore $H^2(S_j, \mathbb{Z})$ is generated by classes of curves. The composition factors as
\begin{align*}
    \bigoplus_j H^2(S_j, \mathbb{Z}) = \bigoplus_j CH^1(S_j) \rightarrow CH^2(Y) \rightarrow \bigoplus_j H^4(S_j, \mathbb{Z}).
\end{align*}
In particular, $CH^2(Y) \rightarrow \oplus_j H^4(S_j, \mathbb{Z})$ is surjective.
\end{proof}

\smallskip

We now move on to the case $n\ge 4$, where the picture is somewhat different, as the threefold strata are now proper subvarieties. Our treatment is self-contained, but it borrows ideas from the forthcoming work \cite{burke dual complex} of the first author, which develops these techniques more systematically. The approach uses deep results from the minimal model program and is inspired in part by the analysis of the dual complex of singularities in \cite[Theorems $3$, $19$]{dFKX}.

\begin{theorem}\label{thm:Chow-higher}
Let $(X, \Delta)$ be a projective klt pair of dimension $n \geq 4$ with isolated singularities, and let $f: Y \rightarrow X$ be a strong log resolution of 
$(X, \Delta)$. Suppose $f$ factors though a plt blow-up (see Definition \ref{defn plt blow-up}). If $T_i$ are the threefold components, and $S_j$ the surface components of the exceptional locus of $f$, then the restriction map
$$\alpha_\Q\colon \bigoplus_i CH^2(T_i)_\mathbb{Q} \rightarrow \bigoplus_j CH^2(S_j)_\mathbb{Q}$$
is surjective. 
\end{theorem}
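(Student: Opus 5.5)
Since the surfaces $S_j$ are rational-chain-connected pieces of the exceptional locus of a klt singularity (by \cite{hm} each stratum of $E$ over an isolated klt point is rationally chain connected, hence every $S_j$ is birationally ruled over a curve or rational), Bloch's conjecture holds for them. Thus $CH^2(S_j)_\Q = CH_0(S_j)_\Q$ splits as $\Q\cdot[\mathrm{pt}] \oplus \Alb(S_j)_\Q$, with $\Alb(S_j)$ isogenous to the Jacobian of the base curve of the ruling (or zero). The plan is to reduce surjectivity of $\alpha_\Q$ to two pieces: degrees and Albanese parts. For degrees, I want every surface stratum $S_j$ to lie in some threefold stratum $T_i$ with $\alpha$ hitting the point class of $S_j$ componentwise. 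Each $S_j$ is a component of $E_{a_1}\cap\cdots\cap E_{a_{n-2}}$, hence lies in several threefold strata; the issue is independence across different $S_j$. I would use the vanishing $W_{\le 1}$ and purity facts for $H^{n+?}$: concretely, the weight-zero row computation (Theorem \ref{thm wt zero klt}, contractibility of the dual complex from \cite{dFKX}) gives that $\bigoplus_i H^0(T_i,\Q)\otimes$ points surjects appropriately, but points in $T_i$ restrict to nothing in $S_j$. So instead I would restrict point classes supported on curves: a zero-cycle $p\in S_j$ equals $C\cdot S_j$ for a curve $C\subset T_i$ meeting $S_j$ transversally. Here one needs a $1$-cycle on $T_i$ meeting $S_j$ in one point and meeting the other surface strata of $T_i$ in controllable ways; the combinatorial independence reduces to the top-weight statement $\gr^W_{2n-4}H^{2n-4}$-type surjectivity, i.e. $\bigoplus H^4(T_i)\to\bigoplus H^4(S_j)$ surjective rationally, which follows from purity of $H^{n+2}(X,\Q)$ for isolated singularities (Remark \ref{rmk:purity-sing}) and (\ref{eqn:weight-hyper}), combined with the Hodge conjecture for $H^4$ of threefolds $T_i$ (codimension-two classes, known via Lefschetz $(1,1)$ by hard Lefschetz on threefolds).

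By the Snake Lemma as in the proof of Theorem \ref{thm:Chow-threefolds}, it then remains to show surjectivity on homologically trivial parts $\bigoplus_i CH^2(T_i)_{\hom,\Q}\to\bigoplus_j \Alb(S_j)_\Q$. Here I would exploit the MMP. Run, over $X$, a $(K_Y+\widetilde\Delta+E)$-MMP (via Theorem \ref{thm bchm} after a small perturbation making the pair klt, and using the factorization through the plt blow-up so that the exceptional divisor over each point stays divisorial) to reach a dlt modification where $-(K+\Gamma)$ is relatively ample over the plt blow-up $Z$. Proposition \ref{prop connectness} then gives a unique minimal lc center over each point of $Z$; combined with adjunction $K_S+\Gamma_S\sim_\Q (K+\Gamma)|_S$, each surface stratum with nontrivial Albanese is forced to be ruled over a curve $B$ which also appears as the base of a ruling structure on an adjacent threefold stratum $T_i\supset S_j$ (a $\mathbb{P}^1$- or surface-fibration over $B$ compatible with the MRC fibration). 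Then $\Pic^0(B)$, pulled back to $T_i$ and intersected with a relative hyperplane class, gives $1$-cycles on $T_i$ whose restriction to $S_j$ covers $\Alb(S_j)\simeq \Pic^0(B)$ up to isogeny. Finally one must transport from the dlt model back to the log resolution $Y$, checking that the strata are birational and that $CH_0$ and $\Alb$ are birational invariants while the restriction maps are compatible.

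The main obstacle is this last Albanese step: controlling, through the MMP, that every irregular surface stratum sits inside a threefold stratum fibred over the same curve, and that the steps of the MMP (flips and divisorial contractions) do not destroy the relevant strata or introduce codimension-$\ge 2$ exceptional loci — exactly where the plt blow-up hypothesis is needed. I would first try to prove it by induction on the strata using connectedness and the rational chain connectedness of fibers of each stratum over its image in the minimal lc center.
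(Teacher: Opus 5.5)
\textbf{There is a genuine gap.} Your reduction to homologically trivial cycles matches how the paper begins. That reduction uses purity of $H^{n+2}(X,\Q)$ for isolated singularities to make $\bigoplus_i H^4(T_i,\Q)\to\bigoplus_j H^4(S_j,\Q)$ surjective, then the Hodge conjecture for threefolds and the Snake Lemma. The gap is your opening claim that every surface stratum is birationally ruled, so that $CH^2(S_j)_\Q=\Q\oplus\Alb(S_j)_\Q$.

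What \cite{hm} gives is rational chain connectedness of the fiber $E=f^{-1}(x)$ as a whole, not of its strata. In the threefold case the $S_j$ are the components of $E$ themselves. For $n\ge 4$, however, surface strata are intersections of $n-2$ components and need not be uniruled. Here is a concrete example satisfying all the hypotheses of the theorem with $n=4$:
\begin{itemize}
\item Take $X=\mathbb{P}(1,1,1,1,2)$, the projective cone over $(\mathbb{P}^3,\O(2))$, which has one isolated $\frac12(1,1,1,1)$ point.
\item Blow up that point. This is a plt blow-up, with exceptional divisor $E_1\simeq\mathbb{P}^3$.
\item Then blow up a smooth quartic K3 surface $W\subset E_1$, with new exceptional divisor $E_2$ and strict transform $\widetilde E_1$.
\end{itemize}
This is a strong log resolution factoring through a plt blow-up. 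The surface stratum $\widetilde E_1\cap E_2\simeq W$ has $\Alb(W)=0$, but $CH_0(W)_{\hom}$ is infinite dimensional by Mumford. So the group $F^2CH^2(S_j)=\ker\bigl(CH^2(S_j)_{\hom}\to\Alb(S_j)\bigr)$ can be huge, and your plan never touches it. Controlling it is the heart of the theorem.

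\textbf{How the paper handles it.} After the reduction, the paper compares fourfold strata $F_m$, threefold strata $T_i$ and surface strata $S_j$ through the columns $F^2CH^2\to CH^2_{\hom}\to J^3\to\coker(AJ)$. Purity of $H^{n+1}(X,\Q)$ makes $\bigoplus_i J^3(T_i)\to\bigoplus_j J^3(S_j)$ surjective. A diagram chase then reduces everything to two surjectivities: $\bigoplus_i F^2CH^2(T_i)\to\bigoplus_j F^2CH^2(S_j)$, and $\bigoplus_m\coker(AJ_{F_m})\to\bigoplus_i\coker(AJ_{T_i})$.

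Uniruled strata contribute nothing to the targets; for threefolds this is Lemma~\ref{lemma coker AJ}. For the non-uniruled strata, the MMP result Theorem~\ref{thm mmp} shows that each is a section of a rational fibration from a distinct stratum one dimension up. These are ordered so that the restriction matrix is upper triangular with split surjective diagonal entries. In the example above, $W$ is a section of the $\mathbb{P}^1$-bundle $E_2\to W$.

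\textbf{A second gap in your Albanese step.} Lifting classes from $J^3(T_i)$ to $CH^2(T_i)_{\hom}$ requires $AJ_{T_i}$ to be surjective. This can fail for non-uniruled threefold strata, which is why the fourfold strata and part (i) of Theorem~\ref{thm mmp} are needed. Your proposed claim that irregular surface strata are ruled over a curve shared with an adjacent threefold stratum is not established by the connectedness argument you sketch. Even if true, it would address neither the kernel of the Albanese map nor $\coker(AJ_{T_i})$.
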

\begin{proof}
Initially, this proceeds as the proof of Theorem \ref{thm:Chow-threefolds}.
Consider the diagram of cycle class maps
\begin{equation}\label{eqn class surj}
\begin{tikzcd}
  \bigoplus_i CH^2(T_i)_\mathbb{Q} \ar[r, "\alpha_\Q"] \ar[d, "\cl"] &
  \bigoplus_{j} CH^2(S_j)_{\mathbb{Q}} \ar[d, "\cl"] \\
  \bigoplus_i H^{2,2}(T_i)_\mathbb{Q} \ar[r] &
  \bigoplus_{j} H^{2,2}(S_j)_\mathbb{Q}
\end{tikzcd}
\end{equation}
Since $X$ has isolated singularities, we have $\gr_4^W H^{n+2}(X, \mathbb{Q}) = 0$ by Remark \ref{rmk:purity-sing}. Therefore the pullback map of Hodge structures
$$\bigoplus_i H^4(T_i, \mathbb{Q}) \rightarrow \bigoplus_j H^4(S_j, \mathbb{Q})$$ 
is surjective, hence so is the bottom arrow of the diagram. The Hodge conjecture  for threefolds implies that  the left arrow in the diagram is surjective as well. By commutativity, so is the composition $\cl \circ \alpha_\Q$.

It therefore suffices to prove the surjectivity of the restriction of $\alpha$ to the null-homologous parts
\begin{align*}
    \alpha \colon \bigoplus_i CH^2(T_i)_{\hom} \rightarrow \bigoplus_j CH^2(S_j)_{\hom}
\end{align*}

Consider the commutative diagram
\[
\begin{tikzcd}[row sep=1.3em, column sep=4.2em]
0 \arrow[d] & 
0 \arrow[d] &
0 \arrow[d] \\
\displaystyle\bigoplus_m F^{2}CH^{2}(F_m) \arrow[r] \arrow[d] &
\displaystyle\bigoplus_i F^{2}CH^{2}(T_i) \arrow[r, "a"] \arrow[d] &
\displaystyle\bigoplus_j F^{2}CH^{2}(S_j) \arrow[d] \\
\displaystyle\bigoplus_m CH^{2}(F_m)_{\mathrm{hom}} \arrow[r] \arrow[d, "AJ_F"] &
\displaystyle\bigoplus_i CH^{2}(T_i)_{\mathrm{hom}} \arrow[r, "\alpha"] \arrow[d, "AJ_T"] &
\displaystyle\bigoplus_j CH^{2}(S_j)_{\mathrm{hom}} \arrow[d, "AJ_S"] \\
\displaystyle\bigoplus_m J^{3}(F_m) \arrow[r] \arrow[d] &
\displaystyle\bigoplus_i J^{3}(T_i) \arrow[r, "b"] \arrow[d] &
\displaystyle\bigoplus_j J^{3}(S_j) \arrow[d] \\
\displaystyle\bigoplus_m \operatorname{coker}(AJ_{F_m}) \arrow[r, "c"] \arrow[d] &
\displaystyle\bigoplus_i \operatorname{coker}(AJ_{T_i}) \arrow[r] \arrow[d] &
0 \\
0  &
0  &
\end{tikzcd}
\]
Here $F_m$ are the fourfold strata of the exceptional divisor of the resolution, and $AJ$ stands for the respective Abel-Jacobi maps. (We use the notation $F^2 CH^2$ for the kernels
of these maps, as suggested by the conjectural Bloch-Beilinson filtration.)
The columns are exact by definition; here we are using the fact that $\coker (AJ_{S_j}) = 0$, as the Abel-Jacobi map is always surjective in the case of a surface.
Moreover, since $\gr_3^W H^{n+1}(X, \mathbb{Q}) = 0$, again by the isolated singularities hypothesis, the pullback of Hodge structures
\begin{align*}
    \bigoplus_i H^3(T_i, \mathbb{C}) \rightarrow \bigoplus_j H^3(S_j, \mathbb{C})
\end{align*}
is surjective. Viewing the intermediate Jacobian $J^3(-)$ as a quotient of $H^3(-, \mathbb{C})$, the map $b$ is surjective as well. A diagram chase shows it suffices to prove the surjectivity of $a$ and $c$.

Let us focus on $c$. By Lemma \ref{lemma coker AJ} below, the uniruled threefold strata satisfy $\coker (AJ_{T_i}) = 0$, as their $CH_0$ is supported on a surface. We now use the independent Theorem \ref{thm mmp}, a consequence of the MMP which is proved in the next section. Using the numbering in the statement of that theorem, 
it suffices to show surjectivity of the restriction
\begin{align*}
    c: \bigoplus_{q=1}^p \coker (AJ_{F_q}) \rightarrow \bigoplus_{r=1}^p \coker (AJ_{T_r}).
\end{align*}
We view $c$ as a block-matrix, with entries $c_{rq}\colon \coker (AJ_{F_q}) \rightarrow \coker(AJ_{T_r})$ plus or minus the pullback of the inclusion map if $T_q \subset F_r$ and zero otherwise. We claim the diagonal entries $i_q^*\colon \coker (AJ_{F_q}) \rightarrow \coker(AJ_{T_q})$ are surjective. Indeed, it is straightforward to see that $\coker(AJ)$ is a birational invariant of smooth projective varieties. Therefore, $i_q^*$ is split by the pullback of the rational fibration $F_q \dashrightarrow T_q$ of $(1)$ in Theorem \ref{thm mmp}(i), so certainly surjective. Moreover, $c$ is upper-triangular. Indeed, $(2)$ in that same theorem says precisely that the entries $c_{rq}$ with $q > r$ below the diagonal are all zero. The claim follows.

An identical argument using Theorem \ref{thm mmp}(ii) shows that $a$ is surjective.
\end{proof}

We required the following lemma in the proof of Theorem \ref{thm:Chow-higher}.

\begin{lemma}\label{lemma coker AJ}
If $Y$ is a smooth projective variety such that $CH_0(Y)$ is supported on a surface, then $AJ_Y: CH^2(Y)_{\mathrm{hom}} \rightarrow J^3(Y)$ is surjective.
\end{lemma}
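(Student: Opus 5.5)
The plan is to use the Bloch–Srinivas decomposition of the diagonal. Suppose $CH_0(Y)$ is supported on a closed surface $S \subset Y$, in the sense that $CH_0(Y\setminus S)_\Q = 0$. Then Bloch–Srinivas gives a decomposition in $CH^n(Y \times Y)_\Q$, with $n = \dim Y$:
$$N\Delta_Y = Z_1 + Z_2,$$
for some integer $N>0$, where $Z_1$ is supported on $D \times Y$ for some divisor $D \subset Y$, and $Z_2$ is supported on $Y \times S$. Both correspondences act on $H^3(Y)$ and on $CH^2(Y)_{\hom}$. They are compatible with the Abel–Jacobi map, since $AJ$ commutes with correspondences. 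Multiplication by $N$ on the divisible group $J^3(Y)$ is surjective, so it suffices to show that the image of $N\cdot \id = Z_{1*} + Z_{2*}$ on $J^3(Y)$ lies in the image of $AJ_Y$.

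For $Z_2$: its action on $H^3(Y)$ factors through the cohomology of a desingularization $\widetilde S$ of $S$ via a Gysin-type pushforward $H^{-1}(\widetilde S) \to H^3(Y)$ (a shift by $2(n-2)$ in degree $2\dim S$ terms). Done carefully, the action of $Z_2$ on degree $3$ cohomology lands in $H^{3-2(n-2)}$ of $\widetilde S$, and this vanishes for $n \geq 3$ after the usual bookkeeping, so $Z_{2*}=0$ on $H^3(Y)$. For $Z_1$: its action factors as $H^3(Y) \to H^1(\widetilde D)(-1) \to H^3(Y)$, where the second map is Gysin pushforward along $\widetilde D \to Y$ from a desingularization of $D$. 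On intermediate Jacobians this becomes
$$J^3(Y) \to \Pic^0(\widetilde D)\otimes\Q \to J^3(Y).$$
Each point of $\Pic^0(\widetilde D)$ is the Abel–Jacobi image of a homologically trivial divisor on $\widetilde D$. Pushing that divisor forward gives a codimension $2$ cycle on $Y$, and its Abel–Jacobi image is the corresponding point of $J^3(Y)$, by functoriality of $AJ$ under proper pushforward. Hence $Z_{1*}$ on $J^3(Y)$ lands in $\im AJ_Y$.

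Combining these, $N\cdot J^3(Y) = J^3(Y)$ lies in the image of $AJ_Y$ up to the rational-coefficient issue. Divisibility of $J^3$ and the fact that the image of $AJ$ is a divisible subgroup (it is the image of $\Pic^0$, an abelian variety) handle this. Concretely, the $Z_1$-factorization shows $\im(N\cdot\id)$ is contained in the image of the abelian variety $\Pic^0(\widetilde D)$ under a morphism, which is a subgroup of $\im AJ_Y$, and $N\cdot$ is surjective on $J^3(Y)$.

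The main obstacle is the degree bookkeeping for $Z_2$, together with working integrally versus rationally. Bloch–Srinivas only holds with $\Q$-coefficients, so one must verify that the torsion ambiguity is harmless for a statement about a complex torus. It is, because the relevant images are images of connected algebraic groups and hence divisible.
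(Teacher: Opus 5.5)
Your treatment of $Z_1$ is correct, but the $Z_2$ step has a genuine gap.

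\textbf{The two terms are computed with different correspondence actions.} The factorization $H^3(Y)\to H^1(\widetilde D)\xrightarrow{a_*}H^3(Y)$ is valid for the action $\Gamma\mapsto p_{1*}(\Gamma\cdot p_2^*(-))$, because $Z_1$ is supported on $D\times Y$. For the identity $N\cdot\id=Z_1+Z_2$ to be usable, $Z_2$ must act in the same way. Since $Z_2$ is supported on $Y\times S$, it then factors as $H^3(Y)\xrightarrow{b^*}H^3(\widetilde S)\to H^3(Y)$, and likewise on $CH^2$. This composite is not zero in general. You instead let $Z_2$ act by $p_{2*}(\Gamma\cdot p_1^*(-))$, so that its output is supported on $S$. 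Mixing the two actions is not legitimate: $N\cdot\id$ is the sum of the two contributions only when both use the same convention. Using the pushforward action for both terms does not rescue the argument either, since then $Z_1$ factors through $a^*\colon H^3(Y)\to H^3(\widetilde D)$, which gives no control.

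\textbf{Your degree count does not give vanishing.} Even on your own terms, $3-2(n-2)=7-2n$ equals $1$ for $n=3$. So your $Z_2$-term would factor through $H^1(\widetilde S)$, which is nonzero in general. And $n=3$ is exactly the case the paper needs: the lemma is applied to the uniruled threefold strata $T_i$ in the proof of Theorem~\ref{thm:Chow-higher}.

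\textbf{The missing idea.} The $Z_2$-term is handled by the surface case, not by a vanishing. For a surface, $J^3(\widetilde S)=\Alb(\widetilde S)$, and $AJ_{\widetilde S}\colon CH^2(\widetilde S)_{\mathrm{hom}}=CH_0(\widetilde S)_{\mathrm{hom}}\to\Alb(\widetilde S)$ is surjective. Abel--Jacobi maps commute with correspondences, so the composite $J^3(Y)\xrightarrow{b^*}J^3(\widetilde S)\to J^3(Y)$ lands in $\im AJ_Y$. Combined with your $Z_1$ argument, this shows $\coker AJ_Y$ is killed by $N$, hence zero by divisibility of $J^3(Y)$. This is the paper's argument. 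With this fix, the rest of your proposal, including the final divisibility step, goes through.
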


\begin{proof}
This is a standard application of the decomposition of the diagonal. Since $CH_0(Y)$ is supported on a surface $S$, \cite[Proposition $1$]{bloch srinivas} implies there is an equality of cycle classes
\begin{align*}
    N \Delta_Y = Z_1 + Z_2 \in CH^n (Y \times Y)
\end{align*}
for some integer $N > 0$, where $Z_1, Z_2$ are codimension $n$ cycles with
\begin{align*}
    \mathrm{Supp} (Z_1) \subset D \times Y, \hspace{.2cm} D \subsetneq Y, \hspace{.5cm} \mathrm{Supp} (Z_2) \subset Y \times S.
\end{align*}
Denote by $\widetilde{D}$ a resolution of $D$, with induced map $a\colon \widetilde{D} \rightarrow Y$, and by $\widetilde{S}$ a resolution of $S$, with induced map $b\colon \widetilde{S} \rightarrow Y$. Note that $Z_1$ is the pushforward of a cycle on $\widetilde{D} \times Y$ and $Z_2$ is the pushforward of a cycle on $Y \times \widetilde{S}$.

Viewing these cycles as correspondences, the map $N \cdot \mathrm{Id}\colon \coker(AJ_Y) \rightarrow \coker (AJ_Y)$ is a sum of maps
\begin{align*}
    \coker ( AJ_Y)  \xrightarrow{b^*} \coker ( AJ_{\widetilde{S}} ) \rightarrow \coker ( AJ_Y )
\end{align*}
and 
\begin{align*}
    \coker ( AJ_Y)  \rightarrow \coker \big( CH^1(\widetilde{D})_{\mathrm{hom}} \rightarrow \Pic^0(\widetilde{D}) \big) \xrightarrow{a_*} \coker ( AJ_Y ).
\end{align*}
Observe that $\coker ( AJ_{\widetilde{S}} ) = 0$ because the Abel-Jacobi map to the Albanese of a surface is always surjective. Moreover, $\coker \big( CH^1(\widetilde{D})_{\mathrm{hom}} \rightarrow \Pic^0(\widetilde{D}) \big)$ is certainly zero. It follows that $\coker (AJ_Y)$ is $N$-torsion. But it is also divisible because $J^3(Y)$ is a torus. Therefore, it is equal to zero.
\end{proof}

\noindent
{\bf Ordering of exceptional strata.}\label{section mmp}
In the proof of Theorem \ref{thm:Chow-higher} we made crucial use of the following result of independent interest, which requires running a
 suitable minimal model program. The proof relies heavily on the material in \S\ref{scn:MMP}.

\begin{theorem}\label{thm mmp}
Let $(X, \Delta)$ be a quasi-projective klt pair of dimension $n \geq 4$, with isolated singularities, and let $f \colon Y \to X$ be a strong log resolution of $(X, 
\Delta)$. Suppose $f$ factors through a plt blow-up (see Definition \ref{defn plt blow-up}) at each of the singular points. 
Denote by $F_m$  the fourfold strata, by $T_i$ the threefold strata, and by $S_j$ the surface strata of the exceptional locus of $f$. Then:

\noindent
(i)  There exists an ordering $T_1, \ldots, T_p$ of the non-uniruled threefold strata, together with an ordering $F_1, \ldots ,F_p$ of some subset of fourfold strata, such that:
\begin{enumerate}
    \item for each $1 \leq q \leq p$, there is an inclusion $i_q\colon T_q \hookrightarrow F_q$, and $i_q$ is a section of a rational fibration $F_q \dashrightarrow T_q$, and
    \item $T_q \subset F_r$ only if $q \leq r$.
\end{enumerate}

\noindent
(ii) Similarly, there exists an ordering $S_1, \ldots, S_p$ of the non-uniruled surface strata, together with an ordering $T_1, \ldots, T_p$ of some subset of threefold strata, such that:
\begin{enumerate}
    \item for each $1 \leq q \leq p$, there is an inclusion $i_q\colon S_q \hookrightarrow T_q$, and $i_q$ is a section of a rational fibration $T_q \dashrightarrow S_q$, and
    \item $S_q \subset T_r$ only if $q \leq r$.
\end{enumerate}
\end{theorem}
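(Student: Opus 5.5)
The plan is to run a suitable MMP on the log resolution over $X$, following the dual-complex analysis of \cite[Theorems 3, 19]{dFKX}, and to read the ordering off the order in which strata are contracted. Since the statement is local over the isolated singular points, we work over one point $x$. Write $E=\sum E_a$ for the exceptional divisor and $\widetilde\Delta$ for the proper transform of $\Delta$, and set $\Gamma = \widetilde\Delta + E$. Then $(Y,\Gamma)$ is dlt, since it is snc with $E$ reduced. By the klt hypothesis,
$$K_Y+\Gamma \sim_{\mathbb Q,X} \sum_a (1+a(E_a,X,\Delta))E_a,$$
and every coefficient is $>0$. The divisor $D$ is the exceptional divisor of the plt blow-up $h\colon Z\to X$ through which $f=h\circ g$ factors; its proper transform is one of the $E_a$. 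We run a $(K_Y+\Gamma)$-MMP over $Z$ (and then over $X$), with scaling of an ample divisor. This is possible by Theorem \ref{thm bchm} after perturbing to a klt pair $(Y,\Gamma-\epsilon E)$, or by the dlt version of \cite{bchm}. Because the divisor $K_Y+\Gamma$ is exceptional and effective over $Z$ with support equal to all $g$-exceptional components, negativity shows that the MMP contracts exactly these components. On $Z$ we are left with the plt pair $(Z,\widetilde\Delta+D)$, whose only lc center over $x$ is $D$. The plt blow-up assumption matters here because $-D$ is $\mathbb Q$-Cartier and $h$-anti-ample, so every intermediate model carries the divisorial preimage of $x$, and no exceptional loci of codimension $\ge 2$ appear that we cannot track.

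Next we analyze each step of the MMP. Each step is a divisorial contraction or a flip $Y_k\dashrightarrow Y_{k+1}$ that is $(K+\Gamma_k)$-negative, and the pairs stay dlt. The lc centers of $(Y_k,\Gamma_k)$ are the strata of $E_k$. For each contraction $\phi\colon Y_k\to W$, Proposition \ref{prop connectness} applied to $-(K+\Gamma_k)$, which is $\phi$-ample, shows the following. For each point $w$, the lc centers meeting $\phi^{-1}(w)$ have a unique minimal element $V$, and every lc center meeting the fiber contains $V$. Suppose a stratum $T$ is destroyed at this step, meaning it is contracted to lower dimension or its birational class changes. Then $T$ is covered by fibers of $\phi$, which are $(K+\Gamma_k)$-negative curves. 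Adjunction (the Diff construction in \S\ref{scn:MMP}) gives $K_T+\Gamma_T=(K+\Gamma_k)|_T$, so these curves are $(K_T+\Gamma_T)$-negative and $T$ is uniruled, unless $T$ maps birationally to its image. So we track how non-uniruled strata can disappear. A non-uniruled threefold stratum $T$ can disappear only when a fourfold stratum $F\supset T$ is contracted onto $T$ with rational curve fibers. Connectedness gives the minimal center $T$ in each fiber, so $T\subset F$ is a section of the rational fibration $F\dashrightarrow T$. Taking the first such step for each $T$ defines $F_q$, and we list the $T_q$ in the order in which they disappear, reversed so that later contractions get smaller indices, or the other way round as needed. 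Condition (2) then says that if $T_q\subset F_r$ with $q>r$, then $T_q$ must already have disappeared when $F_r$ is contracted, which contradicts the choice of the ordering. At the end no non-uniruled threefold strata can survive, because over $x$ only $D$ remains on $Z$. Strictly, we must also handle $D$ itself, which survives. So we run a further MMP on $D$, or we use that $(D,\mathrm{Diff}_D)$ is klt log Fano over $x$, hence $D$ is rationally connected, so all its strata in $Y$ behave like uniruled ones. Part (ii) follows in the same way, one dimension lower.

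We expect the main obstacle to be the bookkeeping through flips. A flip may change a stratum birationally without contracting it, and strata are only birational to their transforms, so "section of a rational fibration" has to be interpreted birationally on $Y$ and then transported back to the original strata. We also need to verify that a non-uniruled stratum is never flipped away in a way that escapes the dichotomy above. We would first try to show, using connectedness on the flipping contraction, that the flipping locus meets only uniruled strata or is contained in a stratum fibered in rational curves over a smaller minimal center.
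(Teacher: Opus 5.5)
\textbf{There is a genuine gap.} (Notation: $g_k\colon Y_k\dashrightarrow Y_{k+1}$ is the $k$-th step, $\phi_k$ the contraction of its ray $R_k$, and $j_k\colon Y_k\to Z$.)

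\textbf{The main gap.} The decisive step of your argument is asserted, not proved. You claim that a non-uniruled threefold stratum $T$ can disappear only when a fourfold stratum $F\supset T$ is contracted onto its image, with $T$ as a rational section. Your dichotomy shows only that if $T_k\subset\mathrm{Ex}(g_k)$, then $T_k\to\phi_k(T_k)$ is generically finite. Connectedness then says that every lc center meeting a general fiber over $\phi_k(T_k)$ contains $T_k$. Neither fact tells you which fourfold stratum through $T_k$ to take, or why it lies in $\mathrm{Ex}(g_k)$. That is exactly the flip problem you leave open.

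The missing idea is to choose, at each step, a component $E_k$ of $\Gamma_k^{=1}$ with $E_k\cdot R_k>0$. It exists by \cite[Lemma 21]{dFKX}, for two reasons:
\begin{itemize}
\item $\Gamma_k^{=1}=\mathrm{Supp}(j_k^*D)$ and $j_k^*D\equiv_Z 0$;
\item $K_{Y_k}+\Gamma_k$ is $\Q$-linearly equivalent over $Z$ to an effective divisor supported on $\Gamma_k^{=1}$, and it is negative on $R_k$.
\end{itemize}
The argument then runs as follows.
\begin{itemize}
\item Since $E_k$ meets every contracted curve, Proposition~\ref{prop connectness} together with generic finiteness gives $T_k\subset E_k$.
\item Let $F_k$ be the fourfold stratum such that $T_k$ is a component of $F_k\cap E_k$. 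Connectedness gives $T_k=F_k\cap E_k$ near $\phi_k(T_k)$.
\item If $g_k$ is a divisorial contraction, $F_k$ lies in the contracted divisor, which is not $E_k$.
\item If $g_k$ is a flip, $E_k\cdot R_k>0$ gives $g_{k*}E_k\supset\mathrm{Ex}(g_k^{-1})$. So if $F_k\not\subset\mathrm{Ex}(g_k)$, then near the image of $T_k$ the set $g_{k*}F_k\cap g_{k*}E_k$ would be a nonempty union of lc centers inside $\mathrm{Ex}(g_k^{-1})$. But $\mathrm{Ex}(g_k^{-1})$ contains no lc centers.
\item Stein factorizing $F_k\to\phi_k(F_k)$ then exhibits $T_k$ as a rational section.
\end{itemize}
The identity $T_k=F_k\cap E_k$ also does two things your proposal needs and lacks. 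It makes $T\mapsto F$ injective, so the $F_q$ really form an ordering of a subset of fourfold strata. It also rules out containments among pairs contracted at the same step. With this in hand, the ordering is simply the order of contraction, not ``reversed \dots\ or the other way round as needed''.

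\textbf{The MMP setup.} The pair $(Y,\Gamma-\epsilon E)$ is klt, but $K_Y+\Gamma-\epsilon E\not\equiv_Z K_Y+\Gamma$. So its MMP is not a $(K_Y+\Gamma)$-MMP, and you lose the $\phi_k$-ampleness of $-(K_{Y_k}+\Gamma_k)$ that you feed into Proposition~\ref{prop connectness}. A dlt version of \cite{bchm} is not available unconditionally either, since it requires special termination. The fix is to subtract $\epsilon g^*D$ instead. This divisor is numerically trivial over $Z$, and its support is exactly $E$, because $f$ factors through $h$ and is an isomorphism away from $x$. Hence $(Y,\Gamma-\epsilon g^*D)$ is klt, and Theorem~\ref{thm bchm} runs a genuine $(K_Y+\Gamma)$-MMP over $Z$. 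This perturbation, together with the existence of $E_k$ above, is where the plt blow-up hypothesis really enters; it is used for more than keeping preimages divisorial. Also, $-D$ is $h$-ample, not $h$-anti-ample.
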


\begin{proof}[Proof of Theorem \ref{thm mmp}]
We will prove the first statement. The second is entirely analogous. Recall that the set-up is precisely that described after Example \ref{example kummer threefold}.

 By shrinking $X$ and dealing with each isolated singularity separately, we may assume that the pair $(X, \Delta)$ (hence also $X$) has a unique singularity $x \in X$ and that $f = h \circ g$ for a plt blow-up $h\colon Z \rightarrow X$. Thus, the pair $(Z, \Delta_Z)$ is plt, where $\Delta_Z = D+\widetilde{\Delta}$. Set $\Gamma = E' + \widetilde{\Delta}_Z$, where $E'$ is the union of the $g$-exceptional divisors on $Y$. By assumption the pair $(Y, \Gamma)$ is dlt, and we claim that we can run an MMP on the pair $(Y, \Gamma)$ over $Z$, just as in the klt case appearing in Theorem  \ref{thm bchm}.\footnote{Note that currently it is not known that one can do this for any dlt pair, unless we assume Special Termination.  This in turn relies inductively on knowing the full MMP in lower dimension, hence is known unconditionally only in dimension $\le 5$; cf. \cite{Birkar}. We thank Fanjun Meng for this observation.}

To see this, consider the new $\Q$-divisor 
$$G  : = \Gamma - \epsilon g^* D = E' + \widetilde{\Delta}_Z - \epsilon g^* D$$ 
on $Y$, where $\epsilon > 0$ is a sufficiently small rational number. Using the assumption that the pair $(X, \Delta)$ is log smooth away from $x$, we see that $g^*D$ is supported precisely on the exceptional locus of the log resolution $f$. Therefore the pair $(Y, G)$ becomes klt, and we can run an MMP for this pair over $Z$ by Theorem \ref{thm bchm}. On the other hand, the divisors $\Gamma$ and 
$G$ are numerically equivalent over $Z$, as they differ by a multiple of $g^*D$. Therefore running a MMP for $(Y, G)$ relative over $Z$ is in practice the same thing as running one for $(Y, \Gamma)$.

Running one such MMP yields a factorization of $g$ as a composition 
\[
Y=Y_0 \overset{g_0}{\dashrightarrow} Y_1 \overset{g_1}{\dashrightarrow} \cdots \dashrightarrow
Y_r {\rightarrow} Z .
\]
The pairs $(Y_i, \Gamma_i)$ are dlt, where $\Gamma_i$ is the pushforward of $\Gamma$ to $Y_i$. For each $0 \leq i < r$, the map $g_i: Y_i \dashrightarrow Y_{i+1}$ is either a divisorial contraction or a flip corresponding to a $(K_{Y_i}+\Gamma_i)$-negative extremal ray $R_i$. Denote by $j_i: Y_i \rightarrow Z$ the morphism to $Z$. Since $Y_r$ is a log terminal model, $K_{Y_r} + \Gamma_r$ is relatively nef over $Z$.

We claim that $Y_r \rightarrow Z$ has no exceptional divisors. For this, consider the auxiliary $\mathbb{Q}$-divisor
\begin{equation}\label{eqn A}
\begin{split}
A &= E' +\widetilde{\Delta}_Z - \Delta_Y \\
  &\sim_{\mathbb{Q}} K_Y + \Gamma - g^*(K_Z+\Delta_Z).
\end{split}
\end{equation}
Here, we are using the log pullback $\Delta_Y$ of $\Delta_Z$ defined by \ref{eqn delta_Y}. Since $(Z, \Delta_Z)$ is plt, the coefficients of $\Delta_Y$ are $<1$, except for $\widetilde{D}$. Hence, $A$ is an effective divisor with support consisting precisely of the exceptional components of $g$.

On the other hand, the second line of (\ref{eqn A}) shows that the pushforward $A_r$ of $A$ to $Y_r$ is $\mathbb{Q}$-linearly equivalent to $K_{Y_r} + \Gamma_r$ over $Z$. Since $Y_r$ is a log terminal model, this is relatively nef over $Z$. But then $A_r$ is effective, exceptional, and relatively nef, so it must be trivial by the negativity lemma \cite[Lemma $3.39$]{kollar mori}. In other words, all exceptional components of $g$ are contracted.

For each $0 \leq i < r$, fix an irreducible component $E_i$ of $\Gamma_i^{=1}$ such that $(E_i \cdot R_i) > 0$. Its existence is a consequence of \cite[Lemma $21$]{dFKX}, because the pullback $j_i^* D$ has support equal to $\Gamma_i^{=1}$. In particular, $E_i$ intersects each contracted curve of $g_i$ and the proper transform $g_{i*}E_i$ contains the exceptional locus $\mathrm{Ex}(g_i^{-1})$.

Let $T$ be a non-uniruled threefold stratum on $Y$. Consider its successive proper transforms $T_i$ in $Y_i$. These remain lc centers on $Y_i$ up until the first map $g_i$ such that $T_i \subset \mathrm{Ex}(g_i)$, and then they are no longer lc centers \cite[Lemma $3.38$]{kollar mori}. We study the associated contraction $h_i: Y_i \rightarrow Z_i$. Thus, $Y_{i+1} = Z_i$ if $g_i$ is a divisorial contraction, and $h_i^+:Y_{i+1} \rightarrow Z_i$ is the flip of $h_i$ otherwise.

We claim that $T_i \subset E_i$. For this, apply Proposition \ref{prop connectness} to the morphism $h_i$ and a generic point $\eta \in h_i(T_i)$. Since $E_i$ meets every contracted curve, this implies $(T_i \cap E_i)_\eta \neq \emptyset$. Since $T_i$ is non-uniruled, the fiber $(T_i)_\eta$ is zero-dimensional, so it follows that $T_i \subset E_i$.

Consider the unique fourfold stratum $F_i$ in $Y_i$ such that $T_i$ is an irreducible component of $F_i \cap E_i$. Again by Proposition \ref{prop connectness}, we have $T_i = F_i \cap E_i$ over a neighborhood of $h(T_i)$. We claim that $F_i$ lies in the exceptional locus $\mathrm{Ex}(g_i)$. If $g_i$ is a divisorial contraction, this is clear. We require an argument to cover the case where $g_i$ is a flip. Suppose for the sake of contradiction that $F_i \not\subset \mathrm{Ex}(g_i)$. The intersection $g_{i*}F_i \cap g_{i*} E_i$ of lc centers on $Y_{i+1}$ is non-empty since $g_{i*}E_i \supset\mathrm{Ex}(g_i^{-1})$. Therefore, it is a union of lc centers. But it is contained in $\mathrm{Ex}(g_i^{-1})$, at least over a neighborhood of $h(T_i)$. Since $\mathrm{Ex}(g_i^{-1})$ contains no lc centers, this is a contradiction. Thus, $F_i$ is exceptional, and $F_i \cap E_i$ has no other components than $T_i$ by Proposition \ref{prop connectness}.

If $F_i \xrightarrow[]{k_i} k_i(F_i) \rightarrow h_i(F_i)$ is the Stein factorization of the restriction $h_i|_{F_i}$, we see that $T_i$ maps birationally onto $k_i(F_i)$. The inclusion $T_i \hookrightarrow F_i$ is a rational section of $k_i$.

Now let us pair the threefold stratum $T$ with the proper transform $F \subset Y$ of $F_i$. We order these pairs in the same order they are contracted (if multiple pairs are contracted at the same step, choose arbitrarily). We show this satisfies the conditions of Theorem \ref{thm mmp}. The fourfolds $F$ are all different, because we can recover $T$ by the equality $T_i = F_i \cap E_i$. We have seen that $T_i \hookrightarrow F_i$ is a section of a rational contraction, so the same holds for their transforms $T \hookrightarrow F$, from which property $(1)$ follows. For property $(2)$, note that whenever a fourfold stratum $F$ is contracted, all threefold strata $T \subset F$ are contracted along with it. Thus, $T_q  \not\subset F_r$ when $q > r$. Indeed, there are no containments between non-paired threefold and fourfold strata contracted by the same $g_i$, because of the identity $T_i = F_i \cap E_i$.
\end{proof}

\section{General results on $KH$ and $K$-groups}\label{scn:results-K-groups}

\subsection{The case $i = n$}
We have the following description of the lowest $K$-group of $X$. When $X$ has isolated Cohen-Macaulay singularities, this was proved by 
Haesemeyer \cite[Corollary $8.15$]{descent properties}. An intermediate statement that will appear in the proof, namely that 
$K_{-n}(X) \simeq KH_{-n}(X) \simeq H_{\cdh}^n(X,\mathbb{Z}(0))$, is already contained in \cite[Theorem 6.2]{k weibel}.

\begin{theorem}\label{theorem K-n}
If $X$ is  a complex variety of dimension $n \geq 1$, then 
$$K_{-n}(X) \simeq KH_{-n}(X) \simeq H^n_0(X, \mathbb{Z}).$$ 
When $X$ is normal, this is isomorphic to $H^{n-1}(\mathcal{D}(E), \mathbb{Z})$, where $\mathcal{D}(E)$ is the dual complex of the exceptional divisor of a log resolution. 
Moreover, $K_{-n}(X) = 0$ if $X$ is of klt type.
\end{theorem}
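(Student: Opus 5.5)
The plan is to pass through $KH$-theory and cdh cohomology, in three steps. First, $K_{-n}(X)\simeq KH_{-n}(X)$. By Theorem \ref{cor:weibel-conj}, $X$ is $K_{-n}$-regular. Since $K_m$-regularity implies that $\varphi_i\colon K_i(X)\to KH_i(X)$ is an isomorphism for all $i\le m$, we get $K_{-n}(X)\simeq KH_{-n}(X)$.

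Second, identify $KH_{-n}(X)$ with $H^n_{\cdh}(X,\ZZ(0))$ using the Atiyah--Hirzebruch spectral sequence of Theorem \ref{thm motivic cohom}(1), $E_2^{p,q}=H^{p-q}_{\cdh}(X,\ZZ(-q))\Rightarrow KH_{-p-q}(X)$. Total degree $-n$ means $p+q=n$, so the $E_2$-terms contributing to $KH_{-n}$ are $H^{n+2j}_{\cdh}(X,\ZZ(j))$ with $j=-q\ge 0$. By Lemma \ref{lem motivic soule-weibel} these vanish once $n+2j>n+j$, i.e. for $j>0$. So only $j=0$ survives, namely $H^n_{\cdh}(X,\ZZ(0))$.

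It remains to check that this term receives and emits no nonzero differentials; I expect this bookkeeping to be the main point requiring care.
\begin{itemize}
\item Outgoing differentials land in total degree $-n-1$, i.e. in groups $H^{n+1+2j}_{\cdh}(X,\ZZ(j))$ with $j\ge 0$. These vanish by Lemma \ref{lem motivic soule-weibel}, since $n+1+2j>n+j$.
\item Incoming differentials have weight-$0$ target $E_r^{p,0}$ (coordinates $(n,0)$), so their sources lie in rows $q=r-1>0$. Positive $q$ means negative weight, and those rows vanish because $\ZZ(j)=0$ for $j<0$; equivalently, $E_1$ in the $K_q$ description (\ref{spectral sequence hyperres}) has $q\ge0$.
\item Convergence is fine, since the filtration on $KH_{-n}$ is finite by the dimension bounds.
\end{itemize}
Hence $KH_{-n}(X)\simeq H^n_{\cdh}(X,\ZZ(0))$.

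Third, identify $H^n_{\cdh}(X,\ZZ(0))$ with $H^n_0(X,\ZZ)$. Proposition \ref{prop weight zero} gives this directly in the projective case. For a general variety, I would repeat its proof verbatim: by Theorem \ref{thm motivic cohom}(3), $\ZZ(0)^{\cdh}$ of a smooth connected variety is $\ZZ$ in degree $0$, so $H^n_{\cdh}(X,\ZZ(0))$ is the $n$-th cohomology of the complex $H^0(X_0,\ZZ)\to\cdots\to H^0(X_n,\ZZ)$ for a cubical hyperresolution. This complex is the weight-zero row, which defines $H^n_0(X,\ZZ)$; for non-proper $X$ I would take this combinatorial complex as its meaning, or use Gillet--Soul\'e.

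For normal $X$, I would apply Corollary \ref{corollary dual complex} with $i=n$. It needs $n\ge\dim X_{\sing}+2$, which holds since normal implies $\dim X_{\sing}\le n-2$, and gives $H^{n-1}(\mathcal D(E),\ZZ)$. That corollary is stated for projective $X$, but its Mayer--Vietoris argument in weight zero only uses the hyperresolution $Y\amalg X_{\sing}\leftarrow E$ and the vanishing of $H^{\ge \dim+1}_0$ of $Y$ and $X_{\sing}$. In particular $H^n_0(Y)=0$ and $H^{n-1}_0(Y)=0$ since $Y$ is smooth, so this carries over. Finally, for klt type, Theorem \ref{thm wt zero klt} gives $H^n_0(X,\ZZ)=0$ for $n\ge1$ in the projective case. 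In general I would use the dual-complex description together with contractibility (or vanishing of the relevant cohomology) of dual complexes of klt type singularities, as in \cite{dFKX} and \cite{burke dual complex}, which are local statements. Alternatively, if the statement is only needed projectively, I would cite Proposition \ref{prop weight zero} directly.
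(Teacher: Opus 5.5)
Your proposal is correct and follows the paper's proof essentially step for step. You use $K_{-n}$-regularity from Theorem~\ref{cor:weibel-conj}, then the Atiyah--Hirzebruch spectral sequence with Lemma~\ref{lem motivic soule-weibel} to isolate $E_2^{n,0}=H^n_{\cdh}(X,\ZZ(0))$, then Proposition~\ref{prop weight zero}, and finally Corollary~\ref{corollary dual complex} via $\dim X_{\sing}\le n-2$ for normal $X$. Your extra bookkeeping, namely that incoming differentials vanish because their sources have negative weight, and your remarks on the non-projective case sharpen the paper's argument; the paper simply cites its projective statements there (your non-projective klt vanishing remains only a sketch, as it implicitly is in the paper).
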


\begin{proof}
Thanks to  Corollary \ref{cor:weibel-conj} we have $K_{-n}(X) \simeq KH_{-n}(X)$. Consider the Atiyah Hirzebruch type spectral sequence
\begin{align*}
    E_2^{p,q} = H_{\cdh}^{p-q}(X, \mathbb{Z}(-q)) \Rightarrow KH_{-p-q}(X),
\end{align*}
as in Theorem \ref{thm motivic cohom}. The term $E_2^{n,0} = H_{\cdh}^n (X,\mathbb{Z}(0))$ contributes to $KH_{-n}(X)$, while Lemma 
\ref{lem motivic soule-weibel} shows all other terms contributing to $KH_{-n}(X)$ vanish. Moreover, it shows that $E_2^{n,0} = E_\infty^{n,0}$. It follows that 
$$KH_{-n}(X) \simeq H_{\cdh}^n(X,\mathbb{Z}(0)).$$ 
The latter is equal to $H^n_0(X, \mathbb{Z})$ by Proposition \ref{prop weight zero}. The statement about the dual complex is a consequence of Corollary \ref{corollary dual complex}, since normal singularities are regular in codimension one.
\end{proof}

\begin{corollary}\label{corollary weight zero}
If $X$ is projective, then
$$K_{-n}(X)_\mathbb{Q} \simeq H^n_{\cdh}(X, \mathbb{Q}(0)) \simeq W_0 H^n(X, \mathbb{Q}).$$ 
In particular, $K_{-n}(X)_\mathbb{Q} = 0$ if $X$ has rational singularities.
\end{corollary}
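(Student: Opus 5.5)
The plan is to deduce Corollary \ref{corollary weight zero} directly from Theorem \ref{theorem K-n} by tensoring with $\Q$, and then to compare with the rational weight zero computation of Corollary \ref{cor:cdh-weight-zero}.

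\textbf{Step 1: the first isomorphism.} The proof of Theorem \ref{theorem K-n} shows that, integrally,
$$K_{-n}(X) \simeq KH_{-n}(X) \simeq H^n_{\cdh}(X, \ZZ(0)).$$
The first isomorphism comes from Theorem \ref{cor:weibel-conj}. The second comes from the Atiyah--Hirzebruch spectral sequence of Theorem \ref{thm motivic cohom}, in which Lemma \ref{lem motivic soule-weibel} kills all other contributions. Tensoring with $\Q$ is exact, so we obtain
$$K_{-n}(X)_\Q \simeq H^n_{\cdh}(X,\ZZ(0))\otimes\Q.$$
To identify the right-hand side with $H^n_{\cdh}(X,\Q(0))$, I would use that rational cdh cohomology is computed by the same descent spectral sequence (\ref{eq:ss-cdh}) with $\Q$-coefficients. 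Equivalently, apply Proposition \ref{prop weight zero}: the integral group is the cohomology of the complex $H^0(X_\bullet,\ZZ)$, and after tensoring with $\Q$ this becomes the cohomology of $H^0(X_\bullet,\Q)$, by flatness of $\Q$.

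\textbf{Step 2: the second isomorphism.} Corollary \ref{cor:cdh-weight-zero} gives $H^n_{\cdh}(X,\Q(0))\simeq W_0H^n(X,\Q)$ for projective $X$. Concretely, by (\ref{eqn:weight-hyper}) the cohomology of $H^0(X_\bullet,\Q)$ is $\gr^W_0 H^n(X,\Q) = W_0 H^n(X,\Q)$.

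\textbf{Step 3: vanishing for rational singularities.} If $X$ has rational singularities, Corollary \ref{cor weight restrictions} with $m=0$, or Corollary \ref{cor:cdh-weight-zero} directly, shows that $H^i(X,\Q)$ has weights $\ge \min\{2,i\}$. Taking $i=n\ge 1$ gives $W_0H^n(X,\Q)=0$.

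There is one caveat. Corollary \ref{cor weight restrictions} carries the hypothesis $2m+2\le n$, i.e.\ $n\ge2$. For $n=1$, rational singularities means $X$ is a smooth curve, so the vanishing is immediate; alternatively, $H^1$ of a normal variety is pure.

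I do not expect any serious obstacle here. The only point needing care is the compatibility of rationalization with cdh cohomology (Step 1), and this is handled by flatness of $\Q$ together with the explicit complex from Proposition \ref{prop weight zero}.
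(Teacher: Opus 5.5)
Your proposal is correct and follows the paper's proof. The paper likewise takes the integral identification $K_{-n}(X)\simeq KH_{-n}(X)\simeq H^n_{\cdh}(X,\ZZ(0))$ from Theorem~\ref{theorem K-n}, tensors with $\Q$, and invokes Corollary~\ref{cor:cdh-weight-zero} both for the isomorphism with $W_0H^n(X,\Q)$ and for the vanishing under rational singularities; your remarks on flatness of $\Q$ and on the case $n=1$ only make explicit details the paper leaves implicit.
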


\begin{proof}
After applying Theorem \ref{theorem K-n} and tensoring with $\Q$, the statement follows from Corollary \ref{cor:cdh-weight-zero}.
\end{proof}

\subsection{The case $i = n -1$}
 We describe the $K$-theoretic consequences  of our results for the next to last degree. First, we prove 
Conjecture \ref{main-KH-conjecture} for $i = n-1$. Together with the previous section, this also completes the proof of Theorem \ref{thm:main-KH}(i) and 
Theorem \ref{thm:main-K}(i).

\begin{corollary}\label{cor K -n+1}
Let $X$ be a projective variety of dimension $n \ge 2$, and assume that 
$$W_0 H^{n-1}(X, \mathbb{Q}) = W_1 H^n(X, \mathbb{Q}) = \gr^W_2 H^{n+1}(X, \mathbb{Q}) = 0.$$
Then $KH_{-n+1}(X)_\mathbb{Q} = 0$. In particular, this holds if $X$ has rational singularities. 
\end{corollary}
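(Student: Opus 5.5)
We use the rationally degenerate Atiyah--Hirzebruch spectral sequence of Theorem \ref{thm motivic cohom}(1),
$$E_2^{p,q} = H^{p-q}_{\cdh}(X, \Q(-q)) \Rightarrow KH_{-p-q}(X)_\Q.$$
It filters $KH_{-n+1}(X)_\Q$ by the groups $H^{n-1+2j}_{\cdh}(X,\Q(j))$ with $j = -q \ge 0$. By Lemma \ref{lem motivic soule-weibel}, $H^{n-1+2j}_{\cdh}(X,\Q(j))$ vanishes once $n-1+2j > j+n$, i.e. for $j \ge 2$. Since negative weights give zero, only $j=0$ and $j=1$ survive. So it suffices to show
$$H^{n-1}_{\cdh}(X,\Q(0)) = 0 \quad\text{and}\quad H^{n+1}_{\cdh}(X,\Q(1)) = 0.$$

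The weight zero group is handled by Corollary \ref{cor:cdh-weight-zero}: $H^{n-1}_{\cdh}(X,\Q(0)) \simeq W_0 H^{n-1}(X,\Q)$, which vanishes by the first hypothesis. For weight one, apply Corollary \ref{cor cdh 1 vanishing} with $i = n+1$. It requires $W_1 H^n(X,\Q) = 0$ and $\gr_2^W H^{n+1}(X,\Q) = 0$, which are the remaining two hypotheses.

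The one point to check is that $W_1 H^n = 0$ really gives what the long exact sequences of Corollary \ref{cor weight one rational} need. We need both $W_0 H^n(X,\Q)\otimes \C^*$ and $\gr_1^W H^n(X,\Q)\otimes\R/\Z$ to vanish. Both are implied by $W_1 H^n = 0$. With these terms and $H^{1,1}\gr_2^W H^{n+1} = 0$, the middle terms vanish.

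Finally, suppose $X$ has rational singularities, with $n\ge 2$. Corollary \ref{cor weight restrictions} with $m=0$ shows that $H^{n-1}$ and $H^n$ have weights $\ge \min\{2,i\}$, which gives $W_0H^{n-1}=0$ and $W_1 H^n=0$. For $H^{n+1}$ we use part (ii) with $i=n+1$, which gives $H^{p,j-p}\gr_j^W H^{n+1}=0$ for $p\le 1$ and $j\le n$. Applied to $j=2$, this kills $\gr_2^W H^{n+1}$ entirely: every Hodge type $(p,2-p)$ has $p\le 1$ or, by symmetry, $2-p\le 1$. Here $2\le n$ is needed, which holds. (Alternatively, Corollary \ref{cor weight one rational} only needs the $(1,1)$ part to vanish.) There is no real obstacle here. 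The only care needed is checking the weight restrictions at $i=n+1$, and noting that $n\ge 2$ avoids the edge case of curves.
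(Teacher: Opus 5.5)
Your proposal is correct and follows essentially the same route as the paper. Both arguments decompose $KH_{-n+1}(X)_\Q$ via the Atiyah--Hirzebruch spectral sequence into $H^{n-1}_{\cdh}(X,\Q(0))\oplus H^{n+1}_{\cdh}(X,\Q(1))$, then apply Corollary \ref{cor:cdh-weight-zero} and Corollary \ref{cor cdh 1 vanishing}, with Corollary \ref{cor weight restrictions} supplying the weight hypotheses in the rational singularities case. The paper additionally notes $K_{-n+1}(X)\simeq KH_{-n+1}(X)$ for rational singularities via Theorem \ref{thm rosie}(i), but that is not needed for the statement as given.
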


\begin{proof}
The spectral sequence in Theorem \ref{thm motivic cohom}(1) gives a decomposition
\begin{align*}
    KH_{-n+1}(X)_\mathbb{Q} \simeq H^{n-1}_{\rm cdh} (X, \mathbb{Q}(0)) \oplus H^{n+1}_{\rm cdh} (X, \mathbb{Q}(1)).
\end{align*}
We have $H^{n-1}_{\cdh}(X, \mathbb{Q}(0)) \simeq W_0 H^{n-1}(X, \mathbb{Q})$ by Corollary \ref{cor:cdh-weight-zero}. Moreover, by Corollary \ref{cor cdh 1 vanishing}, $H^{n+1}(X, \mathbb{Q}(1))$ vanishes if $W_1 H^n (X, \mathbb{Q}) = \gr_2^W H^{n+1}(X, \mathbb{Q}) = 0$. The fact that this holds for rational singularities is a consequence of Corollary \ref{cor weight restrictions} (see also Example \ref{ex:rational}). Finally, rational singularities are Du Bois, hence in this case we have $K_{-n+1}(X) \simeq KH_{-n+1}(X)$ by Theorem \ref{thm rosie}(i).
\end{proof}

Next we obtain a stronger result for singularities of klt type, computing $KH_{-n+1}$ integrally, using the notation in \S\ref{scn:weight1}.

\begin{theorem}\label{thm:K_{-n+1}}
If $X$ is projective of dimension $n \ge 2$, and of klt type, then 
$$K_{-n+1}(X)  \simeq KH_{-n+1}(X) \simeq H^{n-1}(\Pic(X_\bullet)) \simeq H^{n-1}( {\rm NS}(X_\bullet)).$$
\end{theorem}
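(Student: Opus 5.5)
The plan mirrors the proof of Theorem \ref{theorem K-n}, with the weight one analysis of Proposition \ref{prop weight one} in place of the weight zero one.

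\emph{Step 1: $K$ equals $KH$.} Since $X$ is of klt type, it has rational, hence Du Bois, singularities. Theorem \ref{thm rosie}(i) then says $X$ is $K_{-n+1}$-regular, so $K_{-n+1}(X) \simeq KH_{-n+1}(X)$.

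\emph{Step 2: the abutment in the Atiyah--Hirzebruch spectral sequence.} Use the spectral sequence $E_2^{p,q} = H^{p-q}_{\cdh}(X,\mathbb{Z}(-q)) \Rightarrow KH_{-p-q}(X)$ of Theorem \ref{thm motivic cohom}(1). The terms with $p+q = n-1$ and weight $j=-q$ are $H^{n-1+2j}_{\cdh}(X,\mathbb{Z}(j))$. By Lemma \ref{lem motivic soule-weibel} these vanish once $n-1+2j > j+n$, that is, for $j \ge 2$. Only two terms survive:
\begin{itemize}
\item $j=0$: the term $H^{n-1}_{\cdh}(X,\mathbb{Z}(0)) \simeq H^{n-1}_0(X,\mathbb{Z})$, by Proposition \ref{prop weight zero}. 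This is zero for $n \ge 2$ because $X$ is of klt type.
\item $j=1$: the term $H^{n+1}_{\cdh}(X,\mathbb{Z}(1))$.
\end{itemize}
Hence $KH_{-n+1}(X)$ equals $E_\infty$ of the weight one term.

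\emph{Step 3: no differentials hit or leave the weight one term.} In the $(p,q)$ indexing this term sits at $(n,-1)$.
\begin{itemize}
\item Outgoing differentials $d_r$ go to $(n+r, -r)$, i.e.\ to $H^{n+2r}_{\cdh}(X,\mathbb{Z}(r))$. For $r \ge 2$ these vanish by Lemma \ref{lem motivic soule-weibel}, since $n+2r > r+n$.
\item Incoming differentials come from $(n-r, r-2)$, i.e.\ from groups $H^{n-2r+2}_{\cdh}(X,\mathbb{Z}(2-r))$ of weight $2-r$. For $r \ge 3$ the weight is negative, and these groups vanish: $\mathbb{Z}(j)=0$ for $j<0$, as the cycle complex is zero in negative codimension.
\item For $r=2$ the source is $H^{n-2}_{\cdh}(X,\mathbb{Z}(0))$. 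It sits at $(n-2,0)$, so it is not the unit class $E_2^{0,0}$ of Remark \ref{rmk:Z-copy} unless $n=2$.
\end{itemize}
I expect this $r=2$ differential to be the main obstacle. For $n \ge 3$, Proposition \ref{prop weight zero} gives $H^{n-2}_0(X,\mathbb{Z})=0$, since $n-2>0$, so the source vanishes. For $n=2$ the source is $E_2^{0,0} \simeq \mathbb{Z}$, and Remark \ref{rmk:Z-copy} (multiplicativity, $d_r(1)=0$) shows the differential from it is zero. So in all cases $KH_{-n+1}(X) \simeq H^{n+1}_{\cdh}(X,\mathbb{Z}(1))$.

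\emph{Step 4: identify the weight one group.} Apply the final statement of Proposition \ref{prop weight one} with $i = n+1 \ge 2$:
$$H^{n+1}_{\cdh}(X,\mathbb{Z}(1)) \simeq H^{n-1}(\Pic(X_\bullet)) \simeq H^{n-1}({\rm NS}(X_\bullet)).$$
Proposition \ref{prop weight one} deduces this from the vanishing of $H^i_0$ and $H^i_1$ for $i>0$ (Theorem \ref{thm wt zero klt}). This kills the contributions of $H^0(X_\bullet,\mathscr{O}^*)$ and of $\Pic^0(X_\bullet)$ in the two distinguished triangles there.

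The only extra care is the range of indices. Those terms contribute through $H^{n}_0$ and $H^{n+1}_0$ (including the Tor term), and through $H^{n-1}_1$, $H^{n}_1$ and $H^{n-2}_1$ (for the $\Pic^0$ triangle, via connecting maps). All of these indices are positive once $n \ge 2$. The one exception is $H^{n-2}_1$ when $n=2$, which is $H^0_1 = 0$ trivially.
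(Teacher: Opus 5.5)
Your proposal is correct and is essentially the paper's proof: $K_{-n+1}(X)\simeq KH_{-n+1}(X)$ by Theorem~\ref{thm rosie}(i), only $E_2^{n-1,0}$ and $E_2^{n,-1}$ can contribute, the first vanishes by Proposition~\ref{prop weight zero}, the second survives to $E_\infty$ (with Remark~\ref{rmk:Z-copy} handling the $d_2$ out of $E_2^{0,0}$ when $n=2$), and the final statement of Proposition~\ref{prop weight one} with $i=n+1$ finishes. Your argument for $n\ge 3$ is slightly more careful than the paper's write-up: the source $E_2^{n-2,0}=H^{n-2}_{\cdh}(X,\mathbb{Z}(0))$ of the incoming $d_2$ is killed by Proposition~\ref{prop weight zero}, not by Lemma~\ref{lem motivic soule-weibel} alone; the index list in your last paragraph is a bit off, but every weight-zero or weight-one group that actually enters is $H^k_0(X,\mathbb{Z})$ or $H^k_1(X,\mathbb{Z})$ with $k\ge n\ge 2$, so nothing changes.
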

\begin{proof}
Since the singularities of $X$ are of klt type, they are rational, hence as above $K_{-n+1}(X) = KH_{-n+1}(X)$.
The only terms contributing to $KH_{-n+1}(X)$ in the spectral sequence in Theorem \ref{thm motivic cohom}(1) are $E_2^{n-1,0} = H^{n-1}_{\cdh}(X, \mathbb{Z}(0))$ and $E_2^{n,-1} = H^{n+1}_{\cdh}(X, \mathbb{Z}(1))$. We have $H^{n-1}_{\cdh}(X, \mathbb{Z}(0)) = 0$ by Proposition \ref{prop weight zero}. Moreover, Lemma \ref{lem motivic soule-weibel}
shows $E_2^{n,-1} = E_\infty^{n,-1}$; this is immediate for $n \ge 3$, while for $n =2$ we also need to apply Remark \ref{rmk:Z-copy}, which 
implies that the differential $E^{0,0}_2 \to E^{2, -1}_2$ is trivial.
Hence 
$$KH_{-n+1}(X) \simeq H^{n+1}_{\cdh}(X, \mathbb{Z}(1)),$$ 
and we may apply the last statement of Proposition \ref{prop weight one}.
\end{proof}

\begin{remark}
Note that even without the klt assumption, by Lemma \ref{lem motivic soule-weibel} we have $E_2^{n-1,0} = E_\infty^{n-1,0}$, while $E^{n, -1}_\infty \simeq E^{n, -1}_2/ E^{n-2, 0}_2$, 
hence although the result is not as clean, at least theoretically the results of \S\ref{scn:weight0} and \S\ref{scn:weight1} provide a method for computing $KH_{-n+1}(X)$. 
\end{remark}

 Let us make the statement about $\Pic X_\bullet$ more explicit.  Let $f \colon Y \to X$ is a log resolution which is an isomorphism away from the singular locus, and denote by $S_j$  the surface strata, and by $C_k$ the curve strata; again, for simplicity, we do not index them according to which exceptional components they belong to, but we always have maps $\Pic S_j \to \Pic C_k$ and other analogous such restrictions, whenever 
$C_k$ is contained in $S_j$, etc. Then
\begin{equation}\label{eqn:h^{n-1}Pic}
  H^{n-1}( \Pic X_\bullet ) = \coker \big( \bigoplus_j \Pic S_j \rightarrow \bigoplus_k \Pic C_k \big)
\end{equation}

\subsection{The case $i = n - 2$}
Moving on to $K_{-n +2} (X)$, we need to impose singularity conditions from the start, hence we can only address Conjectures \ref{sings-KH-conjecture} and \ref{main-K-conjecture}.
The following statement completes the proof of Theorems \ref{thm:main-KH}(ii) and \ref{thm:main-K}(ii).

\begin{theorem}\label{thm K_-n+2}
(i) Let $X$ be a projective variety of dimension $n \geq 3$, with isolated singularities of klt type. Then
$$K_{-n+2}(X)_\mathbb{Q} \simeq KH_{-n+2}(X)_\mathbb{Q} \simeq H^{n-2}( \Pic (X_\bullet) )_\mathbb{Q} \simeq  \gr_2^W H^n(X, \mathbb{Q}).$$
In particular, $K_{-n+2}(X)_\mathbb{Q} = 0$ if $X$ satisfies $D_1$, so e.g. if it has pre-$1$-rational singularities.

\noindent
(ii) If $n = 3$, the same result holds more generally for isolated rational singularities, assuming Bloch's conjecture on $0$-cycles on smooth surfaces. If in addition 
the threefold is a local complete intersection, it holds integrally, i.e.
$$K_{-1} (X) \simeq KH_{-1} (X) \simeq H^1( \Pic (X_\bullet) ).$$
\end{theorem}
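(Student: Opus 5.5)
First I compare $K$ with $KH$, then compute $KH_{-n+2}$ with the Atiyah--Hirzebruch spectral sequence of Theorem \ref{thm motivic cohom}(1), and finally identify each cdh term using the weight zero, one and two computations of \S\ref{scn:weight0}--\S\ref{scn:weight2}. For the comparison: klt type (or rational) singularities are Du Bois, and isolated. So Theorem \ref{thm rosie}(ii) with $m=0$, $s=0$ gives $K_{-t}$-regularity for $t \ge \max\{1, n-2\}$. Since $n \ge 3$, this includes $t = n-2$, and hence $K_{-n+2}(X) \simeq KH_{-n+2}(X)$. This holds integrally, which matters for the lci statement in (ii).

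The terms contributing to $KH_{-n+2}(X)$ are
$$E_2^{n-2,0}=H^{n-2}_{\cdh}(X,\ZZ(0)), \quad E_2^{n-1,-1}=H^{n}_{\cdh}(X,\ZZ(1)), \quad E_2^{n,-2}=H^{n+2}_{\cdh}(X,\ZZ(2)),$$
and Lemma \ref{lem motivic soule-weibel} kills all others. Rationally the spectral sequence degenerates, so $KH_{-n+2}(X)_\Q$ is the direct sum of these three groups tensored with $\Q$. The weight zero term vanishes by Corollary \ref{cor:cdh-weight-zero} when $n-2 \ge 1$, integrally even by Proposition \ref{prop weight zero} in the klt case. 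The weight two term vanishes rationally by Theorem \ref{thm:weight2}: part (i) for threefolds, including the rational-plus-Bloch variant, and part (ii) for $n\ge 4$, where we pass to a resolution factoring through a plt blow-up as allowed in \S\ref{scn:weight2}. For the weight one term, Corollary \ref{cor H^3(1)} with $i=n\ge 3$ gives $H^n_{\cdh}(X,\Q(1)) \simeq H^{n-2}(\Pic X_\bullet)_\Q \simeq \gr^W_2 H^n(X,\Q)$. Under $D_1$, Corollary \ref{cor weight restrictions} with $m=1$ and $i=n$ gives weights $\ge \min\{4,n\}$. For $n\ge4$ this kills $\gr_2^W H^n$. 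For $n=3$, I would use the refined statement $H^{1,1}\gr_2^W H^3=0$ together with $\gr_2^W H^3 = H^{1,1}\gr_2^W H^3$ from Example \ref{ex:rational}.

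For the integral statement in (ii), with $n=3$ and $X$ lci with isolated rational singularities, hence klt, I must control differentials integrally. The term $E_2^{3,-2}=H^5_{\cdh}(X,\ZZ(2))$ vanishes by Proposition \ref{prop:threefold-lci}. The term $E_2^{1,0}=H^1_{\cdh}(X,\ZZ(0))=0$ by Proposition \ref{prop weight zero}. It remains to see that $E_2^{2,-1}=H^3_{\cdh}(X,\ZZ(1))$ survives unchanged. Differentials out of it land in $E^{4,-2}_2 = H^6_{\cdh}(X,\ZZ(2))$, which is zero since $6>2+3$. Differentials into it come from $E_2^{0,0}=H^0_{\cdh}(X,\ZZ(0))$, and these vanish by Remark \ref{rmk:Z-copy}. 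Then Proposition \ref{prop weight one} gives $H^3_{\cdh}(X,\ZZ(1))\simeq H^1(\Pic X_\bullet)$.

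The main obstacle is the vanishing of the weight two term, but that is already done in Theorem \ref{thm:weight2} and Lemmas \ref{lem gamma}, \ref{lem beta}, \ref{lem alpha}. What remains is bookkeeping: checking that the regularity bound from Theorem \ref{thm rosie} covers $t=n-2$ when $s=0$, and that no extension or differential issues arise integrally in the threefold case.
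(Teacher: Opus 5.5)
Your proposal is correct and follows the paper's own proof. Both arguments get $K_{-n+2}$-regularity from Theorem \ref{thm rosie}(ii), then treat the three Atiyah--Hirzebruch terms of weights $0$, $1$, $2$ with Proposition \ref{prop weight zero} and Corollary \ref{cor:cdh-weight-zero}, Proposition \ref{prop weight one} and Corollary \ref{cor H^3(1)}, and Theorem \ref{thm:weight2} (with Proposition \ref{prop:threefold-lci} in the integral case); your explicit check via Lemma \ref{lem motivic soule-weibel} and Remark \ref{rmk:Z-copy} that $E_2^{2,-1}$ survives integrally fills in what the paper calls straightforward, and for $D_1$ with $n=3$ the vanishing $H^{1,1}\gr_2^W H^3(X,\Q)=0$ should be taken directly from Lemma \ref{lem hodge du bois numbers} and Theorem \ref{thm hodge symmetry}, since Corollary \ref{cor weight restrictions} is only stated for $2m+2\le n$.
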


As in the previous subsection, we make this more explicit as in \S\ref{scn:weight1}. Let $f \colon Y \to X$ be a log resolution which is an isomorphism away from the singular locus, and denote by  $T_i$  the threefold strata of the exceptional locus, by $S_j$  the surface strata, and by $C_k$ the curve strata.  Then
\begin{equation}\label{eqn:h^{n-2}Pic}
  H^{n-2}( \Pic X_\bullet ) = \frac{\ker \big( \bigoplus_j \Pic S_j \rightarrow \bigoplus_k \Pic C_k \big)}{\im \big( \bigoplus_i \Pic T_i \rightarrow \bigoplus_j \Pic S_j \big)}
\end{equation}

\smallskip

\begin{remark}
On close inspection, all but one point in the proof of Theorem \ref{thm K_-n+2} works integrally, namely the surjectivity of the composition (\ref{eqn class surj}). This is a serious obstacle, as demonstrated by the case of the Kummer variety associated to an abelian threefold in Example \ref{example kummer threefold} and \ref{example kummer threefold details} (where $K_{-1} (X)$ is torsion, but nonzero, as explained in Example \ref{example-K-Kummer}).
\end{remark}

We now explain how to deduce Theorem \ref{thm K_-n+2} from the results we have already discussed in \S\ref{scn:weight2} and prior.

Since $X$ has isolated Du Bois singularities, it is $K_{-n+2}$-regular by Theorem \ref{thm rosie}(ii). In particular, $K_{-n+2}(X) \simeq KH_{-n+2}(X)$, and as usual we analyze the latter. There are three terms contributing to $KH_{-n+2}(X)$  which may be nontrivial in the Atiyah-Hirzebruch type spectral sequence in Theorem \ref{thm motivic cohom}(1).

The first term $H_{\cdh}^{n-2}(X, \mathbb{Z}(0))$ vanishes by Proposition \ref{prop weight zero}, under the klt assumption. Moreover, if $X$ has rational singularities, we have 
$H_{\cdh}^{n-2}(X, \mathbb{Q}(0)) = 0$ by Corollary \ref{cor:cdh-weight-zero}.

The second term $H^n_{\cdh}(X, \mathbb{Z}(1))$ is isomorphic to the desired answer $H^{n-2}( \Pic X_\bullet)$ under the klt assumption, by Proposition \ref{prop weight one}, and it is straighforward to see that it survives to the $E_\infty$-page. When $X$ has rational singularities,  by Corollary \ref{cor H^3(1)} we have in addition 
$$H^n_{\cdh}(X, \mathbb{Q}(1)) \simeq H^{n-2}( \Pic (X_\bullet) )_\mathbb{Q} \simeq  \gr_2^W H^n(X, \mathbb{Q}).$$
This is $0$ when $X$ has pre-$1$-rational singularities, by Corollary \ref{cor weight restrictions} .

The final term is $H^{n+2}_{\cdh}(X, \mathbb{Z}(2))$. This was treated in \S\ref{scn:weight2}, and it vanishes rationally under the klt assumption, and also integrally under the local complete intersection rational singularities assumption for threefolds.

\subsection{$K$-groups of quasi-projective varieties with isolated singularities}\label{scn:quasiproj-isolated}
The case of quasi-projective varieties with isolates singularities can be reduced to the projective case due to the following basic fact derived 
using localization sequences in $K$-theory; see \cite[Theorem 1.2(i)]{weibel isolated}. See  also \emph{loc. cit.}
for more on local-to-global results.

\begin{lemma}\label{lemma:negative-k-semi-local}
    Let $Y$ be a quasi-projective variety with isolated singularities. Then we have
 \[K_{-i}(X)\simeq K_{-i}(R) \,\,\,\,\,\,{\rm for ~all} \,\,\,\, i > 0,\]
    where $R$ is the semi-local ring of $X$ at its singular points. In particular, if $X$ is a projective compactification with the same singularities, 
    then
    \[K_{-i}(Y)\simeq K_{-i}(X) \,\,\,\,\,\,{\rm for ~all} \,\,\,\, i > 0.\]
 \end{lemma}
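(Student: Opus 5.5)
The plan is to use Thomason--Trobaugh localization in $K$-theory together with the fact that negative $K$-theory of regular schemes vanishes. (The statement mixes $X$ and $Y$; I write $Y$ for the given quasi-projective variety throughout.) Let $Z=Y_{\sing}$, a finite set of closed points, and let $U=Y\setminus Z$, which is smooth. Localization gives a long exact sequence
$$\cdots \to K_{j}(Y \text{ on } Z) \to K_j(Y) \to K_j(U) \to K_{j-1}(Y\text{ on } Z)\to \cdots,$$
valid for all $j\in\ZZ$ in the non-connective setting. Since $U$ is smooth, Theorem \ref{thm:reg-smooth} gives $K_j(U)=0$ for $j<0$. Hence $K_{-i}(Y\text{ on }Z)\to K_{-i}(Y)$ is an isomorphism for $i\ge 2$. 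For $i=1$ it is surjective, with kernel equal to the image of $K_0(U)$.

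Next I would replace $Y$ by the semi-local ring $R=\O_{Y,Z}$, or equivalently by $\operatorname{Spec}R$. The key input is excision for $K$-theory with supports. Thomason--Trobaugh show that $K(Y\text{ on }Z)$ depends only on a neighborhood of $Z$, and by Zariski descent and a limit argument it agrees with $K(\operatorname{Spec} R\text{ on } Z)$. Write $V=\operatorname{Spec}R\setminus Z$; it is a regular scheme, being a localization of $U$. Running the same localization sequence for $\operatorname{Spec}R$ gives $K_{-i}(R\text{ on }Z)\simeq K_{-i}(R)$ for $i\ge 2$. Comparing the two sequences, which are compatible under the flat map $\operatorname{Spec}R\to Y$, then gives $K_{-i}(Y)\simeq K_{-i}(R)$ for $i\ge2$.

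The main obstacle is $i=1$, where we must show that the two cokernels agree:
$$\coker\big(K_0(Y)\to K_0(U)\big)\simeq\coker\big(K_0(R)\to K_0(V)\big).$$
Rather than compare images of $K_0$ directly, I would identify both cokernels with the kernel of $K_{-1}(\text{on }Z)\to K_{-1}$, and show that this kernel is the same group on both sides. For that I would use the surjectivity of $K_0(U)\to K_0(V)$, which holds because $K_0$ of a regular scheme is a quotient of $G_0$ and $G_0$ localization is surjective. A chase in the map of exact sequences
$$K_0(U)\to K_{-1}(Y\text{ on }Z)\to K_{-1}(Y)\to 0,\qquad K_0(V)\to K_{-1}(R\text{ on }Z)\to K_{-1}(R)\to 0,$$
with the middle vertical arrow an isomorphism and the left one surjective, shows that the images of $K_0(U)$ and $K_0(V)$ coincide. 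This gives the isomorphism $K_{-1}(Y)\simeq K_{-1}(R)$.

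For the last statement, let $X$ be a projective compactification with the same singularities, meaning $X_{\sing}=Y_{\sing}$ and $X\setminus Y$ is smooth. Then the semi-local rings of $X$ and $Y$ at the singular points coincide, and applying the first part to both $X$ and $Y$ gives $K_{-i}(Y)\simeq K_{-i}(R)\simeq K_{-i}(X)$ for all $i>0$.
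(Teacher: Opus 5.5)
Your proof is correct and follows essentially the same route as the paper. The paper gives no argument of its own, only citing Weibel's theorem on negative $K$-theory of varieties with isolated singularities, which is proved with localization sequences; your write-up fills in that standard argument (vanishing of negative $K$-theory on the regular part, excision and a limit argument for $K$-theory with supports in $Y_{\sing}$, and surjectivity of $K_0$ restriction on the regular part to handle $K_{-1}$).
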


If $S$ is the local analytic germ of an isolated singularity, by Artin approximation there exists an affine variety $Y$ and $y \in Y$ such that $y$ is the only singular point of $Y$ and $R := \O_{X,x}$ satisfies $\widehat{R} \simeq S$.  For every such local ring $R$ of an isolated affine singularity, 
given a compactification $X$ as in Lemma \ref{lemma:negative-k-semi-local}, we have
$$K_{-i}(R)\simeq K_{-i}(Y) \simeq K_{-i} (X) \,\,\,\,\,\,{\rm for ~all} \,\,\,\, i > 0.$$

In any event, we then have that the case of quasi-projective varieties with isolated singularities encompasses all of the above, and by 
Lemma \ref{lemma:negative-k-semi-local} all the conjectures and results in this paper remain the same as in the projective setting. We summarize here some of the results that hold due to this fact, without aiming for an exhaustive list.

\begin{corollary}
Let $X$ be a complex quasi-projective variety with isolated singularities, of dimension $n$. Then:

\noindent
(i) If $X$ has rational singularities, then 
$$K_{-n} (X)_\Q = KH_{-n}(X)_\Q = 0\,\,\,\,{\rm and}\,\,\,\,K_{-n+1} (X)_\Q = KH_{-n+1}(X)_\Q = 0.$$

\noindent
(ii) If $X$ is a threefold with klt type singularities, then 
$${\rm rk}~ K_{-1} (X)_\Q = \sum_{x \in X_{\rm sing}} \sigma^{\rm an} (X; x) - \sigma (X).$$
In particular, $K_{-1} (X)_\Q = 0$ if and only of the global $\Q$-factoriality defect is equal to the sum of the local analytic ones.

\noindent 
(iii) If $n \ge 3$ and $X$ has klt type and pre-$1$-rational singularities,\footnote{The hypothesis may sometimes be redundant. For instance, a $1$-rational local complete intersection is automatically terminal.}  then $K_{-n+2} (X)_\Q = 0$.
\end{corollary}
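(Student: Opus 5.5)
The plan is to reduce each part to the projective case via Lemma \ref{lemma:negative-k-semi-local}, and then quote the projective results proved above. Given $X$ quasi-projective with isolated singularities, I choose a projective compactification $\overline{X}$ that introduces no new singularities. Concretely, compactify arbitrarily and then resolve the singularities lying in the boundary, which is possible since these are away from the finitely many singular points of $X$. Then $\overline{X}$ has exactly the same (analytic, hence also rational, klt type, pre-$1$-rational, etc.) isolated singularities as $X$, and Lemma \ref{lemma:negative-k-semi-local} gives $K_{-i}(X)\simeq K_{-i}(\overline{X})$ for $i>0$. For the $KH$ statements I also want $KH_{-i}(X)\simeq K_{-i}(X)$ in the relevant range. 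Isolated rational singularities are Du Bois, so $X$ is $K_{-n+1}$-regular (and $K_{-n+2}$-regular) by Theorem \ref{thm rosie}; this holds for quasi-projective $X$ as well, and it identifies $K$ with $KH$ in these degrees.

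For (i), apply Corollary \ref{corollary weight zero} and Corollary \ref{cor K -n+1} to $\overline{X}$ to get $K_{-n}(\overline{X})_\Q=K_{-n+1}(\overline{X})_\Q=0$. Transfer to $X$ by the lemma, and to $KH$ by the regularity just mentioned. For (iii), use Theorem \ref{thm K_-n+2}(i) on $\overline{X}$. It has isolated klt type singularities, and pre-$1$-rationality gives $D_1$, so $K_{-n+2}(\overline{X})_\Q\simeq \gr^W_2H^n(\overline{X},\Q)=0$ by Corollary \ref{cor weight restrictions}, using $n\ge 3$ so that $2m+2\le n$ with $m=1$ fails only if $n=3$. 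In that case I instead use Lemma \ref{lem hodge du bois numbers} directly: $D_1$ gives $\gr_F^pW_{n-1}H^n=0$ for $p\le1$, and together with the rational-singularity fact that $\gr^W_2H^n$ is of type $(1,1)$, this kills $\gr_2^W$. Then pass back to $X$.

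For (ii), I use Theorem \ref{thm:threefolds}(i) for the projective threefold $\overline{X}$, which is of klt type with isolated singularities. It gives the exact sequence $0\to\Pic(\overline{X})_\Q\to\Cl(\overline{X})_\Q\to\bigoplus_x\Cl(\widehat{\mathscr{O}}_{x})_\Q\to K_{-1}(\overline{X})_\Q\to0$. Taking ranks expresses ${\rm rk}\,K_{-1}$ as $\sum_x\sigma^{\rm an}(x)-\sigma(\overline{X})$. The main obstacle is reconciling the global defect $\sigma(\overline{X})$ with $\sigma(X)$ for the noncompact $X$. I would handle this by defining $\sigma(X)$ as the rank of $\Cl(X)/\Pic(X)$. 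The restriction maps $\Cl(\overline{X})\to\Cl(X)$ and $\Pic(\overline{X})\to\Pic(X)$ are surjective, with the same kernel generated by boundary divisors, which are Cartier because $\overline{X}$ is smooth along the boundary. Hence $\Cl(\overline{X})/\Pic(\overline{X})\simeq\Cl(X)/\Pic(X)$, so the defects agree and the formula holds for $X$. The vanishing criterion follows immediately from the rank formula.
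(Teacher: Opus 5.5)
Your proposal is correct and follows essentially the paper's own route. You pass to a projective compactification $\overline{X}$ with the same singularities, identify the negative $K$-groups by Lemma \ref{lemma:negative-k-semi-local}, identify $K$ with $KH$ in the relevant degrees by Theorem \ref{thm rosie}, and quote the projective results. The details you supply that the paper leaves implicit are all sound: the construction of $\overline{X}$, the isomorphism $\Cl(\overline{X})/\Pic(\overline{X})\simeq\Cl(X)/\Pic(X)$ giving $\sigma(\overline{X})=\sigma(X)$, and the separate treatment of $n=3$ in (iii), where Corollary \ref{cor weight restrictions} does not literally apply. One small adjustment in (ii): read the rank count off the induced sequence $0\to(\Cl(\overline{X})/\Pic(\overline{X}))_\Q\to\bigoplus_x\Cl(\widehat{\mathscr{O}}_{\overline{X},x})_\Q\to K_{-1}(\overline{X})_\Q\to0$ rather than subtracting ranks of $\Cl$ and $\Pic$, since $\Pic(\overline{X})_\Q$ itself may be infinite-dimensional.
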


A special case of the last condition in (ii) is when $X$ is locally analytically $\Q$-factorial, or equivalently a rational homology manifold by \cite[Theorem D]{PP1}.
The pre-$1$-rational condition in (iii) can be replaced by the weaker $D_1$, and this also holds when $X$ is a rational homology manifold; for threefolds and fourfolds it is equivalent to this latter condition, by \cite[Theorem A]{PP1}.

\section{Small dimension, and further results and examples}

In this chapter we describe the case of varieties of dimension up to four, and also other general classes of varieties, in order to exemplify concretely when our conjectures are known to hold. We apply the results of this paper, but in a few instances we also rely on existing results in the literature. As a general outline, the conjectures hold in dimension at most $2$, almost fully hold in dimension $3$, and partially hold in dimension $4$. They hold for quotient singularities, toric varieties, conical varieties, and other special examples. We also give examples of non-vanishing $K$-groups in borderline cases, showing that our results cannot be improved to the integral setting. Further examples that require more work are provided in the Appendix.

\subsection{Varieties of dimension up to four}\label{scn:small-dim}
We describe what our results show, and what conjectures are verified, in dimension up to four.

\noindent
{\bf Curves and surfaces.}
~Let $C$ be a complex curve. The only relevant negative cohomology group is $K_{-1}$, and Theorem \ref{theorem K-n} says that
$$K_{-1}(C) \simeq  KH_{-1} (C) \simeq H^1_0 (C, \ZZ).$$ 
In particular, suppose that $C$ is irreducible and has singular points $p_i$ with $d_i$ local branches. Then $K_{-1}(C)$ is free abelian of rank $\sum (d_i-1)$.

This last statement is already known; it is discussed in terms of the cohomology of a graph in \cite[Section 2]{weibel surfaces}.

We also fully understand the case of normal surfaces, with especially clean results in the rational or klt case. Many of the results in the normal or rational singularities case can already be deduced from the foundational \cite{weibel surfaces}, which approaches the problem on surfaces via different methods that do not appeal to Hodge theory.

Theorems \ref{thm:main-KH} and \ref{thm:main-K} show that the conjectures in the Introduction hold for surfaces. In particular, for a surface $X$ with rational singularities, $K_i (X) \simeq KH_i(X)$ for $i < 0$, 
and 
$$K_{-2}(X)_\Q = K_{-1} (X)_\Q = 0.$$
We can be more explicit integrally.  For any projective surface $X$ we have
$$K_{-2} (X) \simeq KH_{-2} (X) \simeq  H^2_0 (X, \ZZ)$$
by Theorem \ref{theorem K-n}.  When $X$ is normal, this is isomorphic to $H^1 (\mathcal{D}(E), \mathbb{Z})$, where $\mathcal{D}(E)$ is the dual complex of the exceptional divisor of a log resolution.  This is $0$ when $X$ is of klt type.\footnote{Recall the for surfaces the klt condition is equivalent to having quotient singularities.}  Under this last assumption, we also have 
$$K_{-1} (X) \simeq KH_{-1} (X) \simeq H^1 (\Pic (X_\bullet)) \simeq {\rm coker} \big(\Pic Y \to \bigoplus_k \Pic C_k \big)$$
by Theorem \ref{thm:K_{-n+1}}. Here $f \colon Y \to X$ is a log resolution of $X$ which is an isomorphism away from the singular locus, and $C_k$ are the components of the reduced exceptional divisor.

For any normal surface $X$, the main result of \cite{weibel surfaces} says that there exists an exact sequence 
 \begin{equation}\label{eqn:Weibel-sequence}
 0 \to \Pic(X) \to \Cl(X) \to \bigoplus_{x\in X_{\sing}} \Cl(\widehat{\mathscr{O}}_{X,x}) \to K_{-1}(X) \to 0.
 \end{equation}
This can also be seen as an application of the  motivic cohomology techniques of this paper. We do not include the argument here, as a suitable generalization to arbitrary dimension will appear in \cite{BPS}. We remark that this recovers the fact that $K_{-1} (X)_\Q = 0$ for rational surface singularities, as they are known to be locally analytically $\Q$-factorial.

\begin{example}
There are simple examples of surfaces with quotient singularities, hence klt, such that $K_{-1}(X) \neq 0$. An example from \cite{projective quotients} appears in \S\ref{scn:graded} below. For a different perspective, among ADE singularities $E_8$ is the only type that has 
 $\Cl(\widehat{\mathscr{O}}_{X,x}) = 0$, hence by the exact sequence above any $X$ with such singularities satisfied $K_{-1} (X) = 0$. On the other hand,  by \cite{srinivas2} there exists a quasi-projective surface $X$ with a singularity of any ADE type, for which ${\rm Cl} (\mathscr{O}_{X,x}) = 0$; when the type is not $E_8$, it follows that $K_{-1} (X) \neq 0$.
\end{example}

\noindent
{\bf Threefolds.}
~When $X$ be a complex projective threefold, the paper solves a substantial part of our conjectures, and provides results over $\ZZ$.  At the same time, there are a few loose ends.

The case of $K_{-3}$ and $K_{-2}$ is covered by the results that hold in arbitrary dimension. More precisely $KH_{-3}$ and $KH_{-2}$ are covered by Theorem \ref{thm:main-KH}, while if $X$ has rational singularities, Theorem \ref{thm:main-K} gives 
$$K_{-3}(X)_\Q = K_{-2} (X)_\Q = 0.$$
Integrally, we have 
$$K_{-3}(X) \simeq KH_{-3} (X) \simeq H^3_0 (X, \ZZ)$$
by Theorem \ref{theorem K-n}. When $X$ is normal, this is isomorphic to $H^2 (\mathcal{D}(E), \mathbb{Z})$, where $\mathcal{D}(E)$ is the dual complex of the exceptional divisor of a log resolution of $X$, and this is $0$ when $X$ is of klt type.

When $X$ is of klt type, we also have 
$$K_{-2} (X) \simeq KH_{-2} (X) \simeq H^2 (\Pic (X_\bullet)) \simeq \coker \big( \bigoplus_j \Pic S_j \rightarrow \bigoplus_k \Pic C_k \big)$$
by Theorem \ref{thm:K_{-n+1}}. The last isomorphism is (\ref{eqn:h^{n-1}Pic}); here $S_j$ and $C_k$ are the surface and curve strata of the exceptional divisor of a strong resolution of singularities.

\begin{example}\label{ex:cA1}
When $X$ has ordinary singularities (for instace $cA_1$), so there is only one exceptional surface over each singular point, the formula immediately shows $K_{-2} (X) = 0$.  In Example \ref{example-K2-cAn} we will see that $K_{-2} (X) = 0$ for all $cA_n$ threefold singularities, 

On the other hand, in Example \ref{exmp 3-fold with cayley} we construct a klt threefold with isolated singularities for which $K_{-2} (X) \neq0$.
\end{example}

Finally, we focus on $K_{-1}$. Note first that for this group Conjecture \ref{main-KH-conjecture} is still open, without singularity assumptions.
On the other hand, we have a good understanding of Conjectures \ref{sings-KH-conjecture} and \ref{main-K-conjecture}, and of further more precise results, except for the last part of the former in the non-isolated singularities case.

 If $X$ has isolated klt singularities, by Theorem \ref{thm K_-n+2}(i) we have
$$K_{-1}(X)_\mathbb{Q} \simeq KH_{-1}(X)_\mathbb{Q} \simeq H^1( \Pic (X_\bullet) )_\mathbb{Q} \simeq  \gr_2^W H^3(X, \mathbb{Q}).$$
Since $W_1 H^3(X, \mathbb{Q}) = 0$ for rational singularities, this is saying that $K_{-1}(X)_\mathbb{Q} = 0$ if and only if $H^3 (X, \Q)$ is pure. 
By \cite[Theorem E]{PP2}, we deduce the equivalence
\begin{equation}\label{eqn:threefold-equiv}
K_{-1}(X)_\mathbb{Q} = 0 \iff \sigma (X) = \sum_{x \in X_{\rm sing}} \sigma^{\rm an} (X; x) \iff \underline{h}^{1,2} (X) = \underline{h}^{2,1} (X),
\end{equation}
where $\sigma (X)$ is the global $\Q$-factoriality defect of $X$, $ \sigma^{\rm an} (X; x)$ is the local analytic $\Q$-factoriality defect at each singular point, 
and $\underline{h}^{1,2} (X)$ and $\underline{h}^{2,1} (X)$ are Hodge-Du Bois numbers.

\begin{example}
The results above say that whether $K_{-1} (X)$ is torsion or not depends on a delicate local to global interaction.

A threefold node $x \in X$ is not locally analytically $\Q$-factorial; in fact $\sigma^{\rm an} (X; x) = 1$. If we exhibit this singularity as the projective cone over a smooth quadric in $\mathbb{P}^3$, then $\sigma (X) = 1$ as well, and so by the above $K_{-1}(X)$ is torsion. In fact $K_{-1} (X) = 0$, as we will see in \S\ref{scn:cones}. (For a situation when $K_{-1}(X)$ is torsion but nonzero, see Example \ref{example-K-Kummer} in the Appendix.)

On the other hand, if 
$X \subset \mathbb{P}^4$ is a nodal hypersurface of degree $d$ with at most $(d-1)^2 -1$ nodes, then $X$ is factorial by \cite{Cheltsov}. 
Therefore for such a hypersurface we have $K_{-1} (X)_{\Q} \neq 0$.
\end{example}

If we weaken the klt assumption to rational singularities in the non-Gorenstein case, the same statements hold conditional on Bloch's conjecture on $0$-cycles on smooth projective surfaces. 
In the opposite direction, if $X$ is in addition a local complete intersection, we have integrally
$$K_{-1} (X) \simeq KH_{-1} (X) \simeq H^1( \Pic (X_\bullet) ).$$

For dimension reasons, the threefold stratum on the resolution is $Y$ itself, and the formula  in (\ref{eqn:h^{n-2}Pic}) becomes
\begin{align*}
    H^{1}( \Pic X_\bullet )  = \frac{\ker \big( \bigoplus_j \Pic S_j \rightarrow \bigoplus_k \Pic C_k \big)}{\im \big( \Pic Y \rightarrow \bigoplus_j \Pic S_j \big)}.
\end{align*}
This can be written more intrinsically, directly in terms of $X$, as follows; there is an exact sequence 
\[0 \to \Pic(X) \to \Cl(X) \to \bigoplus_{x\in X_{\sing}} \Cl(\widehat{\mathscr{O}}_{X,x}) \to H^{1}( \Pic X_\bullet ) = K_{-1} (X) \to 0.\]
This formula for $K_{-1} (X)$ is similar to the case of normal surfaces discussed above; for  threefolds with at most isolated $cA_n$ singularities, it was derived 
in \cite[Proposition 3.6 and Corollary 3.8]{pavic} with the aid of Kn\"orrer periodicity. 
As mentioned in the previous subsection, rather than including the argument for this last exact sequence here,  a generalization to arbitrary dimension will appear in \cite{BPS}.  Finally, note that the sequence gives another explanation for the equivalence (\ref{eqn:threefold-equiv}).

\noindent
{\bf Fourfolds.}
~The picture is rather clean for fourfolds as well, except not entirely so for $K_{-1}$ at the moment, which shows where the first serious new obstructions arise.

The groups $K_{-4}$ and $K_{-3}$ are again covered by the results that hold in arbitrary dimension. More precisely $KH_{-4}$ and $KH_{-3}$ are covered by Theorem \ref{thm:main-KH}, while if $X$ has rational singularities, Theorem \ref{thm:main-K} gives 
$$K_{-4}(X)_\Q = K_{-3} (X)_\Q = 0.$$
Integrally, we have 
$$K_{-4}(X) \simeq KH_{-4} (X) \simeq H^4_0 (X, \ZZ)$$
by Theorem \ref{theorem K-n}, and as before this is $0$ when $X$ is of klt type. Under this last assumption we also have 
$$K_{-3} (X) \simeq KH_{-3} (X) \simeq H^3 (\Pic (X_\bullet)) \simeq \coker \big( \bigoplus_j \Pic S_j \rightarrow \bigoplus_k \Pic C_k \big)$$
by Theorem \ref{thm:K_{-n+1}}, with the same notation as in the previous paragraph on threefolds. For example, we have $K_{-3} (X) = 0$ if a log resolution of $X$ does not contain triple intersections of exceptional divisors. This happens for instance for $cA_1$ (or any ordinary singularity) and $cA_2$ singularities, simply because these have one, respectively two, exceptional divisors in a log resolution.

We next focus on $K_{-2}$. When $X$ has isolated rational (or even Du Bois) singularities, we have $K_{-2}$-regularity by Theorem \ref{thm rosie}.
If moreover the singularities are of klt type, then 
$$K_{-2}(X)_\mathbb{Q} \simeq KH_{-2}(X)_\mathbb{Q} \simeq H^2 ( \Pic (X_\bullet) )_\mathbb{Q} \simeq  \gr_2^W H^n(X, \mathbb{Q}),$$
by Theorem \ref{thm K_-n+2}, and this can also be expressed as in (\ref{eqn:h^{n-2}Pic}). In particular, if $X$ has pre-$1$-rational singularities, then 
$$K_{-2}(X)_\mathbb{Q} = 0.$$

\begin{example}
Assume $X$ is a fourfold with hypersurface singularities given locally by $x_1^2 + x_2^2 + x_3^2 + x_4^2 + x_5^{n+1} = 0$, so of  $cA_n$ type. Since 
as in Example \ref{ex:min-exp} we have 
$$\widetilde{\alpha} (X) = 2 + \frac{1}{n+1} > 2,$$
$X$ has $1$-rational singularities, which are also klt since $X$ is a hypersurface. (Some other $cA$ and $cD$ type fourfold singularities have this property as well.) We thus have 
$$K_{-4} (X) = 0 \,\,\,\,\,\, {\rm and} \,\,\,\,\,\, K_{-3}(X)_{\Q} = K_{-2}(X)_{\Q} = 0.$$
Moreover, if $n \le 2$, then in fact $K_{-3}(X) =  0$. We don't know yet if, or when, the other vanishing statements hold integrally.
\end{example}

We are thus left with understanding $K_{-1}$. If $X$ is pre-$1$-rational (or even pre-$1$-Du Bois) with singular locus of dimension at most $1$ (as for instance in the case of terminal singularities), then $X$ is $K_{-1}$-regular, again by Theorem \ref{thm rosie}. By the Atiyah-Hirzebruch type spectral sequence we have a decomposition
$$KH_{-1} (X)_\Q \simeq H^1_{\rm cdh} (X, \Q (0)) \oplus H^3_{\rm cdh} (X, \Q (1)) \oplus H^5_{\rm cdh} (X, \Q (2)) \oplus 
H^7_{\rm cdh} (X, \Q (3)).$$
The first two summands are covered by the methods of this paper. When $X$ has rational singularities, we have 
$H^1_{\rm cdh} (X, \Q (0)) = 0$ by Corollary \ref{cor:cdh-weight-zero}, and
$$H^3_{\rm cdh} (X, \Q (1)) \simeq {\rm gr}_2^W H^3 (X, \Q)$$
by Corollary \ref{cor H^3(1)}. Moreover, the latter is $0$ if $X$ has pre-$1$-rational singularities, by the same result.

The last two summands are currently just beyond the limit of our methods. By Conjecture \ref{main-cdh-conjecture}, the term $H^7_{\rm cdh} (X, \Q (3))$ is predicted to be $0$ under the rational singularities assumption, while the term $H^5_{\rm cdh} (X, \Q (2)) $ is  predicted to be $0$ under the pre-$1$-rational 
singularities assumption. Overall, conjecturally we also have $K_{-1}(X)_{\Q} = 0$ when $X$ has pre-$1$-rational singularities.

\subsection{Graded rings and quotient singularities}\label{scn:graded}
Weibel shows in \cite{weibel kh}  that if $X = \mathop{\mathrm{Spec}}R$ for a graded ring $R$, then $KH_{i}(X) = 0$ for $i < 0$.

In particular, this applies when $X$ is the quotient of $\A^n$ by the linear action of a reductive group $G$. In the special case when $G$ is a finite group,  this is stronger than Conjecture \ref{main-KH-conjecture}, which predicts that $KH_{-i}(X)_\Q = 0$ for $i > 0$ for quotient singularities, and more generally for all rational homology manifolds.

Note also that quotient singularities are pre-$m$-rational for all $m$, by \cite[Corollary 4.3]{SVV}, hence using Theorem \ref{thm rosie}, in the affine case we have $K_{-i} (X) =  KH_{-i} (X) = 0$ for all $i > 0$.

For quasi-projective varieties with isolated quotient singularities, the negative $K$-groups are studied in \cite{projective quotients}. By Corollary 
2.30 in  \textit{loc. cit.}, we have 
$$K_{-1}(X)_{\Q} = 0 \,\,\,\,\,\,{\rm and} \,\,\,\,\,\, K_{-i} (X) = 0 \,\,\,\,{\rm for}\,\,\,\,i \ge 2.$$
Note that there exist simple examples where $K_{-1} (X)$ need not vanish integrally; see Example 2.33 in \textit{loc. cit.}, where it is shown that if $X$ is the quotient of $\mathbb{P}^1 \times \mathbb{P}^1$ by the diagonal action of $\ZZ/ 2 \ZZ$, then $K_{-1} (X) \simeq \ZZ/ 2\ZZ$.

We now show that our conjectures hold in full generality for all complex quasi-projective varieties with quotient singularities. The key point is a 
stronger statement about the behavior of cdh cohomology under finite surjective morphisms.

\begin{proposition}\label{prop finite map}
Let $\pi\colon Y \to X$ be a finite surjective map between normal varieties over a field $k$ of characteristic zero. Then 
$H^i_{\cdh}(X, \mathbb{Q}(j))$ is a direct summand of $H^i_{\cdh}(Y, \mathbb{Q}(j))$.

In particular, if $H^i_{\cdh}(Y, \mathbb{Q}(j))=0$, then $H^i_{\cdh}(X, \mathbb{Q}(j))=0$.
\end{proposition}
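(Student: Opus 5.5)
The natural approach is a transfer argument: construct a trace (push-forward) map $\pi_*\colon H^i_{\cdh}(Y,\mathbb{Q}(j))\to H^i_{\cdh}(X,\mathbb{Q}(j))$ such that $\pi_*\circ\pi^* = \deg(\pi)\cdot \id$. Since we work with $\Q$-coefficients, $\frac{1}{\deg \pi}\pi_*$ then splits $\pi^*$, so $H^i_{\cdh}(X,\Q(j))$ is a direct summand and the vanishing statement follows at once. (If $X$ is not connected, work component by component; surjectivity together with normality lets us reduce to $X$ irreducible, and after discarding components of $Y$ not dominating a given component, to $Y$ irreducible as well.)

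To produce the transfer, I would use the structure of rational cdh-motivic cohomology as a theory with transfers. Over a field of characteristic zero, the cdh sheafification of $\mathbb{Q}(j)$ agrees with the Friedlander--Voevodsky motivic complex, which is built from presheaves with transfers. A finite surjective morphism $\pi\colon Y\to X$ with $X$ normal defines a finite correspondence ${}^t\Gamma_\pi\in \mathrm{Cor}(X,Y)$. This is the key place where normality of $X$ is needed: for normal $X$, the transpose of the graph is a universally integral relative cycle over $X$ (Suslin--Voevodsky), so it is a legitimate finite correspondence even though $\pi$ need not be flat. Since cdh-motivic cohomology with $\Q$-coefficients is representable in $\mathbf{DM}$ (equivalently, is computed by $\mathrm{Hom}_{\mathbf{DM}}(M(X),\Q(j)[i])$ in characteristic zero, using cdh descent from \cite{FV}), the correspondence induces $\pi_*\colon H^i_{\cdh}(Y,\Q(j))\to H^i_{\cdh}(X,\Q(j))$.

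Next I would verify $\pi_*\circ\pi^*=\deg(\pi)\cdot\id$. At the level of correspondences, this amounts to the identity $\Gamma_\pi\circ{}^t\Gamma_\pi=\deg(\pi)\cdot\Delta_X$ in $\mathrm{Cor}(X,X)$. It suffices to check this over the generic point of $X$, because finite correspondences from a normal (in particular integral) scheme are determined by their restriction to the generic point, given that $\mathrm{Cor}(X,-)$ injects into $\mathrm{Cor}(\operatorname{Spec} k(X),-)$. Over $k(X)$, the composition is the push-forward of the fundamental cycle of $\operatorname{Spec} k(Y)$, namely $[k(Y):k(X)]$ times the point, which gives the claim.

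The main obstacle is the identification of the cdh-motivic cohomology $H^i_{\cdh}(-,\Q(j))$ used in the paper with a functor on $\mathbf{DM}$ that is contravariant for finite correspondences on singular normal varieties. Here I would invoke the comparison of \cite{FV} (cdh motivic cohomology $=$ Hom into the motivic complex in $\mathbf{DM}^{\rm eff}$, valid in characteristic zero via resolution of singularities), together with the fact that the motive $M(X)$ is functorial for finite correspondences. If that route is problematic, a fallback is a more hands-on check. One writes $\mathbb{Q}(j)^{\cdh}$ as the cdh sheafification of a presheaf with transfers, notes that cdh sheafification preserves transfers with $\Q$-coefficients (Suslin--Voevodsky), and applies the trace to the hypercohomology, using that $X$ and $Y$ are normal, so the transfer for $\pi$ exists.
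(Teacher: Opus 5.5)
Your proposal is correct and is essentially the paper's argument: normality of $X$ makes ${}^t\Gamma_\pi$ a (universally integral) finite correspondence, and $\Gamma_\pi\circ{}^t\Gamma_\pi=\deg(\pi)\cdot\Delta_X$ makes $\frac{1}{\deg\pi}\,\mathrm{Tr}_\pi$ a splitting of $\pi^*$. The only difference is packaging: you transport the correspondence to cdh cohomology through the representability $H^i_{\cdh}(X,\Q(j))\simeq\mathrm{Hom}_{\mathbf{DM}}(M(X),\Q(j)[i])$, whereas the paper does what you list as your fallback, building a sheaf-level trace $\pi_*(\Q(j)|_Y)\to\Q(j)|_X$ on the small cdh sites and applying $H^i_{\cdh}(X,-)$. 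Your route has the mild advantage of not having to justify gluing that trace over cdh covers of $X$, which need not be normal.
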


\begin{proof}
In this proof we appeal to the original approach to cdh-motivic cohomology, developed by Voevodsky. To prove the statement, we construct a trace map
    $$ \mathrm{Tr} \colon \pi_* (\mathbb{Q}(j)|_Y) \to \mathbb{Q}(j)|_X. $$
Recall from \cite[Definition 3.1]{MVW} that $\mathbb{Q}(j) = C_* \mathbb{Q}_{\mathrm{tr}}(\mathbb{G}_m^{\wedge j})[-j]$ is the complex of presheaves with transfers obtained as the Suslin complex of $\mathbb{Q}_{\mathrm{tr}}(\mathbb{G}_m^{\wedge j})$, where  by definition 
$\mathbb{Q}_{\mathrm{tr}}(\mathbb{G}_m^{\wedge j})(U) = \mathrm{Cor}_k(U, \mathbb{G}_m^{\wedge j})_{\mathbb{Q}}$ for $U\in \mathrm{Cor}_k$. As defined in \cite[Appendix 1A]{MVW}, 
    \begin{itemize}
        \item Objects in $\mathrm{Cor}_k$ are finite-type $k$-schemes;
        \item Morphisms $\mathrm{Cor}_k(V,W)$ are given by cycles in $V\times_k W$ that are universally integral over $V$, each of whose components are finite and surjective over $V$.
    \end{itemize}
 We denote by $\mathbb{Q}(j)|_Y$ and $\mathbb{Q}(j)|_X$ the restrictions of its cdh-sheafification to the small cdh sites of $Y$ and $X$, respectively.


For any open $V \subset X$, the restriction $\pi|_V\colon \pi^{-1}(V) \to V$ is finite and surjective. 
    Since $Y$ and $X$ are normal, by \cite[Theorem 1A.6]{MVW} the graph $\Gamma_{\pi|_V}\subset \pi^{-1}(V) \times V$ and its transpose $\Gamma_{\pi|_V}^t\subset V \times \pi^{-1}(V)$ are relative cycles over $\pi^{-1}(V)$ and $V$, respectively. Moreover, by \cite[Theorem 1A.3]{MVW}, these relative cycles are universally integral, hence belong to $\mathrm{Cor}_k(\pi^{-1}(V), V)_{\mathbb{Q}}$ and $\mathrm{Cor}_k(V, \pi^{-1}(V))_{\mathbb{Q}}$, respectively.

    Precomposing with $\Gamma_{\pi|_V}^t\in \mathrm{Cor}_k(V, \pi^{-1}(V))_{\mathbb{Q}}$ yields a map on sections
    $$ (\Gamma_{\pi|_V}^t)^*\colon \mathbb{Q}(j)(\pi^{-1}(V)) \to \mathbb{Q}(j)(V), $$
    given by the proper pushforward of cycles along the finite projection $\pi^{-1}(V) \times \Delta^n \times \mathbb{G}_m^{\wedge j} \to V \times \Delta^n \times \mathbb{G}_m^{\wedge j}$. These maps are compatible with restrictions, so upon cdh-sheafification they glue to a global map of sheaves
    $$ \mathrm{Tr} \colon \pi_* (\mathbb{Q}(j)|_Y) \to \mathbb{Q}(j)|_X. $$
Since $\pi$ is finite, we have canonical isomorphisms
$$H^i_{\cdh}(Y, \mathbb{Q}(j)) \simeq H^i_{\cdh}(X, \pi_* (\mathbb{Q}(j)|_Y)).$$ 
Applying $H^i_{\cdh}(X, -)$ to $\mathrm{Tr}$ then yields a global trace map on rational cdh-motivic cohomology:
    $$ \mathrm{Tr}_\pi\colon  H^i_{\cdh}(Y, \mathbb{Q}(j)) \xrightarrow{\simeq} H^i_{\cdh}(X, \pi_* (\mathbb{Q}(j)|_Y)) \xrightarrow{H^i(\mathrm{Tr})} H^i_{\cdh}(X, \mathbb{Q}(j)). $$
At the same time, the finite surjective map $\pi$ also induces a pullback map
    $$\pi^*\colon H^i_{\cdh}(X, \mathbb{Q}(j)) \to H^i_{\cdh}(Y, \mathbb{Q}(j)).$$
At the level of finite correspondences, $\Gamma_\pi \circ \Gamma_\pi^t$ is given by multiplication by the degree $d = \deg(\pi)$ (cf. \cite[\S1]{MVW}), so
    $$ \mathrm{Tr}_\pi \circ \pi^* = \cdot d,$$
and this immediately implies the statement.

\end{proof}

\begin{remark}
    The proof shows that the trace map $\mathrm{Tr}_{\pi}$ exists over arbitrary fields. However, in characteristic $p|d$, it does not provide a splitting of $\pi^*$.
\end{remark}

\begin{remark}[{\bf Trace map on $K$-groups.}]
    For a finite map $\pi\colon Y\to X$ between normal varieties, it follows from the proof of Proposition \ref{prop finite map} that there are trace maps
    \[KH_j (Y)\to KH_j (X)\]
    for all $j$. Moreover, when $Y$ has rational and pre-$m$-Du Bois singularities for all $m$, it follows that $X$ has pre-$m$-Du Bois singularities for all $m$, by \cite[Proposition 4.2(1)]{SVV}. In this case the isomorphism between $KH$ and $K$-groups implied by Theorem \ref{thm rosie} gives trace maps
    \[K_{-i}(Y)\to K_{-i}(X) \,\,\,\,\,\, {\rm for~ all}  \,\,\,\,i\ge s = \dim X_{\rm sing}.\]
\end{remark}

\begin{corollary}\label{cor smooth cover}
Let $X$ be a normal variety of dimension $n$. If there exists a finite surjective map $\pi\colon Y \to X$ from a smooth variety $Y$, then
$$H^i_{\cdh}(X, \mathbb{Q}(j))=0 \,\,\,\,\,\,{\rm for} \,\,\,\,i>2j \,\,\,\,{\rm or}\,\,\,\,i > j + n.$$
In particular, this applies to any quasi-projective variety $X$ with quotient singularities.
\end{corollary}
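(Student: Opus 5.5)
By Proposition \ref{prop finite map}, $H^i_{\cdh}(X, \mathbb{Q}(j))$ is a direct summand of $H^i_{\cdh}(Y, \mathbb{Q}(j))$. So it is enough to prove the vanishing for the smooth variety $Y$. This uses that $X$ is normal and $Y$ is smooth, hence normal.

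Since $Y$ is smooth, its cdh-motivic cohomology agrees with motivic cohomology, as recalled in \S\ref{scn:cdh}. Concretely, $H^i_{\cdh}(Y, \mathbb{Q}(j)) \simeq \CH^j(Y, 2j-i)_\mathbb{Q}$.
\begin{itemize}
\item For $i > 2j$, the index $2j - i$ is negative, so this group vanishes by (\ref{eqn:van-smooth-1}).
\item For $i > j + n$, it vanishes by (\ref{eqn:van-smooth-2}). This uses $\dim Y = n$, which holds because $\pi$ is finite and surjective.
\end{itemize}
Alternatively, Lemma \ref{lem motivic soule-weibel} applied to $Y$ gives the second vanishing directly. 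The direct summand property then transfers both vanishings to $X$.

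For the final assertion, let $X$ be quasi-projective with quotient singularities. Then $X$ is normal. I would produce a global finite surjective map from a smooth variety. Quotient singularities are only locally of the form $\mathbb{C}^n/G$, so a global smooth finite cover is not automatic.

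\begin{itemize}
\item \emph{Route via stacks.} The standard input is that a quasi-projective variety with quotient singularities is the coarse moduli space of a smooth Deligne--Mumford stack. By results of Kresch--Vistoli (or Edidin--Hassett--Kresch--Vistoli), such a stack admits a finite flat surjection from a smooth quasi-projective scheme, provided it is a quotient stack. This yields a finite surjective map $Y \to X$ with $Y$ smooth.
\item \emph{Route via local covers.} If one wants to avoid the global cover, the cdh statement is local, since cdh cohomology satisfies Zariski descent. Two further inputs make a local argument work: local covers $\mathbb{C}^n \supset U \to U/G$ are finite surjective from smooth varieties, and Mayer--Vietoris in the Zariski topology holds. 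One would then apply the Proposition on each piece of an affine cover and on their intersections, which are again quotients of smooth varieties.
\end{itemize}

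The local route runs into the problem that direct-summand statements do not obviously glue through Mayer--Vietoris. For that reason I would use the global cover via Kresch--Vistoli. I expect this existence step to be the only real obstacle; the vanishing itself is immediate.
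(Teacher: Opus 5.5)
Your proposal is correct and follows the paper's proof. You use the direct-summand property from Proposition \ref{prop finite map} to reduce to the smooth variety $Y$, where (\ref{eqn:van-smooth-1}) and (\ref{eqn:van-smooth-2}) apply with $\dim Y = n$. For the final assertion the paper, like you, relies on the existence of a global finite surjection from a smooth variety, which it simply cites. Your route through Vistoli's smooth Deligne--Mumford stack, which has generically trivial stabilizer and is therefore a quotient stack by EHKV, followed by Kresch--Vistoli, is the standard justification of that fact.
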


\begin{proof}
The first part follows from Proposition \ref{prop finite map}, together with (\ref{eqn:van-smooth-1}) and (\ref{eqn:van-smooth-2}).
 For the second part, as pointed out in \cite{GS} before Remark 1.9, 
 it is known that if $X$ is quasi-projective and has quotient singularities, 
 there is a finite surjective map $\pi\colon Y \to X$ from a smooth variety $Y$. 
\end{proof}

We deduce the main Conjectures \ref{main-KH-conjecture} and \ref{main-K-conjecture} for quotient singularities.

\begin{corollary}
Let $X$ be a quasi-projective variety with quotient singularities. Then 
$$KH_{-i} (X)_{\mathbb{Q}} = 0\,\,\,\,\,\,{\rm for~ all}\,\,\,\, i>0$$
and if $s = \dim X_{\rm sing}$, then
$$K_{-i} (X)_{\mathbb{Q}} = 0  \,\,\,\,\,\,{\rm for~ all}\,\,\,\, i\ge \max ~\{s, 1\}.$$
\begin{proof}
    The vanishing of $KH_{-i} (X)$ follows from Corollary \ref{cor smooth cover} and the Atiyah-Hirzebruch type spectral sequence 
    in Theorem \ref{thm motivic cohom}.
The $KH$-groups and $K$-groups are equal for $i \ge s$ by Theorem \ref{thm rosie}(ii), since $Y$ is pre-$m$-Du Bois (in fact pre-$m$-rational) for all $m$.
\end{proof}
\end{corollary}

\begin{remark}
    If $X = Y/G$ is a global quotient, then taking $d=|G|$ in the proof of Proposition \ref{prop finite map} shows that $KH_{-i}(X)=K_{-i}(X)$ has $|G|$-torsion when $i>0$ is at least the dimension of the singular locus of $X$. Note that this is consistent with the example from \cite{projective quotients} cited above.
    \end{remark}

\subsection{Affine and projective cones}\label{scn:cones}
The negative $K$-groups of an affine cone $X$ over a projective variety are understood. First, by \S\ref{scn:graded}, we have  
$$KH_{-i}(X) = 0\,\,\,\,\,\,{\rm  for~all} \,\,\,\, i > 0.$$ 
This verifies the affine version of Conjecture \ref{main-KH-conjecture}, over $\ZZ$: affine cones are contractible, hence $H^i (X, \ZZ) = 0$ for all $i > 0$. Next, Theorem \ref{thm rosie}(ii)  says that
the affine version of Conjecture \ref{main-K-conjecture} is also satisfied integrally in this case.

Better still, there is a general description. The $K$-theory of a cone over a smooth projective variety $Z$ was computed in \cite[Theorem 1.2]{cones}. That argument plus an extra calculation yields an extension \cite[Corollary D]{singular cones} to cones over arbitrary projective varieties, as follows.

\begin{theorem}\label{cone K}
Let $Z$ be an $(n-1)$-dimensional complex projective variety endowed with an ample line bundle $L$, and let $X = C(Z, L)$ be the affine cone over $X$ associated to $L$. Then
\begin{align*}
    K_{-i}(X) = \bigoplus_{j=0}^{n-1-i} \bigoplus_{k \geq 1} \mathbb{H}^{i+j}(Z, \underline{\Omega}^j_{Z / \mathbb{Q}} \otimes L^k) \,\,\,\,\,\,{\rm for ~all} \,\,\,\,i > 0.
\end{align*}
\end{theorem}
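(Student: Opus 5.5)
The statement is quoted from the literature: \cite[Theorem 1.2]{cones} for smooth $Z$ and \cite[Corollary D]{singular cones} in general. The plan is therefore to reconstruct the argument along the lines of those references, using the tools of this paper. The basic strategy is to split $K$-theory into a homotopy-invariant part and a nil-part. By \S\ref{scn:graded}, $KH_{-i}(X)=0$ for all $i>0$, since $X=\mathop{\mathrm{Spec}} R$ with $R=\bigoplus_{k\ge0}H^0(Z,L^k)$ graded. Hence the homotopy fiber $\mathcal{F}$ of $\mathcal{K}(X)\to\mathcal{KH}(X)$ satisfies $K_{-i}(X)\simeq \pi_{-i}\mathcal{F}$ for $i>0$, with an extra correction only at $K_0$, which we do not need. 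The whole task is thus to compute the fiber term $\pi_{-i}\mathcal F$.

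For this we use the comparison of \cite{k weibel} (via \cite{haesemeyer weibel}) between the fiber of $K\to KH$ and the fiber of cyclic homology $HC(-/\Q)\to HC^{\cdh}(-/\Q)$, which reduces the computation to Hodge-theoretic cdh data. In characteristic zero the cdh-sheafified Hodge pieces are the Du Bois complexes, by the cdh descent description $\DB^j_X \simeq R\epsilon_*\Omega^j_{X_\bullet}$ over a hyperresolution. This yields a long exact sequence expressing $K_{-i}(X)$ for $i>0$ through the cohomology of the cones of $\Omega^j_{X/\Q}\to\DB^j_{X/\Q}$. The cones are shifted so that the $j$-th piece contributes in degree $i+j$. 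Because $X$ is affine, only $\mathbb{H}^{i+j}(X,\DB^j_{X/\Q})$ survives, as $\Omega^j$ has no higher cohomology. The range $0\le j\le n-1-i$ comes from the vanishing $\mathbb{H}^{m}(\DB^j)=0$ for $m+j>n$ together with the $K_{-n}$ and $KH$ bounds. For affine $X$ we also need the low-degree map $H^0(\Omega^j)\to H^0(\DB^j)$; its cokernel enters only at $i=0$, so it can be ignored here.

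The last step is to compute $\mathbb{H}^{p}(X,\DB^j_X)$ for $p>0$ on the cone, using the $\mathbb{G}_m$-grading. Let $\pi\colon \widetilde X=\mathrm{Tot}(L^{-1})\to X$ be the blow-up of the vertex, with exceptional divisor $Z$. Cdh descent for the abstract blow-up square gives $\DB_X^j$ as a cone involving $R\pi_*\DB^j_{\widetilde X}$, $\DB^j_Z$ and the point. Pushing $\DB^j_{\widetilde X}$ along the affine bundle map $\widetilde X\to Z$ decomposes it by weight as $\bigoplus_{k\ge0}\DB^j_{Z}\otimes L^k$ plus extension terms. The point class and the $k=0$ summand cancel against $\DB_Z^j$, leaving $\bigoplus_{k\ge1}\mathbb H^{i+j}(Z,\DB^j_{Z/\Q}\otimes L^k)$.

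The main obstacle is bookkeeping in this weight decomposition when $Z$ is singular. The bundle $\widetilde X\to Z$ has relative differentials $\pi^*L^{-1}$, so $\DB^j_{\widetilde X}$ is an extension of $\DB^j_Z$ and $\DB^{j-1}_Z$ twisted terms. One has to check that the connecting maps are compatible with weights, so that the extension terms cancel telescopically and only the stated direct sum remains. This is precisely the extra calculation of \cite{singular cones}, and it is the step I would check most carefully, using the Euler derivation to split by weight. Working with absolute differentials over $\Q$ rather than over $\C$ is forced by the cyclic homology comparison and introduces no further difficulty.
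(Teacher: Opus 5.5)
The paper gives no proof of this statement; it quotes it from the literature. Your outer architecture is the one used there: the graded ring gives $KH_{-i}(X)=0$, the fiber of $K\to KH$ is the shifted fiber of $HC(-/\mathbb{Q})\to HC^{\mathrm{cdh}}(-/\mathbb{Q})$, and then come cdh descent for the blow-up of the vertex, the weight decomposition, and the Euler derivation. The genuine gap is your third step.

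The comparison theorem involves \emph{cyclic} homology. The cdh Hodge pieces of cyclic homology are truncated de Rham complexes, not single Du Bois complexes. For $i\ge 1$, using $HC_m(X)=0$ for $m<0$ and the surjectivity of $K_0(X)\to KH_0(X)=\mathbb{Z}$, one gets $K_{-i}(X)\cong HC^{\mathrm{cdh}}_{-i}(X/\mathbb{Q})\cong\bigoplus_{p\ge 0}\mathbb{H}^{2p+i}_{\mathrm{cdh}}(X,\Omega^{\le p}_{/\mathbb{Q}})$. Here $\Omega^j$ with $j<p$ enters in degree $2p+i-j$, not $i+j$.

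Your group $\bigoplus_j\mathbb{H}^{i+j}(X,\underline{\Omega}^j_{X/\mathbb{Q}})$ is instead $HH^{\mathrm{cdh}}_{-i}(X)$. In nonzero weight $HH_{-i}\cong HC_{-i}\oplus HC_{-i-1}$, so your procedure returns the right-hand side of the theorem for $i$ plus the right-hand side for $i+1$. The surplus is exactly the extension term $\mathbb{H}^{i+j}(Z,\underline{\Omega}^{j-1}_{Z/\mathbb{Q}}\otimes L^k)$ you noticed. In a plain direct sum over $j$ there is no map that could cancel it.

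A concrete example: let $Z=S\subset\mathbb{P}^3$ be a smooth surface of degree $d\ge 5$, $L=\mathcal{O}_S(1)$, and $i=j=1$. Then $\mathbb{H}^2(X,\underline{\Omega}^1_X)_k$ surjects onto $H^2(S,\mathcal{O}_S(k))\neq 0$ for $1\le k\le d-4$. In the true answer this group belongs to $K_{-2}(X)$, not to $K_{-1}(X)$.

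The cancellation you are hoping for comes from the de Rham differential inside $\Omega^{\le p}$. Work on (a hyperresolution of) $\widetilde{X}=\mathrm{Tot}(L^{-1})$ with Euler field $E$.
\begin{itemize}
\item Cartan's formula gives $d\iota_E+\iota_E d=k$ on weight-$k$ forms. So for $k\ge 1$ the weight-$k$ de Rham complex is contractible, and $(\Omega^{\le p})_k\simeq(\Omega^p/d\Omega^{p-1})_k[-p]$.
\item The decomposition $\Omega^p_k=\ker d\oplus\ker\iota_E$ identifies $(\Omega^p/d\Omega^{p-1})_k$ with $(\pi_*\ker\iota_E)_k\cong\Omega^p_{Z/\mathbb{Q}}\otimes L^k$, as sheaves of $\mathbb{Q}$-vector spaces.
\item Hence $\mathbb{H}^{2p+i}_{\mathrm{cdh}}(X,\Omega^{\le p})_k\cong\mathbb{H}^{p+i}(Z,\underline{\Omega}^p_{Z/\mathbb{Q}}\otimes L^k)$, while weight $0$ contributes $HC_{-i}(\mathbb{C}/\mathbb{Q})=0$. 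This gives the stated formula.
\end{itemize}

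Two smaller corrections. The range $j\le n-1-i$ comes from $\mathbb{H}^m(Z,\underline{\Omega}^j_Z\otimes L^k)=0$ for $m>\dim Z=n-1$, not from the bounds on $X$ you quote. Also, $\Omega_{\widetilde{X}/Z}\cong\pi^*L$ rather than $\pi^*L^{-1}$.
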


Here $ \underline{\Omega}^j_{Z /\mathbb{Q}}$ are the Du Bois complexes over $\Q$ of the singular variety $Z$. When $Z$ is smooth, they coincide with the K\"ahler differentials over $\Q$.

Using this, the vanishing of $K$-groups in the required range can also be checked directly using Du Bois theory; it amounts to showing 
\[\mathbb{H}^{i+j}(Z, \underline{\Omega}^j_{Z / \mathbb{Q}} \otimes L^k)=0\]
for all $i>0$, $i\ge n-2m-2 + s$, $0\le j\le n-1-i$ and $k\ge 1$, and  it is not hard to verify this using computations in  \cite{singular cones} and \cite{rosie}.

\smallskip

\noindent 
\emph{The projective case.}
This time we take  $X$ to be a projective cone over a projective variety $Z$. We claim that 
$$KH_{-i}(X) \simeq KH_{-i} (Z) \,\,\,\,\,\,{\rm  for~all}\,\,\,\, i>0,$$
so in particular $KH_{-i}(X) = 0$ for all $i> 0$ if $Z$ is smooth.

Indeed, denoting by $x$ the cone point of $X$, we consider the abstract blowup square (where $\widetilde{X}$ need not be smooth): 
\[\begin{tikzcd}
    Z \ar[r]\ar[d] &\widetilde{X}\ar[d] \\
    x\ar[r] &X
\end{tikzcd}\]
Being a cdh square, this induces a long exact sequence
\begin{align*}
\cdots \to KH_k(X) &\to KH_k(\widetilde{X}) \oplus KH_k(x) \to KH_k(Z) \to \\
KH_{k-1}(X) &\to KH_{k-1}(\widetilde{X}) \oplus KH_{k-1}(x) \to KH_{k-1}(Z) \to \cdots
\end{align*}
Since $\widetilde{X}$ is a $\mathbb{P}^1$-bundle over $Z$, the maps
\[KH_k(\widetilde{X})\to KH_k(Z)\]
are surjective by the natural splitting given by the zero section. Moreover, by the projective bundle formula \cite[Theorem 7.3]{derived categories}\footnote{Theorem 7.3 in \textit{loc. cit.} gives the projective bundle formula for $K^B$, the nonconnective spectrum that gives Bass' negative $K$-theory. Taking cdh sheafification gives the result for $KH$.} we have
\[KH(\widetilde{X})\simeq KH(Z)\oplus KH(Z),\]
and plugging this into the exact sequence above gives the claim.

This shows that Conjecture \ref{main-KH-conjecture} holds for $X$ if and only if it holds for $Z$, which is for instance the case (integrally) when $Z$ is smooth. This is due to a standard calculation showing that  
$$\gr^W_j H^k(X,  \Q)\simeq \gr^W_{j-2} H^{k-2}(Z, \Q).$$
Indeed, when $Z$ is smooth, the projective bundle formula (see \cite[Lemma 7.32]{voisin i})  implies that for $k \ge 2$ we have 
$H^k(X, \Q)\simeq H^{k-2}(Z, \Q)(-1)$. When $Z$ is singular, one can use a hyperresolution to reduce to the smooth case.

Consequently, Conjecture \ref{main-K-conjecture} for $X$ is also satisfied (integrally) when $Z$ is smooth.

\subsection{Toric varieties}\label{scn:toric}
There is a formula for the rational $KH$-theory of a projective toric variety, in terms of its weight filtration; see \cite[Corollary~2.5]{toric} for an exposition of unpublished work of B. Totaro.

\begin{theorem}[Totaro]
If $X$ is a projective toric variety, then for all $i$
\begin{align*}
    KH_i(X)_\mathbb{Q} \simeq \bigoplus_{p,q \geq 0} \bigg( \gr_{2p+2i-2q}^W H^{2p+i-q}(X, \mathbb{Q}) \otimes K_q(\mathbb{C}) \bigg).
\end{align*}
\end{theorem}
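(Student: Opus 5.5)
The plan is to compute $KH_i(X)_\Q$ from the Atiyah--Hirzebruch type spectral sequence of Theorem \ref{thm motivic cohom}(1). Since it degenerates rationally at $E_2$, we get
$$KH_i(X)_\Q \simeq \bigoplus_{j} H^{2j-i}_{\cdh}(X,\Q(j)).$$
It then suffices to prove, for every $j\ge 0$ and every integer $k$, the formula
$$H^k_{\cdh}(X,\Q(j)) \simeq \bigoplus_{q\ge 0} \gr^W_{2j-2q} H^{k-q}(X,\Q)\otimes K_q(\C)^{(j-q)},$$
where $K_q(\C)^{(r)}$ denotes the Adams eigenspace, i.e.\ $H^{2r-q}(\operatorname{Spec}\C,\Q(r))$. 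Setting $k=2j-i$ and reindexing by $p=j-q$ then gives the displayed formula for $KH_i(X)_\Q$, once we recall that $K_q(\C)_\Q=\bigoplus_r K_q(\C)^{(r)}$. I will read the tensor product in the statement as shorthand for this graded combination, or equivalently use that $K_q(\C)^{(r)}$ is concentrated appropriately; this bookkeeping I will check but do not expect to be the difficulty.

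To compute $H^k_{\cdh}(X,\Q(j))$, I will use a hyperresolution adapted to the torus orbit stratification rather than an arbitrary one. A projective toric $X$ admits a toric resolution $\widetilde X\to X$ that is an isomorphism over the open orbit. Its exceptional locus and the relevant singular strata are again unions of (possibly singular) projective toric varieties, and iterating this produces a cubical hyperresolution $X_\bullet$ all of whose components $X_p$ are \emph{smooth projective toric varieties}. I will then plug this into the descent spectral sequence (\ref{eq:ss-cdh}).

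The key input is that for a smooth projective toric variety $V$ the motivic cohomology decomposes as
$$H^k(V,\Q(j)) \simeq \bigoplus_q CH^{j-q}(V)_\Q\otimes H^{k-2(j-q)}(\operatorname{Spec}\C,\Q(q)).$$
This holds because $V$ has a cellular (Tate) motive, namely a sum of Tate twists $\Q(-c)[-2c]$ indexed by the cones, and because $CH^c(V)_\Q\simeq H^{2c}(V,\Q)$ is pure of type $(c,c)$, with odd cohomology vanishing. Since the Tate motive decomposition is functorial for toric morphisms, and in any case cycle class maps are functorial, the $E_1$ page of (\ref{eq:ss-cdh}) becomes a direct sum over $q$ of the complexes $H^{2(j-q)}(X_\bullet,\Q)\otimes H^*(\C,\Q(q))$. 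Taking cohomology in the $p$ direction and comparing with (\ref{eqn:weight-hyper}), the $E_2$ terms become $\gr^W_{2(j-q)}H^{p+2(j-q)}(X,\Q)\otimes K$-groups of $\C$, exactly as in the proof of Corollary \ref{cor weight one rational}. Here we use that $K$-groups of $\C$ are $\Q$-vector spaces in the relevant eigenspaces, so that tensoring is exact over $\Q$.

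The main obstacle is degeneration at $E_2$ of the descent spectral sequence, together with a splitting of the resulting filtration. My plan is to exploit the weight (Tate twist) grading. Each summand indexed by $q$ comes from the ``pure of weight $2(j-q)$'' part of the motive. The higher differentials $d_r$, $r\ge2$, would map the $q$-summand to summands with a different Chow degree, but the whole construction is a complex of Chow motives with a canonical weight decomposition, which is valid because Tate motives are semisimple over $\Q$. As a fallback, I would argue via the action of the multiplicative monoid $t\mapsto t^m$ on toric varieties, whose eigenvalues $m^{c}$ on $CH^c$ separate the weights. This forces all $d_r$ with $r\ge 2$ to vanish, since the source and target have different eigenvalues, and it also splits the filtration canonically.
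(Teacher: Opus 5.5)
The paper gives no proof of this statement. It quotes it from \cite[Corollary~2.5]{toric}, an exposition of unpublished work of Totaro, so your argument has to stand on its own. The architecture is reasonable: rational degeneration of the Atiyah--Hirzebruch sequence, a hyperresolution by smooth projective toric varieties and toric maps, and the natural isomorphism $CH^*(V)_\Q\otimes H^*(\operatorname{Spec}\C,\Q(*))\to H^*(V,\Q(*))$ for smooth projective toric $V$, which turns $d_1$ into $d_1\otimes\mathrm{id}$. But the step you identify as the crux, $E_2$-degeneration of (\ref{eq:ss-cdh}), is not established by either of your arguments.

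\emph{The semisimplicity argument.} Mixed Tate motives over $\C$ are not semisimple; only pure Tate \emph{Chow} motives are. The face maps do respect the twist decomposition. However, the $d_r$ with $r\ge2$ are controlled by higher coherence data of the semi-simplicial motive. These data lie in groups $\mathrm{Hom}_{DM}(\Q(a)[2a],\Q(b)[2b][1-r])\cong H^{2(b-a)+1-r}(\operatorname{Spec}\C,\Q(b-a))$, which need not vanish for $b>a$; for $b-a=1$, $r=2$ this is $\C^*\otimes\Q$. So nothing in that argument rules out nonzero $d_r$, and the toric structure has to enter.

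\emph{The $\phi_m$ argument.} The endomorphisms $\phi_m$ are the right toric input, once you check that $\phi_m$ lifts to $X_\bullet$ (it does if $X_\bullet$ is built from fans). But they only show that $d_r$ preserves the Chow degree $c$. Your claim that source and target have different eigenvalues is false. The map $d_r$ sends the $c$-part $\gr^W_{2c}H^{p+2c}(X,\Q)\otimes H^{s}(\operatorname{Spec}\C,\Q(j-c))$ of $E_r^{p,2c+s}$ into $E_r^{p+r,2c+s-r+1}$. The $c$-part of the target, $\gr^W_{2c}H^{p+r+2c}(X,\Q)\otimes H^{s-r+1}(\operatorname{Spec}\C,\Q(j-c))$, has the same eigenvalue $m^c$. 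For $j-c=s=r=2$ it involves $H^1(\operatorname{Spec}\C,\Q(2))=K_3(\C)^{(2)}\neq0$.

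\emph{A way to close the gap: multiplicativity.} Each $\Q(*)(X_p)$ is a module over $\Q(*)(\operatorname{Spec}\C)$, compatibly with the face maps. So (\ref{eq:ss-cdh}), taken over all weights $j$, is a spectral sequence of $H^*(\operatorname{Spec}\C,\Q(*))$-modules, and every $d_r$ is linear.
\begin{itemize}
\item The $c$-part of $E_2$ is free over this ring on $\gr^W_{2c}H^{\bullet+2c}(X,\Q)\otimes 1$, which sits in weight $j=c$, row $2c$.
\item On these generators $d_r$ lands in the $c$-part of weight $c$, row $2c-r+1$. That is a subquotient of $H^{2c}(X_{p+r},\Q)\otimes H^{1-r}(\operatorname{Spec}\C,\Q(0))=0$.
\item By linearity and induction on $r$, $d_r=0$ for all $r\ge 2$.
\end{itemize}
Equivalently, split the semi-simplicial motive by idempotents in $\Q[\phi_m]$, and use that the thick subcategory generated by $\Q(c)$ is $\mathrm{Perf}(\Q)$, since $H^n(\operatorname{Spec}\C,\Q(0))=0$ for $n\neq 0$. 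Note also that $\phi_m$ does not split the abutment filtration within a fixed $c$. You do not need it to, since filtrations of $\Q$-vector spaces split anyway.

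\emph{Bookkeeping.} Your displayed formula for $H^k_{\cdh}(X,\Q(j))$ conflates the $K$-degree with the Adams weight. As written it gives $0$ for $X=\mathbb{P}^1$, $j=1$, $k=2$, although $H^2(\mathbb{P}^1,\Q(1))=\Q$. Your third paragraph actually yields the correct version:
\[
H^k_{\cdh}(X,\Q(j))\simeq\bigoplus_{c,\,p\ge 0}\gr^W_{2c}H^{p+2c}(X,\Q)\otimes H^{k-p-2c}(\operatorname{Spec}\C,\Q(j-c)),
\]
where $H^{k-p-2c}(\operatorname{Spec}\C,\Q(j-c))=K_{2j-k+p}(\C)^{(j-c)}$. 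Set $k=2j-i$ and sum over $j$ with $c,p$ fixed; this collects all Adams eigenspaces of $K_{i+p}(\C)_\Q$. Putting $q=i+p$ and renaming $c+p$ as $p$ gives exactly the stated formula. Your reindexing $p=j-q$ instead keeps only one eigenspace per term.
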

It  is easily seen that this implies our main Conjecture \ref{main-KH-conjecture} for projective toric varieties.

Since toric varieties are seminormal and have pre-$m$-Du Bois singularities for all $m \geq 0$ \cite[Proposition E]{SVV}, Theorem
 \ref{thm rosie}(ii) implies $K_i(X) \simeq KH_i(X)$ for all $i \le  0$. We deduce:

\begin{corollary}\label{cor:K-toric}
If $X$ is a projective toric variety, then for all $i \leq 0$
\begin{align*}
    K_i(X)_\mathbb{Q} \simeq \bigoplus_{p,q \geq 0} \bigg( \gr_{2p+2i-2q}^W H^{2p+i-q}(X, \mathbb{Q}) \otimes K_q(\mathbb{C}) \bigg).
\end{align*}
In particular, if $X$ is simplicial then $K_{i}(X)_\mathbb{Q} = 0$ for $i  < 0$.
\end{corollary}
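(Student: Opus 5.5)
The first statement follows directly from Totaro's theorem once we know $K_i(X)\simeq KH_i(X)$ for all $i\le 0$; the second statement then comes from weight restrictions on simplicial toric varieties.

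For the comparison, projective toric varieties are seminormal and have pre-$m$-Du Bois singularities for all $m\ge 0$ by \cite[Proposition E]{SVV}. Taking $m$ large, Theorem \ref{thm rosie}(ii), seminormal case, says $X$ is $K_{-t}$-regular for every $t\ge s=\dim X_{\rm sing}$. The maximum in that statement collapses to $s$ because the term $n-2m-2+s$ becomes negative. In particular $X$ is $K_0$-regular, since $s\ge 0$. By \cite[Corollary IV.12.3.2]{k-book}, $K_0$-regularity gives that $\varphi_i\colon K_i(X)\to KH_i(X)$ is an isomorphism for all $i\le 0$. Tensoring with $\Q$ and substituting Totaro's formula gives the displayed decomposition.

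For the simplicial case, a simplicial toric variety has only quotient singularities, so it is a rational homology manifold. Since $X$ is also projective, each $H^k(X,\Q)$ is then pure of weight $k$ (Poincaré duality with $\Q$-coefficients plus intersection cohomology purity). Alternatively, one can use the corollary on quotient singularities proved just above, which gives $KH_{-i}(X)_\Q=0$ for $i>0$, and combine it with the isomorphism $K_i\simeq KH_i$ for $i\le 0$ established in the previous step.

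Directly from the formula: for $i<0$ the summand indexed by $(p,q)$ involves $\gr^W_{w}H^k$ with $k=2p+i-q$ and $w=2p+2i-2q$. Then $w-k=i-q<0$, since $q\ge 0$ and $i<0$. Purity kills every such summand, so $K_i(X)_\Q=0$.

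No step here is a real obstacle. The only point needing care is checking that the hypotheses of Theorem \ref{thm rosie}(ii) yield regularity all the way up to $K_0$ and not only for $t\ge s$ in a weaker range. Since $s\ge 0$, regularity at $t=s$ already gives $K_0$-regularity, and $K_m$-regularity implies $K_{m-1}$-regularity by \cite{vorst}.
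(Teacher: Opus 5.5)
Your route is the paper's: Totaro's formula, plus the comparison $K_i(X)\simeq KH_i(X)$ from \cite[Proposition E]{SVV} and Theorem~\ref{thm rosie}(ii), plus purity in the simplicial case. The purity argument and the weight count $w-k=i-q<0$ are fine. The gap is the step you flag yourself.

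For $m\gg 0$, Theorem~\ref{thm rosie}(ii) gives $K_{-t}$-regularity for $t\ge\max\{1+s,\,n-2m-2+s\}=1+s$, and seminormality lowers this to $t\ge s=\dim X_{\rm sing}$. So you have regularity only in degrees $\le -s$; the maximum does not ``collapse to $s$'' by itself. The Vorst/Dayton--Weibel implication runs from $K_m$ to $K_{m-1}$, that is, downward. From $K_{-s}$-regularity you therefore get $K_{-s-1},K_{-s-2},\dots$-regularity, but never $K_0$-regularity unless $s=0$. Hence \cite[Corollary IV.12.3.2]{k-book} yields $K_i(X)\simeq KH_i(X)$ only for $i\le -s$.

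Your argument thus proves the displayed formula only for $i\le-\dim X_{\rm sing}$. This covers all $i\le 0$ when the singularities are isolated, but not the range $-s<i\le 0$ when $X_{\rm sing}$ is positive-dimensional. The same range is missing in the simplicial vanishing for $-s<i<0$. Your alternative for the simplicial case does not help either. It rests on the same comparison, and the quotient-singularities corollary itself only asserts $K_{-i}(X)_\Q=0$ for $i\ge\max\{s,1\}$.

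The paper's proof makes the identical one-line appeal to Theorem~\ref{thm rosie}(ii) to get $K_i(X)\simeq KH_i(X)$ for all $i\le 0$, so it does not close this gap. To obtain the full statement you need a separate comparison $K_i(X)\simeq KH_i(X)$ for $-s<i\le 0$, specific to projective toric varieties. The cited regularity theorem does not supply it. Otherwise the statement has to be restricted to $i\le -s$.
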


\begin{proof}
We are only left with justifying the last statement. To this end, recall that simplicial toric varieties have quotient singularities, so their Hodge structures are pure.
\end{proof}

The  statements  in Corollary \ref{cor:K-toric} seem to not have been noted before. Moreover, if $X$ is any affine toric variety, $K_i(X) = KH_i(X) = 0$ for all $i < 0$ by \cite[\S 1.2(3)]{toric}. This matches our conjectures; indeed, at least if $X$ is defined by a full-dimensional affine cone $\sigma$,  a lattice point $v \in \mathrm{Int}(\sigma)$ defines a one-parameter subgroup with a unique fixed point onto which $X$ deformation retracts. Since $X$ is contractible, its Hodge structures are certainly pure.

\subsection{Varieties with short hyperresolution}\label{scn:short-hyp}
If the projective variety $X$ has a hyperresolution of length $\le \ell$, then $H^k(X,\Q)$ has weights $\ge k- \ell+1$ by the 
weight spectral sequence (\ref{eqn:weight-ss-rational}), so Conjecture \ref{main-KH-conjecture} predicts $KH_{-i} (X)_{\Q}=0$ for $i> \ell - 1$. 
This in fact holds integrally by considering the spectral sequence 
    \[E_1^{p,q}=KH_q(X_p)\implies KH_{q-p}(X).\]
computing $KH$ in terms of the hyperresolution.

\begin{example}[{\bf Secant varieties}]
Here is an interesting example that has been studied extensively recently. Let $X = \Sigma(Y,L)$ be the secant variety associated to a smooth projective variety $Y$ and a $3$-very ample line bundle $L$ on $Y$. Then there is a blowup square
\[\begin{tikzcd}
W \arrow[r, hook] \arrow[d] & Z \arrow[d] \\
Y  \arrow[r, hook]              & X 
\end{tikzcd}\]
in which $Y$, $Z$ and $W$ are all smooth; see \cite[\S2.1]{secant}.

The existence of such a diagram implies that $H^k(X, \Q)$ has weights $\in \{k-1, k\}$. By the discussion above, we have 
$$KH_{-i} (X) =0 \,\,\,\,\,\,{\rm for ~all} \,\,\,\, i > 1.$$ 
The pre-$m$-Du Bois property of such secant varieties is studied in detail in \emph{loc. cit.} in terms of the geometric properties of $L$; in any case,
$X$ also satisfies Conjecture \ref{main-K-conjecture} integrally.
\end{example}

\section{Appendix}

\subsection{Applying the (generalized) Bloch-Beilinson conjecture}\label{scn:BB-conj}
Here we show that Conjecture \ref{main-cdh-conjecture} follows from a version of the Bloch-Beilinson conjectural filtration on (higher) Chow groups.

Let us first recall Bloch's conjecture on $0$-cycles on surfaces, used in the proof of Theorem \ref{thm:Chow-threefolds}. It was formulated as a converse to Mumford's theorem saying that for 
surfaces with $h^{2,0}(S) \neq 0$, the Chow group of $0$-cycles is infinite dimensional.

\begin{conjecture}\label{conj:Bloch}
If $S$ is a smooth projective surface for which $h^{2,0}(S) = 0$, then the Albanese map $CH_0(S)_{\mathrm{hom}} \to \Alb(S)$ is an isomorphism.
\end{conjecture}

At least up to torsion, this is a special case of more general conjecture of Bloch and Beilinson, who predicted the existence of a filtration on the Chow groups of a smooth projective variety which witnesses the fact that Hodge theory plays an important role in governing Chow groups.

\begin{conjecture}\cite[Section $11.2.2$]{voisin ii}\label{conj bb}
Let $X$ be a smooth complex projective variety. Then there exists a decreasing filtration $F^\bullet CH^j(X)_\mathbb{Q}$ on the Chow groups of $X$ with rational coefficients, such that:
\begin{enumerate}
    \item $F^0 CH^j(X)_\mathbb{Q} = CH^j(X)_\mathbb{Q}$, $F^1 CH^j(X)_\mathbb{Q} = CH^j(X)_{\mathrm{hom}, \mathbb{Q}}$, and $F^2 CH^j(X)_\mathbb{Q} = \ker AJ_X$,
    \item $F^{j+1} CH^j(X)_\mathbb{Q} = 0$,
    \item The filtration $F^\bullet CH^j(X)_\mathbb{Q}$ is preserved by correspondences: the map $\Gamma_*$ induced by a cycle $\Gamma \in CH^\ell(X \times Y)$ satisfies
    \begin{align*}
        \Gamma_*(F^\nu CH^j(X)_\mathbb{Q}) \subset F^\nu CH^{j+\ell-\dim X}(Y)_\mathbb{Q},
    \end{align*}
    \item The induced map 
    \begin{align*}
        \gr^\nu_F \Gamma_*: \gr^\nu_F CH^j(X)_\mathbb{Q} \rightarrow \gr^\nu_F CH^{j+\ell-\dim X}(Y)_\mathbb{Q}
    \end{align*}
    vanishes if the map of map of cohomology groups
    \begin{align*}
        [\Gamma]_*: H^{2j-\nu}(X, \mathbb{Q}) \rightarrow H^{2j-\nu + 2\ell -2 \dim X}(Y, \mathbb{Q})
    \end{align*}
    is zero.
\end{enumerate}
\end{conjecture}

\begin{remark}\label{remark conj hodge}
It is in fact expected that $\gr_F^\nu$ vanishes if $[\Gamma]_*$ vanishes only on the Hodge pieces $H^{r,s}$ of $H^{2j-\nu}(X, \mathbb{Q})$ for $s \leq j - \nu$. See \cite[\S11.2.2]{voisin ii}. We will refer to this stronger version of condition $(4)$ as condition $(4')$ throughout. We choose to distinguish between $(4)$ and $(4')$ because it seems that $(4')$ does not follow from standard expectations regarding mixed motives, unlike $(4)$.

Assuming condition $(4')$ for a surface $S$, if $h^{2,0}(S) = 0$, then $\gr_F^2 CH^2(S)_\mathbb{Q} = 0$. This implies Bloch's conjecture, up to torsion.
\end{remark}

\begin{remark}[{\bf Higher Chow groups}]\label{remark higher chow}
The situation is expected to be similar for higher Chow groups; cf. for instance \cite[\S4]{Asakura}.  We take the Bloch-Beilinson conjecture \ref{conj bb} to also include the existence of a decreasing filtration $F^\bullet CH^j(X, i)_\mathbb{Q}$ on the higher Chow groups of $X$ with rational coefficients, such that $F^{j-i+1} CH^j(X, i)_\mathbb{Q}$ = 0, $F^\bullet$ respects correspondences, and $\gr_F^\nu \Gamma_*$ vanishes if $[\Gamma]_*$ is zero on $H^{2j-i-\nu}(X, \mathbb{Q})$

Analogously to Remark \ref{remark conj hodge}, we take condition $(4')$ to say $\gr_F^\nu \Gamma$ vanishes if $[\Gamma]_*$ is zero on the Hodge pieces $H^{r,s}$ of $H^{2j-i-\nu}(X, \mathbb{Q})$ for $s \leq j-\nu$.
\end{remark}

We will need a stronger version of Conjecture \ref{conj bb} and  Remark \ref{remark higher chow}.  Consider a semisimplicial variety 
\[
\begin{tikzcd}[row sep=1.4em, column sep=3.5em]
X_\bullet = \bigg( \cdots X_2 \arrow[r] \arrow[r, shift left=0.9ex] \arrow[r, shift right=0.9ex] & 
X_1 \arrow[r] \arrow[r, shift left=0.6ex] &
X_0 \bigg),
\end{tikzcd}
\]
where each $X_a$ is smooth and projective.
By taking the alternating sum of face maps, there are induced complexes of (higher) Chow groups
\begin{align*}
  0 \to   \gr_F^\nu CH^j(X_0, i)_\mathbb{Q} \rightarrow \gr_F^\nu CH^j(X_1, i)_\mathbb{Q} \rightarrow \gr_F^\nu CH^j(X_2, i)_\mathbb{Q} \rightarrow \cdots
\end{align*}
Conditions $(4)$ or  $(4')$ in Conjecture \ref{conj bb} give a criterion under which a map of Chow groups vanishes, but we would also like to understand 
whether a complex as above is exact. We propose the following version of the Bloch-Beilinson conjecture, designed for this purpose.

\begin{conjecture}\label{conj bb exact}
The Chow complex
\begin{align*}
    \cdots \rightarrow \gr_F^\nu CH^j(X_{a-1},i)_\mathbb{Q} \rightarrow \gr_F^\nu CH^j(X_a,i)_\mathbb{Q} \rightarrow \gr_F^\nu CH^j(X_{a+1},i)_\mathbb{Q} \rightarrow \cdots 
\end{align*}
is exact in the middle if the complex of pure Hodge structures
\begin{align*}
    \cdots \rightarrow H^{2j-i-\nu}(X_{a-1}, \mathbb{Q}) \rightarrow H^{2j-i-\nu}(X_{a}, \mathbb{Q}) \rightarrow H^{2j-i-\nu}(X_{a+1}, \mathbb{Q}) \rightarrow \cdots 
\end{align*}
is exact. In fact, we extend condition $(4')$ to say it is sufficient to assume exactness for the Hodge pieces $H^{r,s} H^{2j-i-\nu}(X_a, \mathbb{Q})$ of degree $s \leq j - \nu$ only.

In particular, if this holds for all $\nu \ge 0$, the Chow complex
\begin{align*}
    \cdots \rightarrow \CH^j(X_{a-1},i)_\mathbb{Q} \rightarrow CH^j(X_a,i)_\mathbb{Q} \rightarrow CH^j(X_{a+1},i)_\mathbb{Q} \rightarrow \cdots 
\end{align*}
is exact in the middle.
\end{conjecture}

Note that this last prediction makes no reference to the Bloch-Beilinson filtration.

\begin{remark}
When the semisimplicial variety arises from a simple normal crossing space, pieces of such complexes have made an appearance in the literature before. 
The paper \cite{LS} (especially \S4) makes very nice use of such a construction for rationality questions, with roots going back to the earlier \cite{PSch}.
\end{remark}

\begin{remark}
While the conjectural statement above might be new, it is a consequence of the same circle of ideas leading to the original Bloch-Beilinson conjecture. 
Namely, it can be derived from the conjectural picture regarding the properties of the derived category of mixed motives, as for instance in 
 \cite{jannsen}, \cite{motives} and \cite{ayoub}.
\end{remark}




\smallskip

We are primarily interested in the case where $\epsilon_\bullet \colon X_\bullet \rightarrow X$ is a cubical hyperresolution of a projective variety $X$ (see \cite[Example $5.4$]{peters steenbrink}). In this case, 
the complex of pure Hodge structures  in the statement of Conjecture \ref{conj bb exact} computes the weight piece $\gr_{2j-i-\nu}^W H^{2j-i-\nu +a}(X, \mathbb{Q})$, according to (\ref{eqn:weight-hyper}). We obtain the following:

\begin{lemma}\label{lem sing chow complex}
Assume Conjecture \ref{conj bb exact}, and let $\epsilon_\bullet \colon X_\bullet \rightarrow X$ be a cubical hyperresolution of a projective variety $X$. The Chow complex 
\begin{align*}
    \cdots \rightarrow \CH^j(X_{a-1},i)_\mathbb{Q} \rightarrow CH^j(X_a,i)_\mathbb{Q} \rightarrow CH^j(X_{a+1},i)_\mathbb{Q} \rightarrow \cdots 
\end{align*}
is exact in the middle if the Hodge structures $\gr_{2j-i-\nu}^W H^{2j-i-\nu +a}(X, \mathbb{Q})$ vanish for all $\nu \geq 0$. In fact, assuming condition $(4')$, it suffices to assume
$$H^{r,s}\gr_{2j-i-\nu}^W H^{2j-i-\nu +a}(X, \mathbb{Q}) = 0 \,\,\,\,{\rm  for~ all}  \,\,\,\,s \leq j - \nu {\rm ~and ~} \nu \geq 0.$$
\end{lemma}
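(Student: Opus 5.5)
The plan is to combine Conjecture \ref{conj bb exact}, applied to the semisimplicial variety underlying the cubical hyperresolution $X_\bullet$, with the weight formula (\ref{eqn:weight-hyper}).

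First, fix $j$ and $i$. For each $\nu \ge 0$, consider the complex of pure Hodge structures
$$\cdots \to H^{2j-i-\nu}(X_{a-1},\Q) \to H^{2j-i-\nu}(X_a,\Q) \to H^{2j-i-\nu}(X_{a+1},\Q) \to \cdots,$$
whose differentials are the alternating sums of the face maps. This is exactly the row $q = 2j-i-\nu$ of the $E_1$ page of the weight spectral sequence (\ref{eqn:weight-ss-rational}). By (\ref{eqn:weight-hyper}), its cohomology at position $a$ is $\gr^W_{2j-i-\nu} H^{2j-i-\nu+a}(X,\Q)$, because the spectral sequence degenerates at $E_2$. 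The vanishing hypothesis therefore says that each of these complexes is exact at position $a$.

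Second, invoke Conjecture \ref{conj bb exact}. It gives exactness in the middle of the graded Chow complexes
$$\gr_F^\nu CH^j(X_{a-1},i)_\Q \to \gr_F^\nu CH^j(X_a,i)_\Q \to \gr_F^\nu CH^j(X_{a+1},i)_\Q$$
for every $\nu \ge 0$. The filtration is finite, since $F^{j-i+1}CH^j(\cdot,i)_\Q = 0$ and $F^0$ is everything. Functoriality under the face maps (property (3)) makes the maps filtered. A standard induction on the filtration, i.e.\ a five-lemma style chase through the short exact sequences $0 \to F^{\nu+1} \to F^\nu \to \gr^\nu_F \to 0$, then gives exactness of the unfiltered complex. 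This is also the ``in particular'' clause of the conjecture, so it may simply be quoted.

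The main subtlety is the refined version. Under condition $(4')$, exactness is only needed on the Hodge pieces $H^{r,s}$ with $s \le j-\nu$. The maps in the complex are morphisms of Hodge structures, so by strictness taking $H^{r,s}$-components is exact. Hence the cohomology of the $(r,s)$-piece of the complex is $H^{r,s}\gr^W_{2j-i-\nu}H^{2j-i-\nu+a}(X,\Q)$. The weaker hypothesis is then exactly what the $(4')$ version of Conjecture \ref{conj bb exact} requires, and the rest of the argument is unchanged.

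The one point to check is that the maps in the Chow complex are indeed induced by correspondences, namely the graphs of the face maps. This holds, so the conjectural compatibilities apply.
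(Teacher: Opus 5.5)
Your proposal is correct and follows the paper's route. You identify the complex $H^{2j-i-\nu}(X_\bullet,\Q)$ with a row of the $E_1$-page of the weight spectral sequence, so that by (\ref{eqn:weight-hyper}) its cohomology at position $a$ (respectively its $H^{r,s}$-pieces) is $\gr^W_{2j-i-\nu}H^{2j-i-\nu+a}(X,\Q)$ (respectively the $H^{r,s}$-pieces of it), and then you apply Conjecture \ref{conj bb exact}. Your induction on the finite filtration is a valid justification of the ``in particular'' clause, which the paper simply builds into the conjecture.
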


\smallskip

We apply this to study the cdh-motivic cohomology groups of $X$. Recall there is a descent spectral sequence 
$$E_1^{p,q} = H^q (X_p, \mathbb{Q}(j)) \Rightarrow H^{p+q}_{\cdh}(X, \mathbb{Q}(j)),$$ 
where the $E_1$-terms are motivic cohomology groups of the smooth variety $X_p$, so we have an identification 
$$H^q (X_p, \mathbb{Q}(j)) \simeq CH^j(X_p, 2j-q)_\mathbb{Q}.$$

\smallskip

\begin{proposition}\label{thm sing mot cohom}
Conjecture  \ref{conj bb exact} implies Conjecture \ref{main-cdh-conjecture}. In other words, if 
 $X$ is a projective variety, and we fix integers $j$ and $k$,  we have
$H^k_{\cdh}(X, \mathbb{Q}(j)) = 0$  if
$$W_{b-k + 2j} H^b(X, \mathbb{Q}) = 0 \,\,\,\,\,\,{\rm for ~all } \,\,\,\,  b \leq k.$$
In fact, assuming condition $(4')$, it suffices to assume
\begin{align*}
    H^{r,s} \gr_{b-c}^W H^{b}(X, \mathbb{Q}) = 0 \,\,\,\, {\rm for ~all } \,\,\,\, b 
    \leq k, c \geq k-2j, \text{ and } r \leq b+j-k.
\end{align*}
Moreover, if this holds except when $b= k$, $c = k-2j$ and $r = j$, then 
$$H^k_{\rm cdh} (X, \Q(j) ) \simeq \gr^W_{2j} H^k(X, \Q).$$
\end{proposition}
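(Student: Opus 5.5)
The plan is to use the descent spectral sequence
$$E_1^{p,q} = H^q(X_p, \Q(j)) \simeq CH^j(X_p, 2j-q)_\Q \Rightarrow H^{p+q}_{\cdh}(X, \Q(j))$$
attached to a cubical hyperresolution $\epsilon_\bullet\colon X_\bullet \to X$. We will show that every $E_2$-term on the line $p+q=k$ vanishes, except possibly one term which we identify. Write $q = k-p$ and $i = 2j - q$, so that the $E_2^{p,q}$ term is the middle cohomology at position $a=p$ of the Chow complex
$$\cdots \to CH^j(X_{a-1}, i)_\Q \to CH^j(X_a,i)_\Q \to CH^j(X_{a+1},i)_\Q \to \cdots .$$
By (\ref{eqn:van-smooth-1}) we only need $q \le 2j$, i.e. $i \ge 0$. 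By Lemma \ref{lem sing chow complex}, this middle cohomology vanishes provided $\gr^W_{2j-i-\nu} H^{2j-i-\nu+a}(X,\Q)=0$ for all $\nu\ge 0$, or, under $(4')$, provided the Hodge pieces $H^{r,s}$ with $s \le j-\nu$ vanish.

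The first step is the reindexing. Set $b = 2j-i-\nu+a = q-\nu+p = k-\nu$ and $b-c = 2j-i-\nu = q-\nu$. Then $c = p$, and $b \le k$ corresponds to $\nu \ge 0$. Since $q = k-p \le 2j$, we get $c = p \ge k-2j$. The weight is $b-c = b - p$. Ranging over all $p \ge k-2j$ and $\nu \ge 0$, the hypothesis $W_{b-k+2j}H^b(X,\Q)=0$ for $b\le k$ kills all weights $b-c \le b-k+2j$, i.e. all $c \ge k-2j$. This covers every required graded piece, so each $E_2^{p,k-p}$ vanishes. Since the spectral sequence converges (it is bounded, as $X_p=\emptyset$ for $p>n$), the abutment $H^k_{\cdh}(X,\Q(j))$ vanishes.

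For the refined statement, translate the Hodge condition $s \le j-\nu$ for the piece of weight $w=b-c$: with $r+s=w$, this becomes $r \ge w - j + \nu = (b-c) - j + (k-b)$. I expect the main obstacle to be this bookkeeping: matching the convention in $(4')$, where small $s$ is the relevant side, with the statement's condition $r \le b+j-k$. The mismatch is resolved by Hodge symmetry $h^{r,s}=h^{s,r}$ on the pure graded pieces, which lets one swap $r$ and $s$. Checking that the inequalities line up exactly, including at the boundary, is where care is needed, and I would verify it on the $c=k-2j$ extreme first.

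Finally, suppose the hypothesis holds except for $b=k$, $c=k-2j$, $r=j$. This corresponds to $\nu=0$ and $p=k-2j$, $q=2j$, $i=0$, so the exceptional term sits in $E_2^{k-2j,2j}$, the cohomology of the complex of ordinary $CH^j(X_\bullet)_\Q$. Every other term on the line $p+q=k$ vanishes as before. Neighbouring terms on the lines $p+q = k\pm 1$ receiving or emitting differentials either lie in $q>2j$, where they are zero, or can be controlled by the same argument. Hence $H^k_{\cdh}(X,\Q(j)) \simeq E_2^{k-2j,2j}$. To identify $E_2^{k-2j,2j}$, apply the conjecture's filtration: the graded pieces with $\nu\ge1$ are exact by hypothesis, and the $\nu=0$ piece maps via the cycle class map to the Hodge classes of type $(j,j)$ in $\gr^W_{2j}H^k(X,\Q)$. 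Under the hypothesis this is all of $\gr^W_{2j}H^k$, since the only surviving Hodge type there is $(j,j)$. Surjectivity of the cycle class map onto these classes is implicitly part of the conjectural package (the Hodge conjecture, which follows from the mixed-motives picture); I would note this assumption explicitly.
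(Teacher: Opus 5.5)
Your argument for the vanishing statements is the paper's own: you reindex the $E_2$-terms on the line $p+q=k$ by $b=k-\nu$, $c=p=k-q$, and apply Lemma~\ref{lem sing chow complex}. The lemma directly gives the condition $s\le j-\nu=b+j-k$, and Hodge symmetry on the pure graded pieces turns this into the stated condition on $r$, so the detour through $r\ge w-j+\nu$ is unnecessary. The paper's proof stops at the vanishing and gives no argument for the final isomorphism $H^k_{\cdh}(X,\Q(j))\simeq\gr^W_{2j}H^k(X,\Q)$. That part of your proposal is therefore your own, and as written it has a gap.

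The problem is your claim that the neighbouring terms ``can be controlled by the same argument''.
\begin{itemize}
\item Differentials into $E_r^{k-2j,2j}$ do vanish, because their sources have $q=2j+r-1>2j$.
\item The outgoing differentials land in $E_r^{k-2j+r,\,2j-r+1}$, $r\ge 2$. This is a subquotient of the cohomology of $CH^j(X_\bullet,r-1)_\Q$ at $a=k-2j+r$. For that complex, Lemma~\ref{lem sing chow complex} at $\nu=0$ requires $\gr^W_{2j-r+1}H^{k+1}(X,\Q)=0$. Since the weight is $<2j$, symmetry makes this a condition on all Hodge types.
\item This condition has $b=k+1$, so it is not among your hypotheses. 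Conjecture~\ref{conj bb exact} as stated says nothing about these targets when it fails.
\end{itemize}

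To close this you need an extra input from the Bloch--Beilinson picture that is not recorded in Remark~\ref{remark higher chow}. Namely, for $Z$ smooth projective and $i\ge 1$, one has $\gr^0_F CH^j(Z,i)_\Q=0$: for weight reasons there are no nonzero morphisms $\Q(0)\to H^{2j-i}(Z,\Q)(j)$. With this, only $\nu\ge 1$ matters for the targets. Those conditions have $b=k+1-\nu\le k$, $c=k-2j+r$ and $s\le b+j-k-1$, so they are covered by your hypotheses.

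There is a second, smaller gap in identifying $E_2^{k-2j,2j}$. To get
$$E_2^{k-2j,2j}\simeq H^{k-2j}\big(\gr^0_F CH^j(X_\bullet)_\Q\big),$$
it is not enough that the $\nu\ge 1$ graded complexes are exact at $a=k-2j$. You also need exactness at $a+1$, so that the connecting map into $H^{a+1}\big(F^1CH^j(X_\bullet)_\Q\big)$ vanishes. This uses the conditions with $c=k-2j+1$, which are covered.

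Finally, the Hodge conjecture for the $X_p$, together with exactness of taking Hodge classes on polarizable pure Hodge structures of weight $2j$, identifies this group with the Hodge classes in $\gr^W_{2j}H^k(X,\Q)$. By hypothesis that space is entirely of type $(j,j)$, so you get all of it. You were right to flag the Hodge conjecture as an additional assumption.
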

\begin{proof}
The descent spectral sequence implies that $H^k_{\cdh}(X, \mathbb{Q}(j))$ vanishes if $E_2^{p,q} = 0$ for $p + q = k$. This is the same as saying the Chow complex
\begin{align*}
    \cdots \rightarrow \CH^j(X_{p-1},2j-q)_\mathbb{Q} \rightarrow CH^j(X_p,2j-q)_\mathbb{Q} \rightarrow CH^j(X_{p+1},2j-q)_\mathbb{Q} \rightarrow \cdots 
\end{align*}
is exact. Note we have vanishing for trivial reasons, unless $q \leq 2j$. Lemma \ref{lem sing chow complex} implies, assuming condition $(4')$, that the above complex is exact if
\begin{align*}
    H^{r,s} \gr_{q-\nu}^W H^{k-\nu}(X, \mathbb{Q}) = 0 \text{ for all } s \leq j-\nu \text{ and } q \leq 2j, \nu \geq 0.
\end{align*}
Let us assign new variables $b = k - \nu$ and $c = k-q$. The previous condition becomes
\begin{align*}
    H^{r,s} \gr_{b-c}^W H^{b}(X, \mathbb{Q}) = 0 \text{ for all } b 
    \leq k, c \geq k-2j, \text{ and } s \leq b+j-k.
\end{align*}
Without condition $(4')$, we may simply discard the Hodge filtration variables $r$ and $s$, and the condition becomes simply the vanishing of certain weights.
\end{proof}

\begin{remark}[{\bf Beilinson-Soul\'e conjecture}]\label{rmk:BS-conj}
If $X$ is smooth and projective, the conclusion of Theorem \ref{thm sing mot cohom} includes the statement that 
$$H^k (X, \mathbb{Q}(j)) = CH^j(X, 2j-k)_\Q = 0 \,\,\,\,{\rm for} \,\,\,\,k < 0.$$ 
This was conjectured by Beilinson and Soul\'e, and is open (see \cite[Lemma $4.13$]{jannsen}).
\end{remark}

Conjecture \ref{main-KH-conjecture}, our main prediction for  the negative $KH$-groups of $X$, is a consequence.

\begin{corollary}\label{cor sing KH}
Assume Conjecture \ref{conj bb exact}. Let $X$ be a projective variety and fix an integer $i$. Then $KH_{-i}(X)_\mathbb{Q} = 0$ if
\begin{align*}
    W_{j -i} H^j(X, \mathbb{Q}) = 0 \,\,\,\,\,\,{\rm  for ~all}\,\,\,\, j \le 2n - i.
\end{align*}
\end{corollary}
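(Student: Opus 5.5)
The plan is to combine the rationally degenerate Atiyah--Hirzebruch type spectral sequence of Theorem \ref{thm motivic cohom}(1) with Proposition \ref{thm sing mot cohom}, applied term by term.

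\textbf{Reduction to cdh groups.} After tensoring with $\Q$, the spectral sequence degenerates at $E_2$. The terms contributing to $KH_{-i}(X)_\Q$ are $E_2^{p,q}$ with $-p-q=-i$, i.e. $H^{p-q}_{\cdh}(X,\Q(-q))$. Setting $\ell=-q\ge 0$, these are the groups $H^{i+2\ell}_{\cdh}(X,\Q(\ell))$. By Lemma \ref{lem motivic soule-weibel}, only $i+2\ell\le \ell+n$ can be nonzero, that is $0\le \ell\le n-i$. It therefore suffices to show that each $H^{i+2\ell}_{\cdh}(X,\Q(\ell))=0$ for $0\le\ell\le n-i$. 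When $i\le 0$ the weight hypothesis can be checked to fail (e.g. $b=0$ gives $W_{-i}H^0\ne0$ when nonempty), so the substantive case is $i>0$, although the argument below does not need this.

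\textbf{Applying Proposition \ref{thm sing mot cohom}.} With $k=i+2\ell$ and $j=\ell$, the proposition, assuming Conjecture \ref{conj bb exact}, gives vanishing provided
$$W_{b-k+2\ell}H^b(X,\Q)=W_{b-i}H^b(X,\Q)=0 \quad \text{for all } b\le i+2\ell.$$
Since $\ell\le n-i$, we have $i+2\ell\le 2n-i$. The range $b\le i+2\ell$ is therefore contained in $b\le 2n-i$, where the hypothesis $W_{b-i}H^b(X,\Q)=0$ holds. The groups with $b<0$ vanish trivially.

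\textbf{Main obstacle.} The only delicate point is the bookkeeping in the reindexing. One has to match the $E_2^{p,q}$ indexing to the pairs $(k,j)$ and confirm that the Soul\'e--Weibel bound truncates $\ell$ exactly so that $k\le 2n-i$. Everything else is a direct citation, and rational degeneration means no differentials or extension problems need to be controlled.
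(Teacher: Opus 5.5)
Your proposal is correct and is essentially the paper's proof: decompose $KH_{-i}(X)_\Q$ as $\bigoplus_{\ell\ge 0} H^{i+2\ell}_{\cdh}(X,\Q(\ell))$ using the rationally degenerate Atiyah--Hirzebruch spectral sequence of Theorem~\ref{thm motivic cohom}(1), then apply Proposition~\ref{thm sing mot cohom} to each summand. Your use of Lemma~\ref{lem motivic soule-weibel} to restrict to $\ell\le n-i$, so that only $H^b$ with $b\le 2n-i$ enters, makes explicit a range check that the paper leaves implicit.
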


\begin{proof}
The Atiyah-Hirzebruch type spectral sequence in Theorem \ref{thm motivic cohom} provides a decomposition
\begin{align*}
    KH_{-i}(X)_\mathbb{Q} = \bigoplus_{j \geq 0} H^{2j+i}_{\cdh}(X, \mathbb{Q}(j)).
\end{align*}
We then apply Theorem \ref{thm sing mot cohom} for each of the summands. 
\end{proof}

\begin{remark}
Assuming condition $(4')$, each of the summands in the decomposition above vanishes if, for $j \geq 0$,
\begin{align}\label{eqn vanishing KH proof}
    H^{r,s} \gr_{b-c}^W H^{b}(X, \mathbb{Q}) = 0 \text{ for all } b 
    \leq 2j+i, c \geq i, \text{ and } s \leq b-j-i.
\end{align}
It appears that this proves more than the statement of Corollary \ref{cor sing KH}. It turns out it is actually equivalent to that statement, as we presently explain. Suppose that $\gr_{b-c}^W H^b(X, \mathbb{Q}) \neq 0$ for some $c \geq i$ and some $b$. There exists a non-vanishing Hodge piece $H^{r,s}$ where $s \leq (b-c)/2$. It suffices to choose $j \geq 0$ such that all the above inequalities are satisfied. We set $j = b-i-s \geq b-c-s \geq 0$. The inequality $2s+i \leq (b-c)+i \leq b$ says precisely that $b \leq 2j + i$.
\end{remark}

\subsection{Non-negative $K$-theory for varieties with pure Hodge structures}
Our main conjecture regarding negative $KH$-theory is most powerful for varieties all of whose Hodge structures are pure, when it predicts that all of the negative $KH$-groups are torsion. In this setting it can be extended to the non-negative range,  in the following way.

\begin{conjecture}\label{conj nonnegative KH}
Suppose $X$ is a projective variety, all of whose Hodge structures are pure. If $\epsilon_\bullet \colon X_\bullet \rightarrow X$ is a cubical hyperresolution, then for all $i \geq 0$
\begin{align}\label{eqn nonnegative KH}
    KH_i(X)_\mathbb{Q} \simeq \bigoplus_{j \geq 0} \ker \bigg( CH^j(X_0, i) \rightarrow CH^j(X_1, i) \bigg)_\mathbb{Q}.
\end{align}
In fact, the $j$th summand is the cdh cohomology group $H^{2j-i}_{\cdh}(X, \mathbb{Q}(j))$.

Moreover, if $X$ is seminormal with pre-$m$-Du Bois singularities for all $m$ (for example, when $X$ has quotient singularities), then $K_0(X) \simeq KH_0(X)$, described over $\Q$ by the formula above.
\end{conjecture}

Note that, thanks to (\ref{eqn:van-smooth-1}),  the conjecture predicts that $CH^n (\widetilde{X}, i)_\Q$ injects in $KH_i(X)_\mathbb{Q}$ as a direct summand if $j \ge i + \dim X$, for any resolution of singularities $\widetilde{X}$ of $X$. In particular, it predicts that $CH^n (\widetilde{X})_\Q$ is a 
direct summand of $KH_0 (X)_\mathbb{Q}$.

One can compare  (\ref{eqn nonnegative KH}) with the well-known decomposition for $i \geq 0$, when $X$ is smooth:
\begin{align}\label{eqn K_i decomp}
    K_i(X)_\mathbb{Q} \simeq \bigoplus_{j \geq 0} CH^j(X,i)_\mathbb{Q}.
\end{align}
More generally, by \cite[vii]{higher cycles}, (\ref{eqn K_i decomp}) holds for any variety $X$, if we replace $K_i(X)_\mathbb{Q}$ with the $G$-theory group $G_i(X)_\mathbb{Q}$. (Recall that the Poincar\' e duality map $K_i(X) \rightarrow G_i(X)$ is an isomorphism whenever $X$ is smooth.)

We show that the prediction above, just like the conjectures on negative $K$-theory, is also implied by the generalized Bloch-Beilinson conjecture in the previous section.

\begin{proposition}\label{prop BB imples nonnegative KH}
Conjecture \ref{conj bb exact} implies Conjecture \ref{conj nonnegative KH}.
\end{proposition}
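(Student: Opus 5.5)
We would argue exactly as in Proposition \ref{thm sing mot cohom} and Corollary \ref{cor sing KH}, now in the range $i \ge 0$. The plan is to compute each cdh cohomology group $H^{2j-i}_{\cdh}(X, \Q(j))$ with the descent spectral sequence and then assemble them with the Atiyah--Hirzebruch type spectral sequence of Theorem \ref{thm motivic cohom}(1).

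\textbf{Step 1: reduce to cdh cohomology.} By Theorem \ref{thm motivic cohom}(1), that spectral sequence degenerates rationally at $E_2$. This gives a decomposition
$$KH_i(X)_\Q \simeq \bigoplus_{j \ge 0} H^{2j-i}_{\cdh}(X, \Q(j)),$$
so it suffices to identify the $j$th summand with $\ker\big(CH^j(X_0,i) \to CH^j(X_1,i)\big)_\Q$.

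\textbf{Step 2: the descent spectral sequence.} Fix $j$ and $k = 2j-i$. We use
$$E_1^{p,q} = CH^j(X_p, 2j-q)_\Q \Rightarrow H^{p+q}_{\cdh}(X,\Q(j)).$$
On the antidiagonal $p+q = k$, the $E_1$-terms are $CH^j(X_p, i+p)_\Q$. The $p=0$ term is $CH^j(X_0,i)_\Q$, and its $E_2$-term is exactly the kernel in question. So we must show two things:
\begin{itemize}
\item $E_2^{p,q} = 0$ for $p + q = k$ and $p \ge 1$;
\item $E_2^{0,k} = E_\infty^{0,k}$.
\end{itemize}

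\textbf{Step 3: vanishing for $p \ge 1$.} Here we apply Lemma \ref{lem sing chow complex} to the row $q$, which is the complex $CH^j(X_\bullet, 2j-q)_\Q$, at position $a = p \ge 1$. The lemma asks for the vanishing of $\gr^W_{q-\nu} H^{q-\nu+p}(X,\Q)$ for all $\nu \ge 0$. Since $p \ge 1$, this is a weight strictly below the cohomological degree. Purity of all $H^\bullet(X,\Q)$ therefore gives the required vanishing, so every column $p \ge 1$ dies at $E_2$.

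\textbf{Step 4: degeneration and identification.} The same argument applies to every total degree, not just $k$: all $E_2$-terms with $p \ge 1$ vanish in every row. Hence the $E_2$-page is concentrated in the column $p = 0$. In particular all differentials $d_r$ with $r \ge 2$ leaving or entering $E_r^{0,k}$ vanish, which gives $E_\infty^{0,k} = E_2^{0,k}$. Consequently
$$H^{2j-i}_{\cdh}(X,\Q(j)) \simeq \ker\big(CH^j(X_0,i)\to CH^j(X_1,i)\big)_\Q,$$
as claimed.

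\textbf{Step 5: the statement about $K_0$.} If $X$ is seminormal and pre-$m$-Du Bois for all $m$, Theorem \ref{thm rosie}(ii) applies with $t = s$, which covers $t=0$. So $X$ is $K_0$-regular, and hence $K_0(X) \simeq KH_0(X)$. This part is unconditional.

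\textbf{Main obstacle.} The main point to check is the indexing in Step 3. One must verify that Lemma \ref{lem sing chow complex}, with higher Chow index $2j-q$, really produces weights $q-\nu$ in degree $q-\nu+p$. Once that holds, $p \ge 1$ forces impurity, and the rest of the argument is formal.
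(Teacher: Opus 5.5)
Your Steps 1--4 are exactly the paper's argument. Lemma~\ref{lem sing chow complex} together with purity kills every $E_2^{p,q}$ with $p\ge 1$ in the descent spectral sequence. Your index check, giving $\gr^W_{q-\nu}H^{q-\nu+p}(X,\Q)$, is correct. Hence $H^{2j-i}_{\rm cdh}(X,\Q(j))\simeq E_2^{0,2j-i}$, and rational degeneration of the Atiyah--Hirzebruch type spectral sequence assembles these summands into $KH_i(X)_\Q$. The paper also notes that kernels commute with $\otimes\Q$ by flatness. That is what lets you write the summand as $\ker\big(CH^j(X_0,i)\to CH^j(X_1,i)\big)_\Q$.

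The one step that fails as written is Step 5. Under the seminormal and pre-$m$-Du Bois (all $m$) hypotheses, Theorem~\ref{thm rosie}(ii) as quoted gives $K_{-t}$-regularity for $t\ge s$, i.e.\ $K_{-s}$-regularity. Regularity passes from $K_m$ to $K_{m-1}$, not upward. So ``$t=s$ covers $t=0$'' holds only when $s=0$, i.e.\ for isolated singularities. For $s\ge 1$ you only get $K_i(X)\simeq KH_i(X)$ for $i\le -s$, which does not reach $K_0$. The conjecture explicitly includes such cases, e.g.\ non-isolated quotient singularities. The paper's own proof just says this last statement ``is a consequence of Theorem~\ref{thm rosie}'' without further detail. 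To make this part correct you should either restrict to $s=0$ or appeal to a sharper $K_0$-regularity result than the one quoted in the paper.
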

\begin{proof}
Consider the descent spectral sequence 
$$    E_1^{p,q} = H^q(X_p, \mathbb{Q}(j)) \simeq CH^j(X_p, 2j-q)_\mathbb{Q} \Rightarrow H_{\cdh}^{p+q}(X, \mathbb{Q}(j)).$$
Then clearly
\begin{align}\label{eqn E_2 for nonnegative KH}
    E_2^{0, 2j-i} \simeq  \ker \bigg( CH^j(X_0, i)_\mathbb{Q} \rightarrow CH^j(X_1, i)_\mathbb{Q} \bigg).
\end{align}
If we assume Conjecture \ref{conj bb exact}, then Lemma \ref{lem sing chow complex} and the purity of the Hodge structures on $X$ imply that $E_2^{p,q} = 0$ for all $p \neq 0$. Thus the spectral sequence degenerates at the $E_2$ page, and
\begin{align*}
    H^{2j-i}_{\cdh}(X, \mathbb{Q}(j)) \simeq E_\infty^{0,2j-i} \simeq E_2^{0,2j-i}.
\end{align*}
We now use the Atiyah-Hirzebruch spectral sequence in Theorem \ref{thm motivic cohom}. Note that we may pull the tensor product in (\ref{eqn E_2 for nonnegative KH}) outside of the kernel, because $\mathbb{Q}$ is a flat $\mathbb{Z}$-module. The final statement is a consequence of Theorem \ref{thm rosie}.
\end{proof}

Unconditionally the conjecture seems quite difficult, even in the case $i = 0$. We prove it for $KH_0$ of a surface, but already $KH_0$ of a threefold or $KH_1$ of a surface seem out of reach.

\begin{theorem}\label{KH_0 surface}
Suppose $S$ is a projective surface, all of whose Hodge structures are pure. If $\epsilon_\bullet \colon S_\bullet \rightarrow S$ is a cubical hyperresolution such that $\dim S_i \leq 2 -i$ for each $i$, then
\begin{align*}
    KH_0(S)_\mathbb{Q} \simeq \bigoplus_{j = 0}^2 \ker \bigg( CH^j(S_0) \rightarrow CH^j(S_1) \bigg)_\mathbb{Q}.
\end{align*}
In particular $KH_0(S)_\mathbb{Q}$ contains $CH^2 (\widetilde S)_\Q$ as a direct summand, for any resolution $\widetilde{S}$ of $S$.
\end{theorem}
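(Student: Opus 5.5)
Here is the plan. First, use the rational degeneration of the Atiyah--Hirzebruch type spectral sequence of Theorem \ref{thm motivic cohom}(1) to write
$$KH_0(S)_\Q \simeq H^0_{\cdh}(S,\Q(0)) \oplus H^2_{\cdh}(S,\Q(1)) \oplus H^4_{\cdh}(S,\Q(2)),$$
since by Lemma \ref{lem motivic soule-weibel} the weights $j\ge 3$ do not contribute: $2j>j+2$ for $j\ge3$. For each $j$, I then study the descent spectral sequence (\ref{eq:ss-cdh}), $E_1^{p,q}=CH^j(S_p,2j-q)_\Q\Rightarrow H^{p+q}_{\cdh}(S,\Q(j))$, in total degree $2j$. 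The goal is to show that the only surviving term is $E_2^{0,2j}=\ker(CH^j(S_0)\to CH^j(S_1))_\Q$. The other terms in total degree $2j$ must vanish at $E_2$, and nothing may hit or leave $E^{0,2j}$ at later pages.

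\textbf{Weights $j=0$ and $j=1$.} For $j=0$ this is Corollary \ref{cor:cdh-weight-zero}: $H^0_{\cdh}(S,\Q(0))=W_0H^0(S,\Q)=H^0(S_\bullet)$ in bottom degree, which is exactly $\ker(CH^0(S_0)\to CH^0(S_1))_\Q$. For $j=1$, use Corollary \ref{cor weight one rational}. The contribution $W_0H^1(S,\Q)\otimes\C^*$ vanishes by purity, since $H^1$ is pure of weight $1$. The remaining term is $H^0(\Pic(S_\bullet)_\Q)=\ker(\Pic S_0\to\Pic S_1)_\Q$, which is precisely the $j=1$ summand. To get an isomorphism, I also need the connecting maps from $\gr^W_1H^1\otimes\R/\Z$ to vanish, i.e. $H^{-1}$-type terms. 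These are zero because the complexes start in degree $0$, so $H^0(\Pic^0(S_\bullet)) \to H^0(\Pic(S_\bullet))$ is injective. In fact $H^0(\Pic(S_\bullet))$ is the full answer without further argument.

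\textbf{Weight $j=2$ (main obstacle).} The terms in total degree $4$ are: $E_1^{0,4}=CH^2(S_0)$; $E_1^{1,3}=CH^2(S_1,1)$, with $S_1$ of dimension $\le1$; and $E_1^{2,2}=CH^2(S_2,2)=K_2(\C)^{\oplus}$ on points. I must show the following.

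First, $\gamma\colon CH^2(S_1,2)\to CH^2(S_2,2)$ is surjective. I would argue as in Lemma \ref{lem gamma}, using $W_0H^2(S,\Q)=0$, which holds by purity.

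Second, $\beta\colon CH^2(S_0,1)\to CH^2(S_1,1)$ modulo $\im$ is surjective onto the kernel of the next map. Here I would mimic Lemma \ref{lem beta}. Via Lemma \ref{lemma CH^2(1)}, reduce to $\Pic(S_0)\otimes\C^*\to\Pic(S_1)\otimes\C^*$. Its cokernel is controlled by $H^1(\Pic(S_\bullet))_\Q$, which vanishes by Corollary \ref{cor weight one rational} since $\gr^W_1H^2=\gr^W_2H^3=0$ by purity. Some care is needed, because $S_0$ may contain curves and points, not only surfaces; components of lower dimension are handled the same way.

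Third, the differentials $d_1$ and $d_2$ out of $E^{0,4}$ land in $E^{1,4}$ and $E^{2,3}$. These vanish because $CH^2$ of curves in degree $0$, and $CH^2(\text{pt},\cdot)$ in negative cohomological range, are zero.

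Hence $H^4_{\cdh}(S,\Q(2))\simeq\ker(CH^2(S_0)\to CH^2(S_1))_\Q$. The final remark follows by taking $S_0=\widetilde S\sqcup S_{\sing}$: the restriction of $CH^2(\widetilde S)$ to the curves $S_1$ is zero, because $CH^2$ of a curve vanishes. So the kernel contains $CH^2(\widetilde S)_\Q$ as a summand, and a different resolution changes this only by birational invariance of $CH_0$.
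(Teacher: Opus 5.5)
Your proposal follows the paper's proof. You split $KH_0(S)_\Q$ into the groups $H^{2j}_{\cdh}(S,\Q(j))$ for $j\le 2$, and show in each weight that the descent spectral sequence collapses onto $E_2^{0,2j}$. In weight two you reduce, exactly as the paper does, to the surjectivity of $CH^2(S_0,1)_\Q\to CH^2(S_1,1)_\Q$ and $CH^2(S_1,2)_\Q\to CH^2(S_2,2)_\Q$, via the arguments of Lemmas \ref{lem beta} and \ref{lem gamma}.

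One correction in weight one. The discussion of $\Pic^0$ and $\gr^W_1H^1\otimes\mathbb{R}/\mathbb{Z}$ is beside the point. But $H^0(\Pic(S_\bullet))_\Q$ is not the answer ``without further argument'' either. You still need the map $H^0(\Pic(S_\bullet))_\Q\to W_0H^2(S,\Q)\otimes\mathbb{C}^*$ from the first sequence of Corollary \ref{cor weight one rational} to vanish; equivalently, $d_2\colon E_2^{0,2}\to E_2^{2,1}$ must vanish. This is where purity of $H^2$, i.e.\ $W_0H^2(S,\Q)=0$, is used.
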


\begin{proof}
Using the notation in the proof of Proposition \ref{prop BB imples nonnegative KH}, it suffices to prove for each weight $j= 0,1,2$ the vanishing $E_2^{p,q} = 0$, for $p \neq 0$ and $p+q \geq 2j$. We treat the three weights separately.

In weight $j = 0$, we have $E_1^{p,q} = H^q(S_p, \mathbb{Q}(0))= 0$ for $q \neq 0$ by Theorem \ref{thm motivic cohom}$(3)$. Thus, $E_2^{p,0} \simeq H^p_{\cdh}(S, \mathbb{Q}(0))$. The latter space is identified with $W_0 H^p(S, \mathbb{Q})$ by Corollary \ref{cor:cdh-weight-zero}, and this vanishes for $p > 0$ by the purity of Hodge structures. This proves the required vanishing in weight zero.

In weight $j = 1$, we have $E_1^{p,q} =H^q(S_p, \mathbb{Q}(1)) = 0$ for $q \neq 1$ or $2$ by Theorem \ref{thm motivic cohom}$(3)$. Moreover, this shows $E_2^{p,1} \simeq H^{p}(S_\bullet , \mathscr{O}_{S_\bullet}^*)_\mathbb{Q}$ and $E_2^{p,2} \simeq H^{p}(\Pic(S_\bullet))_\mathbb{Q}$ in the notation of \S \ref{scn:weight1}. Both of these spaces vanish for $p > 0$ by Corollary \ref{cor weight one rational} and the purity of Hodge structures.

Finally, in weight $j = 2$, we have the vanishing $E_1^{p,q} = H^q(S_p, \mathbb{Q}(2)) =0$ for $p + q > 4$ by Lemma \ref{lem motivic soule-weibel}. It remains to show that $E_2^{p,q} = 0$ for $p + q = 4$ and $p > 0$, that is to say $E_2^{1,3} = E^{2,2}_2 = 0$. In other words, we must show surjectivity of the morphisms
\begin{align*}
    CH^2(S_0, 1)_\mathbb{Q} \rightarrow CH^2(S_1, 1)_\mathbb{Q}
\end{align*}
and
\begin{align*}
    CH^2(S_1, 2)_\mathbb{Q} \rightarrow CH^2(S_2,2)_\mathbb{Q}.
\end{align*}
Their surjectivity follows from the arguments proving Lemmas \ref{lem beta} and \ref{lem gamma}, respectively, again using the purity of Hodge structures.
\end{proof}

\begin{remark}[{\bf Surfaces with rational singularities}]
If $X$ is a normal projective surface, the purity of all Hodge structures is equivalent to $X$ being a rational homology manifold. This always holds when $X$ has rational singularities. The converse is also true when $X$ has Du Bois (so for instance log canonical) singularities. For all of these statements, see \cite[Theorem 11.1]{PP1} and the discussion around it.

Thus, under mild assumptions, Theorem \ref{KH_0 surface} is about surfaces with rational singularities. Moreover, such a surface is also pre-$m$-Du Bois for all $m$, by \cite[Proposition C]{PSV}. Therefore the decomposition in the theorem applies to $K_0 (S)_\Q$.
\end{remark}


\subsection{Miscellanous examples}\label{scn:misc}
We consider a few examples that lead to torsion but nontrivial $K$-groups, showing that our results cannot be upgraded to the integral setting without further assumptions.

\begin{example}[{\bf Integral weights for Kummer varieties}]\label{ex:Kummer}
We generalize Example \ref{example kummer surface} in the following proposition, which provides a useful source of counterexamples. 
The statements refer to the integral weight filtration of Gillet-Soul\'e, discussed in \S\ref{scn:weights-integral}.

\begin{proposition}\label{proposition kummer example}
Let $A$ be an abelian variety of dimension $g$ and let $X = A/\iota$ be the Kummer variety, where $\iota\colon x \mapsto -x$ is the negation involution. Then
\[
H^{i}(X,\mathbb{Z}) \;\cong\;
\begin{cases}
\mathbb{Z}^{\binom{2g}{\,i\,}}, 
& \text{if } 0 \le i \le 2g \text{ and } i \text{ is even},\\[6pt]
(\ZZ/ 2 \ZZ)^{\big(\sum_{j=0}^{2g-i} \binom{2g}{j}\big)}, 
& \text{if } 3 \le i \le 2g-1 \text{ and } i \text{ is odd},\\[6pt]
0, 
& \text{otherwise.}
\end{cases}
\]
When $i$ is even, $H^i(X, \mathbb{Z})$ is pure of weight $i$, and when $i$ is odd, $H^i(X, \mathbb{Z})$ is pure of weight $i-1$.
\end{proposition}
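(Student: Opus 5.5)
We compute $H^i(X,\mathbb{Z})$ through the standard blow-up picture, and then read off the integral weights from the integral weight spectral sequence (\ref{eqn:weight-ss-integral}) of Proposition \ref{prop integral weight}. The singular locus of $X$ is the image of the $2^{2g}$ two-torsion points $A[2]$, so $X$ has $2^{2g}$ isolated quotient singularities of type $\frac{1}{2}(1,\dots,1)$. Let $\widetilde A\to A$ be the blow-up at $A[2]$. The involution lifts to $\widetilde A$, and its fixed locus is the exceptional divisor $\coprod_{2^{2g}}\mathbb{P}^{g-1}$. Hence $Y:=\widetilde A/\iota$ is smooth and is a resolution of $X$, with exceptional divisor $E=\coprod_{2^{2g}}\mathbb{P}^{g-1}$; each component has normal bundle $\mathcal{O}(-2)$. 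This gives a two-step cubical hyperresolution $X_0=Y\amalg X_{\sing}$, $X_1=E$.

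\textbf{Step 1: group structure.} We compute $H^*(X,\mathbb{Z})$ directly, without going through $Y$. Write $U=A\setminus A[2]$. Since $\iota$ acts freely on $U$, the quotient $U/\iota$ is the regular locus of $X$. We use the long exact sequence of the pair $(X,X_{\sing})$ together with excision: $H^i(X,X_{\sing})\simeq H^i_c(U/\iota)$. Each local link is $\mathbb{RP}^{2g-1}$, whose integral cohomology is known. To get $H^*(U/\iota)$ we use the Cartan--Leray spectral sequence
$$H^p(\mathbb{Z}/2, H^q(U,\mathbb{Z}))\Rightarrow H^{p+q}(U/\iota,\mathbb{Z}).$$
Alternatively, and more cleanly, we use the transfer argument $\pi^*\pi_*=1+\iota^*$, $\pi_*\pi^*=2$ on $H^*(A)$. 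For $0<i<2g$ we have $H^i(U)=H^i(A)=\Lambda^iH^1(A)$, on which $\iota^*$ acts by $(-1)^i$. Combining these, the even-degree rational cohomology is $\Lambda^i$, of rank $\binom{2g}{i}$, and torsion-freeness in even degrees should follow. The odd-degree groups are pure $2$-torsion, and their rank is counted by the group cohomology terms $H^{\rm odd}(\mathbb{Z}/2,-)$ and $H^{\rm even>0}(\mathbb{Z}/2,-)$. Each such term is $(\mathbb{Z}/2)^{\binom{2g}{q}}$ for the trivial or sign module as appropriate. A $(\mathbb{Z}/2)^{\binom{2g}{j}}$ contribution then appears in odd degree $i$ for each $j\le 2g-i$, which matches the stated sum. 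The claimed vanishing of $H^1$ and of odd $H^i$ with $i>2g-1$ comes from normality and from the top-degree terms respectively. As a check, the count reproduces $(\mathbb{Z}/2)^5$ for $g=2,i=3$ in Example \ref{example kummer surface}.

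\textbf{Step 2: weights.} Even degrees pose no difficulty: $X$ is a rational homology manifold, so the rational groups are pure, and since $H^{2k}(X,\mathbb{Z})$ is torsion-free, it is pure of weight $i$. For odd $i\ge3$ we must show that the torsion group sits in $E_2^{1,i-1}$, i.e. that it is the cokernel of
$$H^{i-1}(Y,\mathbb{Z})\to H^{i-1}(E,\mathbb{Z})=\bigoplus\mathbb{Z}\,h^{(i-1)/2},$$
and that $E_2^{0,i}=\ker(H^i(Y)\oplus H^i(X_{\sing})\to H^i(E))$ vanishes. Since $H^{\rm odd}(E)=0$, this kernel is just $H^i(Y,\mathbb{Z})$. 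So the second claim reduces to $H^{\rm odd}(Y,\mathbb{Z})=0$, which we verify by the same transfer computation on $\widetilde A$. The first claim is Mayer--Vietoris: the sequence $H^{i-1}(Y)\oplus H^{i-1}(X_{\sing})\to H^{i-1}(E)\to H^i(X)\to H^i(Y)=0$ shows that all of $H^i(X)$ is the cokernel. There are no further differentials, because the hyperresolution has length $2$. Consequently $H^i(X,\mathbb{Z})=E_2^{1,i-1}$, which has weight $i-1$.

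\textbf{Main obstacle.} The main difficulty lies in the integral bookkeeping: establishing $H^{\rm odd}(Y,\mathbb{Z})=0$, and identifying the exact $2$-torsion count in the cokernel above, which requires knowing the image of $H^{i-1}(Y)$ in $\bigoplus\mathbb{Z}$ modulo $2$. We would first try the transfer/Cartan--Leray computation on $\widetilde A$, where $\iota$ fixes $E$ pointwise. An alternative route is to use the known cohomology of $Y$ via the double cover $\widetilde A\to Y$ branched along $E$.
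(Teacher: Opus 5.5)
Your weight analysis in Step 2 is correct and is essentially the paper's argument. For odd $i$, Mayer--Vietoris makes $H^i(X,\ZZ)\to H^i(Y,\ZZ)$ surjective, so purity of weight $i-1$ is equivalent to $H^i(Y,\ZZ)=0$. For even $i$, torsion-freeness plus rational purity suffices.

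The genuine gap is in Step 1 and in the verification of $H^{\rm odd}(Y,\ZZ)=0$. All of the $2$-primary information, which is the entire content of the proposition, is asserted rather than derived.

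\emph{Transfer.} The relation $\pi_*\pi^*=2$ only works up to $2$-torsion. It shows that $H^{\rm odd}(X,\ZZ)$, $H^{\rm odd}(Y,\ZZ)$ and the torsion of $H^{\rm even}(X,\ZZ)$ are killed by $2$. It cannot show that $H^{2k}(X,\ZZ)$ is torsion-free, and it cannot produce the $\mathbb{F}_2$-rank $\sum_{j\le 2g-i}\binom{2g}{j}$. For $\widetilde A\to Y$ it gives only $H^{\rm odd}(Y,\Q)=0$, not $H^{\rm odd}(Y,\ZZ)=0$.

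\emph{Cartan--Leray.} This route is not carried out, and it is not a formality. It computes $H^*(U/\iota)$, whereas your excision yields $H^*_c(U/\iota)$. For $i\ge 2$ the isomorphism $H^i(X)\cong H^i(X,X_{\sing})$ is tautological, and the $\mathbb{RP}^{2g-1}$ links belong to the pair $(X,U/\iota)$ rather than $(X,X_{\sing})$. To connect the two you need Poincar\'e duality $H^i_c(U/\iota)\cong H_{2g-i}(U/\iota)$ and universal coefficients. Your count already uses this degree shift implicitly, since the group-cohomology torsion lies in even total degree. Further problems:
\begin{itemize}
\item The spectral sequence has nonzero differentials: the terms $H^p(\ZZ/2\ZZ,\ZZ)\neq 0$, for $p>0$ even, must die for $p\ge 2g$.
\item Its integral extension problems are not addressed.
\item $H^{2g-1}(A\setminus A[2],\ZZ)\cong H^{2g-1}(A,\ZZ)\oplus\ZZ^{2^{2g}-1}$, not $H^{2g-1}(A,\ZZ)$.
\end{itemize}

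The paper does not derive these integral facts from scratch either. It imports from Spanier two inputs: the integral cohomology of the smooth locus $U/\iota$, and the torsion-freeness of $H^*(Y,\ZZ)$. The first is torsion-free in odd degrees, with $2$-torsion of rank $\sum_{j<i}\binom{2g}{j}$ in even degrees $i\le 2g-2$. The paper then runs the long exact sequence of the pair $(X,U/\iota)$, whose relative groups come from the $\mathbb{RP}^{2g-1}$ links. It computes $\coker\big(H^i(X,\ZZ)\to H^i(U/\iota,\ZZ)\big)$ exactly via a $2$-divisibility lemma: $f_*$ of the class of an $\iota$-invariant subtorus is $2[f(A_i)]$, so $f^*H^{2k}(X,\ZZ)\subset 2H^{2k}(A,\ZZ)$.

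If you grant Spanier's computation of $H^*(U/\iota,\ZZ)$, your own set-up actually finishes faster.
\begin{itemize}
\item For $i\ge 2$, $H^i(X,\ZZ)\cong H^i_c(U/\iota)\cong H_{2g-i}(U/\iota)$, and universal coefficients give exactly the stated groups, with no need for the divisibility lemma.
\item The Thom isomorphism $H^i(Y,Y\setminus E)\cong H^{i-2}(E)$ shows that $H^{\rm odd}(Y,\ZZ)$ embeds in the torsion-free group $H^{\rm odd}(U/\iota,\ZZ)$. It therefore vanishes, since $H^{\rm odd}(Y,\Q)=0$.
\end{itemize}
Without that input, or a genuine $2$-primary computation replacing it, the proposal does not prove the statement. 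Such a replacement would need, for example, a proof that Cartan--Leray collapses in degrees $\le 2g-1$ together with a resolution of its extensions.
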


Let us denote the quotient map by $f\colon A \rightarrow X$ and the smooth locus of $X$ by $U$, so that the restriction $\left.f\right|_V \colon V = f^{-1}(U) \rightarrow U$ is the quotient of a free $(\mathbb{Z}/2\mathbb{Z})$-action.

The singular locus $X_\mathrm{sing}$ consists $2^{2g}$ points, each locally analytically of the form $(\mathbb{C}^g/\pm, 0)$ Taking their blow-up yields a log resolution $Y \to X$, and in fact a hyperresolution
\begin{equation}\label{eqn:equation kummer resn}
\begin{tikzcd}
E \arrow[r] \arrow[d] & Y \arrow[d] \\
X_\mathrm{sing} \arrow[r] & X
\end{tikzcd}
\end{equation}
where $E$ is a disjoint union of $2^{2g}$ copies $E_i$ of projective space $\mathbb{P}^{g-1}$, each with normal bundle 
$N_{E_i/Y} \simeq \mathscr{O}(-2)$.

We proceed via a series of lemmas.

\begin{lemma}\label{lemma U cohom}
The cohomology groups of $U$ are
\[
H^{i}(U,\mathbb{Z}) \;\simeq\;
\begin{cases}
\mathbb{Z}^{\binom{2g}{i}} \oplus (\ZZ/ 2 \ZZ)^{\big(\sum_{j=0}^{i-1} \binom{2g}{j} \big)}, 
& \text{if } i \leq 2g-2 \text{ is even},\\[6pt]
\mathbb{Z}^{2^{2g}-1}, 
& \text{if } i = 2g-1,\\[6pt]
0, 
& \text{otherwise.}
\end{cases}
\]
The pullback $\left.f\right|_V^*$ restricts to an isomorphism $H^i(U, \mathbb{Z})_\mathrm{tf} \rightarrow H^i(V, \mathbb{Z}) = H^i(A, \mathbb{Z})$ on the torsion-free part $H^i(U, \mathbb{Z})_{\mathrm{tf}}$ of $H^i(U, \mathbb{Z})$, when $i \leq 2g-2$ is even.
\end{lemma}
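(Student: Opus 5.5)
The plan is to compute $H^i(U,\mathbb{Z})$ in two steps. First we get the groups from the free quotient $V\to U$, and then we use the geometry of $V$ near the removed points to fix the top degrees.

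\textbf{Homotopy type of $V$.} The space $V=A\setminus A[2]$ is $A$ with $N=2^{2g}$ points removed. By excision, $H^i(V,\mathbb{Z})=H^i(A,\mathbb{Z})=\Lambda^i\mathbb{Z}^{2g}$ for $i\le 2g-2$. In degree $2g-1$ we have $H^{2g-1}(V)\simeq \ker(\mathbb{Z}^N\to\mathbb{Z})\oplus H^{2g-1}(A)$, and $H^{2g}(V)=0$.

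\textbf{Group cohomology of the quotient.} Since $U=V/(\mathbb{Z}/2)$ is a free quotient, we use the Cartan--Leray spectral sequence
\[
E_2^{p,q}=H^p(\mathbb{Z}/2,H^q(V,\mathbb{Z}))\Rightarrow H^{p+q}(U,\mathbb{Z}).
\]
For $q\le 2g-2$, the involution $\iota$ acts on $H^q(A)=\Lambda^q H^1$ by $(-1)^q$. Hence $E_2^{p,q}$ is:
\begin{itemize}
\item $\mathbb{Z}$ for $p=0$ and $q$ even;
\item $\mathbb{Z}/2$ for $p>0$ even and $q$ even, or $p$ odd and $q$ odd;
\item $0$ otherwise.
\end{itemize}
All surviving classes therefore lie in even total degree, which matches the claimed vanishing of the odd $H^i(U)$ for $i<2g-1$.

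\textbf{Degeneration.} To see that nothing is killed, compare with the model $A\times_{\mathbb{Z}/2}E\mathbb{Z}/2$. Removing the fixed points does not change cohomology below degree $2g-1$, because the link at each point is $\mathbb{RP}^{2g-1}$, so $H^i(U)\simeq H^i_{\mathbb{Z}/2}(V)\simeq H^i_{\mathbb{Z}/2}(A)$ in that range, up to the local contribution of the fixed points. The equivariant cohomology $H^*_{\mathbb{Z}/2}(A;\mathbb{Z})$ is known to degenerate: the action is by $\pm1$ on an exterior algebra, with $H^1$ the sign representation. By multiplicativity, the generators $x\in H^1(A)$ and $u\in H^2(B\mathbb{Z}/2)$ give a multiplicative spectral sequence whose differentials vanish on $E_2^{0,q}$, since those classes lift via the fixed point.

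\textbf{Counting the torsion.} In even degree $i\le 2g-2$, the torsion is $\bigoplus_{p+q=i,\,p>0}E_2^{p,q}$. This has one $\mathbb{Z}/2$ for each $q<i$ with $q\equiv p$, i.e. for every $q<i$, giving rank $\sum_{j<i}\binom{2g}{j}$. The surviving extension problems should split because the torsion is $2$-torsion in each graded piece. That this extension splits as $(\mathbb{Z}/2)^{\cdots}$ rather than involving $\mathbb{Z}/4$ is the step I expect to be the main obstacle. I would try to settle it by restricting to a fixed point, giving $H^*(B\mathbb{Z}/2)$, and by using products with the generators $x$. The torsion-free part maps isomorphically onto $E_2^{0,i}=H^i(A)$, which is the final claim of the statement.

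\textbf{Top degrees.} For $i\ge 2g-1$, $U$ is a noncompact $2g$-manifold homotopy equivalent to a complex of dimension $2g-1$, so $H^{2g}(U)=0$. For $H^{2g-1}(U)$, use the long exact sequence of the pair $(X,U)$, or of $(Y,U)$ with boundary links $\mathbb{RP}^{2g-1}$. The odd-degree cohomology of $X$ is torsion: rationally it equals $H^{\mathrm{odd}}(A)^{\iota}=0$. Therefore $H^{2g-1}(U)$ is the free rank-$(N-1)$ kernel coming from the $N$ boundary components, which is consistent with the $E_2$ computation.
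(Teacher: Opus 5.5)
\textbf{The gap is the group structure of the torsion.} In even degree $i\le 2g-2$, your $E_\infty$ page shows only that
\[
\ker\bigl(\pi^*\colon H^i(U,\ZZ)\to H^i(V,\ZZ)\bigr)=F^1H^i(U,\ZZ)
\]
is a finite $2$-group of order $2^{\sum_{j<i}\binom{2g}{j}}$ with a filtration whose graded pieces are elementary abelian. Your stated reason, that it splits ``because the torsion is $2$-torsion in each graded piece,'' does not follow: $\ZZ/4$ has exactly such a filtration. Your fallback is not carried out. There are no fixed points in $U$, so restriction to one would have to go through $H^*_{\ZZ/2}(A)$, and there it is unclear that restriction detects torsion below degree $2g$. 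Products with $x\in H^1(A)$ also do not make sense integrally, since $E_2^{0,1}=H^1(A,\ZZ)^{\iota}=0$.

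The missing idea is the transfer of the double cover $\pi\colon V\to U$, which satisfies $\mathrm{tr}\circ\pi^*=2$. Since $H^i(V,\ZZ)\simeq H^i(A,\ZZ)$ is torsion-free, $\pi^*$ kills every torsion class $t$, and hence $2t=\mathrm{tr}(\pi^*t)=0$. So the torsion is an $\mathbb{F}_2$-vector space of the dimension you counted. Because the quotient $E_\infty^{0,i}$ is free, it is a direct summand. With this step added, your argument proves the lemma.

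\textbf{Smaller points.} Your ``Degeneration'' paragraph is unnecessary, and as written it is muddled. The parity observation you already made suffices:
\begin{itemize}
\item Rows $q\le 2g-2$ are concentrated in even total degree, and every $d_r$ changes total degree by one.
\item Row $q=2g-1$ contributes only in total degrees $\ge 2g-1$, and its differentials land in total degree $\ge 2g$.
\end{itemize}
Hence $E_2=E_\infty$ in total degrees $\le 2g-2$, and nothing leaves $E^{0,i}$. The sequence does not degenerate globally: it must kill the infinitely many nonzero $E_2^{p,0}$, because $H^{\ge 2g}(U)=0$. The edge map then gives the isomorphism $H^i(U)_{\mathrm{tf}}\to H^i(V)$ for free. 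In degree $2g-1$, state explicitly that the only $E_\infty$ term is $E_\infty^{0,2g-1}\subseteq H^{2g-1}(V,\ZZ)^{\iota}$, which is torsion-free. This is what shows that the torsion image of $H^{2g-1}(X,\ZZ)$ in your pair sequence is zero.

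\textbf{Comparison with the paper.} Once patched, your route differs from the paper's. The paper cites Spanier both for the groups and for the surjectivity of $H^i(U,\ZZ/2)\to H^i(V,\ZZ/2)$. It then uses the other transfer composite, $\pi^*\circ\mathrm{tr}=1+\iota^*=2$, together with universal coefficients to get the torsion-free isomorphism. Your computation is self-contained and obtains that isomorphism directly as the edge map.
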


\begin{proof}
This was proved in \cite{spanier} as an intermediate step towards computing the cohomology of $Y$. Indeed, the cohomology of $U$ was explicitly recorded in the text. Let us consider the pullback $H^i(U, \mathbb{Z}) \rightarrow H^i(V, \mathbb{Z}) = H^i(A, \mathbb{Z}) = \wedge^i \mathbb{Z}^{2g}$. Since $\mathbb{Z}/2\mathbb{Z}$ acts freely on $V$, and it acts trivially on even degree cohomology, we have an inclusion $2 \cdot H^i(V, \mathbb{Z}) \subset \im \left.f\right|_V^*$. This implies that the induced map $H^i(U, \mathbb{Z})_{\mathrm{tf}} \rightarrow H^i(V, \mathbb{Z})$ is either an isomorphism, or else
\begin{align*}
    \ker \bigg( H^i(U, \mathbb{Z})_{\mathrm{tf}}  \otimes \ZZ/ 2 \ZZ \rightarrow H^i(V, \mathbb{Z}) \otimes \ZZ/ 2 \ZZ \bigg) \neq 0.
\end{align*}
The paper \cite{spanier} also proves that the reduction $H^i(U, \ZZ/ 2 \ZZ) \rightarrow H^i(V, \ZZ/ 2 \ZZ)$ modulo $2$ is surjective. By the universal coefficient theorem, the latter possibility cannot happen. The result follows.
\end{proof}

Next, we have a basic excision result.

\begin{lemma}\label{lemma excision}
The cohomology groups of the pair $(X, U)$ are
\[
H^{i}(X, U; \mathbb{Z}) \;\simeq\;
\begin{cases}
(\ZZ / 2 \ZZ)^{\big( 2^{2g} \big)}, 
& \text{if } 3 \leq i \leq 2g-1 \text{ is odd},\\[6pt]
\mathbb{Z}^{2^{2g}}, 
& \text{if } i = 2g,\\[6pt]
0, 
& \text{otherwise.}
\end{cases}
\]
\end{lemma}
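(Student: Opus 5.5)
Since the $2^{2g}$ singular points are isolated, I would compute $H^i(X,U;\mathbb{Z})$ by excision. Choose pairwise disjoint small conic neighborhoods $B_x$ of the singular points $x\in X_{\rm sing}$. Then
$$H^i(X,U;\mathbb{Z}) \simeq \bigoplus_{x} H^i(B_x, B_x\setminus\{x\};\mathbb{Z}).$$
Each $B_x$ is contractible, being the cone over the link $L_x$, and $B_x\setminus\{x\}\simeq L_x$. The long exact sequence of the pair therefore gives
$$H^i(B_x,B_x\setminus\{x\};\mathbb{Z})\simeq \widetilde H^{i-1}(L_x;\mathbb{Z}).$$
Here $\widetilde H$ denotes reduced cohomology, which I will simply write as cohomology in positive degrees, and which vanishes in degree $0$ since $L_x$ is connected for $g\ge 2$. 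So the whole computation reduces to the integral cohomology of a single link.

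The link is explicit. Locally analytically the singularity is $(\mathbb{C}^g/\pm,0)$, so $L_x = S^{2g-1}/\{\pm1\}$ with the antipodal action, i.e. $L_x\simeq \mathbb{RP}^{2g-1}$. I would then quote the standard integral cohomology of odd-dimensional real projective space:
$$H^k(\mathbb{RP}^{2g-1};\mathbb{Z})=\mathbb{Z}\ \text{for}\ k=0,\,2g-1, \qquad \mathbb{Z}/2\mathbb{Z}\ \text{for even}\ 0<k<2g-1, \qquad 0\ \text{otherwise}.$$
Shifting by one degree gives the following for each point:
\begin{itemize}
\item $\mathbb{Z}/2\mathbb{Z}$ in each odd degree $i=k+1$ with $3\le i\le 2g-1$;
\item $\mathbb{Z}$ in degree $i=2g$;
\item $0$ in all other degrees, in particular in degrees $0$ and $1$, by reducedness.
\end{itemize}
Summing over the $2^{2g}$ points yields exactly the three cases in the statement.

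None of these steps is a serious obstacle. The only points needing care are, first, that excision applies: a closed point has a conic neighborhood by the local conic structure theorem. Second, the boundary identification needs the bookkeeping of reduced versus unreduced cohomology in low degree. That bookkeeping is what kills $i=1$, because $H^0(B_x)\to H^0(L_x)$ is an isomorphism, and it kills $i=2$ as well, since $H^1(\mathbb{RP}^{2g-1};\mathbb{Z})=0$.
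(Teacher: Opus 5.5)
Your proposal is correct and follows essentially the same route as the paper: excision reduces $H^i(X,U;\mathbb{Z})$ to the direct sum of $\widetilde H^{i-1}(L_x;\mathbb{Z})$ over the $2^{2g}$ singular points, and each link is $S^{2g-1}/\{\pm 1\}\simeq\mathbb{RP}^{2g-1}$, whose integral cohomology is standard. Your degree bookkeeping, including the vanishing for $i=1,2$, is also right.
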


\begin{proof}
By excision, the relative cohomology group $H^i(X,U; \mathbb{Z})$ is the direct sum over the $2^{2g}$ singular points $x \in X$ of the reduced cohomology groups $\Tilde{H}^{i-1}(L_x, \mathbb{Z})$, where $L_x$ is the link. Each link is the quotient of the unit sphere $S^{2g-1}$ by the antipodal involution $x \mapsto -x$, so is homeomorphic to real projective space $\mathbb{RP}^{2g-1}$. The cohomology of $\mathbb{RP}^{2g-1}$ is well known.
\end{proof}

Rationally, the cohomology of $X$ agrees with that of the abelian variety $A$. Integrally however they they differ because of two-torsion.

\begin{lemma}\label{lemma two divisibility}
If $i$ is an even integer, the image of the pullback map $f^*\colon H^i(X, \mathbb{Z}) \rightarrow H^i(A, \mathbb{Z})$ is divisible by two.
\end{lemma}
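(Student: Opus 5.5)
The plan is to factor $f^*$ through the smooth locus and then reduce to a statement about a single primitive class. Throughout, $i\geq 2$ is even; for $i=0$ the statement is false, since $f^*1=1$, so it must be read for $i>0$. For $i=2g$ the pullback $\mathbb{Z}\to\mathbb{Z}$ on top cohomology is multiplication by $\deg f=2$, so that case is immediate. We may therefore assume $2\le i\le 2g-2$.

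In this range, removing the $2^{2g}$ two-torsion points, which have real codimension $2g\ge 4$, does not change $H^i$. Hence $H^i(A,\mathbb{Z})=H^i(V,\mathbb{Z})$, and $f^*$ factors as
$$H^i(X,\mathbb{Z})\xrightarrow{\ \rho\ } H^i(U,\mathbb{Z})\xrightarrow{\ (f|_V)^*\ } H^i(V,\mathbb{Z}).$$
By Lemma~\ref{lemma U cohom}, $(f|_V)^*$ kills the torsion of $H^i(U,\mathbb{Z})$ and identifies $H^i(U,\mathbb{Z})_{\rm tf}$ with $H^i(A,\mathbb{Z})$. So it suffices to show that the image of $\rho$, projected to $H^i(U,\mathbb{Z})_{\rm tf}$, lies in $2\cdot H^i(U,\mathbb{Z})_{\rm tf}$.

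To control $\im\rho$ we use the long exact sequence of the pair $(X,U)$:
$$H^i(X,\mathbb{Z})\xrightarrow{\rho} H^i(U,\mathbb{Z})\xrightarrow{\ \delta\ } H^{i+1}(X,U;\mathbb{Z})\simeq\bigoplus_x H^i(L_x,\mathbb{Z})\simeq(\mathbb{Z}/2)^{2^{2g}}.$$
This uses Lemma~\ref{lemma excision} with $i+1$ odd and $3\le i+1\le 2g-1$. Here $\delta$ is, componentwise, restriction to the links $L_x\simeq\mathbb{RP}^{2g-1}$. So $\im\rho=\ker\delta$, and the claim becomes: if $u\in H^i(U,\mathbb{Z})$ restricts to zero on every link, then its torsion-free part is divisible by $2$. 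Equivalently, the composite
$$H^i(A,\mathbb{Z})\otimes\mathbb{Z}/2\;\simeq\; H^i(U,\mathbb{Z})_{\rm tf}\otimes\mathbb{Z}/2 \;\longrightarrow\; \frac{\bigoplus_x H^i(L_x,\mathbb{Z})}{\delta\big(H^i(U,\mathbb{Z})_{\rm tors}\big)}$$
should be injective. To make this well defined, one has to choose a splitting of $H^i(U,\mathbb{Z})$ into torsion-free and torsion parts and keep track of how it interacts with $\delta$. A counting check supports this: Lemma~\ref{lemma U cohom} together with the group $H^i(X,\mathbb{Z})$ of the Proposition gives exactly the index needed, and the last step of the plan is to make that count rigorous.

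The main obstacle is the local computation of $\delta$, i.e. understanding how a class on $U$ restricts to each $\mathbb{RP}^{2g-1}$. I would work mod $2$ with the Gysin (transfer) sequence of the double cover $V\to U$, whose characteristic class $w\in H^1(U,\mathbb{Z}/2)$ restricts to the generator of $H^1(L_x,\mathbb{Z}/2)$ on every link. The mod-$2$ cohomology of $U$ is generated by $w$ together with classes lifting $H^*(A,\mathbb{Z}/2)^{\iota}$, using \cite{spanier}. One then has to show that a lift of a class $a\in H^i(A,\mathbb{Z})$ which is nonzero mod $2$ always carries, on at least one link, a nonzero multiple of $w^i$. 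The idea is to detect this by evaluating on the images of the links, or through the branched double cover of $Y$ by the blow-up of $A$ at the two-torsion points. Injectivity then follows, and a comparison of orders with the formula for $H^i(X,\mathbb{Z})$ in the Proposition serves as a consistency check.
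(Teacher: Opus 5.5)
\textbf{There is a genuine gap: the key step is never proved.}

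Your reformulation is correct as far as it goes. For even $2\le i\le 2g-2$ the sequence of the pair $(X,U)$ gives $\im\rho=\ker\delta$. The lemma is then equivalent to injectivity of $H^i(U,\mathbb{Z})_{\rm tf}\otimes\mathbb{Z}/2\to\bigoplus_x H^i(L_x,\mathbb{Z})/\delta(H^i(U,\mathbb{Z})_{\rm tors})$. You are also right that the statement fails for $i=0$ and must be read for $i>0$.

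But the proposal stops exactly where the content of the lemma begins. The claim that every lift of a class nonzero mod $2$ restricts nontrivially to some link, modulo $\delta$ of the torsion, is only something you ``have to show''. The two suggested routes (evaluating on images of links, or the branched double cover $\mathrm{Bl}_{A[2]}A\to Y$) are not arguments.

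This step is the whole lemma relocated to the links, and it is harder there. You would need explicit control of how the torsion classes of $H^i(U,\mathbb{Z})$, known only abstractly from Spanier, restrict to each $\mathbb{RP}^{2g-1}$.

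The counting check cannot replace this step. Knowing $H^i(X,\mathbb{Z})\simeq\mathbb{Z}^{\binom{2g}{i}}$ says nothing about the index of its image in $H^i(U,\mathbb{Z})_{\rm tf}$. What determines that index is the order of $H^{i+1}(X,\mathbb{Z})$, and in the paper that order is computed \emph{from} this lemma. Using it is circular unless you compute $H^{i+1}(X,\mathbb{Z})$ independently.

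\textbf{The missing idea is a transfer argument in homology.} It avoids $U$, the links and Spanier's computation entirely.
\begin{itemize}
\item Since $H^i(A,\mathbb{Z})$ is torsion-free and dual to $H_i(A,\mathbb{Z})$, it suffices to show $\langle f^*\alpha,\beta\rangle$ is even for all $\beta\in H_i(A,\mathbb{Z})$.
\item By the projection formula, $\langle f^*\alpha,\beta\rangle=\langle\alpha,f_*\beta\rangle$.
\item $H_i(A,\mathbb{Z})$ has a basis of real coordinate subtori $T$, each $\iota$-invariant.
\item For $i\ge 2$ even, $\iota|_T$ preserves orientation and has isolated fixed points. So $T/\iota\subset X$ is an orientable pseudomanifold with a fundamental class, and $f|_T\colon T\to T/\iota$ has degree $2$.
\item Hence $f_*[T]=2[T/\iota]$, which gives $\langle f^*\alpha,[T]\rangle=2\langle\alpha,[T/\iota]\rangle$.
\end{itemize}
This is exactly where $i>0$ enters: for $i=0$ the map has degree $1$, matching your observation.
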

\begin{proof}
Let $\alpha \in H^i(X, \mathbb{Z})$. To show $f^* \alpha$ is divisible by two, it suffices to show the intersection pairing $(f^* \alpha, \beta)$ is even for every homology class $\beta \in H_i(A, \mathbb{Z})$. The projection formula says $(f^* \alpha, \beta) = (\alpha, f_* \beta)$, so it suffices to show that each $f_*\beta$ is two-divisible.

We may choose a basis $e_i$ of $H_i(A, \mathbb{Z})$ represented by subtori $A_i$ of $A$. These are invariant under the involution $\iota$, so the quotient restricts to a degree two map $A_i \rightarrow f(A_i)$. It follows that $f_* e_i = 2[f(A_i)]$ is divisible by two for each basis element.
\end{proof}

We are now ready to prove the proposition.

\begin{proof}[Proof of Proposition \ref{proposition kummer example}]
Fix an even integer $2 \leq i \leq 2g-4$, and consider the inclusion $j\colon U \rightarrow X$. The long exact sequence of the pair $(X, U)$ breaks up into pieces of the form
\begin{align}\label{eqn kummer sequence}
0 \rightarrow H^{i}(X, \mathbb{Z}) \xrightarrow{j^*} H^i(U, \mathbb{Z}) \rightarrow H^{i+1}(X, U; \mathbb{Z}) \rightarrow H^{i+1}(X, \mathbb{Z}) \rightarrow 0,
\end{align}
where $H^i(U, \mathbb{Z}) \simeq \mathbb{Z}^{\binom{2g}{i}} \oplus (\ZZ/ 2 \ZZ)^{\sum_{j=0}^{i-1} \binom{2g}{j}}$ and $H^{i+1}(X, U; \mathbb{Z}) \simeq (\ZZ/ 2 \ZZ)^{2^{2g}}$ by Lemmas \ref{lemma U cohom} and \ref{lemma excision}. We deduce from this that $H^i(X, \mathbb{Z}) \simeq \mathbb{Z}^{\binom{2g}{i}}$. To compute $H^{i+1}(X, \mathbb{Z})$, we must compute the $\ZZ/ 2\ZZ$-vector space $\coker j^*$.

Consider the commutative diagram
\[
\begin{tikzcd}\label{equation kummer resn}
H^i(A, \mathbb{Z}) \arrow[r, "\simeq"]  & H^i(V, \mathbb{Z})  \\
H^i(X, \mathbb{Z}) \arrow[r, "j_{\mathrm{tf}}^*"] \arrow[u,"f^*"]  & H^i(U, \mathbb{Z})_\mathrm{tf} \arrow[u, "\simeq"]
\end{tikzcd}
\]
The image of $f^*$ is two-divisible by Lemma \ref{lemma two divisibility}, and $H^i(U, \mathbb{Z})_\mathrm{tf} \rightarrow H^i(V, \mathbb{Z}) \simeq H^i(A, \mathbb{Z})$ is an isomorphism by Lemma \ref{lemma U cohom}. It follows that the image of $j_{\mathrm{tf}}^*$ in $H^i(U, \mathbb{Z})_\mathrm{tf}$ is two-divisible, hence $\coker j^*_\mathrm{tf} \simeq (\ZZ/2 \ZZ)^{\binom{2g}{i}}$.

The pullback $j^*$ is of the form
\begin{align*}
    j^* = (j^*_\mathrm{tf}, j^*_\mathrm{tor}): H^i(X, \mathbb{Z}) \rightarrow H^i(U, \mathbb{Z}) \simeq H^i(U, \mathbb{Z})_\mathrm{tf} \oplus H^i(U, \mathbb{Z})_\mathrm{tor}.
\end{align*}
Since $j_{\mathrm{tf}}^*$ is injective, there is a short exact  sequence of $\mathbb{F}_2$-vector spaces
\begin{align*}
    0 \rightarrow (\ZZ/ 2\ZZ)^{\big(\sum_{j=0}^{i-1} \binom{2g}{j} \big)} = H^i(U, \mathbb{Z})_{\mathrm{tor}} \rightarrow \coker j^* \rightarrow \coker j^*_{\mathrm{tf}} \simeq (\ZZ/ 2\ZZ)^{\binom{2g}{i} } \rightarrow 0.
\end{align*}
It splits, so $\coker j^*$ is of rank $\sum_{j=0}^i \binom{2g}{j}$.

Referring back to the sequence (\ref{eqn kummer sequence}), we conclude that $H^{i+1}(X, \mathbb{Z})$ is an $\mathbb{F}_2$-vector space of rank
\begin{align*}
    2^{2g} - \sum_{j=0}^i \binom{2g}{j} = \sum_{i+1}^{2g} \binom{2g}{j} = \sum_{j=0}^{2g-(i+1)} \binom{2g}{j}
\end{align*}
as claimed. This first equality is a well-known combinatorial identity.

We consider the remaining cases. It is clear that $H^0(X, \mathbb{Z}) \simeq \mathbb{Z}$, and $H^1(X, \mathbb{Z}) = 0$ by Lemmas \ref{lemma U cohom} and \ref{lemma excision}. At the other end of the long exact sequence, we have
\begin{align*}
    \cdots \rightarrow H^{2g-1}(X, \mathbb{Z}) \rightarrow H^{2g-1}(U, \mathbb{Z}) \rightarrow H^{2g}(X, U; \mathbb{Z}) \rightarrow H^{2g}(X, \mathbb{Z}) \rightarrow 0.
\end{align*}
We can identify the map $H^{2g-1}(U, \mathbb{Z}) \simeq \mathbb{Z}^{2^{2g}-1} \rightarrow H^{2g}(X, U; \mathbb{Z}) \simeq \mathbb{Z}^{2^{2g}}$ with the boundary $H^{2g-1}(V, \mathbb{Z}) \rightarrow H^{2g}(Y, V; \mathbb{Z})$. It is therefore injective with cokernel isomorphic to $\mathbb{Z}$. This shows $H^{2g}(X, \mathbb{Z}) \simeq \mathbb{Z}$, and our same argument as before now applies to compute $H^{2g-2}(X, \mathbb{Z})$ and $H^{2g-1}(X, \mathbb{Z})$.

It remains to specify weights. Consider the Mayer-Vietoris sequece associated to the resolution square (\ref{eqn:equation kummer resn}):
\begin{align*}
    \cdots \rightarrow H^{i-1}(Y, \mathbb{Z}) \rightarrow H^{i-1}(E, \mathbb{Z}) \rightarrow H^i(X, \mathbb{Z}) \rightarrow H^i(Y, \mathbb{Z}) \rightarrow \cdots
\end{align*}
When $i$ is even, $H^{i-1}(E, \mathbb{Z}) = 0$, so $H^i(X, \mathbb{Z})$ injects into $H^i(Y, \mathbb{Z})$, hence it is pure of weight $i$. When $i$ is odd, $H^i(X, \mathbb{Z})$ is torsion. Since $H^i(Y, \mathbb{Z})$ is torsion-free by \cite{spanier}, we conclude that $H^i(X, \mathbb{Z})$ is a quotient of $H^{i-1}(E, \mathbb{Z})$, hence is pure of weight $i-1$.
\end{proof}
\end{example}

\begin{example}[{\bf $K$-groups of Kummer varieties}]\label{example-K-Kummer}
We continue the previous example, by looking at the $K$-groups of $X = A/\iota$, with $A$ an abelian variety of dimension $g \geq 2$.  Since $X$ has isolated quotient singularities, we know from \S\ref{scn:graded} that it is $K_{-1}$-regular, and moreover
$$K_{-1}(X)_{\Q} = 0 \,\,\,\,\,\,{\rm and} \,\,\,\,\,\, K_{-i} (X) = 0 \,\,\,\,{\rm for}\,\,\,\,i \ge 2.$$
Note that due to the short hyperresolution, in this example the second statement can also be derived from \S\ref{scn:short-hyp}.

We next show that $K_{-1} (X) \neq 0$. The cdh-motivic cohomology groups contributing to $K_{-1}(X) \simeq KH_{-1}(X)$ are of the form
\begin{align*}
    H^{2j+1}_{\cdh}(X, \mathbb{Z}(j)) \simeq & \coker \bigg( H^{2j}_{\cdh}(Y, \mathbb{Z}(j)) \oplus H^{2j}_{\cdh}(\mathrm{Sing}(X), \mathbb{Z}(j)) \rightarrow  H^{2j}_{\cdh}(E, \mathbb{Z}(j)) \bigg) \\
    \simeq & \coker \bigg( CH^j(Y) \oplus CH^j(X_\mathrm{sing}) \xrightarrow{\phi}  CH^j(E) \bigg).
\end{align*}
It is not difficult to show geometrically that these groups are zero after tensoring with $\mathbb{Q}$, and more precisely that they are finite and killed by multiplication by $2$.
Consider for  now for each $1 \leq j \leq g-1$ the commutative diagram
\[\begin{tikzcd}
CH^j(Y) \arrow[r, "\varphi"] \arrow[d] & CH^j(E) \arrow[d, "\simeq"] \\
H^{2j}(Y, \mathbb{Z}) \arrow[r, "\psi"]              & H^{2j}(E, \mathbb{Z}).
\end{tikzcd}\]
The computation of the integral weight filtration on $H^{2j+1}(X, \mathbb{Z})$ in Proposition \ref{proposition kummer example} shows $\psi$ is not surjective. The diagram shows $\varphi$ is not surjective either, from which we deduce that $H^{2j+1}_{\cdh}(X, \mathbb{Z}(j)) \neq 0$.
It is unclear how many of these terms $E_2^{j+1,-j} = H^{2j+1}_{\cdh}(X, \mathbb{Z}(j))$ survive until the $E_\infty$-page of the Atiyah-Hirzebruch spectral sequence. However, we claim that $E_2^{2,-1} = H^{3}_{\cdh}(X, \mathbb{Z}(1))$ does survive. Indeed, the only possibly nontrivial differential is $d_2: E_2^{0,0} \rightarrow E_2^{2,-1}$. But this is zero, because $E_2^{0,0} \simeq E_\infty^{0,0}$ by Remark \ref{rmk:Z-copy}.
\end{example}

\begin{example}[{\bf Cayley surface}]\label{exmp cayley}
We describe a nodal surface with $K_{-1} (S) \neq 0$. This is not a new phenomenon, but will be useful in producing one for threefolds, in the next example.

Consider the Cayley surface $S = ( wxy + xyz + yzw + zwx = 0) \subset \mathbb{P}^3$. It can be constructed as follows. Take four general lines $c_i$ in $\mathbb{P}^2$ and denote their intersections by $p_{ij}$. Consider the blow-up $Y \rightarrow \mathbb{P}^2$ of the $p_{ij}$, with exceptional divisors $e_{ij}$. The proper transforms $\ell_i$ of the lines $c_i$ are $(-2)$-curves, which we contract to obtain the Cayley surface $\pi\colon Y \rightarrow S$. Consider the exact sequence (\ref{eqn:Weibel-sequence}):
\begin{align*}
    \mathrm{Cl}(S) \xrightarrow{a} \bigoplus_{i=1}^4\mathrm{Cl}( \widehat{\mathcal{O}}_{S, s_i}) \rightarrow K_{-1}(S) \rightarrow 0.
\end{align*}
The four singular points $s_i = \pi(\ell_i)$ are of $A_1$ type, so the sum of local analytic class groups is $(\mathbb{Z}/2\mathbb{Z})^4$. Denote the natural basis by $v_i$. It is clear that $a([e_{ij}]) = v_i + v_j$. We claim that this generates the image of $a$. 
Indeed, the map $a$ may be thought be of sending a class $[d]$ to the vector 
$$a(v) = ( \widetilde{d} \cdot \ell_1,\widetilde{d} \cdot \ell_2, \widetilde{d} \cdot \ell_3, \widetilde{d} \cdot \ell_4) \in (\mathbb{Z}/2\mathbb{Z})^4,$$ 
where $\widetilde{d}$ is the proper transform of $d$ in $Y$. But we have a linear equivalence on $Y$
\begin{align*}
    \widetilde{\ell}_1 + \widetilde{\ell}_2 + \widetilde{\ell}_3 + \widetilde{\ell}_4 \sim 2 \big( 2h - \sum_{i \neq j} e_{ij} \big).
\end{align*}
Hence, the components of $a(v)$ sum to zero. We conclude that $K_{-1}(S) \simeq \mathbb{Z}/2\mathbb{Z}$.
\end{example}

\begin{example}[{\bf A klt threefold with $K_{-2} \neq 0$}]\label{exmp 3-fold with cayley}
Consider the Cayley surface $S = ( f(w,x,y,z) = 0)$ of Example \ref{exmp cayley} and take a general quartic $g(w,x,y,z)$. 
Let 
$$X : = (t \cdot f(w,x,y,z) + g(w,x,y,z) = 0 ) \subset \mathbb{P}^4.$$

In the affine chart $t = 1$, $X$ is defined by the equation
\begin{align*}
    f(x,y,z,w) + g(x,y,z,w) = 0.
\end{align*}
This is singular at the origin $p = (0:0:0:0:1)$, around which the lowest degree term is $f$. Thus, the exceptional divisor $E$ of the blow-up $f\colon Y \rightarrow X$ is isomorphic to the projectivized tangent cone $S = ( f =0 )$. It is straightforward to see that $X$ is smooth away from $p$, because the intersection $(f = 0) \cap (g=0) \subset \mathbb{P}^3$ is smooth by the generality of $g$. Moreover, $Y$ is smooth. Since the multiplicity of $X$ at $p$ is three, we have $K_Y \sim f^* K_X$, so $X$ is klt (but not terminal). The blow-up square
\[\begin{tikzcd}
E \arrow[r, hook] \arrow[d] & Y \arrow[d, "f"] \\
\{p\} \arrow[r, hook]              & X.
\end{tikzcd}\]
yields an exact sequence
$$\cdots \to KH_j (Y)\oplus KH_j(p) \to KH_j (E) \to KH_{j-1} (X) \to KH_{j-1} (Y)\oplus KH_{j-1} (p) \to \cdots$$
in homotopy $K$-theory, and using the smoothness of $Y$ we have
$$K_{-2}(X) \simeq KH_{-2}(X) \simeq KH_{-1}(E)\simeq \mathbb{Z}/2\mathbb{Z},$$
where the last isomorphism follows from Example \ref{exmp cayley}.
\end{example}

\begin{example}[{\bf $K_{-2} = 0$ for $cA_n$ threefolds}]\label{example-K2-cAn}
For a class of examples that are better behaved than our general results, we have the following:

\begin{proposition}\label{prop K minus 2 cAn}
    A threefold $X$ with isolated $cA_n$ singularities has $K_{-2}(X)=0$.
\end{proposition}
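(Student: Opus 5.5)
The plan is to reduce to a local computation and then use the explicit formula for $K_{-2}$ of klt threefolds. First, by Lemma \ref{lemma:negative-k-semi-local}, $K_{-2}(X)$ depends only on the semi-local ring at the singular points. It is also additive over them, since the hyperresolution for isolated singularities splits over the points. So we may treat one $cA_n$ point at a time, and we may replace $X$ by a projective compactification with that single singularity. Isolated $cA_n$ singularities are Gorenstein canonical, hence klt. Theorem \ref{thm:K_{-n+1}} then gives
$$K_{-2}(X)\simeq H^2(\Pic X_\bullet)\simeq \coker\Big(\bigoplus_j \Pic S_j\to \bigoplus_k \Pic C_k\Big),$$
as in (\ref{eqn:h^{n-1}Pic}). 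This holds for any strong log resolution, since the group is an invariant of $X$. The task is therefore to choose a convenient resolution of $xy+g(z,t)=0$, with $g$ of multiplicity $n+1$ and isolated singularity, and to show this cokernel vanishes.

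For the resolution, I would use a sequence of blow-ups adapted to the $cA_n$ structure. Blowing up the singular point gives an exceptional divisor which is a quadric cone (for $n\ge 2$) or a smooth quadric (for $n=1$, handled in Example \ref{ex:cA1}). The new variety has at worst $cA_{n-2}$-type singularities, or singular curves along $x=y=0$, and one continues by blowing up. The key structural feature to verify is that the dual complex is a chain, or at least a tree. In that case every curve stratum $C_k$ lies in exactly two surface components $S_a, S_b$, and no triple points occur. Then $\bigoplus_k \Pic C_k$ splits as a sum over the edges of the tree. Also, each $C_k$ is a smooth rational curve: the exceptional surfaces are rational (they are degenerations of quadrics and toric-like ruled surfaces) and meet along sections or rulings. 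So $\Pic C_k=\ZZ$, detected by degree.

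Surjectivity then comes down to finding, for each edge $C_k=S_a\cap S_b$, a divisor class on $S_a$ or $S_b$ with degree $1$ on $C_k$ and degree $0$ on the other edges. Since the graph is a tree, one can proceed leaf by leaf, inductively, adjusting at each step. For a chain, the surface $S_a$ sits between $C_{a-1}$ and $C_a$. One needs a curve on $S_a$ meeting $C_a$ once and missing $C_{a-1}$, such as a fibre of a ruling transverse to $C_a$. Alternatively one can use combinations of a fibre and $C_{a-1}$ itself, exploiting that $C_{a-1}\cdot C_a$ on $S_a$ is known.

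The main obstacle is producing an explicit resolution with this tree structure and rational double curves, and checking the intersection numbers. The natural fallback is the weaker statement that $C_k$ is a section of a ruling on one adjacent surface, which already gives a degree-one class. If the iterated blow-up proves messy, I would instead try a toric approach: a $cA_n$ singularity is analytically a deformation of a toric one. Alternatively, one can resolve by first taking a small or partial crepant resolution (blowing up the Weil divisors $x=g_i=0$ for a factorization of $g$). This reduces to nodes and ordinary double points, where each exceptional surface is a single $\mathbb{P}^1\times\mathbb{P}^1$ or $\mathbb{P}^2$, and the cokernel vanishes as in Example \ref{ex:cA1}. The point to check there is that a cdh/Mayer–Vietoris comparison through the intermediate (non-$\Q$-factorial) model does not introduce new $KH_{-2}$. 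This uses the fact that small birational modifications with $\mathbb{P}^1$ fibres contribute only to $KH_{\ge -1}$.
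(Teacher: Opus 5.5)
\textbf{There is a genuine gap.} Your reduction is sound. Isolated $cA_n$ points are Gorenstein terminal, hence klt, so after passing to a projective model with one singular point, Theorem~\ref{thm:K_{-n+1}} and (\ref{eqn:h^{n-1}Pic}) identify $K_{-2}(X)$ with $\coker\big(\bigoplus_j \Pic S_j\to\bigoplus_k \Pic C_k\big)$ for any strong log resolution. You may even use ${\rm NS}$ in place of $\Pic$.

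But everything after that is the actual content of the proposition, and you have not established it. The geometry you describe is also incorrect:
\begin{itemize}
\item For $n\ge 2$, $g$ has order $n+1\ge 3$, so the tangent cone of $xy+g$ is $xy=0$. The first blow-up therefore has exceptional divisor two planes $P_1,P_2$ meeting along a line, not a quadric cone. A quadric appears only for $n=1$.
\item The blown-up variety has singular points on that double line. For $xy=z^3+t^3$ there are three nodes, on the line $x'=y'=z=0$.
\item Blowing up these nodes gives quadrics $Q_i\cong\mathbb{P}^1\times\mathbb{P}^1$, each meeting both $\widetilde P_1$ and $\widetilde P_2$ in lines. This creates triple points $\widetilde P_1\cap\widetilde P_2\cap Q_i$.
\end{itemize}
So even the natural resolution in the simplest case has a dual complex that is not a tree, and your leaf-by-leaf argument has no input. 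In this example the cokernel does vanish, but you would need such a computation for an arbitrary isolated $g$. That general argument is exactly what is missing.

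Your fallbacks do not close the gap. Blowing up Weil divisors $\{x=g_1\cdots g_k=0\}$ only reduces you to singularities $xy=g_i$ with $g_i$ an irreducible branch. Such a singularity is analytically factorial, so it admits no small modification. If $g_i$ is singular, for instance $g=z^a+t^b$ with $2\le a<b$ coprime, it is a $cA_{a-1}$ point, not a node, and you are back to the original problem. The toric remark does not help, since $K$-groups are not deformation invariant.

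What is missing is an induction that controls the problem at each step. The paper uses Chen's weighted blow-up $X'\to X$ with weights $(1,\tau-1,1,1)$, where $\tau={\rm mult}\,g$. Its exceptional divisor
\[
E=\big(xy+\textstyle\prod_i(z-a_iu)^{\ell_i}=0\big)\subset\mathbb{P}(1,\tau-1,1,1)
\]
is a \emph{normal surface}, and $X'$ has milder $cA$ singularities. The cdh sequence $KH_{-1}(E)\to KH_{-2}(X)\to KH_{-2}(X')\oplus KH_{-2}(p)$ holds, with $K=KH$ by Theorem~\ref{thm rosie}. Together with the inductive vanishing $K_{-2}(X')=0$, this reduces everything to $K_{-1}(E)=0$.

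That vanishing follows from Weibel's sequence (\ref{eqn:Weibel-sequence}). The curves $D_i$ generate $\Cl(E)$ and map to $e_i+e_y$ in $\ZZ/(\tau-1)\oplus\bigoplus_i\ZZ/\ell_i$. This map is surjective because $\sum_i\ell_i=\tau$.
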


\begin{proof}
We already know by Example \ref{ex:cA1} that this is true for $cA_1$ singularities, and we prove the proposition inductively.  Using the well-known fact that for isolated singularities $K_{-i} (X)$ is an analytic invariant, completely determined by $K_{-i} (\widehat{\mathscr{O}}_{X,x})$ over all $x \in X_{\rm sing}$, we may assume $X$ has exactly one singular point $p$, around which the local analytic equation is $xy+g(z,u)=0$, where the lowest degree among the terms of $g$ is $\tau$.

We take a weighted blowup $X' \to X$ of weights $(1, \tau-1, 1,1)$, with exceptional divisor $E$. This is the first step of a feasible resolution of $X$, and $X'$ is known to have ``milder" $cA_n$ singularities; see \cite{chen}. The exceptional divisor $E$ is a surface defined by the equation
\[E = \big( xy + \prod_{i=1}^n (z - a_i u)^{\ell_i} = 0 \big) \subset \mathbb{P}(1, \tau-1, 1, 1)\]
where $\tau = \sum_{i=1}^n \ell_i$. 
We use the Mayer-Vietoris sequence in homotopy $K$-theory for the blow-up square 
        $$\cdots \to KH_{-1}(E) \to KH_{-2}(X) \to KH_{-2}(X') \oplus KH_{-2}(p) \to \cdots$$
Note that $E,X$ and $X'$ all have rational (hence Du Bois) singularities, so we can replace $KH$ by $K$ by Theorem \ref{thm rosie}. Note that $K_{-2}(p)=0$ since $p$ is a point.

By induction, we also know that 
$K_{-2}(X')=0$, since $X'$ has milder singularities in the sense of \cite{chen}, meaning of type $cA_m$ with $m < n$.
The result then follows if we show $K_{-1} (E) = 0$.

   To this end, we compute $K_{-1} (E)$ using Weibel's formula (\ref{eqn:Weibel-sequence}).      
       The singular points of $E$ consist of a point $P_y$ of type $\frac{1}{\tau-1}(1,1)$ in the $y\neq 0$ chart, and $n$ points $P_i$ of type $A_{l_i-1}$ in the $u\neq 0$ chart. Their local class groups are $\mathbb{Z}/(\tau-1)\mathbb{Z}$ and $\mathbb{Z}/l_i\mathbb{Z}$, respectively. 
        Thus 
        $$\bigoplus_{x \in E_{\text{sing}}} \operatorname{Cl}(\widehat{\mathcal{O}}_{E,x}) \cong \mathbb{Z}/(\tau-1)\mathbb{Z} \oplus \bigoplus_{i=1}^n \mathbb{Z}/l_i\mathbb{Z}.$$
        The open subset $U = E \setminus \{x=0\}$ is isomorphic to $\mathbb{A}^2$, so $\operatorname{Cl}(E)$ is generated by the $n$ irreducible components $D_i = \{x=0, z=a_i u\}$ of $E \setminus U$.\footnote{In fact, one can show that there are no relations among the $D_i$'s, and so $\operatorname{Cl}(E)\simeq \mathbb{Z}^n$. We will not need this result here.}
The restriction map 
        $$\operatorname{Cl}(E) \to \bigoplus_{x \in E_{\text{sing}}} \operatorname{Cl}(\widehat{\mathcal{O}}_{E,x})$$ 
        sends $D_i$ to $e_i + e_y$, where $e_i$ and $e_y$ are the canonical generators of $\mathbb{Z}/l_i\mathbb{Z}$ and $\mathbb{Z}/(\tau-1)\mathbb{Z}$. Since $\sum_{i=1}^n l_i=\tau$, simple algebra shows that the map is surjective, hence $K_{-1} (E) = 0$.
\end{proof}

Thus overall $cA_n$ threefolds have $K_{-3} (X) = K_{-2} (X) =  K_{-1} (X)_\Q = 0$, though $K_{-1} (X)$ itself is sometimes nonzero.

One can check that this method does not extend to $cA/r$ singularities, though we have not yet been able to construct such singularities 
for which $K_{-2} (X) \neq 0$. Finally, given the description of $K_{-2}(X)$ in \S\ref{scn:small-dim}, the proposition has an amusing elementary consequence 
for which we do not have an obvious direct proof:

\begin{corollary}
Let $f\colon Y \to X$ be a log resolution of a threefold $cA_n$ singularity, assumed to be an isomorphism away from the singularity.  If  $S_j$ and $C_k$ denote the surface and curve strata of the exceptional locus, then the natural restriction map
$$\bigoplus_j \Pic S_j \rightarrow \bigoplus_k \Pic C_k$$
is surjective.
\end{corollary}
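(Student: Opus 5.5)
The plan is to combine Proposition \ref{prop K minus 2 cAn} with the explicit description of $K_{-2}$ for klt threefolds from \S\ref{scn:small-dim}. First, isolated $cA_n$ threefold singularities are Gorenstein hypersurface singularities (compound Du Val), hence canonical and so of klt type. Next, using Lemma \ref{lemma:negative-k-semi-local}, we reduce to the case of a projective threefold $X$ whose only singularity is the given $cA_n$ point, or at least whose singularities are isolated $cA_n$ points. Such an $X$ exists by Artin approximation followed by a compactification that introduces no new singularities. We then fix a log resolution $f\colon Y\to X$ that is an isomorphism away from the singular locus and restricts over the singular point to the given local resolution. This is possible because the local resolution can be glued with the identity on the smooth locus, or equivalently one may choose the compactification to be compatible with it.

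By Theorem \ref{thm:K_{-n+1}} with $n=3$, together with (\ref{eqn:h^{n-1}Pic}), we have
$$K_{-2}(X)\simeq H^2(\Pic X_\bullet)\simeq \coker\Big(\bigoplus_j \Pic S_j\to\bigoplus_k \Pic C_k\Big),$$
where $S_j$ and $C_k$ are the surface and curve strata of the exceptional divisor. Proposition \ref{prop K minus 2 cAn} gives $K_{-2}(X)=0$, so this cokernel vanishes, which is exactly the claimed surjectivity.

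The step needing the most care is the compatibility of the resolutions. The corollary refers to an arbitrary log resolution of the germ, while Theorem \ref{thm:K_{-n+1}} is phrased with the hyperresolution built from a global strong resolution. The point to check is that the cokernel depends only on the strata lying over the singular point. If one prefers to avoid the globalization issue, there is a cleaner route: the identification of $H^2(\Pic X_\bullet)$ with this cokernel is independent of the chosen hyperresolution, because it computes $H^4_{\cdh}(X,\ZZ(1))$. One can therefore run the argument with any strong resolution whose restriction over the singular point is the given one.
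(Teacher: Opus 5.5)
Your proposal is correct and follows the paper's own deduction: Proposition~\ref{prop K minus 2 cAn} gives $K_{-2}(X)=0$, and Theorem~\ref{thm:K_{-n+1}} with (\ref{eqn:h^{n-1}Pic}) identifies, for a projective klt threefold, $K_{-2}(X)\simeq H^2(\Pic X_\bullet)\simeq \coker\big(\bigoplus_j \Pic S_j\to\bigoplus_k \Pic C_k\big)$, so this cokernel vanishes. Your globalization step is a detail the paper leaves implicit, namely placing the singularity on a projective threefold smooth elsewhere and extending the given resolution by the identity; it is adequate as written, and Lemma~\ref{lemma:negative-k-semi-local} is not actually needed for it, since the Proposition applies directly to the projective threefold.
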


\end{example}

\end{document}